\numberwithin{equation}{section}
\newtheorem{thm}{Theorem}[section]
\newtheorem{prop}[thm]{Proposition}
\newtheorem{lemm}[thm]{Lemma}
\newtheorem{cor}[thm]{Corollary}
\theoremstyle{remark}
\newtheorem{rem}[thm]{Remark}
\newtheorem{defn}[thm]{Definition}
\newcommand{\BBB}{\mathbb}
\newcommand{\R}{{\BBB R}}
\newcommand{\Z}{{\BBB Z}}
\newcommand{\T}{{\BBB T}}
\newcommand{\N}{{\BBB N}}
\newcommand{\C}{{\BBB C}}
\newcommand{\ZZ}{\mathcal{Z}}
\newcommand{\ee}{\mbox{\boldmath $1$}}
\newcommand{\lec}{{\ \lesssim \ }}
\newcommand{\gec}{{\ \gtrsim \ }}
\newcommand{\CC}{\mathcal{C}}
\newcommand{\RR}{\mathcal{R}}
\newcommand{\al}{\alpha}
\newcommand{\be}{\beta}
\newcommand{\ga}{\gamma}
\newcommand{\ka}{\kappa}
\newcommand{\eps}{\varepsilon}
\newcommand{\x}{\xi}
\newcommand{\de}{\delta}
\newcommand{\om}{\omega}
\newcommand{\supp}{\operatorname{supp}}
\newcommand{\F}{\mathcal{F}}
\newcommand{\1}{{\mathbf 1}}
\newcommand{\HHH}{\mathcal{H}}
\newcommand{\ti}{\widetilde}
\newcommand{\cj}[1]{\overline{#1}}
\renewcommand{\H}{\mathcal H}
\newcommand{\mus}{\mu (\sigma_1,\sigma_2,\sigma_3)}
\newcommand{\kks}{\kappa (\sigma_1,\sigma_2,\sigma_3)}
\title[WP and IP for a system of periodic qDNLS]{Well-posedness and ill-posedness for
\\
a system of periodic quadratic derivative
\\
nonlinear Schr\"odinger equations
}
\author[H. Hirayama]{Hiroyuki Hirayama}
\address[H. Hirayama]{Faculty of Education,
University of Miyazaki, 1-1, Gakuenkibanadai-nishi,
Miyazaki 889-2192, Japan}
\email[H. Hirayama]{h.hirayama@cc.miyazaki-u.ac.jp}
\author[S. Kinoshita]{Shinya Kinoshita}
\address[S. Kinoshita]{Department of Mathematics,
Institute of Science Tokyo, Meguro-Ku,
Tokyo 152-8551, Japan}
\email[S. Kinoshita]{kinoshita@math.titech.ac.jp}
\author[M. Okamoto]{Mamoru Okamoto}
\address[M. Okamoto]{Department of Mathematics,
Graduate School of Advanced Science and Engineering,
Hiroshima University,
1-3-1 Kagamiyama, Higashi-Hiroshima, 739-8526, Japan}
\email[M. Okamoto]{mokamoto@hiroshima-u.ac.jp}
\subjclass[2010]{35Q55}
\keywords{Schr\"odinger equation; Cauchy problem; Well-posedness; Ill-posedness}
\begin{document}
\begin{abstract}
We consider the Cauchy problem of a system of quadratic derivative nonlinear 
Schr\"odinger equations which was introduced by M. Colin and T. Colin (2004) as a model of laser-plasma interaction.
For the nonperiodic setting, the authors proved some well-posedness results, 
which contain the scaling critical case for $d\geq 2$.  
In the present paper, we prove the well-posedness of this system for the periodic setting. 
In particular, well-posedness is proved at the scaling critical regularity for $d\geq 3$ 
under some conditions for the coefficients of the Laplacian.  
We also prove some ill-posedness results.
As long as we use an iteration argument,
our well-posedness results
are optimal except for some critical cases.
\end{abstract}
\maketitle

\tableofcontents


\section{Introduction\label{intro_torus}}
We consider the Cauchy problem of the system of nonlinear Schr\"odinger equations:
\begin{equation}\label{NLS_sys_torus}
\begin{cases}
\displaystyle (i\partial_{t}+\alpha \Delta )u=-(\nabla \cdot w )v,\hspace{2ex}t>0,\  x\in \T^{d}, \\
\displaystyle (i\partial_{t}+\beta \Delta )v=-(\nabla \cdot \overline{w})u,\hspace{2ex}t>0,\  x\in \T^{d}, \\
\displaystyle (i\partial_{t}+\gamma \Delta )w =\nabla (u\cdot \overline{v}),\hspace{2ex}t>0,\  x\in \T^{d}, \\
(u(0,x), v(0,x), w(0,x))=(u_{0}(x), v_{0}(x), w_{0}(x)),\hspace{2ex}x\in \T^{d},
\end{cases}
\end{equation}
where $\alpha$, $\beta$, $\gamma\in \R\backslash \{0\}$, 
$\T=\R/2\pi\Z$, and the unknown functions $u$, $v$, $w$ are $\C^d$ valued. 
The initial data $(u_0,v_0,w_0)$ is given in the Sobolev space
\[
\mathcal{H}^s(\T^d):=(H^s(\T^d))^d\times (H^s(\T^d))^d\times (H^s(\T^d))^d. 
\]
The system (\ref{NLS_sys_torus}) was introduced by Colin and Colin in \cite{CC04} 
as a model of laser-plasma interaction.

The aim of this paper is to
classify the property of the flow map of \eqref{NLS_sys_torus} in terms of the Sobolev regularity.
One of the threshold values is coming from the scaling transformation,
which is called the scaling critical regularity.
Here, we note that \eqref{NLS_sys_torus} (on $\R^d$) is invariant under the following scaling transformation:
\[
A_{\lambda}(t,x)=\lambda^{-1}A(\lambda^{-2}t,\lambda^{-1}x),
\]
where
$A=(u,v,w)$ and $\lambda>0$.
Hence, the scaling critical regularity is
\begin{equation}
s_{c}= \frac d2 -1.
\label{scal}
\end{equation}

First, we introduce some known results for related problems. 
The system (\ref{NLS_sys_torus}) has quadratic nonlinear terms which contain a derivative. 
A derivative loss arising from the nonlinearity makes the problem difficult. 
In fact, Chihara (\cite{Chi02}) and Christ (\cite{Ch}) proved that the flow map of the Cauchy problem:
\[
\begin{cases}
i\partial_{t}u-\partial_{x}^{2}u=u\partial_{x}u,\ t>0,\ x\in \T,\\
u(0,x)=u_{0}(x),\ x\in \T
\end{cases}
\]
is not continuous on $H^{s}(\T )$ for any $s\in \R$.
See \cite{CGKO17} for the well-posedness for mean-zero initial data.
Moreover,
see also \cite{KoOk1, KoOk2} for ill-posedness results on $\T$.
On the other hand, 
for the Cauchy problem of the cubic derivative nonlinear Schr\"odinger equation:
\[
\begin{cases}
i\partial_{t}u+\partial_{x}^{2}u=i \partial_{x}(|u|^{2}u),\ t>0,\ x\in \T,\\
u(0,x)=u_{0}(x),\ x\in \T,
\end{cases}
\]
Herr (\cite{He06}) proved the local well-posedness in $H^{s}(\T )$ for $s\geq \frac 12$ by using the gauge transform 
and Win (\cite{Wi10}) proved the global well-posedness in $H^{s}(\T )$ for $s> \frac 12$.
For the nonperiodic case, there are many results for the well-posedness of the nonlinear Schr\"odinger equations with derivative nonlinearity.
See, for example, \cite{Be08}, \cite{BL01}, \cite{Chi99}, \cite{CKSTT02}, \cite{HKNV}, \cite{IKO16}, \cite{KNV23}, \cite{MWX11}, \cite{Tak01},
and references therein. 

Next, we mention some known results for 
the well-posedness of (\ref{NLS_sys_torus}). 
We set
\begin{equation}\label{coeff_condition}
\mu :=\alpha\beta\gamma \left(\frac{1}{\alpha}-\frac{1}{\beta}-\frac{1}{\gamma}\right),\ \ 
\kappa :=(\alpha -\beta)(\alpha -\gamma)(\beta +\gamma),\ \ 
\widetilde{\kappa} :=(\alpha -\gamma)(\beta +\gamma). 
\end{equation}
For the nonperiodic case,
in \cite{Hi} and \cite{HK},
the first and second 
authors proved the well-posedness of (\ref{NLS_sys_torus}) 
in $\H^s(\R^d)$ under the condition $\kappa \ne 0$, 
where $s$ is given in Table~\ref{WP_NLS_sys} below. 
\begin{table}[h]
\begin{center}
\begin{tabular}{|l|l|l|l|l|l|}
\hline
\multicolumn{2}{|c|}{} & \multicolumn{1}{|c|}{$d=1$} & 
\multicolumn{1}{|c|}{$d=2$} & 
\multicolumn{1}{|c|}{$d=3$} &
\multicolumn{1}{|c|}{$d\geq 4$}\\
\hline
\multicolumn{2}{|c|}{$\mu > 0$} & \multicolumn{1}{|c|}{$s\geq 0$} & \multicolumn{3}{|c|}{$s\geq s_{c}$} \\              
\cline{1-5}
\multicolumn{2}{|c|}{$\mu =0$} & \multicolumn{3}{|c|}{$s\geq 1$} 
&
\\
\cline{1-5}
\multicolumn{2}{|c|}{$\mu < 0$, $\kappa \ne 0$} & \multicolumn{2}{|c|}{$s\geq \frac 12$} & 
\multicolumn{1}{|c|}{$s>s_c$} & \\
\hline
\end{tabular}
\caption{Regularities to be well-posed in \cite{Hi} and \cite{HK}%
\label{WP_NLS_sys}}
\end{center}
\end{table}

\noindent In \cite{Hi} and \cite{HKO2021}, the authors also considered 
the case $\kappa =0$ 
and proved the 
well-posedness of \rm (\ref{NLS_sys_torus}) 
in $\H^s(\R^d)$, where $s$ is given in Table~2
\ below. 
\begin{table}[h]
\begin{center}
\begin{tabular}{|l|l|l|l|l|}
\hline
\multicolumn{2}{|c|}{} & \multicolumn{1}{|c|}{$d=1,2$} & \multicolumn{1}{|c|}{$d \ge 3$}\\
\hline
\multicolumn{2}{|c|}{$\alpha-\beta=0$,\ $\widetilde{\kappa}\ne 0$} & \multicolumn{1}{|c|}{$s\geq \frac 12$} &\multicolumn{1}{|c|}{$s>s_c$} \\
\hline
\end{tabular}
\caption{Regularities to be well-posed in \cite{Hi} and \cite{HKO2021}%
\label{WP_table22}}
\end{center}
\end{table}
On the other hand, 
the first author proved in \cite{Hi} that
the flow map is not $C^2$ for $s<1$ if $\mu= 0$, 
for $s< \frac 12$ if $\mu< 0$ and 
$\widetilde{\kappa}\ne 0$, 
and for any $s\in \R$ if $\widetilde{\kappa} =0$. 
Furthermore, the authors proved in \cite{HKO2021} that 
the flow map is not $C^3$ for $s<0$ if $\mu >0$. 
Therefore, the well-posedness of (\ref{NLS_sys_torus}) 
in $\H^s(\R^d)$ is optimal except for some scaling critical cases 
if we use an iteration argument. 
By using the modified energy method, 
the authors in \cite{HKO22} also obtained the well-posedness in $\H^s(\R^d)$
for $s> \frac d2+3$ under the condition $\beta +\gamma \ne 0$, 
which result contained the case $\alpha -\gamma =0$. 
The well-posedness for radial initial data is also considered in \cite{HKO2020}.

Now, we give the main results in the present paper.
Recall that the scaling critical regularity $s_{c}$ is given by \eqref{scal} and $\mu$, $\kappa$, and $\widetilde{\kappa}$ are given in (\ref{coeff_condition}). 
%
We note that if $\alpha$, $\beta$, $\gamma \in \R\backslash \{0\}$ satisfy $\mu \ge 0$, 
then $\kappa \neq 0$  holds. 
\begin{thm}[Critical case]\label{wellposed_T}
We assume $\alpha$, $\beta$, $\gamma \in \R\backslash \{0\}$. \\
{\rm (i)}\ If $d\geq 3$ and $\mu >0$,  
then {\rm (\ref{NLS_sys_torus})} is locally well-posed in $\mathcal{H}^{s_{c}}(\T^d )$. 
More precisely, for any $(u_{0}, v_{0}, w_{0})\in \mathcal{H}^{s_c}(\T^d)$, 
there exist $T>0$ and a solution
\[
(u,v,w)\in C \big( [0,T);\mathcal{H}^{s_{c}}(\T^d ) \big)
\]
to the system {\rm (\ref{NLS_sys_torus})} on $(0, T)$. 
Such solution is unique in $X^{s_{c}}([0,T))$ which is a closed subspace of $C\left([0,T);\mathcal{H}^{s_{c}}(\T^d )\right)$ {\rm (see (\ref{Xs_norm_T}) and Definition~\ref{YZ_space_T})}. 
Moreover, the flow map
\[
\mathcal{H}^{s_c}(\T^d )\ni (u_{0},v_{0},w_{0})\mapsto (u,v,w)\in X^{s_{c}}([0,T))
\]
is Lipschitz continuous. \\
{\rm (ii)}\ If $d\geq 4$ and $\mu =0$, 
then {\rm (\ref{NLS_sys_torus})} is locally well-posed in $\mathcal{H}^{s_{c}}(\T^d )$. \\
{\rm (iii)}\ If $d\geq 5$, $\mu < 0$, and $\kappa \neq 0$, 
then {\rm (\ref{NLS_sys_torus})} is locally well-posed in $\mathcal{H}^{s_{c}}(\T^d )$.
\end{thm}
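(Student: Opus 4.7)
The plan is to prove Theorem~\ref{wellposed_T} by a Picard iteration at scaling critical regularity. Since the $X^{s,b}$ space with $b=1/2$ does not embed into $C_tH^{s_c}$, the argument must be run in $U^2/V^2$-type spaces adapted separately to the three Schr\"odinger propagators $e^{it\alpha\Delta}$, $e^{it\beta\Delta}$, $e^{it\gamma\Delta}$, in the style of Hadac--Herr--Koch and Herr--Tataru--Tzvetkov; these are exactly the spaces $X^{s}([0,T))$ whose definition is deferred to Definition~\ref{YZ_space_T}. After that set-up, well-posedness reduces by the contraction principle to the bilinear nonlinear estimates
\[
\|(\nabla\cdot w)v\|_{N^{s_c}_\alpha}\lesssim \|v\|_{X^{s_c}_\beta}\|w\|_{X^{s_c}_\gamma},\qquad
\|\nabla(u\cdot\overline{v})\|_{N^{s_c}_\gamma}\lesssim \|u\|_{X^{s_c}_\alpha}\|v\|_{X^{s_c}_\beta},
\]
together with the analogue for the $v$-equation, where $N^{s_c}_\star$ denotes the natural dual space.

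To prove these, I would dyadically decompose both inputs and output, index by frequency scales $N_1,N_2,N_3$ and modulation scales $L_1,L_2,L_3$, and sum the resulting trilinear forms. The main analytic input is the scale-invariant periodic Strichartz estimate $\|e^{it\kappa\Delta}P_Nf\|_{L^p_{t,x}([0,1]\times\T^d)}\lesssim N^{d/2-(d+2)/p}\|P_Nf\|_{L^2}$ for $p>2(d+2)/d$, obtained from Bourgain--Demeter $\ell^2$-decoupling and transferred to $U^p$, hence to $V^2$ by embedding. The interaction is governed by the resonance function, which for the $w$-nonlinearity can be rewritten, after completing the square in the high-frequency variable, as
\[
\alpha|\xi_1|^2-\beta|\xi_2|^2-\gamma|\xi_1+\xi_2|^2=(\alpha-\beta)\Bigl|\xi_1+\tfrac{\beta}{\alpha-\beta}(\xi_1+\xi_2)\Bigr|^2+\tfrac{\mu}{\alpha-\beta}|\xi_1+\xi_2|^2,
\]
with analogous identities for the other two nonlinearities. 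When $\mu>0$ the two squares have the same sign (and $\alpha\ne\beta$ is automatic from $\mu>0$), so after a routine case-split one obtains $\max(L_1,L_2,L_3)\gtrsim N_{\min}^2$ across the relevant frequency configurations, which supplies exactly the off-diagonal smoothing needed to absorb the derivative in the nonlinearity and close the estimate at $s=s_c$.

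The principal obstacle concentrates in the near-resonant high-high-to-high regime. In case (i) this region is essentially void and matters reduce to Strichartz plus modulation gain. In case (ii) $\mu=0$ the resonance vanishes on the hyperplane $\xi_1\parallel\xi_1+\xi_2$, and in case (iii) $\mu<0,\ \kappa\ne 0$ on a quadric; there the pointwise modulation lower bound fails and must be replaced by an angular/transverse orthogonality argument on frequency tubes of variable thickness, in the same spirit as the nonperiodic treatments \cite{Hi,HK,HKO2021} but now combined with $\ell^2$-decoupling to stay scale invariant on $\T^d$. The loss of Strichartz gain incurred by the extra orthogonality is what forces the admissible dimension up from $d\geq 3$ in (i) to $d\geq 4$ in (ii) and $d\geq 5$ in (iii); verifying this bilinear estimate uniformly in the dyadic parameters so that the dyadic sum converges at the critical exponent $s_c=d/2-1$ is the central technical task.
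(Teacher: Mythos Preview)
Your overall framework---Picard iteration in $U^2/V^2$-type spaces adapted to the three propagators, reduction to bilinear/trilinear estimates via duality, dyadic decomposition in frequency and modulation, and Bourgain--Demeter Strichartz as the main linear input---matches the paper exactly. Several of the specifics, however, diverge from what the paper actually does, and two of them are substantive.

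First, the modulation lower bound: you write $\max(L_1,L_2,L_3)\gtrsim N_{\min}^2$. The bound the paper uses (Lemma~\ref{modul_est_T}) is $\gtrsim N_{\max}^2$, uniformly across all frequency configurations when $\mu>0$ (and in the high--low configurations whenever $\kappa\ne 0$). This is what is needed: after duality the trilinear form carries a factor $N_{\max}$ from the derivative, and $M\sim N_{\max}^2$ supplies $N_{\max}^{-1}$ via $\|Q_{\ge M}u\|_{L^2}\lesssim M^{-1/2}\|u\|_{V^2}$. A bound of size $N_{\min}^2$ would not close the high--low interactions.

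Second, and more importantly, your description of cases (ii) and (iii) does not match the paper. For (iii), $\mu<0$, $\kappa\ne 0$, $d\ge 5$, the paper does \emph{not} perform an angular/transversal orthogonality argument on the resonant quadric. Once $\kappa\ne 0$ forces $N_1\sim N_2\sim N_3$ in the low-modulation piece (Proposition~\ref{HL_est_T_2}), the estimate is a single line of H\"older plus the $L^3$ Strichartz estimate $\|P_N u\|_{L^3}\lesssim N^{d/6-2/3}\|u\|_{Y^0}$; three factors give $N_{\max}^{d/2-2}$ and combine with the extra $N_{\max}$ to hit $N_{\max}^{s_c}$ exactly. The restriction $d\ge 5$ is precisely the condition $3>\frac{2(d+2)}{d}$ that makes the $L^3$ Strichartz estimate lossless, not a consequence of any orthogonality loss. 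For (ii), $\mu=0$, the paper exploits the algebraic identity $\sigma_1|\xi_1|^2+\sigma_2|\xi_2|^2+\sigma_3|\xi_3|^2=\tfrac{\sigma_3}{\sigma_1\sigma_2}|\sigma_1\xi_1-\sigma_2\xi_2|^2$ to localise $|\sigma_1\xi_1-\sigma_2\xi_2|\lesssim M_3^{1/2}$ at modulation level $M_3$, then applies the bilinear Strichartz estimate (Remark~\ref{bilin_Stri_gene_bilin_op}) and sums $\sum_{M_3<N_{\max}^2}M_3^{(s_c-1)/2}$; the summability condition $s_c\ge 1$ is exactly $d\ge 4$. So the dimension thresholds arise from the $L^3$ Strichartz range and a modulation sum, respectively---not from ``loss of Strichartz gain incurred by extra orthogonality.''

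Finally, at critical regularity the bilinear estimates of Proposition~\ref{Duam_est_T} carry no factor of $T$, so contraction for \emph{large} data requires an additional mechanism. The paper handles this (Proposition~\ref{Duam_est_T_loc_critical} and the proof of Theorem~\ref{wellposed_T}) by splitting the solution into a high-frequency part $P_{\ge K}$ which is small in $H^{s_c}$ and a low-frequency remainder that admits a crude estimate with a factor $T^\varepsilon K^\theta$; one chooses $K$ and then $T$. Your proposal does not address this step.
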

\begin{thm}[Subcritical case]\label{wellposed_sub_T}Let $d\ge 1$, $\alpha$, $\beta$, $\gamma \in \R\backslash \{0\}$, and $s >s_c$. 
If one of
\begin{enumerate}
\item[{\rm (i)}] $\mu>0$ and $s>0;$ 
\item[{\rm (ii)}] $\mu \le 0$, $\widetilde{\kappa} \neq 0$, and $s\ge 1$
except for $\mu <0$ and $(d,s)\ne (3,1)$
\end{enumerate}
is satisfied, 
then {\rm (\ref{NLS_sys_torus})} is locally well-posed in $\mathcal{H}^{s}(\T^d )$.
\end{thm}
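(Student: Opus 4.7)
The plan is to establish local well-posedness via a contraction mapping argument applied to the Duhamel formulation of \eqref{NLS_sys_torus}, carried out in Bourgain-type spaces $X^{s,b}_\alpha, X^{s,b}_\beta, X^{s,b}_\gamma$ adapted to the three linear propagators $e^{it\alpha\Delta}, e^{it\beta\Delta}, e^{it\gamma\Delta}$, with $b>\frac{1}{2}$. Since $s>s_c$ strictly, there is room to choose $b$ subcritically, and the contraction reduces to the three bilinear estimates
\[
\|(\nabla\cdot w)v\|_{X^{s,b-1}_\alpha}\lesssim \|v\|_{X^{s,b}_\beta}\|w\|_{X^{s,b}_\gamma},
\]
together with the two analogues for the $v$- and $w$-equations. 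Once these are in hand, the embedding $X^{s,b}\hookrightarrow C([0,T);\H^s(\T^d))$ for $b>\frac12$ and the bilinearity of the resolution map yield continuity in time and Lipschitz dependence on initial data.

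The derivative loss is controlled by the resonance function
\[
\Phi(\xi_0,\xi_1,\xi_2)=\alpha|\xi_0|^2-\beta|\xi_1|^2-\gamma|\xi_2|^2,\qquad \xi_0=\xi_1+\xi_2,
\]
which is a block quadratic form in $(\xi_1,\xi_2)$ whose determinant is a nonzero multiple of $\mu$. Hence in case (i) the hypothesis $\mu>0$ makes $\Phi$ definite and supplies the pointwise bound $|\Phi|\gtrsim|\xi_{\max}|^2$; the bilinear estimate then follows from a standard dyadic analysis combining this lower bound with Bourgain's periodic $L^p_{t,x}$ Strichartz estimates. The auxiliary hypothesis $s>0$, only restrictive in low dimensions, takes care of the high-high$\,\to\,$low interaction in which the external derivative falls on a low output frequency.

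In case (ii), $\mu\le 0$ makes $\Phi$ indefinite or degenerate, and one must split the frequency interaction into a nonresonant region $|\Phi|\sim|\xi_{\max}|^2$, treated essentially as in case (i), and a nearly-resonant region $|\Phi|\ll|\xi_{\max}|^2$. On the latter, $\widetilde{\kappa}\ne 0$ ensures transversality between the null set of $\Phi$ and the singular locus of the symbol $\nabla\cdot$; an angular decomposition combined with the divisor bound for lattice points in thin spherical annuli then absorbs the derivative at the cost of one unit of regularity, which is exactly what $s\ge 1$ supplies. The excluded endpoint $(d,s)=(3,1)$ with $\mu<0$ corresponds precisely to the borderline where this lattice-counting step is just critical and the residual logarithm cannot be reabsorbed.

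The main obstacle is the periodic nature of the problem in case (ii): the sharp nonperiodic bilinear estimates of \cite{Hi,HK,HKO2021} rely on continuous frequency geometry (co-area formula, bilinear Strichartz on $\R^d$) that must be replaced on $\T^d$ by discrete counting arguments, typically at the cost of logarithmic losses. These losses are absorbed by the quantitative gain arising from $s>s_c$ strictly, and that same gain furnishes the small factor needed to make the Picard iteration contract on a short time interval, completing the proof.
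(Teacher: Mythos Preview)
Your overall scheme---contraction in Bourgain-type spaces, with the derivative loss recovered via the resonance function $\Phi$---matches the paper, and your treatment of case~(i) is essentially correct: when $\mu>0$ the form $\Phi$ is definite (Lemma~\ref{modul_est_T}(ii)), so $|\Phi|\gtrsim|\xi_{\max}|^2$ everywhere and modulation absorbs the derivative. The paper carries this out in $U^2/V^2$-type spaces rather than standard $X^{s,b}$, but for the strictly subcritical regime that choice is not essential.

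Your case~(ii) analysis, however, has a real gap. The sentence ``$\widetilde{\kappa}\ne 0$ ensures transversality between the null set of $\Phi$ and the singular locus of $\nabla\cdot$'' does not describe an actual mechanism: the condition $\widetilde{\kappa}\ne 0$ (or the stronger $\kappa\ne 0$) is used only in the high--low interactions to force the resonant region into $N_1\sim N_2\sim N_3$ (Lemma~\ref{modul_est_T}(i)); it gives no information about the geometry of the resonant cone relative to $\nabla$. In that high--high--high resonant block there is \emph{no} modulation gain and no transversality to exploit. What the paper does instead is: for $d\ge 5$ (and $d\ge 4$ when $\mu=0$) use the $L^3$ (respectively sharper) Strichartz estimate to close directly (Propositions~\ref{HL_est_T_mass_reso}, \ref{HL_est_T_2}, \ref{HL_est_loc_T_reso}); for $d=3$ with $\mu<0$ the periodic $L^3$ Strichartz estimate carries an $\varepsilon$-loss, which is the actual reason $(d,s)=(3,1)$ is excluded---not a logarithmic divisor residue; and for $d=1,2$ with $\mu<0$, $s=1$ the paper proves a dedicated trilinear convolution estimate (Theorem~\ref{thm:Triest_4.3}, Corollary~\ref{cor:Triest_4.4}) by observing that the two resonance constraints confine $\xi_1$ to the intersection of an annulus and a strip, giving $\lesssim M_{\max}^{1/2}$ lattice points by elementary geometry rather than a divisor bound. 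This endpoint requires working in the Besov-refined spaces $X^{s,\frac12,1}_\sigma$ so the modulation sums close, which your proposal does not anticipate.
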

\begin{table}[h]
\begin{center}
\begin{tabular}{|l|l|l|l|l|l|}
\hline
\multicolumn{2}{|c|}{} & \multicolumn{1}{|c|}{$d=1,2$} & 
\multicolumn{1}{|c|}{$d=3$} & 
\multicolumn{1}{|c|}{$d=4$} & 
\multicolumn{1}{|c|}{$d\geq 5$}\\
\hline
\multicolumn{2}{|c|}{$\mu > 0$} & \multicolumn{1}{|c|}{$s>0$} &\multicolumn{3}{|c|}{} \\              
\cline{1-4}
\multicolumn{2}{|c|}{$\mu =0$} & \multicolumn{2}{|c|}{} & 
\multicolumn{2}{c|}{$s\geq s_{c}$}
\\
\cline{1-2} \cline{4-5}
\multicolumn{2}{|c|}{$\mu < 0$, $\kappa \ne 0$} & \multicolumn{1}{|c|}{$s\geq 1$}
&\multicolumn{1}{|c|}{$s> 1$}
&\multicolumn{1}{|c|}{$s>s_c$} & \\
\cline{1-2} \cline{6-6}
\multicolumn{2}{|c|}{$\alpha-\beta =0$, $\widetilde{\kappa} \ne 0$} & \multicolumn{1}{|c|}{} 
&\multicolumn{1}{|c|}{}
&\multicolumn{2}{|c|}{}\\
\hline
\end{tabular}
\caption{Regularities to be well-posed in Theorems~\ref{wellposed_T} and ~\ref{wellposed_sub_T}%
\label{WP_NLS_sys_Td}}
\end{center}
\end{table}

\begin{rem}
The dependence of the existence time $T$ on the initial data differs between the critical and subcritical cases.
On the one hand, $T$ depends on the norm of initial data in the subcritical case (Theorem \ref{wellposed_sub_T}),
but on the other hand, $T$ also depends on the profile of the initial data in the critical case (Theorem \ref{wellposed_T}).
\end{rem}

\begin{rem}
The condition $\mu>0$ yields
that the dispersive effect in the nonlinear terms does not vanish, which is called nonresonance.
Moreover, $\kappa \neq 0$ is the nonresonance condition under the High-Low interaction.

Oh {\rm (\cite{Oh09})} studied the resonance and the nonresonance for the system of KdV equations. 
He proved that if the coefficient of the linear term of the system satisfies the nonresonance condition, 
then the well-posedness of the system is obtained at lower regularity than the 
regularity for the coefficient satisfying the resonance condition. 
\end{rem}

\begin{rem}
The well-posedess result for (i) in Theorem \ref{wellposed_sub_T} is not novel.
Indeed,
Gr\"unrock \cite{Gru00} proved that
the Cauchy problem for the quadratic derivative nonlinear Schr\"odinger equation
\[
(i\partial_{t}+\Delta )u= \partial_{x_1}(\cj u^2)
\]
is well-posed in $H^s(\T^d)$ when $s \ge 0$ and $s> s_c$.
A similar argument as in \cite{Gru00} applies to \eqref{NLS_sys_torus} with $\mu>0$.
Specifically,
\eqref{NLS_sys_torus} with $\mu>0$ is well-posed in $H^s(\T^d)$ for $s \ge 0$ and $s> s_c$.
In particular,
with the $L^2$-conservation law below,
\eqref{NLS_sys_torus} is globally well-posed in $H^s(\T)$ for $s \ge 0$.
However,
since the proof of Theorem \ref{wellposed_sub_T} (i) relies on the Littlewood-Paley decomposition and the bilinear Strichartz estimate,
the case $s=0$ is excluded to avoid logarithmic divergences.
\end{rem}

The system (\ref{NLS_sys_torus}) has the following conserved quantities (see Proposition 7.1 in \cite{Hi}):
\[
\begin{split}
Q(u,v,w)&:=2\|u\|_{L^{2}_{x}}^{2}+\|v\|_{L^{2}_{x}}^{2}+\|w\|_{L^{2}_{x}}^{2},\\
H(u,v,w)&:=\alpha \|\nabla u\|_{L^{2}_{x}}^{2}+\beta \|\nabla v\|_{L^{2}_{x}}^{2}+\gamma \|\nabla w\|_{L^{2}_{x}}^{2}+2{\rm Re}(w ,\nabla (u\cdot \overline{v}))_{L^{2}_{x}}. 
\end{split}
\]
By using these quantities, we obtain the following result.
\begin{thm}\label{global_extend_T}
Let $d\ge 1$. 
We assume that $\alpha$, $\beta$, $\gamma \in \R \backslash \{0\}$ have the same sign 
and satisfy the following {\rm (i)} or {\rm (ii)}:
\begin{enumerate}
\item[{\rm (i)}] $1\le d\le 4$ and $\mu\ge 0;$
\item[{\rm (ii)}] $1 \le d \le 2$, $\mu <0$, and $\widetilde{\kappa} \ne 0$.
\end{enumerate}
Then, {\rm (\ref{NLS_sys_torus})} is globally well-posed for small data in $\mathcal{H}^{1}(\T^d )$. 
\end{thm}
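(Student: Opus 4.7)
\medskip\noindent\textbf{Proof plan.}
The plan is to combine local well-posedness with an a priori $\H^1$ bound derived from the two conserved quantities $Q$ and $H$. In every listed case one has $s=1\ge s_c=d/2-1$ (since $d\le 4$ in (i), $d\le 2$ in (ii)), so Theorem~\ref{wellposed_T} or Theorem~\ref{wellposed_sub_T} supplies a local solution in $C([0,T);\H^1(\T^d))$ whose lifespan depends only on $\|(u_0,v_0,w_0)\|_{\H^1}$. Consequently, global existence follows once a uniform a priori $\H^1$ bound on the maximal interval is established. Conservation of $Q$ immediately yields
\[
\|u(t)\|_{L^2}^2+\|v(t)\|_{L^2}^2+\|w(t)\|_{L^2}^2 \ \le\ Q(u_0,v_0,w_0),
\]
which controls the $L^2$ part.

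For the $\dot H^1$ part, split $H=H_2+H_3$, with $H_2:=\alpha\|\nabla u\|_{L^2}^2+\beta\|\nabla v\|_{L^2}^2+\gamma\|\nabla w\|_{L^2}^2$ and $H_3:=2\,\text{Re}\,(w,\nabla(u\cdot\overline{v}))_{L^2}$. The common-sign hypothesis on $\alpha,\beta,\gamma$ makes $H_2$ coercive: $|H_2(t)|\ge c_0\,\mathcal{D}(t)^2$, where $\mathcal{D}(t)^2:=\|\nabla u\|_{L^2}^2+\|\nabla v\|_{L^2}^2+\|\nabla w\|_{L^2}^2$ and $c_0:=\min(|\alpha|,|\beta|,|\gamma|)$. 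After integration by parts and H\"older, combined with the Sobolev embedding $H^1(\T^d)\hookrightarrow L^4(\T^d)$ valid in all dimensions $d\le 4$,
\[
|H_3(t)|\ \le\ 2\|\nabla\cdot w\|_{L^2}\|u\|_{L^4}\|v\|_{L^4}\ \le\ C\,\mathcal{D}(t)\bigl(Q^{1/2}+\mathcal{D}(t)\bigr)^2,
\]
after replacing the $L^2$ pieces by $Q^{1/2}$ via the previous display.

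Conservation of $H$ therefore produces $c_0\,\mathcal{D}(t)^2\le |H(u_0,v_0,w_0)|+C\,\mathcal{D}(t)(Q^{1/2}+\mathcal{D}(t))^2$, which closes by a continuity/bootstrap argument for small initial data. Setting $T^\ast:=\sup\{\,T\ge0:\mathcal{D}(t)\le 2\mathcal{D}(0)\text{ on }[0,T]\,\}$ and taking $\|(u_0,v_0,w_0)\|_{\H^1}$ small enough that the cubic remainder is absorbed by $\tfrac12 c_0\mathcal{D}(t)^2$ on $[0,T^\ast)$, one deduces $\mathcal{D}(t)\le \tfrac32\mathcal{D}(0)$ there, forcing $T^\ast=\infty$. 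Combined with the $Q$-bound, this gives a uniform $\H^1$ bound on the maximal interval, and iterating the local theory yields the global solution. The only delicate point is the smallness threshold in the bootstrap; the dimension restriction $d\le 4$ in case~(i) enters precisely through the $L^4$ Sobolev embedding, while the more restrictive $d\le 2$ in case~(ii) comes from the local theory of Theorem~\ref{wellposed_sub_T} rather than from the energy argument itself.
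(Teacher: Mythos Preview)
Your approach is correct and matches the paper's: combine the local theory from Theorems~\ref{wellposed_T}--\ref{wellposed_sub_T} with an a priori $\H^1$ bound coming from the conserved quantities $Q$ and $H$, where the same-sign hypothesis on $\alpha,\beta,\gamma$ makes the quadratic part of $H$ coercive and the restriction $d\le 4$ enables the Sobolev embedding $H^1\hookrightarrow L^4$ needed to control the cubic term; the paper simply defers the a priori estimate to \cite{Hi}, Proposition~7.2, while you spell it out. One minor imprecision: your bootstrap constants $2$ and $\tfrac32$ will not close in general, because after absorbing the cubic remainder you obtain $\mathcal D(t)^2\le \tfrac{2c_1}{c_0}\mathcal D(0)^2+o(\delta^2)$ with $c_1=\max(|\alpha|,|\beta|,|\gamma|)$, and $\sqrt{2c_1/c_0}$ need not be below $\tfrac32$; replacing $2\mathcal D(0)$ by $M\mathcal D(0)$ for a suitable $M=M(\alpha,\beta,\gamma)$ (or running the bootstrap on the full $\H^1$ norm rather than on $\mathcal D$ alone) fixes this immediately.
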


\begin{rem}
If the initial data are small enough,
we obtain the solution to \eqref{NLS_sys_torus}
on the time interval $[0,1)$,
even in the scaling critical case.
See Subsection \ref{subsec:WP11} below.
Therefore,
{\rm Theorem~\ref{global_extend_T}} follows from 
a priori estimate of the $\H^1$-norm which is obtained by the conservation quantities.
Proof of the a priori estimate is the same as that in the nonperiodic case {\rm (see Proposition 7.2 in \cite{Hi})}. 
\end{rem}

The main tools of above well-posedness results 
are the Strichartz  and bilinear Strichartz estimates 
with Fourier restriction method. 
The Strichartz estimate on tori was proved by
\cite{Bo93, Bo13, BD15, KV16}.
Because our results contain the scaling critical case, 
we will use $U^{p}$ and $V^{p}$ type spaces as the resolution space.  

To obtain the well-posedness of (\ref{NLS_sys_torus}) at the scaling critical regularity, 
we will show the following bilinear estimate. (The definition of $Y^0_{\sigma}$ will be given in Definition~\ref{YZ_space_T}.)
\begin{equation}\label{Ybe_ext_T}
\begin{split}
&\|\eta (t)P_{N_{3}}(P_{N_{1}}u_{1}\cdot P_{N_{2}}u_{2})\|_{L^{2}(\R \times \T^{d})}\\
&\lesssim N_{\min}^{s_c}\left(\frac{N_{\min}}{N_{\max}}+\frac{1}{N_{\min}}\right)^{\delta}\|P_{N_{1}}u_{1}\|_{Y^{0}_{\sigma_{1}}}\|P_{N_{2}}u_{2}\|_{Y^{0}_{\sigma_{2}}},
\end{split}
\end{equation}
where $P_N$ denotes the Littlewood-Paley projection, $\de>0$, and
\begin{equation}
\eta (t) := \Big( \frac{\sin \frac{\pi t}2}{\pi t} \Big)^2.
\label{cutoff}
\end{equation}
Wang (\cite{Wa}) proved 
a similar bilinear estimate for the case $N_{1}\sim N_{3}\gtrsim N_{2}$
by using the decomposition for the Fourier support of $u_1$ 
into the stripes which are contained in some cube with side-length $N_2$.
See also Lemma 3.3 in \cite{KV16}.
To prove (\ref{Ybe_ext_T}) for the case $N_1\sim N_2\gg N_3$, 
we will use the decomposition for both $u_1$ and $u_2$. 
This is a different point from the case $N_{1}\sim N_{3}\gtrsim N_{2}$.

The well-posedness in the subcritical cases follows from a slight modification of the critical cases,
except for the cases $d=1,2$ and $s=1$.
When $d=1,2$ and $s=1$,
we use a convolution estimate.
See Subsection \ref{subsec:ResIII}.
Moreover,
we employ the dyadic decomposition of modulation parts.
We then use Besov-type Fourier restriction norm spaces (instead of $U^2$-type spaces) to prove the well-posedness in $\H^1(\T^d)$ for $d=1,2$.
See Subsection \ref{subsec:ds21}.

From tables
\ref{WP_NLS_sys}, \ref{WP_table22}, and \ref{WP_NLS_sys_Td},
there are some differences between Sobolev regularities to be well-posed for \eqref{NLS_sys_torus} on $\R^d$ and $\T^d$.
In fact,
the well-posedness in $\H^s(\R^d)$ holds for $s> \frac 12$
at least when $\mu \le 0$, $\widetilde \kappa \neq 0$, and $d=1,2,3$.
However,
we can not prove the well-posedness in $\H^s(\T^d)$ for $\mu \le 0$ and $s<1$
by using an iteration argument.
See Theorem \ref{thm:IP2} below.
Moreover,
the well-posedness in $\H^1(\T^d)$ for $\mu<0$ and $\kappa \neq 0$ is unsolved
even when $d=3$,
since the Strichartz estimate contains a derivative loss.
Indeed,
if the $L^3$-Strichartz estimate without a derivative loss holds,
we can show the well-posedness in $\H^1(\T^3)$ for $\mu<0$ and $\kappa \neq 0$.

\begin{rem}
Since the Strichartz estimate for irrational tori is valid (\cite{BD15, KV16}),
our main results also hold for irrational tori.
Namely,
with straightforward modifications to our proof,
we can replace $\T^d$ in this paper with
\[
\T^d_\theta := \prod_{j=1}^d (\R / 2\pi \theta _j \Z)
\]
where $\theta = (\theta_1, \dots, \theta_d) \in (0,\infty)^d$.
See Remarks \ref{rem:irrm1} and \ref{rem:ConvEstTori} below.
\end{rem}

We also obtain some negative results as follows.  
\begin{thm}
\label{thm:IP}
Let $\al$, $\be$, $\ga \in \R\backslash \{0\}$ and $s \in \R$ satisfy one of the followings:
\begin{enumerate}
\item[{\rm (i)}]
$\be + \ga=0$ and $s \neq 0;$

\item[{\rm (ii)}]
$\al - \ga=0$ and $s<0$.
\end{enumerate}
Then,
we have the norm inflation in $\H^s(\T^d)$ for \eqref{NLS_sys_torus}.
More precisely,
there exist a sequence $\{ (u_n, v_n, w_n) \}$ of solutions to \eqref{NLS_sys_torus}
and a sequence $\{ t_n \}$ of positive numbers such that
\begin{align*}
&\lim_{n \to \infty} t_n =0,
\quad
\lim_{n \to \infty} \| (u_n(0), v_n(0), w_n(0)) \|_{\H^s}=0,
\\
&
\lim_{n \to \infty}
\| (u_n(t_n), v_n(t_n), w_n(t_n)) \|_{\H^s} = \infty.
\end{align*}
\end{thm}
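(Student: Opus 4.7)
The plan is to reduce~\eqref{NLS_sys_torus} to a three-mode ODE system by means of a plane-wave ansatz and then to exhibit norm inflation along an explicit resonant trajectory. We take initial data of the form
\begin{equation*}
u_0(x) = a_0\, e^{i n_u \cdot x}\, \vec v, \quad v_0(x) = b_0\, e^{i n_v \cdot x}\, \vec v, \quad w_0(x) = c_0\, e^{i n_w \cdot x}\, \vec v,
\end{equation*}
where $\vec v = n_w/|n_w| \in \R^d$ and the frequencies are constrained by $n_u = n_w + n_v$. A direct computation shows that under this momentum constraint together with the alignment $\vec v \parallel n_w$, the nonlinearities $(\nabla\cdot w)v$, $(\nabla\cdot \bar w)u$ and $\nabla(u\cdot \bar v)$ are each parallel to $\vec v$ at the respective frequencies $n_u$, $n_v$, $n_w$, so the three-mode class is preserved by the flow. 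After setting $\tilde a = e^{i\al|n_u|^2 t}a$, $\tilde b = e^{i\be|n_v|^2 t}b$, $\tilde c = e^{i\ga|n_w|^2 t}c$ and $N = |n_w|$, the PDE reduces to
\begin{equation*}
\tilde a' = -N e^{i\Phi t}\, \tilde b\, \tilde c, \qquad \tilde b' = N e^{-i\Phi t}\, \tilde a\, \overline{\tilde c}, \qquad \tilde c' = N e^{-i\Phi t}\, \tilde a\, \overline{\tilde b},
\end{equation*}
where $\Phi := \al|n_u|^2 - \be|n_v|^2 - \ga|n_w|^2$ is the three-wave resonance phase.

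Under hypothesis~(i) we choose $n_u = 0$ and $n_v = -n_w = m_0$, so $\Phi = -(\be+\ga)|m_0|^2 = 0$; under~(ii) we choose $n_v = 0$ and $n_u = n_w = n_0$, so $\Phi = (\al-\ga)|n_0|^2 = 0$. In either case the system becomes the fully resonant three-wave model, whose two conserved quantities $|\tilde a|^2 + |\tilde b|^2$ and $|\tilde a|^2 + |\tilde c|^2$ ensure global smoothness and reduce the evolution to an elliptic integral, yielding transit times between prescribed amplitude levels of order $(N\cdot \text{amplitude})^{-1}$ up to logarithmic factors. We then exploit the Sobolev-weight asymmetry between frequency $0$ (weight $\sim 1$) and the nonzero frequency of norm $N$ (weight $\sim N^s$): in case~(i) with $s>0$, start at $(\tilde a,\tilde b,\tilde c)(0) = (A, B, 0)$ with $B \ll A$ and follow the trajectory until $|\tilde c| \sim A$, giving $\|w(t_n)\|_{H^s} \sim A N^s$ against initial norm $\sim A$; in case~(i) with $s<0$, start at $(0, B, B)$ and follow until $|\tilde a| \sim B$, giving $\|u(t_n)\|_{H^s} \sim B$ against initial norm $\sim BN^s$; in case~(ii) with $s<0$, start at $(A, 0, A)$ and follow until $|\tilde b| \sim A$, giving $\|v(t_n)\|_{H^s} \sim A$ against initial norm $\sim AN^s$. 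Choosing the free amplitude as a suitable power $N^{\pm\eta}$ with $\eta$ in a nonempty open interval that depends only on $s$, and letting $N \to \infty$, produces sequences $(u_n,v_n,w_n)$ and $t_n$ realizing $\|(u_n(0),v_n(0),w_n(0))\|_{\H^s} \to 0$, $t_n \to 0$, and $\|(u_n(t_n),v_n(t_n),w_n(t_n))\|_{\H^s} \to \infty$.

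The main obstacle is to identify the correct resonant configuration in each subcase and then to match the three competing scalings (initial norm vanishing, transit time vanishing, solution norm diverging) simultaneously; the technical heart is the quantitative transit-time estimate from the elliptic integral, in particular the logarithmic correction near the turning point that appears in case~(i) with $s>0$. Once that estimate is available, the invariance of the three-mode ansatz, the global existence of the reduced ODE, and the verification that the allowed exponents form a nonempty open interval in each subcase are all elementary.
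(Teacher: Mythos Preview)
Your proposal is correct and follows essentially the same approach as the paper: reduce via a single-frequency-per-component plane-wave ansatz (with frequencies chosen so that the three-wave phase $\Phi$ vanishes) to the real system $f'=-Ngh$, $g'=Nfh$, $h'=Nfg$, exploit the two conservation laws $|f|^2+|g|^2$ and $|f|^2+|h|^2$ to close on a single second-order equation, and estimate the transit time by the resulting elliptic integral. The particular initial seeds you pick in each subcase differ slightly from the paper's (for instance, in case~(ii) the paper takes $(f,g,h)(0)=(\delta N^{-s},\delta,0)$ and obtains a $\log N$ transit-time factor, while your choice $(A,0,A)$ lands on a periodic orbit with a clean $O((NA)^{-1})$ bound), but these variations are inconsequential and the scaling windows for the amplitude parameter are nonempty in every subcase, exactly as you claim.
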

\begin{rem}
The norm inflation in $\H^s(\T^d)$ for $\be+\ga=0$ and $s>0$ comes from the high$\times$low$\to$high interaction.
On the other hand,
the norm inflation in $\H^s(\T^d)$ for ($\be+\ga=0$ and $s<0$) or ($\al-\ga=0$ and $s<0$) comes from the high$\times$high$\to$low interaction.
The norm inflation implies a discontinuity of the flow map of \eqref{NLS_sys_torus}.
In particular,
the Cauchy problem \eqref{NLS_sys_torus} is ill-posed in $\H^s(\T^d)$ for 
the case (i) or (ii) in Theorem \ref{thm:IP}.
\end{rem}
Because of the $L^2$-conservation,
the norm inflation in $\H^0(\T^d)$ for \eqref{NLS_sys_torus} does not occur.
However, we obtain the discontinuity when $\beta+\gamma=0$ as follows.

\begin{thm}
\label{thm:IP2}
Let $\al$, $\be$, $\ga \in \R\backslash \{0\}$ and $s \in \R$ satisfy $\be + \ga=0$ Then,
the flow map of \eqref{NLS_sys_torus} is discontinuous in $\H^s(\T^d)$.
\end{thm}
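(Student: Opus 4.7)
The plan splits by the sign of $s$. For $s \neq 0$, the norm inflation of Theorem \ref{thm:IP} immediately implies discontinuity of the flow map: it provides sequences of smooth initial data tending to $0$ in $\H^s$ whose corresponding solutions have $\H^s$-norm tending to infinity at times $t_n \to 0^+$, which is certainly an obstruction to continuity.

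The remaining case $s = 0$ is more delicate because the $L^2$-conservation of $Q(u,v,w) = 2\|u\|_{L^2}^2 + \|v\|_{L^2}^2 + \|w\|_{L^2}^2$ precludes norm inflation. The plan is instead to exhibit two sequences of smooth initial data converging in $\H^0$ to a common limit $(0, V, W)$ whose corresponding classical solutions remain bounded apart in $\H^0$ at times $t_n \to 0^+$, with the separation detected at the zero Fourier mode of $u$. The mechanism is the resonance that already drives Theorem \ref{thm:IP}: Duhamel's formula for the first equation of \eqref{NLS_sys_torus} evaluated at $\xi = 0$ gives
\[
\hat u_j(0, t) = \hat u_{0, j}(0) - \int_0^t \sum_{k, \xi} \xi_k\, \hat w_k(\xi, s)\, \hat v_j(-\xi, s)\, ds,
\]
and replacing $v, w$ by their free linear evolutions produces a phase $e^{-is(\be + \ga)|\xi|^2}$ which equals $1$ when $\be + \ga = 0$. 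The integrand is then time-independent, so the leading Picard iterate gives
\[
\hat u_j(0, t) \approx \hat u_{0, j}(0) + t\, B_j(v_0, w_0), \qquad
B_j(v_0, w_0) := -\sum_{k, \xi} \xi_k\, \hat w_{0, k}(\xi)\, \hat v_{0, j}(-\xi).
\]
The bilinear form $B_j$ involves one derivative of $w_0$ and is therefore \emph{not} continuous on $L^2 \times L^2$.

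Concretely, I would choose $V, W \in H^\infty(\T^d; \C^d)$ and set
\[
I_0^{(n)} := \bigl( 0,\; V + A_n e^{i N_n x_1} e_1,\; W + A_n e^{-i N_n x_1} e_1 \bigr),
\]
with $A_n \to 0$ and $N_n \to \infty$ chosen so that $N_n A_n^2 \to \infty$ (e.g.\ $A_n = N_n^{-1/4}$). Then $I_0^{(n)} \to (0, V, W)$ in $\H^0$, and a direct computation shows that the dominant contribution to $B_1(v_0^{(n)}, w_0^{(n)}) - B_1(V, W)$ is the diagonal term $N_n A_n^2$ (the cross terms $N_n A_n \hat V_1(N_n e_1)$ and $N_n A_n \hat W_1(-N_n e_1)$ are negligible for smooth $V, W$). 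Choosing $t_n$ so that $t_n \cdot N_n A_n^2 = 1$, one has $t_n \to 0$, the leading-order Duhamel prediction gives $|\hat u_1^{(n)}(0, t_n) - \hat u_1^\infty(0, t_n)| \approx 1$, and the $\H^0$-distance of the initial data is $\lesssim A_n \to 0$, contradicting continuity of the flow map at $(0, V, W)$.

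The main technical obstacle is that the time $t_n$ lies exactly at the natural nonlinearity time scale $1/(N_n A_n^2)$ of the system, so higher-order Picard iterates are formally of the same order as the first (indeed, a rough estimate of the second Picard iterate gives a contribution $\sim N_n^3 A_n^4 t_n^3 \sim 1/A_n^2$, which the Picard expansion alone cannot control). To close the argument, the naive Picard expansion must be replaced by a more refined analysis that exploits the explicit structure of the ODE reduction governing the relevant Fourier modes in a suitable rotating frame, together with the $L^2$-conservation of $Q$, in order to show that the leading-order discrepancy of the zero mode of $u$ persists through the nonlinear dynamics at time $t_n$. The local existence of smooth solutions on $[0, t_n]$ is classical for smooth initial data.
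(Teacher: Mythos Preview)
Your reduction to $s\neq 0$ via Theorem~\ref{thm:IP} is fine. The gap is in the $s=0$ case, and you have essentially identified it yourself: the Picard expansion does not close at the relevant time scale, and the ``ODE reduction'' you appeal to is not actually available for your choice of data. With generic smooth $V,W$, the nonlinear interactions immediately populate infinitely many Fourier modes, so there is no finite-dimensional closed system to analyze. If instead you take $V=W=0$ so that the ansatz does close (modes $0$, $\pm N_n$ only), then the reference solution is the zero solution, and the $L^2$-conservation \eqref{cons1} forces $|f|,|g|,|h|\le CA_n\to 0$ for all time: no $O(1)$ separation is possible. Either way the argument does not close.

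The paper's approach for $s=0$ (Section~6.3) avoids both obstructions by comparing not to $(0,V,W)$ but to the explicit \emph{nonzero} stationary solution $(\tilde u,\tilde v,\tilde w)=(1,0,0)$, and perturbing to
\[
u_0=1+\delta,\qquad v_0=0,\qquad w_0=\delta\, e^{iNx_1}e_1,
\]
with $\delta=N^{-1}$. This fits the exact ansatz \eqref{uvwaa} with $k=0$ (so that $\alpha k^2-\beta(k-1)^2-\gamma=-(\beta+\gamma)=0$ kills the oscillation), and reduces \emph{exactly} to the three-dimensional real system \eqref{ODE1f}. The crucial difference from your setup is that here $f(0)=1+\delta\sim 1$ is large: linearizing \eqref{ODE1f} around $f\approx 1$ gives $g'\approx Nh$, $h'\approx Ng$, so the small seed $h(0)=\delta$ is amplified exponentially, with $g(t)\sim\delta\sinh(Nt)$ until saturation at $g\sim 1$. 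A phase-plane analysis (the rescaled $\kappa$ satisfies $\kappa''=-\kappa(\kappa^2-1)$ on the level set $E_0\sim\delta^2$) gives the hitting time $t_\ast\lesssim|\log\delta|$, hence a physical time $T\lesssim N^{-1}\log N\to 0$ at which $\|v(T)\|_{L^2}=|g(T)|\sim 1$, while the initial data differ from $(1,0,0)$ by $O(N^{-1})$ in $\H^0$. The large constant mode in $u$ is what supplies the pump that your configuration (with $u_0=0$) lacks.
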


We also obtain that the flow map is not locally uniformly continuous in $\H^s(\T^d)$ in some cases.

\begin{thm}
\label{thm:uni1}
Let $\al$, $\be$, $\ga \in \R\backslash \{0\}$ and $s \in \R$ satisfy one of the followings:
\begin{enumerate}
\item[{\rm (i)}]
$\al-\ga=0$ and $s \ge 0;$

\item[{\rm (ii)}]
$\mu \le 0$ and $s<1$.
\end{enumerate}
Then, the flow map for \eqref{NLS_sys_torus} fails to be locally uniformly continuous in $\H^s(\T^d)$.
More precisely,
there exist sequences $\{ (u_n, v_n, w_n)\}$, $\{(\ti u_n, \ti v_n, \ti w_n)\}$ of solutions to \eqref{NLS_sys_torus}
and a sequence $\{ t_n \}$ of positive numbers such that
\begin{align*}
&
\lim_{n \to \infty} t_n =0,
\\
&
\sup_{n \in \N}
\big(
\| (u_n(0), v_n(0), w_n(0)) \|_{\H^s}
+
\| (\ti u_n(0), \ti v_n(0), \ti w_n(0)) \|_{\H^s} \big)
\lec 1,
\\
&
\lim_{n \to \infty}
\| (u_n(0), v_n(0), w_n(0)) - (\ti u_n(0), \ti v_n(0), \ti w_n(0)) \|_{\H^s}
=0,
\\
&
\lim_{n \to \infty}
\|
(u_n(t_n), v_n(t_n), w_n(t_n)) - (\ti u_n(t_n), \ti v_n(t_n), \ti w_n(t_n)) \|_{\H^s} \gec 1.
\end{align*}
\end{thm}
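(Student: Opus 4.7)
My plan in both cases is to construct, for each $n$, a pair of explicit solutions with plane-wave structure that reduces \eqref{NLS_sys_torus} to a finite-dimensional ODE, and then to compare two such solutions whose initial data are close in $\H^s$ but whose ODE phases separate by $O(1)$ at a short time $t_n \to 0$; the $O(1)$ phase difference produces an $O(1)$ $\H^s$ difference because the peak amplitude of the oscillating component, multiplied by the high-frequency Fourier weight, gives something of order one.

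For case (i), since $\al = \ga$ means $u$ and $w$ share the same linear Schr\"odinger group, I will use the ansatz $u(t,x) = a(t) e^{inx_1} e_1$, $w(t,x) = b(t) e^{inx_1} e_1$, $v(t,x) = c(t)$, which is preserved exactly by the divergence/gradient nonlinearity. After absorbing the common phase $e^{-i\al n^2 t}$ into $a, b$, the system reduces to a closed $3 \times 3$ complex ODE with coupling of order $n$ and two conservation laws analogous to $Q$. Choosing $|a(0)|, |b(0)| \sim n^{-s}$ and $|c(0)| \sim 1$ keeps the $\H^s$ norms of order one uniformly in time, while $\ti b(t)$ oscillates with angular velocity $\sim n |c|$. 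Comparing two initial data that differ only in $c(0)$ by $\eta_n = n^{-1/2}$, the oscillation frequencies of $\ti b_1$ and $\ti b_2$ differ by $\sim n \eta_n$, so at $t_n = n^{-1/2}$ the phase separation reaches $O(1)$; the peak amplitude being $\sim n^{-s}$, the $\H^s$ difference of the $w$-components is then $\sim n^s \cdot n^{-s} = 1$, while the initial $v$ data differ by $\eta_n \to 0$.

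For case (ii), I will use the three-frequency ansatz $u = a(t) e^{inx_1}$, $w = b(t) e^{imx_1} e_1$, $v = c(t) e^{i(n-m)x_1}$ with integers $m, n$. The nonlinearity closes, and after gauge the system carries the phase factor $e^{i(\al n^2 - \ga m^2 - \be(n-m)^2)t}$; setting this to zero gives the resonance equation $(\be+\ga) r^2 - 2\be r + (\be - \al) = 0$ with $r = m/n$, whose discriminant equals $-4\mu$, so real roots exist exactly when $\mu \le 0$. For such a root $r$ (approximated rationally when $r \notin \Q$), the reduced ODE has amplitudes $\sim n^{-s}$ and coupling $\sim n$, giving a natural oscillation time $\sim n^{s-1}$ which tends to zero precisely when $s < 1$. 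I will then apply the same period-sensitivity mechanism as in case (i), this time perturbing the amplitude of $c(0)$ by a factor $\eta_n \sim n^{(s-1)/2}$, yielding $t_n \sim n^{(s-1)/2}$.

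The hardest part will be controlling the integer constraint in case (ii): when the resonant $r$ is irrational, the residual non-resonance produces a fast phase factor of order $n$ that may average out the effective coupling on the target time scale $n^{s-1}$. I plan to address this either by a normal-form/averaging reduction, computing the leading-order averaged coupling and checking it is still strong enough, or by a refined ansatz combining several nearby Fourier modes that collectively realize the resonance. A secondary technical step is to verify that the plane-wave ansatz solves \eqref{NLS_sys_torus} exactly --- immediate from the fact that the divergence/gradient nonlinearity preserves the Fourier support of the ansatz --- and to justify, beyond leading order, that the chosen amplitude perturbation genuinely produces the claimed $O(1)$ phase separation at $t_n$.
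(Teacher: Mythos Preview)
Your plane-wave ansatz reducing \eqref{NLS_sys_torus} to a three-dimensional ODE is exactly the paper's starting point, and your identification of the resonance condition in case (ii) via the discriminant $-4\mu$ matches \eqref{conk}. Where you diverge is the separation mechanism and the handling of irrational $k$.

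\textbf{Mechanism.} You propose a frequency-modulation argument: perturb one amplitude by $\eta_n\to 0$, the period of the reduced ODE shifts by $O(\eta_n)$, and after $O(\eta_n^{-1})$ periods the phases separate. The paper instead flips the \emph{sign} of the small component(s): $v_{\pm,0}=\pm\de(\cdots)$ in case (i), and both $v_{\pm,0},w_{\pm,0}=\pm\de(\cdots)$ in case (ii). Because the real ODE \eqref{ODE1f} is invariant under $(f,g,h)\mapsto(-f,-g,h)$ and under $(f,g,h)\mapsto(f,-g,-h)$, the two solutions lie on the \emph{same} level set of the conserved quantities and are exact reflections of one another; the separation equals $2|f_+(t)|$ (resp.\ $2|\ti h_+(t)|$) at every time, so a single phase-portrait computation delivers both $t_n\to 0$ and the $O(1)$ gap with no perturbation theory. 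Your route should also succeed when $k$ is rational, but it requires proving that the period depends non-degenerately on the perturbed datum and then tracking two nearby orbits over many periods --- considerably more bookkeeping than the sign-flip.

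\textbf{Irrational $k$: a genuine gap.} The residual phase in \eqref{ODE1e} has frequency $\al p_n^2-\be(p_n-q_n)^2-\ga q_n^2 = q_n^2\,\phi(p_n/q_n)$, where $\phi$ is the quadratic in \eqref{keq1} vanishing at $k$; making this $O(1)$ forces $|p_n/q_n-k|=O(q_n^{-2})$, i.e.\ Dirichlet's Diophantine approximation. Without that constraint the phase can oscillate arbitrarily fast relative to the ODE time scale $q_n^{s-1}$, and averaging then annihilates the leading-order coupling --- exactly the obstruction you anticipate --- so your plan A (``compute the leading-order averaged coupling and check it is still strong enough'') yields zero, and the multi-mode plan B is too vague to evaluate. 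The paper takes $p_n,q_n$ from Dirichlet (see \eqref{Diop1}), so the residual frequency is $O(1)$; a Gronwall comparison (Lemma~\ref{lem:diff5a}) then shows the oscillatory system \eqref{hpm3} and the phase-free system \eqref{hpm3a} differ by $O(q_n^{-1})$ on the relevant interval, after which the sign-flip/phase-portrait argument applies to \eqref{hpm3a} verbatim.
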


Theorem \ref{thm:uni1} implies that
the well-posedness in $\H^s(\T^d)$ does not follow from an iteration argument.
As mentioned before, the authors \cite{HKO22} proved the well-posedness in $\H^s(\T^d)$ for $\al-\ga=0$ and $s>\frac{d}{2}+3$
by using the energy method.%
\footnote{Strictly speaking, the nonperiodic cases are treated in \cite{HKO22}.
However, the same argument works for the periodic cases.}
Namely, the flow map of \eqref{NLS_sys_torus} is continuous in $\H^s(\T^d)$ for $\al-\ga=0$ and $s>\frac{d}{2}+3$.
When $\al-\ga=0$ and $0 \le s \le \frac{d}{2}+3$,
it is unsolved whether the flow map is continuous or not in $\H^s(\T^d)$.

The same argument of Proposition 5.1 in \cite{HKO2021} yields that
the flow map of \eqref{NLS_sys_torus} fails to be
$C^3$
if $d=1$, $\mu>0$, and $s<0$.%
\footnote{If $\frac \gamma \alpha$ is rational, the argument is the same as in \cite{HKO2021}.
If $\frac \gamma \alpha$ is irrational,
for any $N \in \N$, there exists a rational number $k_N$ such that
$|k_N - \frac \gamma \al|<\frac 1N$.
Then, the argument in \cite{HKO2021} with $k$ replaced by $k_N$ shows that the flow map is not $C^3$.
}
Therefore,
we obtain almost sharp well-posedness results of \eqref{NLS_sys_torus}
in $\H^s(\T^d)$ (except for some critical cases)
if we use an iteration argument. 

To prove Theorems \ref{thm:IP2} and \ref{thm:uni1},
we use an ODE approach as in \cite{BGT02} and \cite{CCT03}.
Since we consider the system \eqref{NLS_sys_torus},
the corresponding ODEs become a Hamiltonian system.
By using conserved quantities of the Hamiltonian system,
we study the asymptotic behavior of the ODEs.
See Sections \ref{sec:IP} and \ref{Sec:7}.

\begin{rem}
{\rm (i)}\ 
In the case $\al-\ga=0$ and $s>0$,
we consider the high$\times$low$\to$high interaction in the proof of Theorem \ref{thm:uni1}.
However,
the low-frequency part here is $v$,
while $u$ is the low-frequency part in Theorem \ref{thm:IP} for $\be+\ga=0$ and $s>0$.
Because of this difference,
our argument does not yield ill-posedness in $H^s(\T^d)$ for $\al-\ga=0$ and $s>0$.
See also Remark \ref{rem:notip}.\\
%
{\rm (ii)}\
Theorem~\ref{thm:uni1} (ii) 
comes from the high$\times$high$\to$high interaction.
\\
{\rm (iii)}\
Theorem~\ref{thm:uni1} (ii) contains the case $\mu<0$. 
For the nonperiodic setting under the condition $\mu<0$ and $\ti \ka \neq 0$,
the well-posedness was obtained 
for $d=1,2$ and $s\ge \frac 12$ by the iteration argument {\rm (see Table~\ref{WP_NLS_sys})}. 
In particular, the flow map is analytic. 
This is a different point between periodic and nonperiodic settings. 
\end{rem}


\noindent {\bf Notation.} 
We define the integral on $\T^{d}$:
\[
\int_{\T^{d}}f(x)dx:=\int_{[0,2\pi ]^{d}}f(x)dx .
\]
We denote the spatial Fourier coefficients for the function on $\T^{d}$ as
\[
\F_{x}[f](\xi )=\widehat{f}(\xi ):=\int_{\T^{d}}f(x)e^{-i\xi \cdot x}dx,\ \xi \in \Z^{d}
\]
and the space time Fourier transform as
\[
\F[f](\tau ,\xi)
:=\int_{\R}\int_{\T^{d}}f(t,x)e^{-it\tau}e^{-ix\cdot \xi}dxdt,\ \tau \in \R, \ \xi \in \Z^{d}. 
\]
For $\sigma \in \R$, the free evolution $e^{it\sigma \Delta}$ on $L^{2} (\T^d)$ is given as a Fourier multiplier
\[
\F_{x}[e^{it\sigma \Delta}f](\xi )=e^{-it\sigma |\xi |^{2}}\widehat{f}(\xi ). 
\]
We will use $A\lesssim B$ to denote an estimate of the form $A \le CB$ for some constant $C$ and write $A \sim B$ to mean $A \lesssim B$ and $B \lesssim A$. 
We will use the convention that capital letters denote dyadic numbers, e.g. $N=2^{n}$ for $n\in \N_0 := \N \cup \{ 0 \}$ and for a dyadic summation we write
$\sum_{N}a_{N}:=\sum_{n\in \N_0}a_{2^{n}}$ and $\sum_{N\geq M}a_{N}:=\sum_{n\in \N_0, 2^{n}\geq M}a_{2^{n}}$ for brevity. 
Let $\chi \in C^{\infty}_{0}((-2,2))$ be an even, non-negative function such that $\chi (s)=1$ for $|s|\leq 1$. 
We define $\psi_{1}(s):=\chi (s)$ and $\psi_{N}(s):=\psi_{1}(N^{-1}s)-\psi_{1}(2N^{-1}s)$ for $N\geq 2$. 
We define frequency and modulation projections
\begin{align*}
&\F_x[ P_{S}u ](\xi ):=\ee_{S}(\xi ) \F_x [u](\xi ),\quad
\F_x[ P_{N}u](\xi ):=\psi_{N}(|\xi |) \F_x[ u](\xi ),\\
&\F[ Q_{M}^{\sigma}u ](\tau ,\xi ):=\psi_{M}(\tau +\sigma |\xi|^{2}) \F [u](\tau ,\xi )
\end{align*}
for a set $S\subset \Z^{d}$ and dyadic numbers $N$, $M$, where $\ee_{S}$ is the characteristic function of $S$. 
Furthermore, we define $Q_{\geq M}^{\sigma}:=\sum_{N\geq M}Q_{N}^{\sigma}$ and $Q_{<M}^{\sigma}:=Id -Q_{\geq M}^{\sigma }$. 
For $s\in \R$, we define the Sobolev space $H^{s}(\T^{d})$ as the space of all 
periodic distributions for which the norm
\[
\|f\|_{H^{s}}:=\left(\sum_{\xi \in \Z^{d}}\langle \xi \rangle^{2s}|\widehat{f}(\xi )|^{2}\right)^{\frac 12}\sim \left(\sum_{N\geq 1}N^{2s}\|P_{N}f\|_{L^{2}(\T^{d})}^{2}\right)^{\frac 12}
\]
is finite. 

The rest of this paper is planned as follows.
In Section \ref{func_sp_T}, we will give the definition and properties of the $U^{p}$ space and $V^{p}$ space. 
In Section \ref{b_Stri_T}, we will introduce some Strichartz estimates on tori and prove the bilinear estimates. 
In Section \ref{tri_est_T}, we will give the trilinear estimates. 
In Section \ref{WP2},  we will prove the well-posedness results (Theorems~\ref{wellposed_T} 
and ~\ref{wellposed_sub_T}). 
In Sections \ref{sec:IP} and \ref{Sec:7}, we will give some counter examples of well-posedness. 
In particular, 
we will prove the ill-posedness results (Theorems~\ref{thm:IP} and ~\ref{thm:IP2}) 
in Section \ref{sec:IP} 
and the failure of the uniform continuity of the flow map (Theorem~\ref{thm:uni1}) in Section \ref{Sec:7}. 
%

\section{$U^{p}$, $V^{p}$ spaces  and their properties \label{func_sp_T}}
In this section, we define the $U^{p}$ space and the $V^{p}$ space, 
and mention the properties of these spaces which are proved in \cite{HHK09}
and \cite{HTT11} (see, also \cite{HHK10}).
Throughout this section, $\HHH$ denotes a separable Hilbert space over $\C$. 

We define the set of finite partitions $\ZZ$ as
\[
\ZZ :=\left\{ \{t_{k}\}_{k=0}^{K}| \ K\in \N , \ -\infty <t_{0}<t_{1}<\cdots <t_{K}\leq \infty \right\}
\]
and we put $v(\infty):=0$ for all functions $v:\R \rightarrow \HHH$. 
\begin{defn}\label{upsp_T}
Let $1\leq p <\infty$.
We call a function $a:\R\rightarrow \HHH$ 
a ``$U^{p}${\rm -atom}'' if
there exist
$\{t_{k}\}_{k=0}^{K}\in \ZZ$ and $\{\phi_{k}\}_{k=0}^{K-1}\subset \HHH$ with 
$\sum_{k=0}^{K-1}\|\phi_{k}\|_{\HHH}^{p}=1$ such that
\[
a(t)=\sum_{k=1}^{K}\ee_{[t_{k-1},t_{k})}(t)\phi_{k-1}.
\]
Furthermore, we define the atomic space 
\[
U^{p}(\R ;\HHH):=\left\{ \left.
\sum_{j=1}^{\infty}\lambda_{j}a_{j}
\text{ in } L^{\infty}_{t}(\R ;\HHH)
\right| \ a_{j}:U^{p}\text{-}{\rm atom},\ \{ \lambda_{j} \} \in l^1 \right\}
\]
with the norm
\[
\|u\|_{U^{p}(\R ;\HHH)}:=\inf \left\{\sum_{j=1}^{\infty}|\lambda_{j}|\left|
\
u=\sum_{j=1}^{\infty}\lambda_{j}a_{j} \text{ in } L^{\infty}_{t}(\R ;\HHH),\ 
a_{j}:U^{p}\text{-}{\rm atom},\ \{ \lambda_{j} \} \in l^1 \right.\right\}.
\]
Here, $\l^1$ denotes the space of all absolutely summable $\C$-valued sequences.
\end{defn}
\begin{defn}\label{vpsp_T}
Let $1\leq p <\infty$. We define the space of the bounded $p$-variation 
\[
V^{p}(\R ;\HHH):=\{ v:\R\rightarrow \HHH|\ \|v\|_{V^{p}(\R ;\HHH)}<\infty \}
\]
with the norm
\[
\|v\|_{V^{p}(\R ;\HHH)}:=\sup_{\{t_{k}\}_{k=0}^{K}\in \ZZ}\left(\sum_{k=1}^{K}\|v(t_{k})-v(t_{k-1})\|_{\HHH}^{p}\right)^{1/p}.
\]
Likewise, let $V^{p}_{-, rc}(\R ;\HHH)$ denote the closed subspace of all right-continuous functions $v\in V^{p}(\R ;\HHH)$ with 
$\lim_{t\rightarrow -\infty}v(t)=0$, endowed with the same norm $\|\cdot \|_{V^{p}(\R ;\HHH)}$.
\end{defn}
\begin{prop}[\cite{HHK09} Propositions\ 2,2,\ 2.4,\ Corollary\ 2.6]\label{upvpprop_T}
Let $1\leq p<q<\infty$. \\
{\rm (i)} $U^{p}(\R ;\HHH)$, $V^{p}(\R ;\HHH)$, and $V^{p}_{-, rc}(\R; \HHH)$ are Banach spaces. \\ 
{\rm (ii)} The embeddings $U^{p}(\R ;\HHH)\hookrightarrow V^{p}_{-,rc}(\R ;\HHH)\hookrightarrow U^{q}(\R ;\HHH)\hookrightarrow L^{\infty}_{t}(\R ;\HHH)$ are continuous. 
\end{prop}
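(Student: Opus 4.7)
The plan is to treat the three spaces in (i) separately and then turn to the chain of embeddings in (ii); all the substantive work lies in the middle embedding.

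For (i), I would verify the norm axioms and completeness for each space in turn. The $V^p$-seminorm is sublinear by the triangle inequality, and the convention $v(\infty)=0$ upgrades it to a norm: applying the seminorm to the two-point partition $\{s<\infty\}$ yields $\|v(s)\|_\HHH\le\|v\|_{V^p}$ for every finite $s$, so $\|v\|_{V^p}=0$ forces $v\equiv 0$. For completeness I would use a Fatou-type argument: a $V^p$-Cauchy sequence $\{v_n\}$ is pointwise Cauchy in $\HHH$, hence converges pointwise to some $v$ whose $p$-variation is controlled by $\liminf_n\|v_n\|_{V^p}$, and the same bound applied to $v-v_n$ gives convergence in $V^p$. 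For $U^p$ I would use the standard atomic-space argument: the infimum defines a norm on the vector space of absolutely convergent atomic sums, and completeness comes from extracting subsequences whose differences are summable in norm and recombining their atomic representations. Finally, $V^p_{-,rc}$ is closed in $V^p$ because $V^p$-convergence implies pointwise convergence, which preserves right-continuity and the vanishing condition at $-\infty$.

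For (ii), the embedding $U^q\hookrightarrow L^\infty$ is immediate from the atomic definition: any $U^q$-atom $a$ takes values in $\{0,\phi_0,\ldots,\phi_{K-1}\}$ with $\sum_k\|\phi_k\|_\HHH^q=1$, so $\|a\|_{L^\infty_t\HHH}\le 1$, and this extends to absolutely convergent sums. The embedding $U^p\hookrightarrow V^p_{-,rc}$ I would reduce to the estimate $\|a\|_{V^p}\le 1$ for a $U^p$-atom $a$ adapted to a partition $\{t_k\}$: for any other partition $\{s_\ell\}$, each increment $a(s_\ell)-a(s_{\ell-1})$ is either zero or equals a difference of two of the jump values $\phi_k$, and grouping by which breakpoints $t_k$ are crossed, together with the $\ell^p$ triangle inequality, collapses the sum to $(\sum_k\|\phi_k\|_\HHH^p)^{1/p}=1$. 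Right-continuity and vanishing at $-\infty$ are built into the atomic form, so atoms lie in $V^p_{-,rc}$ and the estimate extends to $U^p$ by limits.

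The main technical step, and what I expect to be the principal obstacle, is the embedding $V^p_{-,rc}\hookrightarrow U^q$ for $p<q$. After normalizing to $\|v\|_{V^p}=1$, I would use a dyadic stopping-time decomposition: set $\tau^j_0=-\infty$ and
\EQQS{
\tau^j_{k+1}:=\inf\bigl\{\,t>\tau^j_k\,:\,\|v(t)-v(\tau^j_k)\|_\HHH>2^{-j}\,\bigr\}.
}
Applying the $V^p$ definition to the partition $\{\tau^j_k\}$ bounds the number of stopping times by $K_j\le 2^{jp}$. Setting $v_j(t):=v(\tau^j_k)$ on $[\tau^j_k,\tau^j_{k+1})$, right-continuity of $v$ ensures $v_j\to v$ uniformly, so $v=\sum_{j\ge 1}(v_j-v_{j-1})$ with $v_0:=0$. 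Each difference $v_j-v_{j-1}$ is a step function whose values are bounded in norm by $2^{-(j-1)}$ across at most $K_j+K_{j-1}\lec 2^{jp}$ pieces, hence $\|v_j-v_{j-1}\|_{U^q}\lec 2^{jp/q}\cdot 2^{-j}$; summing in $j$ yields a geometric series $\sum_{j\ge 1}2^{-j(1-p/q)}$ that converges precisely because $p<q$, and homogeneity promotes the bound to $\|v\|_{U^q}\lec\|v\|_{V^p}$ in general. The delicate points are verifying that the $\tau^j_k$ form a legitimate partition (here right-continuity is used to make the defining infimum an actual partition point) and that the telescoping series represents $v$ in $L^\infty$, and hence in $U^q$; the strict inequality $p<q$ is exactly what keeps the geometric sum finite, which is why the argument degenerates at the endpoint $p=q$.
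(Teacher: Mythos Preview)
The paper does not prove this proposition; it merely cites \cite{HHK09}, Propositions~2.2, 2.4 and Corollary~2.6. Your outline is essentially the argument given in that reference: the atomic-space completeness argument for $U^p$, the pointwise-limit argument for $V^p$, the $V^p$-bound on $U^p$-atoms, and, most importantly, the dyadic stopping-time decomposition for $V^p_{-,rc}\hookrightarrow U^q$ are all the standard steps from \cite{HHK09}. So there is nothing to compare against in the present paper, and your approach coincides with the cited source.

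One small imprecision worth tightening: in the step $U^p\hookrightarrow V^p_{-,rc}$ you claim the $V^p$-seminorm of a $U^p$-atom ``collapses to $(\sum_k\|\phi_k\|_\HHH^p)^{1/p}=1$''. In general the increment $a(s_\ell)-a(s_{\ell-1})$ across a partition interval containing several jump points is a difference of two $\phi_k$'s, and the triangle inequality only yields $\|a\|_{V^p}\le 2$ rather than $1$ (cf.\ \cite{HHK09}, Proposition~2.4). This constant is of course immaterial for the embedding, but the ``collapses to $1$'' phrasing overstates what the grouping argument delivers.
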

\begin{defn}
Let $1\leq p<\infty$, $s\in \R$, and $\sigma \in \R \backslash \{0\}$. We define
\[
U^{p}_{\sigma}H^s:=
\{ u:\R\rightarrow H^s|\ e^{-it \sigma \Delta}u\in U^{p}(\R ;H^s(\T^d ))\}
\]
with the norm $\|u\|_{U^{p}_{\sigma}H^s}:=\|e^{-it \sigma \Delta}u\|_{U^{p}(\R ;H^s)}$
and
\[
V^{p}_{\sigma}H^s:=
\{ v:\R\rightarrow H^s|\ e^{-it \sigma \Delta}v\in V^{p}_{-,rc}(\R ;H^s(\T^d ))\}
\]
with the norm $\|v\|_{V^{p}_{\sigma}H^s}:=\|e^{-it \sigma \Delta}v\|_{V^{p}(\R ;H^s)}$.
\end{defn}
\begin{rem}
We note that $\|\overline{u}\|_{U^{p}_{\sigma}H^s}=\|u\|_{U^{p}_{-\sigma}H^s}$ and $\|\overline{v}\|_{V^{p}_{\sigma}H^s}=\|v\|_{V^{p}_{-\sigma}H^s}$. 
\end{rem}
\begin{prop}[\cite{HHK09} Corollary\ 2.18]\label{projest_T}
Let $1< p<\infty$ and $\sigma \in \R \backslash \{0\}$. We have
\begin{align}
&\|Q_{\geq M}^{\sigma}u\|_{L^{2}(\R ;L^2)}\lesssim M^{-\frac 12}\|u\|_{V^{2}_{\sigma}L^2},\label{highMproj_T}\\
&\|Q_{<M}^{\sigma}u\|_{V^{p}_{\sigma}L^2}\lesssim \|u\|_{V^{p}_{\sigma}L^2},\ \ \|Q_{\geq M}^{\sigma}u\|_{V^{p}_{\sigma}L^2}\lesssim \|u\|_{V^{p}_{\sigma}L^2}.\label{Vproj_T}
\end{align}
\end{prop}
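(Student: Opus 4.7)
The plan is to reduce both estimates, via the $L^2_x$-isometry $u \mapsto v := e^{-it\sigma\Delta}u$, to purely temporal Fourier-multiplier bounds on $L^2(\T^d)$-valued functions. A direct computation gives $\ti u(\tau,\xi) = \ti v(\tau+\sigma|\xi|^2,\xi)$, so that the modulation projection $Q_M^\sigma$ on $u$ corresponds exactly to a time-Fourier projection $P^t_M$ on $v$ with symbol $\psi_M(\tau)$. Since $e^{-it\sigma\Delta}$ is an $L^2_x$-isometry, it suffices to establish
\begin{align*}
\|P^t_{\ge M}v\|_{L^2(\R;L^2)} &\lesssim M^{-1/2}\|v\|_{V^2(\R;L^2)},\\
\|P^t_{<M}v\|_{V^p(\R;L^2)} + \|P^t_{\ge M}v\|_{V^p(\R;L^2)} &\lesssim \|v\|_{V^p(\R;L^2)}.
\end{align*}

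For the first inequality, I would argue by approximation. Right-continuous step functions $v=\sum_{k=1}^{K} \phi_{k-1}\ee_{[t_{k-1},t_{k})}$ are dense in $V^2_{-,rc}(\R;L^2)$, and for such $v$ the temporal Fourier transform can be recast, by summation by parts, as a sum involving the jumps $\phi_{k}-\phi_{k-1}$, each paired with a factor of the form $(e^{-it_{k-1}\tau}-e^{-it_{k}\tau})/(i\tau)$. Parseval in $\tau$ combined with almost-orthogonality (the oscillatory phases decouple after integration on $|\tau|\ge M$) then yields
\[
\int_{|\tau|\ge M}\|\hat v(\tau)\|_{L^2}^2\,d\tau \;\lesssim\; M^{-1}\sum_{k}\|\phi_{k}-\phi_{k-1}\|_{L^2}^2 \;\lesssim\; M^{-1}\|v\|_{V^2}^2,
\]
and passing to the limit (with the boundary terms from summation by parts vanishing by the right-continuity and the prescribed limit at $-\infty$) completes the estimate.

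For the second bound I would exploit the fact that the inverse Fourier transform of $\psi_{<M}$ is a Dirichlet-type kernel with uniformly bounded $L^1$-mass (or, after mild regularization, has uniformly bounded variation). Writing each jump $P^t_{<M}v(t_k)-P^t_{<M}v(t_{k-1})$ as an integral in a translation parameter $h$ of $v(t_k-h)-v(t_{k-1}-h)$ against this kernel, and applying Minkowski in $h$ followed by the $\ell^p$-sum over $k$ inherent in the $V^p$ norm, yields the boundedness of $P^t_{<M}$ on $V^p(\R;L^2)$; the bound on $P^t_{\ge M}=\mathrm{Id}-P^t_{<M}$ then follows immediately.

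The principal obstacle is making rigorous sense of $P^t_{\ge M}v\in L^2$ when $v\in V^2$, since $V^2\not\hookrightarrow L^2$ in general. I would bypass this by carrying out the calculation first on compactly-supported step functions (for which every manipulation is legitimate), and then extending by continuity using the density of step functions in $V^2_{-,rc}$ together with the atomic decomposition coming from the embedding $V^2\hookrightarrow U^p$ of Proposition~\ref{upvpprop_T}; the resulting bounded linear extension furnishes the claimed $L^2$-estimate on all of $V^2_{\sigma}L^2$.
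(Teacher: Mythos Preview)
The paper does not supply a proof; the result is quoted from \cite{HHK09}. Your reduction to temporal multipliers on $v=e^{-it\sigma\Delta}u$ and your argument for \eqref{Vproj_T} (convolution with an $L^1$ kernel preserves $V^p$) are correct and standard.

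For \eqref{highMproj_T} there is a genuine gap: the displayed intermediate inequality
\[
\int_{|\tau|\ge M}\|\hat v(\tau)\|_{L^2}^2\,d\tau\ \lesssim\ M^{-1}\sum_k\|\phi_k-\phi_{k-1}\|_{L^2}^2
\]
is false as stated. Take the scalar step function $v=\sum_{j=1}^K\1_{[j\epsilon,\infty)}$ with $0<K\epsilon\ll M^{-1}$. All jumps equal $1$, so the right-hand side is $K/M$; but for $|\tau|\sim M$ one has $\sum_{j}e^{-ij\epsilon\tau}\approx K$, hence $|\hat v(\tau)|^2\approx K^2/\tau^2$ and the left-hand side is $\sim K^2/M$. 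The off-diagonal integrals $\int_{|\tau|\ge M}\tau^{-2}e^{-i(s_j-s_l)\tau}\,d\tau$ are simply not small when $|s_j-s_l|\ll M^{-1}$, so no ``almost-orthogonality'' is available. What \emph{is} true is the bound with the jump sum replaced by $\|v\|_{V^2}^2$ (equal to $K^2$ in the example, since a two-point partition sees the full increment $K$), but your computation never produces that quantity. Separately, step functions are not norm-dense in $V^2_{-,rc}$, so the extension step is also problematic. The proof in \cite{HHK09} avoids both issues via the $U^2$--$V^2$ duality: test against $f\in L^2$, write $P^t_{\ge M}f=G'$ with $\hat G(\tau)=\psi_{\ge M}(\tau)\hat f(\tau)/(i\tau)$, use the pairing bound $|\int\langle v,G'\rangle\,dt|\le\|v\|_{V^2}\|G\|_{U^2}$, and show $\|G\|_{U^2}\lesssim M^{-1/2}\|f\|_{L^2}$ by a dyadic decomposition together with the elementary estimate $\|h\|_{U^2}\lesssim N^{1/2}\|h\|_{L^2}$ when $\hat h$ is supported in $\{|\tau|\sim N\}$.
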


By (\ref{highMproj_T}), we also obtain
\begin{equation}\label{highMproj_Tequal}
\|Q_{M}^{\sigma}u\|_{L^{2}(\R ;L^2)}\lesssim M^{-\frac 12}\|u\|_{V^{2}_{\sigma}L^2}
\end{equation}

\begin{prop}[\cite{HHK09} Proposition\ 2.19]\label{multiest_T}
Let 
\[
T_{0}:L^2(\T^d )\times \cdots \times L^2(\T^d )\rightarrow L^{1}_{loc}(\T^d )
\]
be an $m$-linear operator, $I\subset \R$ be an interval, 
and $\rho:I\rightarrow [0,\infty)$ be a continuous function. 
Assume that for some $1\leq p, q< \infty$, 
\[
\|\rho (t)T_{0}(e^{it\sigma_{1}\Delta}\phi_{1},\cdots ,e^{it\sigma_{m}\Delta}\phi_{m})\|_{L^{p}_{t}(I :L^{q}_{x})}\lesssim \prod_{i=1}^{m}\|\phi_{i}\|_{L^2}.
\]
Then, there exists $T:U^{p}_{\sigma_{1}}L^2\times \cdots \times U^{p}_{\sigma_{m}}L^2\rightarrow L^{p}_{t}(I ;L^{q}_{x}(\T^d ))$ satisfying
\[
\|\rho (t)T(u_{1},\cdots ,u_{m})\|_{L^{p}_{t}(I ;L^{q}_{x})}\lesssim \prod_{i=1}^{m}\|u_{i}\|_{U^{p}_{\sigma_{i}}L^2}
\]
such that $T(u_{1},\cdots ,u_{m})(t)(x)=T_{0}(u_{1}(t),\cdots ,u_{m}(t))(x)$ a.e.
\end{prop}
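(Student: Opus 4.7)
The plan is to establish the statement by the standard transference principle in the theory of $U^p$-spaces: extend $T_0$ pointwise in time to a multilinear operator on free solutions, reduce to the case where each argument is a $U^p_{\sigma_i}L^2$-atom, and exploit the $L^p_t$-disjointness of a common refinement of the atomic partitions.

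First I would set $T(u_1,\dots,u_m)(t,x) := T_0(u_1(t),\dots,u_m(t))(x)$. Since $\|u_i\|_{U^p_{\sigma_i}L^2}$ equals the infimum of $\sum_{j}|\lambda_{i,j}|$ over atomic representations $e^{-it\sigma_i\Delta}u_i = \sum_j \lambda_{i,j}a_{i,j}$, multilinearity together with the $L^p_tL^q_x$-triangle inequality (valid since $p\geq 1$) reduces matters to a uniform $O(1)$ bound in the case where each $u_i$ comes from a single $U^p$-atom, i.e.
\[
u_i(t) = \sum_{k_i=1}^{K_i}\ee_{I^{(i)}_{k_i}}(t)\,e^{it\sigma_i \Delta}\phi^{(i)}_{k_i},\qquad I^{(i)}_{k_i}=[t^{(i)}_{k_i-1},t^{(i)}_{k_i}),\qquad \sum_{k_i}\|\phi^{(i)}_{k_i}\|_{L^2}^p = 1.
\]

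Next I would form the common refinement $J_{\mathbf{k}}:=\bigcap_{i=1}^m I^{(i)}_{k_i}$ indexed by $\mathbf{k}=(k_1,\dots,k_m)$; these are pairwise disjoint half-open intervals covering $\R$. On each $J_{\mathbf{k}}$ the function $u_i(t)$ coincides with the free evolution $e^{it\sigma_i\Delta}\phi^{(i)}_{k_i}$, hence
\[
\|\rho\,T(u_1,\dots,u_m)\|_{L^p_t(I;L^q_x)}^p = \sum_{\mathbf{k}}\bigl\|\rho\,T_0\bigl(e^{it\sigma_1\Delta}\phi^{(1)}_{k_1},\dots,e^{it\sigma_m\Delta}\phi^{(m)}_{k_m}\bigr)\bigr\|_{L^p_t(J_{\mathbf{k}}\cap I;L^q_x)}^p.
\]
Enlarging each temporal integration domain from $J_{\mathbf{k}}\cap I$ to the full $I$, applying the hypothesis cellwise, and factoring the resulting product-sum yield
\[
\|\rho\,T(u_1,\dots,u_m)\|_{L^p_t(I;L^q_x)}^p \lec \sum_{\mathbf{k}}\prod_{i=1}^m\|\phi^{(i)}_{k_i}\|_{L^2}^p = \prod_{i=1}^m \sum_{k_i}\|\phi^{(i)}_{k_i}\|_{L^2}^p = 1,
\]
which is the desired atomic bound. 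The full statement then follows from the multilinear atomic expansion and the infimum definition of the $U^p$-norm.

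The only delicate point is the multi-index bookkeeping in the refinement step: because the $m$ atomic partitions are chosen independently, one cannot collapse the problem onto a single atom on a single subinterval. The key observation is that the cells $\{J_{\mathbf{k}}\}$ are pairwise disjoint in $t$, so the $L^p_t$-norm converts to an $\ell^p$-sum over $\mathbf{k}$ that factors as a product over $i$ and matches the atomic normalization exactly, with no loss. The condition $p<\infty$ is essential here because we are using the additivity of the $L^p_t$-norm on a disjoint partition; once this passage is handled, everything else is bookkeeping and the definition of the extension $T$.
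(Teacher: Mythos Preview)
Your proof is correct and follows precisely the standard transference argument of \cite{HHK09}, Proposition~2.19; the paper itself does not reprove the statement but simply remarks that the original argument goes through verbatim with the extra weight $\rho(t)$, which is exactly what your refinement-and-factorization argument shows.
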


The original version of Proposition~\ref{multiest_T} (which is Proposition\ 2.19 in \cite{HHK09}) 
is given as $\rho (t)\equiv 1$.
By the same argument as in the case $\rho (t) \equiv 1$ in \cite{HHK09}, we can prove Proposition~\ref{multiest_T}.  

\begin{prop}[\cite{HHK09} Proposition\ 2.20]\label{intpol_T}
Let $q>1$, $E$ be a Banach space, and $T:U^{q}_{\sigma}L^2\rightarrow E$ be a bounded linear operator 
with $\|Tu\|_{E}\leq C_{q}\|u\|_{U^{q}_{\sigma}L^2}$ for all $u\in U^{q}_{\sigma}L^2$.  
In addition, assume that for some $1\leq p<q$ there exists $C_{p}\in (0,C_{q}]$ such that the estimate $\|Tu\|_{E}\leq C_{p}\|u\|_{U^{p}_{\sigma}L^2}$ holds true for all $u\in U^{p}_{\sigma}L^2$. Then, $T$ satisfies the estimate
\[
\|Tu\|_{E}\lesssim C_{p}\left( 1+\log \frac{C_{q}}{C_{p}}\right) \|u\|_{V^{p}_{\sigma}L^2}
\]
for $u\in V^{p}_{\sigma}L^2$,
where the implicit constant depends only on $p$ and $q$.
\end{prop}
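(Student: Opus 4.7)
The strategy is to reduce the claim to a telescoping decomposition of $v \in V^p_\sigma L^2$ whose summands satisfy complementary $U^p$- and $U^q$-bounds, then to apply the two hypotheses on $T$ on a dyadic split of this sum and optimize the split point. First I would reduce to $\sigma = 0$: since $u \mapsto e^{-it\sigma\Delta}u$ is an isometric bijection between $U^p_\sigma L^2$ and $U^p(\R;L^2)$ (and likewise for $V^p$), the operator $\widetilde T(w) := T(e^{it\sigma\Delta} w)$ satisfies the same hypotheses with the same constants $C_p, C_q$. By homogeneity we may further normalize $\|v\|_{V^p(\R;L^2)} \le 1$.

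The core technical step is to construct a decomposition
\[
v = \sum_{k=0}^\infty u_k \quad \text{in } L^\infty(\R;L^2),
\]
with step functions $u_k$ satisfying
\[
\|u_k\|_{U^p(\R;L^2)} \lesssim 1 \quad \text{and} \quad \|u_k\|_{U^q(\R;L^2)} \lesssim 2^{-k(1/p - 1/q)}.
\]
To produce it, I would build a nested family of partitions $\{t_j^{(k)}\}_{j=0}^{2^k} \in \ZZ$ so that the $p$-variation of $v$ on each subinterval of the $k$-th partition is at most $2^{-k}$. This is achievable by greedy bisection using the superadditivity of $p$-variation, after first placing all jump points of $v$ with $L^2$-magnitude $\ge 2^{-k/p}$ into the partition at stage $k$; there are at most $2^k$ such jumps because $\|v\|_{V^p}^p \le 1$. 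Setting $v_k$ equal to the right-continuous step function taking the value $v(t_j^{(k)})$ on $[t_j^{(k)}, t_{j+1}^{(k)})$ and $u_k := v_k - v_{k-1}$ (with $v_{-1}:=0$), each of the $\lesssim 2^k$ values taken by $u_k$ has $L^2$-norm $\lesssim 2^{-k/p}$, and the atomic definition in Definition~\ref{upsp_T} then yields the two norm bounds by direct counting.

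With the decomposition in hand, for any $N \in \N_0$,
\[
\|Tv\|_E \le \sum_{k=0}^N C_p\,\|u_k\|_{U^p_\sigma L^2} + \sum_{k > N} C_q\,\|u_k\|_{U^q_\sigma L^2} \lesssim (N+1)\, C_p + C_q\, 2^{-N(1/p - 1/q)}.
\]
Choosing $N = 0$ when $C_q \le C_p$ and otherwise $N \sim (1/p-1/q)^{-1}\log_2(C_q/C_p)$ balances the two terms and gives $\|Tv\|_E \lesssim C_p(1 + \log(C_q/C_p))$, which is the asserted bound. The main obstacle is the construction of the nested partitions in the presence of potentially large jumps of $v$: superadditivity of $p$-variation handles a continuous $v$ cleanly, but oversized jumps have to be incorporated as mandatory partition points first; the bound on their number keeps the partition at stage $k$ of size $\lesssim 2^k$, while right-continuity together with $\lim_{t\to-\infty}v(t) = 0$ in the definition of $V^p_{-,rc}$ ensures the $L^\infty$-convergence of $v_k$ to $v$ needed to justify the telescoping identity.
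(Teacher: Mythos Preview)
The paper does not supply a proof of this proposition; it is quoted verbatim from \cite{HHK09}, Proposition~2.20, and used as a black box. Your argument is essentially the original Hadac--Herr--Koch proof: reduce to the unitary-free setting, build a dyadic decomposition $v=\sum_k u_k$ into step functions with $\lesssim 2^k$ jumps of size $\lesssim 2^{-k/p}$, read off the $U^p$- and $U^q$-bounds from the atomic definition, and optimize the cutoff $N$. So there is nothing in the present paper to compare against, and your outline matches the standard source.

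One minor comment on the construction: the separate bookkeeping for ``large jumps'' is not really needed. The usual greedy stopping-time construction (set $t^{(k)}_{j+1}$ to be the first time after $t^{(k)}_j$ at which $\|v(\cdot)-v(t^{(k)}_j)\|_{L^2}\ge 2^{-k/p}$) already absorbs jumps automatically because $v$ is right-continuous, and the bound on the number of stopping times follows directly from $\|v\|_{V^p}\le 1$. Your version is correct but slightly more elaborate than necessary.
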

Next, we define the function spaces which will be used to construct the solution. 
\begin{defn}\label{YZ_space_T}
Let $s$, $\sigma\in \R$.\\
{\rm (i)} We define $Z^{s}_{\sigma}$ as the space of all functions $u:\R \rightarrow H^{s}(\T^{d})$ such that for every $\xi \in \Z^{d}$ the map 
$t\mapsto e^{it\sigma |\xi |^{2}}\widehat{u(t)}(\xi )$ is in $U^{2}(\R ;\C )$, and for which the norm
\[
\|u\|_{Z^{s}_{\sigma}}:=\left(\sum_{\xi \in \Z^{d}}\langle \xi \rangle^{2s}\|e^{it\sigma |\xi|^{2}}\widehat{u(t)}(\xi )\|^{2}_{U^{2}(\R ;\C )}\right)^{\frac 12}
\]
is finite. \\
{\rm (ii)} We define $Y^{s}_{\sigma}$ as the space of all functions $u:\R \rightarrow H^{s}(\T^{d})$ such that for every $\xi \in \Z^{d}$ the map 
$t\mapsto e^{it\sigma |\xi |^{2}}\widehat{u(t)}(\xi )$ is in $V^{2}_{-,rc}(\R ;\C )$, and for which the norm
\[
\|u\|_{Y^{s}_{\sigma}}:=\left(\sum_{\xi \in \Z^{d}}\langle \xi \rangle^{2s}\|e^{it\sigma |\xi|^{2}}\widehat{u(t)}(\xi )\|^{2}_{V^{2}(\R ;\C )}\right)^{\frac 12}
\]
is finite. 
\end{defn}
\begin{rem}[\cite{HHK09} Remark\ 2.23]
\label{rem:ristI}
We also consider the restriction space of $Z^s_{\sigma}$ to an interval $I\subset \R$ by
\[
Z^s_{\sigma}(I)=\{u\in C(I,H^s(\T^d))| \ \text{there exists } v \in Z^s_{\sigma}\ \text{such that}\ v(t)=u(t) \ (t\in I) \}
\]
endowed with the norm $\|u\|_{Z^s_{\sigma}(I)}=\inf \{\|v\|_{Z^s_{\sigma}}|\ v \in Z^s_{\sigma}, \, v(t)=u(t) \ (t\in I) \}$.
The restriction space $Y^s_{\sigma}(I)$ is also defined in the same way. 
\end{rem}
\begin{prop}[\cite{HTT11} Proposition\ 2.8, Corollary\ 2.9]\label{UVembedpr}
The embeddings
\[
U^{2}_{\sigma}H^{s}\hookrightarrow Z^{s}_{\sigma}\hookrightarrow Y^{s}_{\sigma}\hookrightarrow V^{2}_{\sigma}H^{s}
\] 
are continuous. Furthermore, if $\Z^{d}=\bigcup_{k \in \N} C_{k}$ be a partition of $\Z^{d}$, then
\begin{equation}
\notag
\left(\sum_{k \in \N}\|P_{C_{k}}u\|^{2}_{V^{2}_{\sigma}H^{s}}\right)^{\frac 12}\lesssim \|u\|_{Y^{s}_{\sigma}}.
\end{equation}
\end{prop}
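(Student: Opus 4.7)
The plan is to prove each of the three embeddings separately by reducing to Plancherel's identity on $\T^d$ and commuting the resulting weighted $\ell^2$-sum over $\x \in \Z^d$ with the temporal $U^2$ or $V^2$ norms; the partition estimate falls out of the same computation as the outermost embedding.

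For $Y^s_\si \hookrightarrow V^2_\si H^s$: setting $v(t)=e^{-it\si\laplacian}u(t)$, Plancherel gives for any partition $\{t_k\}\in\ZZ$
\[
\sum_{k=1}^K \|v(t_k)-v(t_{k-1})\|_{H^s}^2
=\sum_{\x\in\Z^d}\LR{\x}^{2s}\sum_{k=1}^K|\widehat{v(t_k)}(\x)-\widehat{v(t_{k-1})}(\x)|^2
\leq \sum_{\x\in\Z^d}\LR{\x}^{2s}\|\widehat{v(\cdot)}(\x)\|_{V^2(\R;\C)}^2,
\]
and supping over partitions yields $\|v\|_{V^2(\R;H^s)}^2\leq \|u\|_{Y^s_\si}^2$. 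The partition bound $(\sum_k\|P_{C_k}u\|_{V^2_\si H^s}^2)^{1/2}\lesssim \|u\|_{Y^s_\si}$ is the same inequality applied to each $P_{C_k}u$, whose $\x$-sum collapses to $C_k$, so summing in $k$ restores the full $\x$-sum. The middle embedding $Z^s_\si \hookrightarrow Y^s_\si$ reduces to the scalar embedding $U^2(\R;\C)\hookrightarrow V^2_{-,rc}(\R;\C)$ from Proposition~\ref{upvpprop_T} applied at each $\x$, then summed with $\LR{\x}^{2s}$-weights.

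The subtlest step is $U^2_\si H^s \hookrightarrow Z^s_\si$, and I would proceed atomically. Since $\|\cdot\|_{Z^s_\si}$ is subadditive (Minkowski in the outer $\ell^2(\x)$ combined with the $U^2$-triangle inequality at each $\x$), it suffices to show $\|a\|_{Z^s_\si}\leq 1$ for every $U^2$-atom $a$ in $H^s$ and then conclude via atomic decomposition and infimization. For $a(t)=\sum_k\1_{[t_{k-1},t_k)}(t)\phi_{k-1}$ with $\sum_k\|\phi_{k-1}\|_{H^s}^2=1$, each Fourier slice $\widehat{a(t)}(\x)=\sum_k\1_{[t_{k-1},t_k)}(t)\widehat{\phi_{k-1}}(\x)$ is a scalar step function equal to $\bigl(\sum_k|\widehat{\phi_{k-1}}(\x)|^2\bigr)^{1/2}$ times a genuine scalar $U^2(\R;\C)$-atom, so $\|\widehat{a(\cdot)}(\x)\|_{U^2(\R;\C)}\leq \bigl(\sum_k|\widehat{\phi_{k-1}}(\x)|^2\bigr)^{1/2}$. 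Squaring, weighting by $\LR{\x}^{2s}$, summing in $\x$, and applying Plancherel yields
\[
\sum_{\x\in\Z^d}\LR{\x}^{2s}\|\widehat{a(\cdot)}(\x)\|_{U^2}^2
\leq \sum_{\x\in\Z^d}\LR{\x}^{2s}\sum_k|\widehat{\phi_{k-1}}(\x)|^2
=\sum_k\|\phi_{k-1}\|_{H^s}^2=1,
\]
which is exactly the desired atomic bound.

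The main obstacle is aligning two different atomic normalizations: atoms in $U^2(\R;H^s)$ are normalized by $\sum_k\|\phi_{k-1}\|_{H^s}^2=1$, while atoms in $U^2(\R;\C)$ are normalized by unit $\ell^2$-sums of their complex jumps. The matching succeeds precisely because both exponents coincide at $p=2$, so that Plancherel turns the weighted $\x$-sum of scalar normalizations into a single $H^s$-sum that telescopes back to $1$. For any $p\neq 2$ this bookkeeping would break, which is why the space $Z^s_\si$ is tailored to $U^2$ rather than to a general $U^p$.
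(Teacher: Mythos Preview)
Your proof is correct and follows exactly the argument of \cite{HTT11}; the present paper does not give its own proof of this proposition but simply cites \cite[Proposition~2.8, Corollary~2.9]{HTT11}, so there is nothing further to compare.
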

For $f\in L^{1}_{\rm loc}(\R ;L^{2}(\T^{d}))$ and $\sigma \in \R$, we define
\[
I_{\sigma}[f](t):=\int_{0}^{t}e^{i(t-t')\sigma \Delta}f(t')dt'
\]
for $t\geq 0$ and $I_{\sigma}[f](t)=0$ for $t<0$.
\begin{prop}[\cite{HTT11} Proposition\ 2.11]\label{duality_T}
For $s\ge 0$, $T>0$, $\sigma \in \R\backslash \{0\}$ and $f\in L^{1}([0,T);H^{s}(\T^d ))$ we have $I_{\sigma}[f]\in Z^{s}_{\sigma}([0,T))$ and
\[
\|I_{\sigma}[f]\|_{Z^{s}_{\sigma}([0,T))}\leq 
\sup_{v\in Y^{-s}_{\sigma}([0,T)), \|v\|_{Y^{-s}_{\sigma}}=1}\left|\int_{0}^{T}\int_{\T^{d}}f(t,x)\overline{v(t,x)} dxdt\right|. 
\]
\end{prop}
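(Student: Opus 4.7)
The plan is to prove the duality in the standard way used for $U^p$/$V^p$-type spaces: decompose the $Z^{s}_{\sigma}$ norm over Fourier frequencies $\xi \in \Z^{d}$, apply the scalar $(U^2)^\ast \cong V^2$ duality of \cite{HHK09} at each fixed $\xi$, and then reassemble the resulting scalar test functions into a single element $v \in Y^{-s}_{\sigma}([0,T))$ with $\|v\|_{Y^{-s}_{\sigma}}\le 1$.

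First I would set, for each $\xi \in \Z^{d}$, $h_{\xi}(t) := e^{it\sigma|\xi|^{2}}\widehat{I_{\sigma}[f](t)}(\xi)$. A direct computation of the Fourier coefficient of $I_{\sigma}[f]$ gives
\[
h_{\xi}(t) = \int_{0}^{t} e^{it'\sigma|\xi|^{2}}\widehat{f}(t',\xi)\,dt',\qquad t \in [0,T),
\]
so $h_{\xi}$ is absolutely continuous on $[0,T)$ with $h_{\xi}(0)=0$ and $h_{\xi}'(t) = e^{it\sigma|\xi|^{2}}\widehat{f}(t,\xi) \in L^{1}([0,T))$. By Definition~\ref{YZ_space_T} and Remark~\ref{rem:ristI},
\[
\|I_{\sigma}[f]\|_{Z^{s}_{\sigma}([0,T))}^{2} = \sum_{\xi \in \Z^{d}} \langle\xi\rangle^{2s} \|h_{\xi}\|_{U^{2}([0,T);\C)}^{2}.
\]
The scalar $(U^2)^\ast \cong V^2$ duality of \cite{HHK09} then yields, for each $\xi$,
\[
\|h_{\xi}\|_{U^{2}([0,T);\C)} = \sup \Big\{ \Big|\int_0^T e^{it\sigma|\xi|^{2}}\widehat{f}(t,\xi)\,\overline{k_{\xi}(t)}\,dt\Big| : k_{\xi} \in V^{2}_{-,rc}([0,T);\C),\ \|k_{\xi}\|_{V^{2}}\le 1 \Big\},
\]
where the boundary contribution at $t=0$ vanishes since $h_{\xi}(0)=0$.

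Given nonnegative $(c_{\xi})_{\xi\in\Z^{d}}$ with $\sum c_{\xi}^{2}=1$ and near-optimal $k_{\xi}$ in the supremum above, I would assemble
\[
v(t,x) := \sum_{\xi \in \Z^{d}} c_{\xi}\langle\xi\rangle^{s}\,e^{-it\sigma|\xi|^{2}} k_{\xi}(t)\, e^{i\xi\cdot x}.
\]
Then $e^{it\sigma|\xi|^{2}}\widehat{v(t)}(\xi) = c_{\xi}\langle\xi\rangle^{s}k_{\xi}(t)$, so that Plancherel gives
\[
\|v\|_{Y^{-s}_{\sigma}([0,T))}^{2} = \sum_{\xi}\langle\xi\rangle^{-2s}\cdot\langle\xi\rangle^{2s}c_{\xi}^{2}\|k_{\xi}\|_{V^{2}}^{2} \le 1,
\]
while
\[
\int_{0}^{T}\int_{\T^{d}} f(t,x)\overline{v(t,x)}\,dx\,dt = \sum_{\xi} c_{\xi}\langle\xi\rangle^{s} \int_{0}^{T} e^{it\sigma|\xi|^{2}}\widehat{f}(t,\xi)\overline{k_{\xi}(t)}\,dt.
\]
Maximizing over $(c_{\xi})$ (i.e., $\ell^{2}$-duality in $\xi$) and over $(k_{\xi})$, then letting the near-optimality parameter tend to zero, produces the claimed bound.

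The main delicacy will be in the scalar duality step: one must reconcile the conventions of the half-open interval $[0,T)$ for the $U^{2}$-norm with the right-continuity and $t\to-\infty$ decay built into $V^{2}_{-,rc}$, and verify that the scalar test functions $k_{\xi}$ extend in a compatible way so that the assembled $v$ is genuinely an element of $Y^{-s}_{\sigma}([0,T))$ of norm at most one. Once this bookkeeping is settled (following \cite{HHK09, HTT11}), the remainder is a routine combination of Plancherel, Cauchy--Schwarz, and $\ell^{2}$-duality in $\xi$.
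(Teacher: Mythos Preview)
The paper does not give its own proof of this proposition; it is quoted directly from \cite{HTT11} (Proposition~2.11) and used as a black box. Your sketch follows exactly the argument in \cite{HTT11} (which in turn rests on the scalar $U^2$--$V^2$ duality of \cite{HHK09}): freeze the frequency $\xi$, invoke the scalar duality for the absolutely continuous function $h_\xi$, and reassemble via $\ell^2$-duality in $\xi$. So your approach is correct and coincides with the source the paper cites.
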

\section{Strichartz and bilinear Strichartz estimates \label{b_Stri_T}}
%
%
%
In this section, we introduce some Strichartz estimates on tori proved in \cite{BD15}, \cite{HTT11}, \cite{Wa} and
the bilinear estimate proved in \cite{Wa}. 
We also show the bilinear Strichartz estimates (Proposition~\ref{beStest_T}).  

For a dyadic number $N\geq 1$, we define $\CC_{N}$ as the collection of 
disjoint cubes of the form
\[
\left(\xi_{0}+[-N,N]^{d}\right)\cap\Z^{d}
\]
with some $\xi_{0}\in \Z^{d}$. 

 %
 First, we mention the $L^4$-Strichartz estimate for the one dimensional case. 
\begin{prop}[\cite{Bo93} Proposition 2.1]\label{L4stri_1d_T}
For $\sigma \in \R \backslash \{0\}$ and $0<T\le 1$, we have
\[
\|e^{it\sigma\Delta}\varphi\|_{L^4([0,T)\times \T)}\lesssim \|\varphi\|_{L^2(\T)}. 
\]
\end{prop}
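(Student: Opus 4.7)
The plan is to adapt Bourgain's classical argument, combining the squaring identity $\|F\|_{L^4}^2 = \|F^2\|_{L^2}$ with an elementary lattice-counting bound on the one-dimensional parabola.

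First I would reduce to the whole real line in time via a fixed cutoff $\eta \in \mathcal{S}(\R)$ with $\eta(t) \ge 1$ on $[0,1]$ and $\widehat\eta$ compactly supported (such $\eta$ exists, for instance as a suitable rescaling of $|\widehat\phi|^2$ for a bump $\phi \in C_c^\infty$). Since $T \le 1$,
\begin{equation*}
\| e^{it\sigma\Delta}\varphi \|_{L^4([0,T)\times\T)} \le \| \eta(t)\, e^{it\sigma\Delta}\varphi \|_{L^4(\R \times \T)},
\end{equation*}
so it suffices to estimate the right-hand side. Setting $U(t,x) := \eta(t)\sum_n \widehat\varphi(n)\, e^{i(nx - \sigma n^2 t)}$, I would use $\|U\|_{L^4}^4 = \|U^2\|_{L^2}^2$ and Plancherel on $\R\times\T$, together with the formula
\begin{equation*}
\F[U^2](\tau, m) = \sum_{n_1+n_2=m} \widehat\varphi(n_1)\widehat\varphi(n_2) \int_\R \widehat\eta(\tau_1+\sigma n_1^2)\,\widehat\eta(\tau - \tau_1 + \sigma n_2^2)\, d\tau_1.
\end{equation*}
The compact support of $\widehat\eta$ forces this integrand to vanish unless $|\tau + \sigma(n_1^2+n_2^2)| \lesssim 1$, and otherwise bounds the inner integral by $O(1)$.

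The key step is the counting estimate: for fixed $m \in \Z$ and $\tau \in \R$, the number of $(n_1,n_2) \in \Z^2$ with $n_1+n_2=m$ and $|\tau + \sigma(n_1^2+n_2^2)| \lesssim 1$ is $O_\sigma(1)$. Parametrizing $n_1-n_2=j\in\Z$ (of the same parity as $m$) gives $2(n_1^2+n_2^2) = m^2 + j^2$, so the modulation bound forces $\sigma j^2$ into an interval of length $O(1)$, which meets $\Z$ in only $O_\sigma(1)$ points. Applying Cauchy--Schwarz inside the inner sum and then integrating in $\tau$ and summing over $m$ collapses both restrictions to yield
\begin{equation*}
\|U^2\|_{L^2(\R\times\T)}^2 \lesssim \sum_{n_1,n_2 \in \Z} |\widehat\varphi(n_1)|^2 |\widehat\varphi(n_2)|^2 \lesssim \|\varphi\|_{L^2(\T)}^4,
\end{equation*}
which is the desired estimate.

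The main conceptual obstacle is the lattice-counting bound, but it is genuinely elementary in one space dimension because the level sets of $\sigma(n_1^2+n_2^2)$ on the line $n_1+n_2=m$ degenerate to at most two points, so a short interval around them contains only finitely many lattice points. This is precisely what fails in higher dimensions, where one must invoke deeper tools (Bourgain--Demeter decoupling) to obtain the analogous Strichartz estimates on $\T^d$.
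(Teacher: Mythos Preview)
The paper does not prove this proposition; it simply cites it as Proposition~2.1 of Bourgain~\cite{Bo93}. Your argument is correct and is precisely Bourgain's original proof: square, apply Plancherel, and use the elementary fact that on the line $n_1+n_2=m$ the constraint $|\tau+\sigma(n_1^2+n_2^2)|\lesssim 1$ pins $j=n_1-n_2$ to $O_\sigma(1)$ integers. One cosmetic point: your phrase ``$\sigma j^2$ lies in an interval of length $O(1)$, which meets $\Z$ in $O_\sigma(1)$ points'' is slightly off, since $\sigma j^2$ need not be an integer; what you mean (and what works) is that $j^2$ lies in an interval of length $O(1/|\sigma|)$, and the number of integers $j$ with $j^2$ in such an interval is $O(|\sigma|^{-1/2}+1)=O_\sigma(1)$.
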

Next, we give the Strichartz estimates for general settings. 
\begin{prop}[\cite{BD15} Theorem 2.4, Remark 2.5]\label{Stri_est_T}Let $d\ge 1$ and 
$\sigma \in \R \backslash \{0\}$. 
Assume 
\begin{alignat*}{3}
&s\ge \frac d2-\frac{d+2}p& &\quad{\rm if}\ p>\frac{2(d+2)}d,\\
&s>0& &\quad{\rm if}\ p= \frac{2(d+2)}d.
\end{alignat*}
{\rm (i)}\ For any $0<T\le 1$ and dyadic number $N\geq 1$, we have
\begin{equation}\label{Stri_1_T}
\|P_{N}e^{it\sigma \Delta}\varphi \|_{L^{p}([0,T) \times \T^{d})}\lesssim N^{s}\|P_{N}\varphi \|_{L^{2}(\T^{d})}.
\end{equation}
{\rm (ii)}\ For any $0<T\le 1$ and $C\in \CC_{N}$ with dyadic number $N\geq 1$, we have
\begin{equation}\label{Stri_2_T}
\|P_{C}e^{it\sigma \Delta}\varphi \|_{L^{p}([0,T) \times \T^{d})}\lesssim N^{s}\|P_{C}\varphi \|_{L^{2}(\T^{d})}.
\end{equation}
\end{prop}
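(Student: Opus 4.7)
The plan is to reduce both parts to the $\ell^2$-decoupling theorem of Bourgain--Demeter \cite{BD15} for the paraboloid. First, I would establish the endpoint case $p_* := 2(d+2)/d$ with an $N^\eps$ loss, which accounts for the requirement $s>0$ at this exponent. Expanding
\[
P_N e^{it\sigma \Delta}\varphi(x) = \sum_{\xi \in \Z^d} \psi_N(|\xi|)\widehat{\varphi}(\xi)\, e^{i(\xi \cdot x -\sigma |\xi|^2 t)},
\]
the spacetime Fourier support of each summand lies on the discrete truncated paraboloid $\{(\xi,-\sigma|\xi|^2): |\xi|\sim N\}$. A rescaling $(x,t)\mapsto (Nx,\sigma N^2 t)$ transfers the problem to a torus of side $2\pi N$ at unit frequency scale, where the discrete paraboloid approximates the continuous one on the unit ball. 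Applying the $\ell^2$-decoupling inequality at the $1/N$-scale, then noting that each single exponential has $L^{p_*}([0,T)\times \T^d)$-norm $O(1)$, yields
\[
\| P_N e^{it\sigma \Delta}\varphi \|_{L^{p_*}([0,T)\times \T^d)}\lesssim N^{\eps}\| P_N\varphi\|_{L^2(\T^d)}.
\]
This absorbs into $N^s$ for any $s>0$. For the non-endpoint range $p>p_*$, I would interpolate this bound with the trivial estimate $\|P_N e^{it\sigma\Delta}\varphi\|_{L^\infty_{t,x}}\lesssim N^{d/2}\|P_N\varphi\|_{L^2}$, which follows from Plancherel and Cauchy--Schwarz on the $\sim N^d$ lattice points supporting $\widehat{P_N\varphi}$. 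The scaling arithmetic then recovers the sharp loss $s=d/2-(d+2)/p$ without any $\eps$-loss.

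For part (ii), Galilean invariance reduces the cube version to (i). Writing $C = (\xi_0+[-N,N]^d)\cap \Z^d$ and $P_C\varphi(x)=e^{i\xi_0\cdot x}\psi(x)$ with $\psi$ having Fourier support in $[-N,N]^d\cap \Z^d$, a direct computation yields
\[
e^{it\sigma \Delta}(e^{i\xi_0\cdot x}\psi)(x)=e^{i(\xi_0\cdot x - t\sigma |\xi_0|^2)}\,(e^{it\sigma \Delta}\psi)(x+2t\sigma \xi_0).
\]
The $L^p([0,T)\times \T^d)$-norm of the right-hand side equals that of $e^{it\sigma\Delta}\psi$ by periodicity, so (ii) follows from (i) applied to $\psi$ together with $\|P_C\varphi\|_{L^2}=\|\psi\|_{L^2}$.

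The main obstacle is the decoupling step at the endpoint, which is the deep analytical input from \cite{BD15}. The remaining ingredients (parabolic rescaling, interpolation against the trivial $L^\infty$ bound, and Galilean invariance) are routine. The only delicate point in the first step is the transfer from decoupling for the continuous paraboloid to its integer-lattice counterpart, which is handled by standard tiling and rescaling arguments already contained in the proof of Theorem 2.4 and Remark 2.5 of \cite{BD15}; these simultaneously justify taking $C$ to be a cube away from the origin in (ii).
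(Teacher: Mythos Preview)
Your overall plan matches the paper's treatment: the paper does not give its own proof of (i) but cites it from \cite{BD15}, and the remark immediately following the proposition states that (ii) follows from (i) via the Galilean transformation (referring to \cite{Bo93}), exactly as you argue.

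There is, however, a gap in your supercritical step. Interpolating the endpoint bound $\|P_N e^{it\sigma\Delta}\varphi\|_{L^{p_*}}\lesssim_\eps N^\eps\|P_N\varphi\|_{L^2}$ against the trivial $L^\infty$ bound gives, for $\theta=p_*/p\in(0,1)$,
\[
\|P_N e^{it\sigma\Delta}\varphi\|_{L^{p}}\lesssim_\eps N^{\frac d2-\frac{d+2}p+\theta\eps}\|P_N\varphi\|_{L^2},
\]
so the $\eps$-loss survives with a smaller but strictly positive coefficient; you cannot send $\eps\to 0$ because the implicit constant blows up. This does not yield \eqref{Stri_1_T} at the borderline $s=\frac d2-\frac{d+2}p$, and the paper does use the loss-free version (for example in the proof of Proposition~\ref{HL_est_T_2}, where the $L^3$ estimate for $d\ge 5$ is applied with the sharp exponent $\frac d6-\frac 23$). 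The $\eps$-free estimate in the strictly supercritical range on the rational torus is a separate result of Bourgain \cite{Bo93} obtained via the Hardy--Littlewood circle method, not a consequence of interpolation from decoupling; your sketch should invoke that input rather than claim the interpolation removes the loss.
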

\begin{rem}
(i)\ The estimate (\ref{Stri_2_T}) follows from (\ref{Stri_1_T}) and the Galilean transformation (see  (5.7) and (5.8) in \cite{Bo93}). \\
(ii)\ The estimates (\ref{Stri_1_T}) and (\ref{Stri_2_T}) also hold for $s> 0$ and $1\le p<\frac{2(d+2)}d$ 
since the embedding $L^{\frac{2(d+2)}d}([0,T) \times \T^{d})\hookrightarrow L^{p}([0,T) \times \T^{d})$ holds for $1\le p<\frac{2(d+2)}d$. 
\end{rem}
For dyadic numbers $N\geq 1$ and $M\geq 1$, we define $\RR_{M}(N)$ as the 
collection of all sets of the form
\[
\left(\xi_{0}+[-N,N]^{d}\right)\cap\{\xi \in \Z^{d}|\ |a\cdot \xi -A|\leq M\}
\]
with some $\xi_{0}\in \Z^{d}$, $a\in \R^{d}$, $|a|=1$, and $A\in \R$. 
\begin{prop}[\cite{HTT11} Proposition 3.3, \cite{Wa} (3.4)]
Let $d\ge 1$ and $\sigma \in \R \backslash \{0\}$. For any $0<T\le 1$ and $R\in \RR_{M}(N)$ with 
dyadic numbers $N\geq M\geq 1$, we have
\begin{equation}\label{Stri_infi_T}
\|P_{R}e^{it\sigma \Delta}\varphi \|_{L^{\infty}([0,T) \times \T^{d})}\lesssim M^{\frac 12}N^{\frac{d-1}2}\|P_{R}\varphi \|_{L^{2}(\T^{d})}.
\end{equation}
\end{prop}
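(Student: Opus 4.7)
The plan is to observe that $P_R e^{it\sigma\Delta}\varphi$ has Fourier support confined to the lattice set $R$, whose cardinality is small, so that the $L^\infty$ bound reduces to a simple Cauchy--Schwarz argument combined with a lattice-point count. No time integration is involved; the estimate is really a frequency-localized Bernstein-type inequality that happens to be uniform in $t$.

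First I would expand via the Fourier inversion formula
\begin{equation*}
P_R e^{it\sigma\Delta}\varphi(x) \;=\; \frac{1}{(2\pi)^d}\sum_{\xi\in R}\widehat{\varphi}(\xi)\,e^{i(x\cdot\xi - t\sigma|\xi|^2)},
\end{equation*}
and apply the triangle inequality followed by Cauchy--Schwarz in the finite sum over $\xi\in R$ to obtain the pointwise bound
\begin{equation*}
|P_R e^{it\sigma\Delta}\varphi(t,x)| \;\lec\; (\#R)^{1/2}\Big(\sum_{\xi\in R}|\widehat{\varphi}(\xi)|^2\Big)^{1/2} \;\lec\; (\#R)^{1/2}\|P_R\varphi\|_{L^2(\T^d)}.
\end{equation*}
Since the right-hand side is independent of $(t,x)$, taking the supremum over $[0,T)\times\T^d$ is immediate.

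The remaining step is to count $\#R$. Because $|a|=1$, the largest component $a_j$ of $a$ satisfies $|a_j|\ge d^{-1/2}$. Fixing the other $d-1$ coordinates of $\xi$ within $\xi_0+[-N,N]^d$ leaves $\lec N^{d-1}$ choices, and for each such choice the slab condition $|a\cdot\xi - A|\le M$ confines $\xi_j$ to an interval of length $\le 2M/|a_j|\lec M$, yielding $\lec M$ admissible integer values (using $M\ge 1$). Hence $\#R\lec M\cdot N^{d-1}$, and substituting this into the previous display produces $M^{1/2}N^{(d-1)/2}\|P_R\varphi\|_{L^2(\T^d)}$, as desired.

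The only step that warrants any care is this lattice-point count, but it presents no real obstacle: the defining condition for $R$ is already a linear inequality in the coordinates, so no number-theoretic subtlety arises once one pays a dimensional constant $\sqrt{d}$ to isolate the dominant component of $a$. This contrasts sharply with the genuine Strichartz estimates \eqref{Stri_1_T}--\eqref{Stri_2_T}, where the $L^p$ character forces nontrivial oscillatory-sum arguments of Bourgain--Demeter type.
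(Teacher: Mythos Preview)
Your proof is correct. The paper does not supply its own proof of this proposition but simply cites \cite{HTT11} Proposition~3.3 and \cite{Wa}~(3.4); your argument---Cauchy--Schwarz on the finite Fourier expansion followed by the lattice-point count $\#R\lesssim MN^{d-1}$---is exactly the standard proof given in those references.
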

By using the H\"older inequality with (\ref{Stri_2_T}) for $p<4$ and (\ref{Stri_infi_T}), we have the following  $L^{4}$-Strichartz estimate. 
\begin{prop}
Let $d\ge 1$ and $\sigma \in \R \backslash \{0\}$. 
Assume 
\begin{alignat*}{3}
&s\ge s_c\ \Big( \!=\frac d2-1 \Big)& &\quad{\rm if}\  d\ge 3,\\
&s>0& &\quad{\rm if}\ d=1\ {\rm or}\ 2.  
\end{alignat*}
There exists $\delta >0$ such that
for any $0<T\le 1$ and $R\in \RR_{M}(N)$ with dyadic numbers $N\geq M\geq 1$, we have
\begin{equation}\label{Stri_3_T}
\|P_{R}e^{it\sigma \Delta}\varphi \|_{L^{4}([0,T) \times \T^{d})}\lesssim N^{\frac{s}{2}}\left(\frac{M}{N}\right)^{\delta}\|P_{R}\varphi \|_{L^{2}(\T^{d})}.
\end{equation}
\end{prop}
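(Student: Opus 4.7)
The plan is to interpolate H\"older-style between two known bounds: the $L^p$-Strichartz estimate \eqref{Stri_2_T} at some $p<4$ (which controls the $N$-growth), and the slab $L^\infty$-estimate \eqref{Stri_infi_T} (which delivers the $M^{1/2}$ gain coming from the thin geometry of $R$). Concretely, I will apply
\[
\|f\|_{L^4} \le \|f\|_{L^p}^{p/4} \|f\|_{L^\infty}^{1-p/4}
\]
to $f = P_R e^{it\sigma \Delta}\varphi$ and then convert the resulting $M^{(4-p)/8}$ into the desired $(M/N)^{(4-p)/8}$ by the elementary identity $M^\delta = (M/N)^\delta N^\delta$. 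Since $N \ge 1$ makes $N^{s/2}$ monotone in $s$, it suffices to treat the borderline regularity: $s = s_c$ when $d \ge 3$, and an arbitrarily small $s > 0$ when $d = 1,2$.

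For $d \ge 3$ the inequality $2(d+2)/d < 4$ lets me fix some $p \in (2(d+2)/d,\,4)$ and apply \eqref{Stri_2_T} with $s = d/2 - (d+2)/p$ together with \eqref{Stri_infi_T}. The routine exponent arithmetic produces an $N$-exponent equal to $p/8 + (d-4)/4$; pulling the extra $N^{(4-p)/8}$ out of $M^{(4-p)/8}$ and summing, this collapses to exactly $(d-2)/4 = s_c/2$. The desired conclusion then follows with $\delta = (4-p)/8 > 0$.

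For $d = 1,2$ one cannot realize $p < 4$ above the Strichartz threshold $2(d+2)/d$, so I will instead produce an $L^q$-bound at some $q < 4$ by embedding. When $d = 1$, Proposition~\ref{L4stri_1d_T} applied to $P_R \varphi$ (valid because $P_R$ commutes with $e^{it\sigma \Delta}$) gives the $L^4$-estimate without derivative loss, and H\"older in spacetime embeds it into any $L^q$ with $q \le 4$ uniformly in $T\le 1$. When $d = 2$, \eqref{Stri_2_T} at the critical $p = 4$ provides the analogous bound with an $N^{\varepsilon_0}$-loss for arbitrarily small $\varepsilon_0 > 0$. Interpolating this $L^q$-bound against \eqref{Stri_infi_T} via H\"older as above, I will choose $q$ close enough to $4$ (and, in the $d=2$ case, $\varepsilon_0$ small enough) that the aggregate $N$-exponent falls at or below $s/2$, yielding the conclusion with $\delta = (4-q)/8 > 0$.

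The main obstacle is purely algebraic: verifying that the $N$-exponent in the $d\ge 3$ interpolation collapses to exactly $s_c/2$, and tuning $p$ (or $q$ and $\varepsilon_0$) to keep $\delta$ strictly positive while matching the claimed regularity. No new analytic input beyond \eqref{Stri_2_T}, \eqref{Stri_infi_T}, and Proposition~\ref{L4stri_1d_T} is needed.
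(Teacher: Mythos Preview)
Your proposal is correct and follows the same interpolation scheme the paper indicates in the sentence preceding the statement: H\"older between the $L^p$-Strichartz estimate \eqref{Stri_2_T} for some $p<4$ and the $L^\infty$-bound \eqref{Stri_infi_T}. For $d=1,2$ your detour through the $L^4$-estimate and then back down to $L^q$ is unnecessary---Remark~(ii) after Proposition~\ref{Stri_est_T} already gives \eqref{Stri_2_T} directly for any $p<4$ with an $N^{a}$-loss for arbitrary $a>0$---but your version is equally valid and leads to the same conclusion.
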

By Propositions~\ref{multiest_T}, ~\ref{L4stri_1d_T}, and ~\ref{Stri_est_T}, we have the followings:
\begin{cor}\label{U4_Stri_T_1d}
For $\sigma \in \R \backslash \{0\}$ and $0<T\le 1$, we have
\[
\|u\|_{L^4([0,T)\times \T)}\lesssim \|u\|_{U^4_{\sigma}L^2}. 
\]
\end{cor}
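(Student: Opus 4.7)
The plan is to deduce this corollary as an immediate transference of Proposition~\ref{L4stri_1d_T} into the $U^4$ framework via Proposition~\ref{multiest_T}. Since Proposition~\ref{L4stri_1d_T} gives a linear Strichartz estimate on a free solution, and Proposition~\ref{multiest_T} is precisely the machine that upgrades such estimates from the orbit $\{e^{it\sigma\Delta}\phi : \phi \in L^2\}$ to the whole space $U^p_\sigma L^2$, the conclusion should drop out with no further work.

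Concretely, I would apply Proposition~\ref{multiest_T} with $m=1$, $p=q=4$, $I = [0,T)$, $\rho(t)\equiv 1$, $\sigma_1 = \sigma$, and with $T_0$ taken to be the identity map from $L^2(\T)$ into $L^1_{\mathrm{loc}}(\T)$ (which is a valid choice since $\T$ has finite measure, so $L^2(\T) \hookrightarrow L^1_{\mathrm{loc}}(\T)$ is a bona fide bounded linear map). Under this choice the hypothesis of Proposition~\ref{multiest_T} reads
\[
\|e^{it\sigma\Delta}\phi\|_{L^4([0,T)\times \T)} \lesssim \|\phi\|_{L^2(\T)},
\]
which is precisely Proposition~\ref{L4stri_1d_T}. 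The conclusion then furnishes a bounded operator $T : U^4_\sigma L^2 \to L^4([0,T)\times \T)$ with $T(u)(t,x) = T_0(u(t))(x) = u(t,x)$ almost everywhere and $\|T(u)\|_{L^4([0,T)\times \T)} \lesssim \|u\|_{U^4_\sigma L^2}$, which is exactly the claimed inequality.

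There is essentially no obstacle here; the only point that deserves a sentence of verification is that the identity fits into the framework of Proposition~\ref{multiest_T}, i.e.\ that $T_0$ maps $L^2(\T)$ into $L^1_{\mathrm{loc}}(\T)$, which is immediate on the compact torus. Everything else is a direct invocation of the already-established linear Strichartz bound together with the transference principle for $U^p$-atoms.
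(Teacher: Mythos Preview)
Your proof is correct and is exactly the approach the paper takes: the corollary is stated as an immediate consequence of Propositions~\ref{multiest_T} and~\ref{L4stri_1d_T}, applied with $m=1$, $p=q=4$, and $T_0$ the identity. You have simply spelled out the details of this invocation.
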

\begin{cor}\label{UV_Stri_T}
Let $\sigma \in \R \backslash \{0\}$.
Assume 
\begin{alignat*}{3}
&s\ge \frac d2-\frac{d+2}p& &\quad{\rm if}\  p>\frac{2(d+2)}d, \\
&s>0& &\quad{\rm if}\  1 \le p\le \frac{2(d+2)}d. 
\end{alignat*}
For any $0<T\le 1$, dyadic number $N\geq 1$, and $C\in \CC_{N}$, we have
\begin{align}
&\|P_{N}u\|_{L^{p}([0,T)\times \T^{d})}\lesssim N^{s}\|P_{N}u\|_{U_{\sigma}^{p}L^{2}},\label{U_Stri_T}\\
&\|P_{C}u\|_{L^{p}([0,T)\times \T^{d})}\lesssim N^{s}\|P_{C}u\|_{U_{\sigma}^{p}L^{2}}.
\notag
\end{align}
\end{cor}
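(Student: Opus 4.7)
The plan is to transfer the linear Strichartz bounds in Proposition~\ref{Stri_est_T} to functions in $U^p_\sigma L^2$ via the abstract transference principle of Proposition~\ref{multiest_T} applied with $m=1$ and $\rho \equiv 1$. The two estimates in the corollary are established in exactly the same way, the first using part (i) of Proposition~\ref{Stri_est_T} and the second using part (ii). The main technical wrinkle is the desire to have the frequency-localized norm $\|P_N u\|_{U^p_\sigma L^2}$ (rather than $\|u\|_{U^p_\sigma L^2}$) on the right-hand side; I will arrange this by a standard fattening trick.

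For the first inequality, introduce $\widetilde{P}_N := P_{N/2}+P_N+P_{2N}$, which satisfies $\widetilde{P}_N P_N = P_N$ and enjoys the same $L^p$-Strichartz bound as $P_N$:
\[
\|\widetilde{P}_N e^{it\sigma\Delta}\phi\|_{L^p([0,T)\times \T^d)} \lesssim N^s\|\phi\|_{L^2(\T^d)},
\]
valid in the stated range of $(s,p)$. When $p>\frac{2(d+2)}{d}$, or $p=\frac{2(d+2)}{d}$ with $s>0$, this is exactly Proposition~\ref{Stri_est_T}(i); for $1\le p<\frac{2(d+2)}{d}$ with $s>0$ it follows from the endpoint case via H\"older on the finite-measure domain $[0,T)\times \T^d$. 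Applying Proposition~\ref{multiest_T} to the single-variable linear operator $T_0\phi := \widetilde{P}_N\phi$ then yields
\[
\|\widetilde{P}_N u\|_{L^p([0,T)\times \T^d)} \lesssim N^s\|u\|_{U^p_\sigma L^2}
\]
for every $u\in U^p_\sigma L^2$.

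Because $P_N$ commutes with $e^{it\sigma\Delta}$ and is $L^2$-bounded, it sends $U^p$-atoms to $U^p$-atoms (up to a uniform constant), so $P_N u\in U^p_\sigma L^2$ whenever $u\in U^p_\sigma L^2$, with $\|P_N u\|_{U^p_\sigma L^2}\lesssim \|u\|_{U^p_\sigma L^2}$. Substituting $P_N u$ in place of $u$ in the previous display and using $\widetilde{P}_N P_N=P_N$ gives
\[
\|P_N u\|_{L^p([0,T)\times \T^d)} = \|\widetilde{P}_N P_N u\|_{L^p}\lesssim N^s\|P_N u\|_{U^p_\sigma L^2},
\]
which is the first stated inequality. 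The second inequality is proved by the identical argument, using a slightly enlarged cube $\widetilde{C}\supset C$ of comparable side-length (so that $P_{\widetilde{C}}P_C=P_C$) and invoking part (ii) of Proposition~\ref{Stri_est_T} for the starting linear bound. No serious obstacle arises; the entire proof is a routine invocation of Propositions~\ref{Stri_est_T} and \ref{multiest_T}, the only care-requiring point being the fattened-projector device used to produce the frequency-localized norm on the right-hand side.
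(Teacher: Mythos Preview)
Your proposal is correct and takes essentially the same approach as the paper, which simply records the corollary as an immediate consequence of Propositions~\ref{multiest_T} and~\ref{Stri_est_T} without writing out any details. Your fattened-projector device to obtain the frequency-localized $U^p_\sigma L^2$-norm on the right is the standard way to make this precise and is implicit in how such transferred Strichartz bounds are used throughout the paper.
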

Next, we give the bilinear Strichartz estimates. 
Recall that $\eta$ is defined in \eqref{cutoff}.

\begin{prop}\label{beStest_T}
Let $d\ge 1$ and $\sigma_{1}$, $\sigma_{2}\in \R \backslash \{0\}$ with $\sigma_1+\sigma_2 \neq 0$. 
Assume 
\begin{alignat*}{3}
&s\ge s_c\ \Big( \! =\frac d2-1 \Big)& &\quad{\rm if}\  d\ge 3,\\
&s>0& &\quad{\rm if}\ d=1\ {\rm or}\ 2.  
\end{alignat*}
{\rm (i)}\ There exists $\delta >0$ such that 
for any 
dyadic numbers $H$, $L$ with $H\geq L\geq 1$, we have
\begin{equation}\label{beStest_T_1}
\begin{split}
&\|\eta (t)P_{H}(e^{it\sigma_{1} \Delta}\phi_{1})\cdot P_{L}(e^{it\sigma_{2} \Delta}\phi_{2})\|_{L^{2}(\R \times \T^{d})}\\
&\lesssim L^{s}\left(\frac{L}{H}+\frac{1}{L}\right)^{\delta}\|P_{H}\phi_{1}\|_{L^{2}(\T^{d})}
\|P_{L}\phi_{2}\|_{L^{2}(\T^{d})}. 
\end{split}
\end{equation}
{\rm (ii)}\ There exists $\delta>0$ such that 
for any dyadic numbers $L$, $H$, $H'$ with $H\sim H'\gg L\geq 1$, we have
\begin{equation}\label{beStest_T_2}
\begin{split}
&\|\eta (t)P_{L}[P_{H}(e^{it\sigma_{1} \Delta}\phi_{1})\cdot P_{H'}(e^{it\sigma_{2} \Delta}\phi_{2})]\|_{L^{2}(\R \times \T^{d})}\\
&\lesssim L^{s}\left(\frac{L}{H}+\frac{1}{L}\right)^{\delta}\|P_{H}\phi_{1}\|_{L^{2}(\T^{d})}
\|P_{H'}\phi_{2}\|_{L^{2}(\T^{d})}. 
\end{split}
\end{equation}
\end{prop}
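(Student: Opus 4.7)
The plan is to reduce each bilinear estimate to a Fourier-side sum via Plancherel, exploit the nonresonance $\sigma_1+\sigma_2\neq 0$ to force transversality, and decompose the high-frequency inputs into thin slabs so that the $L^4$-Strichartz gain provided by \eqref{Stri_3_T} can be applied.

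Writing $u_j=e^{it\sigma_j\Delta}\phi_j$ and applying Plancherel in $(t,x)$ gives
\[
\|\eta(t) P_H u_1\cdot P_L u_2\|_{L^2_{t,x}}^2=\sum_{\xi\in\Z^d}\int_\R\bigg|\sum_{\xi_1+\xi_2=\xi}\widehat{\phi_1}(\xi_1)\widehat{\phi_2}(\xi_2)\,\widehat{\eta}\bigl(\tau+\Phi(\xi_1,\xi)\bigr)\bigg|^2 d\tau,
\]
where $\Phi(\xi_1,\xi)=(\sigma_1+\sigma_2)|\xi_1|^2-2\sigma_2\,\xi\cdot\xi_1+\sigma_2|\xi|^2$. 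Since $\widehat{\eta}$ is essentially supported on a unit interval and $\sigma_1+\sigma_2\neq 0$, the only $\xi_1$ contributing lie on a spherical shell of radius $\sim H$ centered at $\tfrac{\sigma_2}{\sigma_1+\sigma_2}\xi$, intersected with the dyadic annuli $|\xi_1|\sim H$ and $|\xi-\xi_1|\sim L$; on this set $|\nabla_{\xi_1}\Phi|\sim H$.

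For part (i) we follow Wang~\cite{Wa}. First decompose $P_L\phi_2$ into $O(1)$ cubes of side $L$ so that the direction of $\nabla\Phi$ is essentially constant within each cube; then decompose $P_H\phi_1=\sum_{R\in\mathcal{R}_L(H)}P_R\phi_1$ into slabs of thickness $L$ whose normal is this fixed direction. The Fourier support of $P_R u_1\cdot P_L u_2$ lies in the Minkowski sum of $R$ with a cube of side $L$, so slabs with sufficient Fourier separation produce almost orthogonal outputs and $\|\sum_R\cdot\|_{L^2}^2\lesssim\sum_R\|\cdot\|_{L^2}^2$. Within a single slab, H\"older combined with \eqref{Stri_3_T} at $(N,M)=(H,L)$ controls the high piece by $H^{s/2}(L/H)^\delta\|P_R\phi_1\|_{L^2}$, while \eqref{Stri_3_T} (or Proposition~\ref{Stri_est_T}) yields $L^{s/2}\|P_L\phi_2\|_{L^2}$ for the low piece. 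Summing via Cauchy-Schwarz in $R$ and balancing the exponents against $s_c$ delivers \eqref{beStest_T_1}; the $(1/L)^\delta$ contribution in $(L/H+1/L)^\delta$ comes from the subcritical slack of \eqref{Stri_3_T} when $L$ is large.

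For part (ii) both inputs sit at frequency $\sim H$, so we decompose both: $P_H\phi_1=\sum_{R_1}P_{R_1}\phi_1$ and $P_{H'}\phi_2=\sum_{R_2}P_{R_2}\phi_2$ with $R_1,R_2\in\mathcal{R}_L(H)$. Because the output is restricted to $|\xi|\lesssim L$ and $\supp\widetilde{P_{R_1}u_1\cdot P_{R_2}u_2}\subset R_1+R_2$, only the diagonal pairs $(R_1,R_2)$ with $R_1\approx-R_2$ survive, of which there are $\sim H/L$. Applying \eqref{Stri_3_T} to each factor contributes a $(L/H)^\delta$ gain, and $\sum_{R_1}\|P_{R_1}\phi_1\|_{L^2}\|P_{-R_1}\phi_2\|_{L^2}\lesssim\|\phi_1\|_{L^2}\|\phi_2\|_{L^2}$ closes the diagonal sum, giving \eqref{beStest_T_2}.

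The main obstacle is calibrating the slab directions with the $\xi$-dependent direction of $\nabla\Phi$; in part (i) this is handled by the cube-by-cube reduction on $P_L\phi_2$, and in part (ii) by the observation that $(R_1+R_2)\cap\{|\xi|\lesssim L\}\neq\emptyset$ forces $R_1,R_2$ to be antipodal. The resulting polynomial losses from these case splits are absorbed into the $(L/H)^\delta$ Strichartz gain, so no part of the bound \eqref{beStest_T_1}--\eqref{beStest_T_2} is wasted.
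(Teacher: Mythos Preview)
There is a genuine gap in both parts: your slab scale is too coarse, and this produces the wrong power of the high frequency. In part (i) you decompose $P_H\phi_1$ into slabs $R\in\mathcal{R}_L(H)$ and invoke \eqref{Stri_3_T} at $(N,M)=(H,L)$, which gives $H^{s/2}(L/H)^\delta\|P_R\phi_1\|_{L^2}$; combined with $L^{s/2}$ for the low factor and the spatial almost-orthogonality you obtain $H^{s/2}L^{s/2}(L/H)^\delta$, whereas \eqref{beStest_T_1} demands $L^s(L/H+1/L)^\delta$. The missing factor $(L/H)^{s/2}$ cannot be ``balanced against $s_c$'': the $\delta$ supplied by \eqref{Stri_3_T} is a fixed small number independent of $s$, and at $s=s_c$ there is no subcritical slack to spend. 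The same defect appears in part (ii), where two applications of \eqref{Stri_3_T} at $(H,L)$ yield $H^s(L/H)^{2\delta}$ rather than $L^s(L/H+1/L)^\delta$.

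The paper (following Wang for (i) and extending his idea for (ii)) uses a \emph{two-scale} decomposition: first localize each high-frequency input to cubes $C\in\mathcal{C}_L$ of side $L$ (Galilean invariance makes the $L^4$-Strichartz bound on $P_C$ cost only $L^{s/2}$, not $H^{s/2}$), and \emph{then} slice each such cube into much thinner strips of width $M=\max\{L^2/H,1\}$ in the direction $\xi_0/|\xi_0|$ of the cube center. Now \eqref{Stri_3_T} is applied at $(N,M)=(L,M)$, giving $L^{s/2}(M/L)^\delta\sim L^{s/2}(L/H+1/L)^\delta$, which is exactly the right factor. The price is that the strips $R_{1,k}$ and $R_{2,l}$ are indexed by \emph{two} parameters, and the almost-orthogonality now has to be established in both $\tau$ (via $\sigma_1k-\sigma_2l$) and the output $\xi$-direction (via $k+l$); this is precisely where the hypothesis $\sigma_1+\sigma_2\neq0$ is used, since it makes the map $(k,l)\mapsto(\sigma_1k-\sigma_2l,\,k+l)$ injective up to bounded multiplicity. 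Your part (ii) uses only the spatial constraint $R_1\approx -R_2$ and never invokes $\sigma_1+\sigma_2\neq0$, so even with the correct scales the orthogonality step would be incomplete.
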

\begin{rem}\label{bilin_eta_remark}
We note that $\eta$ defied in \eqref{cutoff} satisfies $\eta (t)\gtrsim 1$ for $0< t\le 1$. 
Therefore, we have
\[
\begin{split}
&\|P_{H}(e^{it\sigma_{1} \Delta}\phi_{1})\cdot P_{L}(e^{it\sigma_{2} \Delta}\phi_{2})\|_{L^{2}([0,T) \times \T^{d})}
\\
&\quad
\lesssim 
\|\eta (t)P_{H}(e^{it\sigma_{1} \Delta}\phi_{1})\cdot P_{L}(e^{it\sigma_{2} \Delta}\phi_{2})\|_{L^{2}(\R \times \T^{d})},\\
&\|P_{L}[P_{H}(e^{it\sigma_{1} \Delta}\phi_{1})\cdot P_{H'}(e^{it\sigma_{2} \Delta}\phi_{2})]\|_{L^{2}([0,T) \times \T^{d})}
\\
&\quad
\lesssim 
\|\eta (t)P_{L}[P_{H}(e^{it\sigma_{1} \Delta}\phi_{1})\cdot P_{H'}(e^{it\sigma_{2} \Delta}\phi_{2})]\|_{L^{2}(\R \times \T^{d})}
\end{split}
\]
for any $0<T\le 1$. 
\end{rem}
To prove Proposition~\ref{beStest_T}, 
we use the following lemma. 
\begin{lemm}\label{bilin_lemm_l4_eta_est}
Let $d\ge 1$ and $\sigma \in \R \backslash \{0\}$. 
Assume 
\begin{alignat*}{3}
&s\ge s_c\ \Big( \! =\frac d2-1 \Big)& &\quad{\rm if}\  d\ge 3,\\
&s>0& &\quad{\rm if}\ d=1\ {\rm or}\ 2.  
\end{alignat*}
There exists $\delta >0$ such that
for any $R\in \RR_{M}(N)$ with dyadic numbers $N\geq M\geq 1$, we have
\begin{equation}\label{Stri_3_T22}
\big\| \eta (t)^{\frac{1}{2}}P_{R}e^{it\sigma \Delta}\varphi \big\|_{L^{4}(\R \times \T^{d})}\lesssim N^{\frac{s}{2}}\left(\frac{M}{N}\right)^{\delta}\|P_{R}\varphi \|_{L^{2}(\T^{d})}.
\end{equation}
\end{lemm}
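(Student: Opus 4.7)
\textbf{Proof plan for Lemma \ref{bilin_lemm_l4_eta_est}.}
The plan is to reduce the global-in-time estimate to the unit-time $L^4$ Strichartz bound \eqref{Stri_3_T} by exploiting the decay of $\eta$ and the time-translation invariance of $e^{it\sigma\Delta}$. Since $\eta(t) = \bigl(\sin(\pi t/2)/(\pi t)\bigr)^2$, one has the elementary pointwise bound $\eta(t)\lec \LR{t}^{-2}$, so in particular $\eta(t)^2\lec \LR{t}^{-4}$ and $\sum_{k\in\Z}\sup_{t\in[k,k+1)}\eta(t)^2<\infty$.

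The first step is the partition
\EQQS{
\big\|\eta(t)^{\frac12}P_{R}e^{it\sigma\Delta}\vp\big\|_{L^{4}(\R\times\T^{d})}^{4}
=\sum_{k\in\Z}\int_{k}^{k+1}\!\!\int_{\T^{d}}\eta(t)^{2}\big|P_{R}e^{it\sigma\Delta}\vp(x)\big|^{4}dx\,dt,
}
followed by pulling the supremum of $\eta(t)^{2}$ out on each unit interval. This reduces the problem to controlling $\|P_{R}e^{it\sigma\Delta}\vp\|_{L^{4}([k,k+1)\times\T^{d})}$ uniformly in $k\in\Z$.

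Next I would apply time-translation invariance: writing $e^{it\sigma\Delta}\vp=e^{i(t-k)\sigma\Delta}(e^{ik\sigma\Delta}\vp)$ and changing variables $t\mapsto t+k$, one reduces the unit-interval norm to
\EQQS{
\|P_{R}e^{it\sigma\Delta}\vp\|_{L^{4}([k,k+1)\times\T^{d})}
=\|P_{R}e^{it\sigma\Delta}(e^{ik\sigma\Delta}\vp)\|_{L^{4}([0,1)\times\T^{d})}.
}
Since $e^{ik\sigma\Delta}$ is an isometry on $L^{2}(\T^{d})$ and commutes with $P_{R}$, the estimate \eqref{Stri_3_T} applied on $[0,1)$ gives a bound of the form $N^{s/2}(M/N)^{\delta}\|P_{R}\vp\|_{L^{2}(\T^{d})}$ that is independent of $k$.

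Finally, summing the pieces,
\EQQS{
\big\|\eta(t)^{\frac12}P_{R}e^{it\sigma\Delta}\vp\big\|_{L^{4}(\R\times\T^{d})}^{4}
\lec \Big(\sum_{k\in\Z}\sup_{t\in[k,k+1)}\eta(t)^{2}\Big)\,N^{2s}\Big(\frac{M}{N}\Big)^{4\delta}\|P_{R}\vp\|_{L^{2}(\T^{d})}^{4},
}
and taking fourth roots produces \eqref{Stri_3_T22}. There is no essential obstacle here; the only point requiring care is to verify that the $k$-sum converges, which is immediate from the $\LR{t}^{-2}$ decay of $\eta$. The same decomposition-plus-translation scheme will be the workhorse for upgrading the other unit-time Strichartz inputs to global-in-time bounds used in Proposition~\ref{beStest_T}.
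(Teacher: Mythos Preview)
Your proposal is correct and follows essentially the same approach as the paper: decompose $\R$ into unit intervals, pull out $\sup\eta^{2}$ on each, reduce to the $[0,1)$-Strichartz bound \eqref{Stri_3_T} via time translation (the paper writes this out explicitly on the Fourier side rather than invoking that $e^{ik\sigma\Delta}$ is an $L^{2}$-isometry commuting with $P_{R}$), and sum using the $\LR{t}^{-4}$ decay of $\eta^{2}$. There is nothing to add.
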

\begin{proof}
For $q\in \Z$, we put $I_q:=[q,q+1)$. Then we have
\begin{equation}\label{eta_pr_phi_decom}
\begin{split}
\big\| \eta (t)^{\frac{1}{2}}P_{R}e^{it\sigma \Delta}\varphi \big\|_{L^{4}(\R \times \T^{d})}^4
&=\sum_{q=-\infty}^{\infty} \big\| \eta (t)^{\frac{1}{2}}P_{R}e^{it\sigma \Delta}\varphi \big\|_{L^{4}(I_q \times \T^{d})}^4\\
&\le \sum_{q=-\infty}^{\infty}\|\eta (t)\|_{L^{\infty}_t(I_q)}^2
\|P_{R}e^{it\sigma \Delta}\varphi \|_{L^{4}(I_q \times \T^{d})}^4. 
\end{split}
\end{equation}
By changing variable $t\mapsto t+q$, it holds that
\[
\begin{split}
\|P_{R}e^{it\sigma \Delta}\varphi \|_{L^4(I_q\times \T^d)}
&=\left\|P_{R}\sum_{\xi \in \Z^d}e^{i\xi \cdot x}e^{-it\sigma|\xi|^2}\widehat{\varphi}(\xi)\right\|_{L^4(I_q\times \T^d)}\\
&=\left\|P_{R}\sum_{\xi \in \Z^d}e^{i\xi \cdot x}e^{-it\sigma|\xi|^2}e^{-iq\sigma|\xi|^2}\widehat{\varphi}(\xi)\right\|_{L^4([0,1)\times \T^d)}. 
\end{split}
\]
Therefore, by using (\ref{Stri_3_T}), we have
\[
\|P_{R}e^{it\sigma \Delta}\varphi \|_{L^4(I_q\times \T^d)}
\lesssim N^{\frac{s}{2}}\left(\frac{M}{N}\right)^{\delta}\left\|
P_R\sum_{\xi \in \Z^d}e^{i\xi \cdot x}e^{-iq\sigma|\xi|^2}\widehat{\varphi}(\xi)\right\|_{L^2_x}.
\]
Thanks to Parseval's identity, we obtain
\[
\left\|
P_R\sum_{\xi \in \Z^d}e^{i\xi \cdot x}e^{-iq\sigma|\xi|^2}\widehat{\varphi}(\xi)\right\|_{L^2_x}
\sim \left\|\left\{\ee_{R}(\xi)e^{-iq\sigma|\xi|^2}\widehat{\varphi}(\xi)\right\}_{\xi \in \Z^d}\right\|_{l^2_{\xi}}
\sim \|P_R\varphi\|_{L^2_x}
\]
for any $q\in \Z$. 
Therefore, we get
\begin{equation}\label{vl4q_est}
\sup_{q\in \Z}\|P_{R}e^{it\sigma \Delta}\varphi \|_{L^4(I_q\times \T^d)}^4
\lesssim N^{2s}\left(\frac{M}{N}\right)^{4\delta}\|P_R\varphi\|_{L^2_x}^4.
\end{equation}
On the other hand, it holds that
\begin{equation}\label{psisup_est}
\sum_{q=-\infty}^{\infty}\|\eta(t)\|_{L^{\infty}_t(I_q)}^2
=\sum_{q=-\infty}^{\infty}\left(\sup_{q\le t\le q+1}\frac{\sin \frac{\pi t}{2}}{\pi t}\right)^4
\lesssim \sum_{q=1}^{\infty}\frac{1}{q^4}<\infty. 
\end{equation}
The estimate  (\ref{Stri_3_T22}) follows from
(\ref{eta_pr_phi_decom}), (\ref{vl4q_est}), and (\ref{psisup_est}). 
\end{proof}
\begin{rem}
From
Proposition~\ref{Stri_est_T} and 
the same argument as in the proof of Lemma~\ref{bilin_lemm_l4_eta_est}, 
we also have
\[
\begin{split}
&\big\| \eta (t)^{\frac{1}{p}}P_Ne^{it\sigma \Delta}\varphi \big\|_{L^p(\R\times \T^d)}
\lesssim N^s\|P_N\varphi\|_{L^2(\T^d)},\\
&\big\| \eta (t)^{\frac{1}{p}}P_Ce^{it\sigma \Delta}\varphi \big\|_{L^p(\R\times \T^d)}
\lesssim N^s\|P_C\varphi\|_{L^2(\T^d)}
\end{split}
\]
for any dyadic number $N\ge 1$ and $C\in \mathcal{C}_N$, where
\begin{alignat*}{3}
&s\ge \frac d2-\frac{d+2}p& &\quad{\rm if}\  p>\frac{2(d+2)}d, \\
&s>0& &\quad{\rm if}\  1\le p\le \frac{2(d+2)}d. 
\end{alignat*}
Furthermore, by applying Proposition~\ref{multiest_T}, we obtain
\begin{align}
\big\| \eta (t)^{\frac{1}{p}}P_Nu \big\|_{L^p(\R\times \T^d)}
\lesssim N^s\|P_Nu\|_{U^p_{\sigma}L^2}, \label{eta_lp_up_est}\\
\big\| \eta (t)^{\frac{1}{p}}P_Cu \big\|_{L^p(\R\times \T^d)}
\lesssim N^s\|P_Cu\|_{U^p_{\sigma}L^2}.\label{eta_lp_up_est_C}
\end{align}
\end{rem}
\begin{proof}[Proof of {\rm Proposition~\ref{beStest_T}}]
We put $u_{j}=e^{it\sigma_{j}\Delta}\phi_{j}$ $(j=1,2)$. 
To prove (\ref{beStest_T_1}) and (\ref{beStest_T_2}), 
we use the argument in [\cite{HTT11} Proposition\ 3.5]. 
Because the proof of (\ref{beStest_T_1}) is simpler 
(decomposition for $u_2$ is not needed),
we only give the proof of (\ref{beStest_T_2}). 

We decompose $P_{H}u_{1}=\sum_{C_{1}\in \CC_{L}}P_{C_{1}}P_{H}u_{1}$. 
For fixed $C_{1}\in \CC_{L}$, let $\xi_{0}=\xi_0(C_1)$ be the center of $C_{1}$.
Note that $|\xi_0| \sim H$.
Since $\xi_{1}\in C_{1}$ and $|\xi_{1}+\xi_{2}|\leq 2L$ imply $|\xi_{2}+\xi_{0}|\leq 3L$, we obtain
\[
\|\eta (t)P_{L}(P_{C_{1}}P_{H}u_{1}\cdot P_{H'}u_{2})\|_{L^{2}(\R \times \T^{d})}
\leq \|\eta (t)P_{C_{1}}P_{H}u_{1}\cdot P_{C_{2}(C_{1})}P_{H'}u_{2}\|_{L^{2}(\R \times \T^{d})},
\] 
where $C_{2}(C_{1})$ is a cube contained in $\{\xi_{2}\in \Z^{d}|\ |\xi_{2}+\xi_{0}|\leq 3L\}$. 
If we prove
\begin{equation}\label{be_C_decom_T}
\begin{split}
&\|\eta (t)P_{C_{1}}P_{H}u_{1}\cdot P_{C_{2}(C_{1})}P_{H'}u_{2}\|_{L^{2}(\R \times \T^{d})}\\
&\lesssim L^{s}\left(\frac{L}{H}+\frac{1}{L}\right)^{\delta}\|P_{C_{1}}P_{H}\phi_{1}\|_{L^{2}(\T^d)}\|P_{C_{2}(C_{1})}P_{H'}\phi_{2}\|_{L^{2}(\T^d)}
\end{split}
\end{equation}
for some $\delta >0$, then we obtain
\[
\begin{split}
&\|\eta (t)P_{L}(P_{H}u_{1}\cdot P_{H'}u_{2})\|_{L^{2}(\R \times \T^{d})}\\
&\lesssim \sum_{C_{1}\in \CC_{L}}L^{s}\left(\frac{L}{H}+\frac{1}{L}\right)^{\delta}\|P_{C_{1}}P_{H}\phi_{1}\|_{L^{2}(\T^d)}\|P_{C_{2}(C_{1})}P_{H'}\phi_{2}\|_{L^{2}(\T^d)}\\
&\lesssim L^{s}\left(\frac{L}{H}+\frac{1}{L}\right)^{\delta}\left(\sum_{C_{1}\in \CC_{L}}\|P_{C_{1}}P_{H}\phi_{1}\|_{L^{2}(\T^d)}^{2}\right)^{\frac 12}
\left(\sum_{C_{1}\in \CC_{L}}\|P_{C_{2}(C_{1})}P_{H'}\phi_{2}\|_{L^{2}(\T^d)}^{2}\right)^{\frac 12}
\end{split}
\]
and the proof is completed.

Now, we prove the estimate (\ref{be_C_decom_T}) for some $\delta>0$.
Set
$M= \max \big\{ \frac{L^{2}}H,1 \big\}$ and
\[
\begin{split}
R_{1,k}&=
\left\{ \xi_{1}\in C_{1} \middle| \ \frac{(\xi_1-\xi_0) \cdot \xi_0}{|\xi_0|} \in [Mk, M(k+1)] \right\},
\\
R_{2,l}&=
\left\{\xi_{2}\in C_{2}(C_{1}) \middle| \ \frac{(\xi_2+\xi_0) \cdot \xi_0}{|\xi_0|} \in [Ml, M(l+1)] \right\}.
\end{split}
\]
Since
$\xi_0$ is the center of $C_1 \in \CC_{L}$,
the strip $R_{1,k}$ is not empty set if $|k| \lec \frac LM$.
Similarly,
$R_{2,l}$ is not empty set if $|l| \lec \frac LM$.
We decompose $C_{1}=\bigcup_{|k| \lesssim \frac LM}R_{1,k}$ and $C_{2}(C_{1})=\bigcup_{|l|\lesssim \frac LM}R_{2,l}$.
Therefore, we have
\begin{equation}\label{C1C2_R1R2_decom}
\begin{split}
P_{C_{1}}P_{H}u_{1}\cdot P_{C_{2}(C_{1})}P_{H'}u_{2}
&= \sum_{|k|, |l|\lec \frac LM}
P_{R_{1,k}}P_{H}u_{1}\cdot P_{R_{2,l}}P_{H'}u_{2}. 
\end{split}
\end{equation}
It follows from $\xi_{1}\in R_{1,k}$ that
\[
|\xi_{1}|^{2} - |\xi_0|^2
= 2 (\xi_1-\xi_0) \cdot \xi_0 + |\xi_1-\xi_0|^2
= 2 M|\xi_0|k+O( HM ).
\]
Similarly,
for $\xi_{2}\in R_{2,l}$,
we have
\[
|\xi_2|^{2} - |\xi_0|^2
= -2 (\xi_2+\xi_0) \cdot \xi_0 + |\xi_2+\xi_0|^2
= -2 M|\xi_0|l+O(HM).
\]
Hence, there exists a constant $A>0$ which is independent of 
$k$ and $l$ such that 
\begin{equation}\label{loc_time_freq}
\big|
\sigma_1 |\xi_1|^2+\sigma_2|\xi_2|^2
-
2M |\xi_0| (\sigma_1 k - \sigma_2 l)
- (\sigma_1+\sigma_2) |\xi_0|^2
\big|
\le A HM
\end{equation}
for $\xi_{1}\in R_{1,k}$ and $\xi_2\in R_{2,l}$.

Set
\[
\begin{split}
F_{k,l}(\tau,\xi)&:=\F[\eta (t)P_{R_{1,k}}P_{H}u_{1}\cdot P_{R_{2,l}}P_{H'}u_{2}](\tau,\xi)\\
&\ =\sum_{\xi_1+\xi_2=\xi}
\widehat{\eta}(\tau +\sigma_1|\xi_1|^2+\sigma_2|\xi_2|^2)
\F_x[P_{R_{1,k}}P_{H}\phi_{1}](\xi_1)\F_x[P_{R_{2,l}}P_{H'}\phi_{2}](\xi_2). 
\end{split}
\]
A direct calculation with \eqref{cutoff} yields that
\begin{equation}
\widehat{\eta}(\tau)=(\ee_{[-\frac{\pi}{2},\frac{\pi}{2}]}*\ee_{[-\frac{\pi}{2},\frac{\pi}{2}]})(\tau).
\label{etaft1}
\end{equation}
It follows from \eqref{loc_time_freq} that
\begin{equation}
{\rm supp}F_{k,l}
\subset
\left\{
(\tau,\xi)
\in \R \times \Z^d
\, \middle| \,
\begin{aligned}
&\big| \tau + 2M |\xi_0| (\sigma_1k-\sigma_2l)+ (\sigma_1+\sigma_2) |\xi_0|^2\big|
\le
2 A HM,
\\
& \xi \cdot \frac{\xi_0}{|\xi_0|} \in [ M(k+l) , M(k+l+2)]
\end{aligned}
\right\}.
\label{suppFkl}
\end{equation}
Then,
there exists a constant $A' >0$ which is independent of 
$k$, $l$, $k'$, and $l'$ such that 
\begin{equation}
{\rm supp}F_{k,l}\cap {\rm supp}F_{k',l'}=\emptyset\ \
\label{alorF}
\end{equation}
holds if $|k-k'| + |l-l'| \ge A'$.
Indeed,
by \eqref{suppFkl},
we have \eqref{alorF} if
\[
|\sigma_1(k-k')-\sigma_2(l-l')| \ge 4 A \frac H{|\xi_0|}
\ \text{ or } \
|(k-k') + (l-l')| \ge 4.
\]
From $\sigma_1 + \sigma_2 \neq 0$ and $|\xi_0| \sim H$,
this condition is equivalent to
$|k-k'| + |l-l'| \ge A'$
for some $A'>0$.

It follows from \eqref{alorF} that
\begin{equation}\label{almost_ortho_est}
\begin{split}
\bigg\|\sum_{|k|, |l|\lec \frac LM}F_{k,l}(\tau,\xi) \bigg\|_{L^2_{\tau}l^2_{\xi}}^2
\lesssim \sum_{|k|, |l|\lec \frac LM}\left\|F_{k,l}(\tau,\xi)\right\|_{L^2_{\tau}l^2_{\xi}}^2.
\end{split}
\end{equation}
By (\ref{C1C2_R1R2_decom}) and (\ref{almost_ortho_est}), we have
\[
\|\eta (t)P_{C_{1}}P_{H}u_{1}\cdot P_{C_{2}(C_{1})}P_{H'}u_{2}\|_{L^{2}(\R \times \T^{d})}
\lesssim
\bigg( \sum_{|k|, |l| \lesssim \frac LM} \left\|F_{k,l}(\tau,\xi)\right\|_{L^2_{\tau}l^2_{\xi}}^2 \bigg)^{\frac 12}.
\]
Recall that 
$M= \max \big\{ \frac{L^{2}}H,1 \big\}$,
$R_{1,k}\in \RR_M(L)$, and $R_{2,l}\in \RR_M(3L)$.
The H\"older inequality and Lemma~\ref{bilin_lemm_l4_eta_est} yield that
\[
\begin{split}
\left\|F_{k,l}(\tau,\xi)\right\|_{L^2_{\tau}l^2_{\xi}}
&\lesssim \|\eta (t)P_{R_{1,k}}P_{H}u_{1}\cdot P_{R_{2,l}}P_{H'}u_{2}\|_{L^2(\R\times \T^d)}\\
&\le \big\| \eta (t)^{\frac{1}{2}}P_{R_{1,k}}P_{H}u_{1} \big\|_{L^4(\R\times \T^d)}
\big\| \eta (t)^{\frac{1}{2}}P_{R_{2,l}}P_{H'}u_{2} \big\|_{L^4(\R\times \T^d)}\\
&\lesssim L^{s}\left(\frac{L}{H}+\frac{1}{L}\right)^{\delta}\|P_{R_{1,k}}P_H\phi_1\|_{L^2_x}\|P_{R_{2,l}}P_{H'}\phi_2\|_{L^2_x}.
\end{split}
\]
Therefore, we obtain (\ref{be_C_decom_T}). 
\end{proof}
\begin{rem}\label{bilin_Stri_gene_bilin_op}
By the same argument in the proof of Proposition~\ref{beStest_T}, 
we can obtain 
\[
\begin{split}
&\|\eta (t)R_{L}[P_{H}(e^{it\sigma_{1}}\phi_{1}),P_{H'}(e^{it\sigma_{2}}\phi_{2})]\|_{L^{2}(\R \times \T^{d})}\\
&\lesssim L^{s}\left(\frac{L}{H}+\frac{1}{L}\right)^{\delta}\|P_{H}\phi_{1}\|_{L^{2}(\T^{d})}
\|P_{H'}\phi_{2}\|_{L^{2}(\T^{d})}, 
\end{split}
\]
where $R_L$ is a bilinear operator defined by
\[
\F_x[R_L(u_1,u_2)](\xi)=\sum_{\substack{\xi_1,\xi_2\in \Z^d\\ \xi_1+\xi_2=\xi}}\psi_L(a\xi_1+b\xi_2)\widehat{u_1}(\xi_1)\widehat{u_2}(\xi_2)
\]
for $a,b\in \R \backslash \{0\}$. 
\end{rem}

\begin{rem}
\label{rem:irrm1}
If we consider the estimate on the irrational tori $\T^d_\theta$,
$R_{1,k}$ and $R_{2,l}$ are replaced with
\[
\begin{split}
R_{1,k}&=
\left\{ \xi_{1}\in C_{1} \middle| \ \sum_{j=1}^d \theta_j^2 \frac{(\xi_{1,j}-\xi_{0,j}) \cdot \xi_{0,j}}{|\xi_{0}|} \in [Mk, M(k+1)] \right\},
\\
R_{2,l}&=
\left\{\xi_{2}\in C_{2}(C_{1}) \middle| \ \sum_{j=1}^d \theta_j^2 \frac{(\xi_{2,j}+\xi_{0,j}) \cdot \xi_{0,j}}{|\xi_{0}|} \in [Ml, M(l+1)] \right\},
\end{split}
\]
where $\xi_{m,j}$ denotes the $j$-th component of $\xi_m$ for $m=0,1,2$.
Hence, \eqref{loc_time_freq} is replaced with
\[
\Big|
\sum_{j=1}^d
\theta_j^2
(\sigma_1 |\xi_{1,j}|^2+\sigma_2|\xi_{2,j}|^2)
-
2M |\xi_{0}| (\sigma_1 k - \sigma_2 l)
- (\sigma_1+\sigma_2)
\sum_{j=1}^d
\theta_j^2
|\xi_{0,j}|^2
\Big|
\le A HM.
\]
With straightforward modifications,
the same calculation as in the proof works well.
\end{rem}
From Proposition~\ref{intpol_T},
we have the following. 
\begin{prop}\label{Ybe_2_T}
Let $d\ge 1$ and $\sigma_{1}$, $\sigma_{2}\in \R \backslash \{0\}$ with $\sigma_1+\sigma_2 \neq 0$.
Assume 
\begin{alignat*}{3}
&s\ge s_c\ \Big( \! =\frac d2-1 \Big)& &\quad{\rm if}\  d\ge 3,\\
&s>0& &\quad{\rm if}\ d=1\ {\rm or}\ 2.  
\end{alignat*}
{\rm (i)}\ There exists $\delta>0$ such that for any dyadic numbers $H$ and $L$ with $H\geq L\geq 1$, we have
\begin{equation}
\label{Ybe_HL_T}
\|\eta (t)P_{H}u_{1}\cdot P_{L}u_{2}\|_{L^{2}(\R\times \T^{d})}\lesssim L^{s}\left(\frac{L}{H}+\frac{1}{L}\right)^{\delta}\|P_{H}u_{1}\|_{Y^{0}_{\sigma_{1}}}\|P_{L}u_{2}\|_{Y^{0}_{\sigma_{2}}}.
\end{equation}
{\rm (ii)}\ There exists $\delta>0$ such that for any dyadic numbers $L$, $H$, and $H'$ with $H\sim H'\gg L\geq 1$, we have
\begin{equation}\label{Ybe_HHL_T}
\|\eta (t)P_L(P_{H}u_{1}\cdot P_{H'}u_{2})\|_{L^{2}(\R \times \T^{d})}\lesssim L^{s}\left(\frac{L}{H}+\frac{1}{L}\right)^{\delta}\|P_{H}u_{1}\|_{Y^{0}_{\sigma_{1}}}\|P_{H'}u_{2}\|_{Y^{0}_{\sigma_{2}}}.
\end{equation}
\end{prop}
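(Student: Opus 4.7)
The plan is to transfer the bilinear Strichartz estimates of Proposition~\ref{beStest_T} from free solutions to $Y^0_\sigma$ functions through the $U^p/V^p$ machinery developed in Section~\ref{func_sp_T}. I describe the argument for \eqref{Ybe_HHL_T}; the proof of \eqref{Ybe_HL_T} is completely analogous (and slightly simpler, as no outer Littlewood--Paley projector is present).

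First, I would apply Proposition~\ref{multiest_T} to the bilinear operator $T_0(\phi_1,\phi_2) := P_L(P_H \phi_1 \cdot P_{H'} \phi_2)$ with $m=2$, $p=q=2$, and $\rho(t)=\eta(t)$. Proposition~\ref{beStest_T}~(ii) supplies the required estimate on free solutions, so Proposition~\ref{multiest_T} yields the $U^2_{\sigma_1} L^2 \times U^2_{\sigma_2} L^2$ version
\[
\|\eta(t) P_L(P_H u_1 \cdot P_{H'} u_2)\|_{L^2(\R \times \T^d)}
\lesssim L^s \Big( \frac{L}{H} + \frac{1}{L} \Big)^\delta \|P_H u_1\|_{U^2_{\sigma_1} L^2} \|P_{H'} u_2\|_{U^2_{\sigma_2} L^2}.
\]

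Next, I would upgrade each factor from $U^2$ to $V^2$ by applying Proposition~\ref{intpol_T} twice. Fixing $P_{H'} u_2 \in U^2_{\sigma_2} L^2$ and viewing the left-hand side as a linear operator in $P_H u_1$, the above estimate provides the sharp $U^2_{\sigma_1} L^2$ bound with constant $C_2 := L^s (L/H + 1/L)^\delta \|P_{H'} u_2\|_{U^2_{\sigma_2} L^2}$. A crude auxiliary bound on $U^4_{\sigma_1} L^2$ follows from H\"older's inequality combined with the weighted $L^4$-Strichartz estimate \eqref{eta_lp_up_est} and the embedding $U^2 \hookrightarrow U^4$ on the second factor, producing a constant $C_4$ that is polynomially worse in $H$. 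Proposition~\ref{intpol_T} then upgrades the $U^2_{\sigma_1} L^2$ bound to a $V^2_{\sigma_1} L^2$ bound at the price of a multiplicative loss of size $1 + \log(C_4/C_2)$. Repeating the argument in the $u_2$ slot gives the fully symmetric $V^2_{\sigma_1} L^2 \times V^2_{\sigma_2} L^2$ estimate, and the embedding $Y^0_\sigma \hookrightarrow V^2_\sigma L^2$ from Proposition~\ref{UVembedpr} concludes \eqref{Ybe_HHL_T}.

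The main obstacle is ensuring that the logarithmic losses produced by the two applications of Proposition~\ref{intpol_T} do not swallow the bilinear gain $(L/H+1/L)^\delta$. The loss $\log(C_4/C_2)$ is controlled by $\log\langle H/L \rangle + \log\langle L\rangle$, and can be absorbed by choosing $\delta$ in Proposition~\ref{beStest_T} slightly larger than needed and then using the elementary inequalities $\log\langle H/L \rangle \lesssim (H/L)^\e$ and $\log\langle L \rangle \lesssim L^\e$ for any $\e>0$. This leaves the conclusion with a strictly positive, but possibly smaller, bilinear gain $\delta' > 0$, as required.
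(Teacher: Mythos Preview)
Your overall strategy---transfer the free-solution bilinear Strichartz to $U^2$ via Proposition~\ref{multiest_T}, produce a weaker $U^4$ bound by H\"older plus $L^4$-Strichartz, interpolate via Proposition~\ref{intpol_T} to reach $V^2$, then embed $Y^0 \hookrightarrow V^2$---is exactly the paper's. The gap is in the $U^4$ step and the subsequent absorption of the logarithm.

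Your crude $U^4$ estimate applies \eqref{eta_lp_up_est} to $P_H u_1$ and $P_{H'} u_2$ directly, so each $L^4$ factor costs a positive power of $H$, and $C_4$ is polynomially large in $H$. Hence $\log(C_4/C_2)$ contains a term of order $\log H$. This is fatal in the regime $L \sim 1$: there $(L/H + 1/L)^\delta \sim 1$ uniformly in $H$, so there is no gain left to sacrifice, yet the interpolation loss is $\sim \log H$. Concretely, the absorption you invoke, $(L/H+1/L)^\delta \cdot (H/L)^\epsilon \lesssim (L/H+1/L)^{\delta'}$, already fails at $L=1$, where the left side is $\sim H^\epsilon$ and the right side is bounded by $2^{\delta'}$.

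The paper avoids this by using the \emph{cube} Strichartz \eqref{eta_lp_up_est_C} in place of \eqref{eta_lp_up_est}: after decomposing $P_H u_j$ into $L$-cubes $C_1, C_2(C_1)$ as in the proof of Proposition~\ref{beStest_T}, H\"older plus \eqref{eta_lp_up_est_C} on each cube gives the $U^4$ constant $L^s$, independent of $H$. With $C_4 = L^s$ one has $C_4/C_2 = (L/H+1/L)^{-\delta}$, so the interpolation loss $1+\delta\,|\log(L/H+1/L)|$ is genuinely controlled by $(L/H+1/L)^{-\epsilon}$ for any $\epsilon>0$, and a net gain $\delta' = \delta - \epsilon > 0$ survives.
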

\begin{proof}
We only give the proof of \eqref{Ybe_HHL_T},
since a slight modification yields \eqref{Ybe_HL_T}.
Proposition~\ref{multiest_T}
with the bilinear Strichartz estimate (\ref{beStest_T_2}) 
(see, also Remark~\ref{bilin_eta_remark})
yields that
\begin{equation}
\label{be_U2_est_2}
\begin{split}
\|\eta (t)P_{L}(P_{H}u_{1}\cdot P_{H'}u_{2})\|_{L^{2}(\R \times \T^{d})}
&\lesssim L^{s}\left(\frac{L}{H}+\frac{1}{L}\right)^{\delta}\|P_{H}u_{1}\|_{U^{2}_{\sigma_{1}}L^{2}}\|P_{H'}u_{2}\|_{U^{2}_{\sigma_{2}}L^{2}}
\end{split}
\end{equation}
for any $0<T\le 1$. 
On the other hand,
by the H\"older inequality and \eqref{eta_lp_up_est_C}, we obtain
\begin{equation}
\label{be_U4_est_2}
\begin{split}
\|\eta (t)P_{L}(P_{H}u_{1}\cdot P_{H'}u_{2})\|_{L^{2}(\R \times \T^{d})}
&\lesssim L^{s}\|P_{H}u_{1}\|_{U^{4}_{\sigma_{1}}L^{2}}\|P_{H'}u_{2}\|_{U^{4}_{\sigma_{2}}L^{2}}
\end{split}
\end{equation}
for any $0<T\le 1$.
It follows from
Proposition~\ref{intpol_T} with \eqref{be_U2_est_2} and \eqref{be_U4_est_2} that
\[
\begin{split}
\|\eta (t)P_{L}(P_{H}u_{1}\cdot P_{H'}u_{2})\|_{L^{2}(\R \times \T^{d})}
&\lesssim L^{s}\left(\frac{L}{H}+\frac{1}{L}\right)^{\delta}\|P_{H}u_{1}\|_{V^{2}_{\sigma_{1}}L^{2}}\|P_{H'}\phi_{2}\|_{V^{2}_{\sigma_{2}}L^{2}}
\end{split}
\]
for some $\delta >0$.
Therefore, we get \eqref{Ybe_HHL_T}
by  the embedding $Y^0_{\sigma}\hookrightarrow V^2_{\sigma}L^2$\ (see, Proposition~\ref{UVembedpr}). 
\end{proof}
%
%

%
%
%
\section{Trilinear estimates\label{tri_est_T}}
In this section, we give the trilinear estimates 
which will be used to prove the well-posedness. 
Set
\begin{align}
\mus
&:= \sigma_1 \sigma_2 \sigma_3 \Big( \frac1{\sigma_1} + \frac 1{\sigma_2} + \frac 1{\sigma_3} \Big)
\label{mus},
\\
\kks
&:= (\sigma_{1}+\sigma_{2})(\sigma_{2}+\sigma_{3})(\sigma_{3}+\sigma_{1})
.
\label{kks}
\end{align}
We first give a lemma related to a nonresonance condition. 
\begin{lemm}\label{modul_est_T}
Let $d\ge 1$ and $\sigma_{1}$, $\sigma_{2}$, $\sigma_{3} \in \R \backslash \{0\}$. 
We assume that  $\tau_0\in \R$
and $(\tau_{1},\xi_{1})$, $(\tau_{2}, \xi_{2})$, $(\tau_{3}, \xi_{3})\in \R\times \R^{d}$ satisfy $\tau_0+\tau_{1}+\tau_{2}+\tau_{3}=0$ and $\xi_{1}+\xi_{2}+\xi_{3}=0$.\\ 
{\rm (i)} Let $(i,j,k)$ be a permutation of $(1,2,3)$ and 
assume $\sigma_i+\sigma_j\ne 0$.
If $|\xi_i|\sim |\xi_j|\gg |\xi_k|$ holds, then there exists 
$C_0>0$, which is independent of $\{\tau_k\}_{k=0}^3$ and $\{\xi_k\}_{k=1}^3$, such that
\begin{equation}\label{modulation_est}
|\tau_0|+\max_{1\leq j\leq 3}|\tau_{j}+\sigma_{j}|\xi_{j}|^{2}|
\ge C_0\max_{1\leq j\leq 3}|\xi_{j}|^{2}. 
\end{equation}
{\rm (ii)} Assume $\mus> 0$. 
If $|\xi_{1}|\sim |\xi_{2}|\sim |\xi_{3}|$ holds, then we have {\rm (\ref{modulation_est})}. 
\end{lemm}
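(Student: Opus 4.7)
The plan is to reduce both parts to an algebraic lower bound on the \emph{resonance function}
\[
R(\xi_1,\xi_2,\xi_3) := \sigma_1|\xi_1|^2+\sigma_2|\xi_2|^2+\sigma_3|\xi_3|^2,
\]
by exploiting the constraint $\tau_0+\tau_1+\tau_2+\tau_3=0$. Adding $\tau_0$ to $\sum_{j=1}^3 (\tau_j+\sigma_j|\xi_j|^2)$ and using the zero-sum of the $\tau$'s gives the identity $R = \tau_0+\sum_{j=1}^3 (\tau_j+\sigma_j|\xi_j|^2)$, so the triangle inequality yields
\[
|\tau_0|+\max_{1\le j\le 3}|\tau_j+\sigma_j|\xi_j|^2|\;\ge\;\tfrac14\,|R(\xi_1,\xi_2,\xi_3)|.
\]
Thus it suffices to show $|R|\gtrsim \max_j |\xi_j|^2$ in each of the two settings, and the remainder of the proof is purely algebraic.

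For (i), by the symmetry of $R$ I may take $(i,j,k)=(1,2,3)$ without loss of generality, so $\sigma_1+\sigma_2\neq 0$ and $|\xi_1|\sim|\xi_2|\gg|\xi_3|$. Substituting $\xi_2=-\xi_1-\xi_3$ from the frequency constraint and expanding gives
\[
R = (\sigma_1+\sigma_2)|\xi_1|^2 + 2\sigma_2\,\xi_1\cdot\xi_3 + (\sigma_2+\sigma_3)|\xi_3|^2.
\]
Interpreting ``$\gg$'' as $|\xi_3|\le c|\xi_1|$ for a sufficiently small $c=c(\sigma_1,\sigma_2,\sigma_3)>0$, Cauchy--Schwarz bounds the cross term by $2c|\sigma_2||\xi_1|^2$ and the last term by $c^2(|\sigma_2|+|\sigma_3|)|\xi_1|^2$. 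Choosing $c$ so that their sum is at most $\tfrac12|\sigma_1+\sigma_2||\xi_1|^2$ (possible precisely because $\sigma_1+\sigma_2\neq 0$), the leading term dominates and I get $|R|\ge \tfrac12|\sigma_1+\sigma_2||\xi_1|^2\sim \max_j |\xi_j|^2$.

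For (ii), the same substitution yields
\[
R = (\sigma_1+\sigma_3)|\xi_1|^2 + 2\sigma_3\,\xi_1\cdot\xi_2 + (\sigma_2+\sigma_3)|\xi_2|^2,
\]
which I would view as a quadratic form on $\R^d\times\R^d$ (acting diagonally across the $d$ components) associated with the symmetric $2\times 2$ matrix
\[
M = \begin{pmatrix} \sigma_1+\sigma_3 & \sigma_3 \\ \sigma_3 & \sigma_2+\sigma_3 \end{pmatrix}.
\]
The decisive observation, and really the only nontrivial step in the whole argument, is the identity
\[
\det M = \sigma_1\sigma_2+\sigma_2\sigma_3+\sigma_3\sigma_1 = \mu(\sigma_1,\sigma_2,\sigma_3).
\]
Since $M$ is real symmetric with positive determinant $\mu$, its two real eigenvalues share a common nonzero sign, so $M$ is (positive or negative) definite. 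Consequently $|R|\gtrsim |\xi_1|^2+|\xi_2|^2\sim \max_j |\xi_j|^2$, as required. This identification of $\det M$ with $\mu$ is what makes transparent why precisely the condition $\mu>0$ governs the fully resonant high$\times$high$\times$high regime, whereas the two-high case (i) is a routine perturbation off the leading $(\sigma_i+\sigma_j)|\xi_i|^2$ term.
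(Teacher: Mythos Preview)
Your argument is correct and complete. The paper itself does not give a proof but defers to Lemma~4.1 of \cite{Hi}; your approach---reducing via $R=\tau_0+\sum_j(\tau_j+\sigma_j|\xi_j|^2)$ to the algebraic bound $|R|\gtrsim\max_j|\xi_j|^2$, handling (i) by a perturbation off the leading $(\sigma_i+\sigma_j)|\xi_i|^2$ term, and (ii) by the definiteness of the $2\times 2$ matrix $M$ with $\det M=\mus$---is exactly the standard one and matches what that reference does.
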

The proof of this lemma is same as Lemma\ 4.1 in \cite{Hi}. 
\begin{rem}\label{resonance_cond_remark41}
{\rm (i)}\ If $\mus\ge 0$, 
then $\kks \neq 0$ holds. 
In particular, (\ref{modulation_est}) always holds when $\mus> 0$. \\
{\rm (ii)}\ Under the condition $\kks \neq 0$, 
Lemma~\ref{modulation_est} {\rm (i)} says that 
{\rm (\ref{modulation_est})} holds unless $|\xi_1|\sim |\xi_2|\sim |\xi_3|$.
\end{rem}
To obtain the well-posedness, 
we need the estimates for the integral
\begin{equation}\label{integral_tri_need}
\left|\int_{0}^{T}\int_{\T^{d}}\left(\prod_{j=1}^{3}P_{N_{j}}u_{j}\right)dxdt\right|. 
\end{equation}
Because $\eta$ defined in \eqref{cutoff} satisfies $\eta (t)\ge \frac 1{\pi^2}$ on $[0,1]$,
for $0<T\le 1$,
there exists $\psi_T\in C_0^{\infty}(\R)$ such that 
\begin{equation}
\eta (t)\psi_T(t)^3=1
\label{defpsi}
\end{equation}
on $[0,T]$. 
Therefore, the integral (\ref{integral_tri_need}) is controlled by
\begin{equation}\label{integral_tri_need_T}
\left|\int_{\R}\int_{\T^{d}}\eta (t)\left(\prod_{j=1}^{3}\psi_T(t)\ee_{[0,T)}(t)P_{N_{j}}u_{j}\right)dxdt\right|. 
\end{equation}
We will give the estimate for the integral (\ref{integral_tri_need_T}) 
instead of (\ref{integral_tri_need}). 
\begin{lemm}
Let $0<T\le 1$, $\sigma \in \R\backslash \{0\}$, and $f\in Y^{0}_{\sigma}$. 
Then, we have
\begin{equation}\label{Y-norm_T_est}
\|\psi_T\ee_{[0,T)}f\|_{Y^{0}_{\sigma}}\lesssim \|f\|_{Y^{0}_{\sigma}}. 
\end{equation}
\end{lemm}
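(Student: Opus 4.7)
The plan is to reduce the inequality to a pointwise multiplier estimate in $V^2$ at every frequency $\xi \in \Z^d$, and then exploit the fact that $\phi(t) := \psi_T(t)\ee_{[0,T)}(t)$ is a bounded function of finite total variation (with bounds uniform in $T \in (0,1]$).

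First, I would unfold the $Y^0_\sigma$-norm. Setting $g_\xi(t) := e^{it\sigma |\xi|^2}\widehat{f(t)}(\xi)$, one has
\[
e^{it\sigma |\xi|^2}\widehat{(\psi_T\ee_{[0,T)}f)(t)}(\xi) = \phi(t)\, g_\xi(t),
\]
so that
\[
\|\psi_T\ee_{[0,T)}f\|_{Y^0_\sigma}^2
=\sum_{\xi\in\Z^d}\|\phi\, g_\xi\|_{V^2(\R;\C)}^2,
\qquad
\|f\|_{Y^0_\sigma}^2
=\sum_{\xi\in\Z^d}\|g_\xi\|_{V^2(\R;\C)}^2.
\]
Hence it suffices to prove the scalar estimate $\|\phi\, g_\xi\|_{V^2}\lesssim \|g_\xi\|_{V^2}$ with an implicit constant independent of $\xi$ and $T\in(0,1]$.

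Next, for any partition $\{t_k\}\in \ZZ$, I would use the telescoping identity
\[
\phi(t_k)g_\xi(t_k)-\phi(t_{k-1})g_\xi(t_{k-1})
=\phi(t_k)\bigl(g_\xi(t_k)-g_\xi(t_{k-1})\bigr)+\bigl(\phi(t_k)-\phi(t_{k-1})\bigr)g_\xi(t_{k-1}),
\]
together with the triangle inequality in $\ell^2$, to obtain
\[
\Bigl(\sum_k |\phi(t_k)g_\xi(t_k)-\phi(t_{k-1})g_\xi(t_{k-1})|^2\Bigr)^{1/2}
\le \|\phi\|_{L^\infty}\|g_\xi\|_{V^2}+\|g_\xi\|_{L^\infty}\|\phi\|_{V^2}.
\]
Taking the supremum over partitions and using the embedding $V^2\hookrightarrow L^\infty$ from Proposition~\ref{upvpprop_T}(ii) (so $\|g_\xi\|_{L^\infty}\lesssim \|g_\xi\|_{V^2}$) gives
\[
\|\phi\, g_\xi\|_{V^2}\lesssim \bigl(\|\phi\|_{L^\infty}+\|\phi\|_{V^2}\bigr)\|g_\xi\|_{V^2}.
\]

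It remains to bound $\|\phi\|_{L^\infty}+\|\phi\|_{V^2}$ uniformly in $T\in(0,1]$. The function $\psi_T\in C^\infty_0(\R)$ was constructed from $\eta$ in \eqref{defpsi}, and since $\eta\gtrsim 1$ on $[0,1]$ the choice $\psi_T=\eta^{-1/3}$ on $[0,1]$ extended by a fixed smooth cutoff leads to $\|\psi_T\|_{L^\infty}+\|\psi_T'\|_{L^1}\le C$ uniformly in $T\in(0,1]$. The cutoff $\ee_{[0,T)}$ has total variation $2$ and is right-continuous. Consequently $\phi$ is right-continuous, bounded, compactly supported, and has total variation
\[
\|\phi\|_{V^1}\le 2\|\psi_T\|_{L^\infty}+\|\psi_T'\|_{L^1}\le C.
\]
Since $\|\phi\|_{V^2}\le \|\phi\|_{V^1}$, both $\|\phi\|_{L^\infty}$ and $\|\phi\|_{V^2}$ are uniformly bounded, which yields $\|\phi\, g_\xi\|_{V^2}\lesssim \|g_\xi\|_{V^2}$. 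Squaring and summing over $\xi\in\Z^d$ gives \eqref{Y-norm_T_est}.

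The only minor technical point is to check that $\phi\, g_\xi$ actually lies in $V^2_{-,rc}$ so that it contributes to the $Y^0_\sigma$-norm: $\phi$ is right-continuous and vanishes outside a compact set, while $g_\xi\in V^2_{-,rc}$ by hypothesis, so the product is right-continuous and vanishes as $t\to-\infty$. I do not expect a serious obstacle in this argument; the main substantive input is the multiplier estimate in $V^2$, which is immediate from the telescoping identity and the embedding $V^2\hookrightarrow L^\infty$.
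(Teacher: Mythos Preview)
Your proof is correct and follows essentially the same approach as the paper: both reduce to the pointwise multiplier bound $\|\phi\,g_\xi\|_{V^2}\lesssim(\|\phi\|_{L^\infty}+\|\phi\|_{V^2})\|g_\xi\|_{V^2}$ together with a uniform-in-$T$ bound on $\|\phi\|_{V^2}$ via $\phi=\eta^{-1/3}\ee_{[0,T)}$. The only cosmetic differences are that the paper quotes the $V^2$ algebra inequality from \cite{KT18} whereas you derive it by the telescoping identity, and the paper bounds $\|\phi\|_{V^2}$ directly from the monotonicity of $\eta^{-1/3}$ on $[0,T)$ rather than passing through $V^1$.
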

\begin{proof}
We note that 
\[
\|\psi_T\ee_{[0,T)}\|_{V^2(\R;\C)}
= \big\| \eta^{-\frac{1}{3}}\ee_{[0,T)} \big\|_{V^2(\R;\C)}
\lesssim \eta (0)^{-\frac{1}{3}}-\eta (T)^{-\frac{1}{3}}
\le  \eta (0)^{-\frac{1}{3}}-\eta (1)^{-\frac{1}{3}}\lesssim 1
\]
holds for any $T>0$ because $\eta$ is positive and decreasing on $[0,1]$. 
Therefore, we get
\begin{equation}\label{cut_func_V2_proper}
\|e^{it\sigma |\xi|^2}\psi_T(t)\ee_{[0,T)}(t)\widehat{f(t)}(\xi)\|_{V^2(\R;\C)}\lesssim 
\|e^{it\sigma |\xi|^2}\widehat{f(t)}(\xi)\|_{V^2(\R;\C)}
\end{equation}
by the algebra-type property (see, Lemma\ B.14 in \cite{KT18})
\[
\|FG\|_{V^2(\R;L^2)}\le \|F\|_{L^{\infty}(\R;L^2)}\|G\|_{V^{2}(\R;L^2)}
+\|F\|_{V^{2}(\R;L^2)}\|G\|_{L^{\infty}(\R;L^2)}
\]
and the embedding $V^2(\R;L^2) \hookrightarrow L^{\infty}(\R ;L^2)$.  
The desired estimate (\ref{Y-norm_T_est}) follows from (\ref{cut_func_V2_proper}). 
\end{proof}

Throughout of this section, 
we put
\begin{align*}
\displaystyle
&N_{\max}:=\max_{1\leq j\leq 3}N_{j}, \quad \displaystyle N_{\min}:=\min_{1\leq j\leq 3}N_{j}, \\ &u_{j,T}:=\psi_T\ee_{[0,T)}P_{N_j}u_j \quad \ (j=1,2,3).
\end{align*}
 
\begin{rem}\label{lowfreq_trilin_est}
If $N_{\max}\lesssim 1$, we obtain
\[
\begin{split}
&\left| N_{\max}\int_{\R}\int_{\T^{d}}\eta(t)\left(\prod_{j=1}^{3}P_{N_{j}}u_{j,T}\right)dxdt\right|\\
&\lesssim \|\ee_{[0,T)}\|_{L^2}\|P_{N_{1}}u_{1}\|_{L^{\infty}([0,T);L^2(\T^d))} \|P_{N_{2}}u_{2}\|_{L^4([0,T)\times \T^d)} \|P_{N_{3}}u_{3}\|_{L^4([0,T)\times \T^d)}\\
&\lesssim T^{\frac{1}{2}}\prod_{j=1}^3\|P_{N_j}u_{j}\|_{V^2_{\sigma_j}L^2}
\lesssim T^{\frac{1}{2}}\prod_{j=1}^3\|P_{N_j}u_{j}\|_{Y^0_{\sigma_j}}
\end{split}
\]
by the H\"older inequality, (\ref{U_Stri_T}) with $p=4$, and
$Y_{\sigma_j}^0\hookrightarrow V^{2}_{\sigma_j}L^{2}\hookrightarrow L^{\infty}(\R ;L^{2}(\T^d ))$. 
Therefore, we only consider $N_{\max}\gg 1$ in the following argument.
\end{rem}

We divide the integral (\ref{integral_tri_need_T})
into $8$ pieces of the form 
\begin{equation}\label{integral_divide_8piece}
\int_{\R}\int_{\T^d}\eta (t)\left(\prod_{j=1}^{3}Q_{j}^{\sigma_j}P_{N_{j}}u_{j,T}\right) dxdt
\end{equation}
with $Q_{j}^{\sigma_j }\in \{Q_{\geq M}^{\sigma_j }, Q_{<M}^{\sigma_j }\}$\ $(j=1, 2, 3)$. 
\begin{lemm}\label{nonreso_case_tri_est}
Let $d\ge 3$ and $\sigma_{1}$, $\sigma_{2}$ $\sigma_3 \in \R \backslash \{0\}$
satisfy 
$\kks \neq 0$. 
Assume $Q_j^{\sigma_j}=Q_{\ge M}^{\sigma_j}$ for some $j\in \{1,2,3\}$. 
Then, there exists $\delta >0$ such that 
for any $0<T\le 1$,
dyadic numbers $N_{1}$, $N_{2}$, $N_{3}$, $M\geq 1$ with $M\sim N_{\max}^2\gg 1$, 
and $P_{N_{j}}u_{j}\in V^{2}_{\sigma_{j}}L^{2}$\ $(j=1,2,3)$, we have
\begin{equation}\label{nonreso_est_trilin}
\begin{split}
&\left| N_{\max}\int_{\R}\int_{\T^{d}}\eta(t)\left(\prod_{j=1}^{3}Q_{j}^{\sigma_j}P_{N_{j}}u_{j,T}\right)dxdt\right|\\
&\lesssim 
N_{\min}^{s_c}\left(\frac{N_{\min}}{N_{\max}}+\frac{1}{N_{\min}}\right)^{\delta}\prod_{j=1}^{3}\|P_{N_{j}}u_{j}\|_{Y^{0}_{\sigma_{j}}}.
\end{split}
\end{equation}
\end{lemm}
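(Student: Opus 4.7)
The plan is to reduce the trilinear integral to a bilinear Strichartz estimate by isolating the high-modulation factor via Cauchy--Schwarz. By the symmetry of both the hypothesis and the conclusion in the three indices, I may assume the distinguished high-modulation index is $j=1$, i.e., $Q_1^{\sigma_1}=Q_{\ge M}^{\sigma_1}$. Write $f_j:=Q_j^{\sigma_j}P_{N_j}u_{j,T}$. The strategy is to bound $\|f_1\|_{L^2}$ using the modulation-decay estimate \eqref{highMproj_T}, producing a factor $M^{-1/2}\sim N_{\max}^{-1}$ that cancels the prefactor $N_{\max}$ on the left-hand side, and to estimate the residual piece $\|\eta\,P_{\sim N_1}(f_2 f_3)\|_{L^2}$ via the bilinear Strichartz estimate of Proposition~\ref{Ybe_2_T}. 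The hypothesis $\kks\neq 0$ enters precisely at the last step: it ensures every pair sum $\sigma_i+\sigma_j$ is nonzero, which is exactly the hypothesis of Proposition~\ref{Ybe_2_T}.

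Concretely, since $f_1=P_{N_1}(\cdots)$ is spatially frequency-localized at $|\xi|\sim N_1$, Plancherel gives
\[
\int_\R\int_{\T^d}\eta(t)\,f_1 f_2 f_3\,dx\,dt
=
\int_\R\int_{\T^d} f_1\cdot\eta(t)\,P_{\sim N_1}(f_2 f_3)\,dx\,dt,
\]
and Cauchy--Schwarz in $L^2_{t,x}$ yields $|T|\le\|f_1\|_{L^2}\|\eta P_{\sim N_1}(f_2 f_3)\|_{L^2}$. For the first factor, combining \eqref{highMproj_T}, the embedding $Y^0_{\sigma_1}\hookrightarrow V^2_{\sigma_1}L^2$ (Proposition~\ref{UVembedpr}), and \eqref{Y-norm_T_est} gives $\|f_1\|_{L^2_{t,x}}\lesssim M^{-1/2}\|P_{N_1}u_1\|_{Y^0_{\sigma_1}}$.

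Next, I would dispatch the bilinear factor by a case analysis on frequencies. The constraint $\xi_1+\xi_2+\xi_3=0$ forces the two largest $N_j$'s to be comparable, so either all three are comparable (HHH) or exactly one $N_j$ is the minimum and the other two are $\sim N_{\max}$ (HHL). In the HHL subcase with $N_{\min}=N_1$, so $N_2\sim N_3\sim N_{\max}$, I would apply \eqref{Ybe_HHL_T} with $H\sim H'\sim N_{\max}$ and outer projection $P_{\sim N_{\min}}$. In the HHL subcase with $N_{\min}\in\{N_2,N_3\}$, WLOG $N_3=N_{\min}$, so $N_1\sim N_2\sim N_{\max}$, the product $f_2 f_3$ is already frequency-localized near $N_{\max}$ so that the outer $P_{\sim N_1}$ is harmless, and I would invoke \eqref{Ybe_HL_T} with $H=N_2$, $L=N_3$. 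In the HHH subcase, \eqref{Ybe_HL_T} with $H=L\sim N_{\max}$ gives $N_{\max}^{s_c}\sim N_{\min}^{s_c}$, while the parenthetical factor is $\sim 1$. After transferring $Q_j^{\sigma_j}$ and the time cutoff from $f_j$ to $P_{N_j}u_j$ (using the boundedness of $Q_j^{\sigma_j}$ on $Y^0_{\sigma_j}$ together with \eqref{Y-norm_T_est}), every subcase produces
\[
\|\eta P_{\sim N_1}(f_2 f_3)\|_{L^2_{t,x}}
\lesssim
N_{\min}^{s_c}\Big(\tfrac{N_{\min}}{N_{\max}}+\tfrac{1}{N_{\min}}\Big)^{\delta}\|P_{N_2}u_2\|_{Y^0_{\sigma_2}}\|P_{N_3}u_3\|_{Y^0_{\sigma_3}}.
\]
Multiplying the two factors and invoking $M\sim N_{\max}^2$ so that $N_{\max}\cdot M^{-1/2}\sim 1$ yields \eqref{nonreso_est_trilin}.

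The only subtle point is the availability of the hypothesis $\sigma_2+\sigma_3\neq 0$ required by Proposition~\ref{Ybe_2_T}; this is exactly what $\kks\neq 0$ provides (and the same applies to whichever pair survives after any other WLOG choice of the high-modulation index). Notably, \emph{no} modulation lower bound from Lemma~\ref{modul_est_T} is needed at this stage, since the $M^{-1/2}$ decay is supplied directly by the hypothesis $Q_{j^*}^{\sigma_{j^*}}=Q_{\ge M}^{\sigma_{j^*}}$; Lemma~\ref{modul_est_T} will instead be called upon in the complementary case, where all three $Q_j^{\sigma_j}=Q_{<M}^{\sigma_j}$ and one must argue that the integrand is supported outside a resonant frequency configuration.
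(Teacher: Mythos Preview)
Your proposal is correct and follows essentially the same approach as the paper: isolate the high-modulation factor via Cauchy--Schwarz, gain $M^{-1/2}\sim N_{\max}^{-1}$ from \eqref{highMproj_T}, and control the remaining bilinear piece with Proposition~\ref{Ybe_2_T}, whose hypothesis $\sigma_i+\sigma_j\neq 0$ is supplied by $\kks\neq 0$. The only cosmetic differences are your choice of $j=1$ versus the paper's $j=3$ and your more explicit HHH/HHL case split; your closing remark that Lemma~\ref{modul_est_T} is not needed here (but will be for the all-low-modulation piece) is exactly right.
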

\begin{proof}
We only consider the case $Q_{3}^{\sigma_3}=Q_{\ge M}^{\sigma_3}$ 
because the other cases can be treated in the same way. 
By the Cauchy-Schwarz inequality, we have
\[
\begin{split}
J&:=\left|\int_{\R}\int_{\T^d}
\eta (t)\left(Q_{1}^{\sigma_1}P_{N_{1}}u_{1,T}Q_{2}^{\sigma_2}P_{N_{2}}u_{2,T}Q_{\ge M}^{\sigma_3}P_{N_{3}}u_{3,T}\right) dxdt\right|\\
&\lesssim \|\eta (t)\widetilde{P}_{N_{3}}(Q_{1}^{\sigma_{1}}P_{N_{1}}u_{1,T}\cdot Q_{2}^{\sigma_{2}}P_{N_{2}}u_{2,T})\|_{L^{2}(\R\times \T^{d})}
\|Q_{\geq M}^{\sigma_{3}}P_{N_{3}}u_{3,T}\|_{L^{2}(\R \times \T^{d})},
\end{split}
\]
where $\widetilde{P}_{N_{3}} :=P_{\frac{N_{3}}2}+P_{N_{3}}+P_{2N_{3}}$. 
Furthermore, by (\ref{highMproj_T}), $M\sim N_{\max}^{2}$, the embedding $Y^{0}_{\sigma_{3}}\hookrightarrow V^{2}_{\sigma_{3}}L^{2}$, and (\ref{Y-norm_T_est}), we have
\begin{equation}\label{high_mod_deriv_gain}
\|Q_{\geq M}^{\sigma_{3}}P_{N_{3}}u_{3,T}\|_{L^{2}(\R\times \T^{d})}
\lesssim N_{\max}^{-1}\|P_{N_{3}}u_{3,T}\|_{V^{2}_{\sigma_{3}}L^{2}}\lesssim N_{\max}^{-1}\|P_{N_{3}}u_{3}\|_{Y^{0}_{\sigma_{3}}}.
\end{equation}
On the other hand, by Proposition~\ref{Ybe_2_T}, (\ref{Vproj_T}), and (\ref{Y-norm_T_est}), we have
\[
\begin{split}
&\|\eta (t)\widetilde{P}_{N_{3}}(P_{N_{1}}Q_{1}^{\sigma_{1}}u_{1,T}\cdot P_{N_{2}}Q_{2}^{\sigma_{2}}u_{2})\|_{L^{2}(\R\times \T^{d})}\\
&\lesssim N_{\min}^{s_c}\left(\frac{N_{\min}}{N_{\max}}+\frac{1}{N_{\min}}\right)^{\delta}\|P_{N_{1}}u_{1}\|_{Y^{0}_{\sigma_{1}}}\|P_{N_{2}}u_{2}\|_{Y^{0}_{\sigma_{2}}}.
\end{split}
\]
Therefore, we obtain
\[
\begin{split}
N_{\max}J
&\lesssim N_{\min}^{s_c}\left(\frac{N_{\min}}{N_{\max}}+\frac{1}{N_{\min}}\right)^{\delta}\prod_{j=1}^{3}\|P_{N_{j}}u_{j}\|_{Y^{0}_{\sigma_{j}}}. 
\end{split}
\]
\end{proof}
\begin{lemm}\label{nonreso_case_tri_est_loc}
Let $d\ge 1$, $\sigma_{1}$, $\sigma_{2}$, $\sigma_3\in \R \backslash \{0\}$, 
 and $s>\max\{s_c,0\}$. 
Assume $Q_j^{\sigma_j}=Q_{\ge M}^{\sigma_j}$ for some $j\in \{1,2,3\}$. 
Then there exist $\delta >0$ and $\eps >0$ such that 
for any $0<T\le 1$,
dyadic numbers $N_{1}$, $N_{2}$, $N_{3}$, $M\geq 1$ with $M\sim N_{\max}^2\gg 1$, 
and $P_{N_{j}}u_{j}\in V^{2}_{\sigma_{j}}L^{2}$\ $(j=1,2,3)$, we have
\begin{equation}\label{nonreso_est_trilin_loc}
\begin{split}
&\left| N_{\max}\int_{\R}\int_{\T^{d}}\eta(t)\left(\prod_{j=1}^{3}Q_{j}^{\sigma_j}P_{N_{j}}u_{j,T}\right)dxdt\right|\\
&\lesssim 
T^{\eps}N_{\min}^{s}\left(\frac{N_{\min}}{N_{\max}}+\frac{1}{N_{\min}}\right)^{\delta}\prod_{j=1}^{3}\|P_{N_{j}}u_{j}\|_{Y^{0}_{\sigma_{j}}}.
\end{split}
\end{equation}
\end{lemm}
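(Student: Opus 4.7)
The plan is to adapt the proof of Lemma~\ref{nonreso_case_tri_est} by interpolating the bilinear Strichartz estimate, used there at the critical regularity $s_c$, with a cruder Strichartz-based bound which carries a small positive power of $T$.

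By symmetry I may assume $j=3$, so $Q_3^{\sigma_3}=Q_{\ge M}^{\sigma_3}$. Following exactly the Cauchy--Schwarz step in the proof of Lemma~\ref{nonreso_case_tri_est}, together with \eqref{highMproj_T} and the assumption $M\sim N_{\max}^2$, reduces the claim to proving
\begin{equation*}
B:=\|\eta\,\widetilde P_{N_3}(P_{N_1}u_{1,T}\cdot P_{N_2}u_{2,T})\|_{L^2(\R\times\T^d)}\lesssim T^\eps N_{\min}^s\Big(\frac{N_{\min}}{N_{\max}}+\frac1{N_{\min}}\Big)^\delta\prod_{j=1,2}\|P_{N_j}u_j\|_{Y^0_{\sigma_j}}
\end{equation*}
for some $\eps,\delta>0$, where $\widetilde P_{N_3}=P_{N_3/2}+P_{N_3}+P_{2N_3}$.

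To obtain this, I combine two bounds. The first is Proposition~\ref{Ybe_2_T} applied at the given regularity $s$ (which is admissible since $s>\max\{s_c,0\}$): there exists $\delta_0>0$ with
\begin{equation*}
B\lesssim N_{\min}^s\Big(\frac{N_{\min}}{N_{\max}}+\frac1{N_{\min}}\Big)^{\delta_0}\|P_{N_1}u_1\|_{Y^0_{\sigma_1}}\|P_{N_2}u_2\|_{Y^0_{\sigma_2}}. \tag{A}
\end{equation*}
The second uses that $u_{j,T}$ is supported in $[0,T]$. Fix small $\eps_0>0$ and apply H\"older in time followed by the $L^{4+\eps_0}_{t,x}$-Strichartz estimate (Corollary~\ref{UV_Stri_T}) and the embedding $Y^0_{\sigma_j}\hookrightarrow V^2_{\sigma_j}L^2\hookrightarrow U^{4+\eps_0}_{\sigma_j}L^2$ (Propositions~\ref{upvpprop_T} and \ref{UVembedpr}) to obtain
\begin{equation*}
\|P_{N_j}u_{j,T}\|_{L^4_{t,x}}\lesssim T^{\eps_0'}\|P_{N_j}u_{j,T}\|_{L^{4+\eps_0}_{t,x}}\lesssim T^{\eps_0'}N_j^{\max\{s_c,0\}/2+\eps_0''}\|P_{N_j}u_j\|_{Y^0_{\sigma_j}},
\end{equation*}
where $\eps_0',\eps_0''\to0$ as $\eps_0\to0$. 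H\"older $L^2\leq L^4\cdot L^4$ then gives
\begin{equation*}
B\lesssim T^{2\eps_0'}(N_1N_2)^{\max\{s_c,0\}/2+\eps_0''}\|P_{N_1}u_1\|_{Y^0_{\sigma_1}}\|P_{N_2}u_2\|_{Y^0_{\sigma_2}}. \tag{B}
\end{equation*}

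Interpolating (A) and (B) with parameter $\theta\in(0,1)$ yields
\begin{equation*}
B\lesssim T^{2(1-\theta)\eps_0'}N_{\min}^{\theta s}\Big(\frac{N_{\min}}{N_{\max}}+\frac1{N_{\min}}\Big)^{\theta\delta_0}(N_1N_2)^{(1-\theta)(\max\{s_c,0\}/2+\eps_0'')}\prod_{j=1,2}\|P_{N_j}u_j\|_{Y^0_{\sigma_j}}.
\end{equation*}
The momentum constraint $\xi_1+\xi_2+\xi_3=0$ inherent in the convolution structure forces, up to relabelling, either $N_1\sim N_2\sim N_3$ or $N_1\sim N_2\sim N_{\max}\gg N_3=N_{\min}$. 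Choosing $\theta$ sufficiently close to $1$ depending on $s-\max\{s_c,0\}$ and $\delta_0$, the above right-hand side is dominated by $T^\eps N_{\min}^s(N_{\min}/N_{\max}+1/N_{\min})^\delta\prod\|P_{N_j}u_j\|_{Y^0_{\sigma_j}}$ with $\eps=2(1-\theta)\eps_0'$ and $\delta>0$.

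The main obstacle is the algebraic verification in the HHL configuration, where the growth $N_{\max}^{(1-\theta)(\max\{s_c,0\}+2\eps_0'')}$ from Bound (B) must be absorbed into the gain $(N_{\min}/N_{\max})^{\theta\delta_0}$ from Bound (A); this balance works precisely because the subcriticality $s>\max\{s_c,0\}$ gives a strictly positive gap which, together with small $\eps_0$, leaves room to take $\theta$ close to $1$.
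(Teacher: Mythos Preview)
Your interpolation fails in the high--high--low configuration $N_1\sim N_2\sim N_{\max}\gg N_3=N_{\min}$ once $N_{\max}\gg N_{\min}^2$. In that regime $\frac{1}{N_{\min}}\gg\frac{N_{\min}}{N_{\max}}$, so the factor $\big(\frac{N_{\min}}{N_{\max}}+\frac{1}{N_{\min}}\big)^{\theta\delta_0}$ from bound~(A) is $\sim N_{\min}^{-\theta\delta_0}$ and carries \emph{no decay in $N_{\max}$}; the gain $(N_{\min}/N_{\max})^{\theta\delta_0}$ you invoke in the final paragraph is simply not present here. Meanwhile bound~(B), because it applies Strichartz to the full annuli $P_{N_1},P_{N_2}$, produces $N_{\max}^{(1-\theta)(\max\{s_c,0\}+2\eps_0'')}$. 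For any $\theta<1$ this diverges as $N_{\max}\to\infty$ with $N_{\min}$ fixed: take $N_{\min}=1$, $N_{\max}=N\to\infty$, so that the target is $O(T^\eps)$ while your interpolated bound is $\sim T^\eps N^{(1-\theta)(\max\{s_c,0\}+2\eps_0'')}$. The paper's proof avoids this by decomposing $P_{N_1}u_{1,T}$ and $P_{N_2}u_{2,T}$ into cubes $C\in\mathcal C_{N_{\min}}$ and applying the Strichartz estimate~\eqref{eta_lp_up_est_C} \emph{on each cube}, so that the loss is $N_{\min}^{\max\{s_c,a\}}$; this is interpolated against a $T^{1/2}N_{\min}^{d/2}$ bound obtained from Bernstein at scale $N_{\min}$ together with $L^\infty_tL^2_x$ (see \eqref{bilin_est_by_l4stri} and \eqref{bilin_est_by_l4strib}). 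Both endpoints involve only powers of $N_{\min}$, which is the key.

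There is also a hypothesis gap: bound~(A) relies on Proposition~\ref{Ybe_2_T}, which requires $\sigma_1+\sigma_2\neq0$, whereas Lemma~\ref{nonreso_case_tri_est_loc} makes no such assumption and is in fact used in Proposition~\ref{HL_est_loc_T_reso} where $\sigma_1+\sigma_2=0$ is allowed. The paper's cube-based argument uses only the linear Strichartz estimates on cubes and needs no constraint on $\sigma_i+\sigma_j$.
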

\begin{proof}
We only consider the case $Q_{3}^{\sigma_3}=Q_{\ge M}^{\sigma_3}$ 
because the other cases can be treated in the same way. 
We decompose 
\[
Q_1^{\sigma_1}P_{N_1}u_{1,T}\cdot Q_2^{\sigma_2}P_{N_2}u_{2,T}
=\sum_{C_1\in \mathcal{C}_{N_{\min}}}
Q_1^{\sigma_1}P_{C_1}P_{N_1}u_{1,T}\cdot Q_2^{\sigma_2}P_{C_1(C_2)}P_{N_2}u_{2,T}
\]
as in the proof of  {\rm Proposition~\ref{beStest_T}}. 
By using the H\"older inequality, (\ref{eta_lp_up_est_C}) with $p=4$, 
the embedding $V^{2}_{\sigma_j}L^{2}\hookrightarrow U^{4}_{\sigma_j}L^{2}$, 
and (\ref{Vproj_T}), we have
\[
\begin{split}
&\|\eta (t)Q_1^{\sigma_1}P_{N_1}u_{1,T}\cdot Q_2^{\sigma_2}P_{N_2}u_{2,T}\|_{L^2(\R\times \T^d)}\\
&\lesssim N_{\min}^{\max\{s_c,a\}}
\sum_{C_1\in \mathcal{C}_{N_{\min}}}\|P_{C_1}P_{N_1}u_{1,T}\|_{V^2_{\sigma_1}L^2}
\|P_{C_1(C_2)}P_{N_2}u_{2,T}\|_{V^2_{\sigma_2}L^2}
\end{split}
\]
for any $a>0$. 
Therefore, by the Schwarz inequality, Proposition~\ref{UVembedpr}, 
and (\ref{Y-norm_T_est}), we obtain
\[
\|\eta (t)Q_1^{\sigma_1}P_{N_1}u_{1,T}\cdot Q_2^{\sigma_1}P_{N_2}u_{2,T}\|_{L^2(\R\times \T^d)}
\lesssim N_{\min}^{\max\{s_c,a\}}
\|P_{N_1}u_{1}\|_{Y_{\sigma_1}^0}
\|P_{N_2}u_{2}\|_{Y_{\sigma_2}^0}.
\]
This and (\ref{high_mod_deriv_gain}) imply that
\begin{equation}\label{bilin_est_by_l4stri}
\left| N_{\max}\int_{\R}\int_{\T^{d}}\eta(t)\left(\prod_{j=1}^{3}Q_{j}^{\sigma_j}P_{N_{j}}u_{j,T}\right)dxdt\right|
\lec
N_{\min}^{\max\{s_c,a\}}
\prod_{j=1}^{3}\|P_{N_{j}}u_{j}\|_{Y^{0}_{\sigma_{j}}}.
\end{equation}
On the other hand, the H\"older inequality, (\ref{high_mod_deriv_gain}), and
the Bernstein inequality yield that
\begin{equation}
\left| N_{\max}\int_{\R}\int_{\T^{d}}\eta(t)\left(\prod_{j=1}^{3}Q_{j}^{\sigma_j}P_{N_{j}}u_{j,T}\right)dxdt\right|
\lec
T^{\frac{1}{2}}
N_{\min}^{\frac d2}
\prod_{j=1}^{3}\|P_{N_{j}}u_{j}\|_{Y^{0}_{\sigma_{j}}}. 
\label{bilin_est_by_l4strib}
\end{equation}
When $d\ge 3$,
we have $s_c>0$.
By interpolating \eqref{bilin_est_by_l4strib} and \eqref{bilin_est_by_l4stri} with $a= \frac{s_c}2$,
we obtain 
\[
\left| N_{\max}\int_{\R}\int_{\T^{d}}\eta(t)\left(\prod_{j=1}^{3}Q_{j}^{\sigma_j}P_{N_{j}}u_{j,T}\right)dxdt\right|
\lec
T^{\eps}
N_{\min}^{s_c+2\eps}
\prod_{j=1}^{3}\|P_{N_{j}}u_{j}\|_{Y^{0}_{\sigma_{j}}}
\]
for any $0<\eps <\frac{1}{2}$. 
By choosing $0<\eps <\frac{1}{2}$ and $\delta >0$ 
such that $\eps<\frac{s-s_c}{2}$ and $\delta =s-s_c-2\eps$, we get
\eqref{nonreso_est_trilin_loc}. 
The cases $d=1,2$ can be treated in the same manner. 
\end{proof}
\subsection{Nonresonance case}

We give the trilinear estimates under the condition
\[
\mus>0.
\]
Note that this condition implies 
$\kks \neq 0$.
\begin{prop}\label{HL_est_T}
Let $d\geq 3$ and $\sigma_{1}$, $\sigma_{2}$, $\sigma_{3}\in \R \backslash \{ 0\}$ satisfy
$\mus>0$. 
There exists $\delta >0$ such that 
for any $0<T\le 1$,
dyadic numbers $N_{1}$, $N_{2}$, $N_{3}\geq 1$ with $N_{\max}\gg 1$, 
and $P_{N_{j}}u_{j}\in V^{2}_{\sigma_{j}}L^{2}$\ $(j=1,2,3)$, we have
\begin{equation}\label{hl_T}
\begin{split}
&\left| N_{\max}\int_{\R}\int_{\T^{d}}\eta(t)\left(\prod_{j=1}^{3}P_{N_{j}}u_{j,T}\right)dxdt\right|
\\
&\lesssim 
N_{\min}^{s_c}\left(\frac{N_{\min}}{N_{\max}}+\frac{1}{N_{\min}}\right)^{\delta}\prod_{j=1}^{3}\|P_{N_{j}}u_{j}\|_{Y^{0}_{\sigma_{j}}}.
\end{split}
\end{equation}
\end{prop}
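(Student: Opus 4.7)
The plan is to decompose the integral into modulation pieces of the form \eqref{integral_divide_8piece} with threshold $M$ of size $N_{\max}^2$, handle the pieces with at least one high modulation factor by direct application of Lemma~\ref{nonreso_case_tri_est}, and show that the remaining ``all-low'' modulation piece vanishes by a Fourier support argument based on the nonresonance provided by $\mus>0$.

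Concretely, I would fix $M := c N_{\max}^2$ with $0<c<C_0$, where $C_0$ is the constant of Lemma~\ref{modul_est_T}, and split
\EQS{\int_{\R}\int_{\T^d}\eta(t)\prod_{j=1}^3 P_{N_j}u_{j,T}\,dx\,dt
=\sum_{Q_j^{\sigma_j}\in\{Q_{<M}^{\sigma_j},Q_{\ge M}^{\sigma_j}\}}\int_{\R}\int_{\T^d}\eta(t)\prod_{j=1}^3 Q_j^{\sigma_j}P_{N_j}u_{j,T}\,dx\,dt.\notag}
For each of the seven terms in which at least one $Q_j^{\sigma_j}=Q_{\ge M}^{\sigma_j}$, Lemma~\ref{nonreso_case_tri_est} applies directly, since $\mus>0$ implies $\kks\neq 0$ by Remark~\ref{resonance_cond_remark41}~(i). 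This yields the desired bound with gain $(N_{\min}/N_{\max}+1/N_{\min})^\delta$ for each such piece.

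It remains to verify that the single ``all-low'' piece $\int \eta(t)\prod_j Q_{<M}^{\sigma_j}P_{N_j}u_{j,T}\,dx\,dt$ vanishes. By Plancherel in space-time together with \eqref{etaft1}, any $(\tau_1,\xi_1),(\tau_2,\xi_2),(\tau_3,\xi_3)$ contributing to this piece must satisfy $\xi_1+\xi_2+\xi_3=0$, $\tau_0+\tau_1+\tau_2+\tau_3=0$ for some $\tau_0\in\supp\hat\eta\subset[-\pi,\pi]$, and $|\tau_j+\sigma_j|\xi_j|^2|<M$ for $j=1,2,3$. Here the hypothesis $\mus>0$ is crucial because it allows Lemma~\ref{modul_est_T} to be applied in \emph{every} frequency regime: part~(i) covers configurations with $|\xi_i|\sim|\xi_j|\gg|\xi_k|$ (using $\sigma_i+\sigma_j\neq 0$, guaranteed by $\kks\neq 0$), while part~(ii) covers the comparable case $|\xi_1|\sim|\xi_2|\sim|\xi_3|$ (using $\mus>0$ directly). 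In both cases we obtain
\EQQS{|\tau_0|+\max_{1\le j\le 3}|\tau_j+\sigma_j|\xi_j|^2|\ge C_0 N_{\max}^2,}
which, combined with $|\tau_0|\le\pi$ and $\max_j|\tau_j+\sigma_j|\xi_j|^2|<M=cN_{\max}^2$, forces $\pi+cN_{\max}^2\ge C_0 N_{\max}^2$. Since $c<C_0$ and $N_{\max}\gg 1$, this is impossible, so the support of the integrand is empty and the all-low piece vanishes.

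The main obstacle is confirming that the nonresonance inequality \eqref{modulation_est} truly holds \emph{uniformly across all frequency configurations}, which is exactly why the stronger hypothesis $\mus>0$ (rather than merely $\kks\neq 0$) enters: it is needed to rule out the high$\times$high$\times$high interaction via Lemma~\ref{modul_est_T}~(ii). In the weaker cases $\mus\le 0$ the all-low piece in the balanced regime is \emph{not} annihilated, and a separate argument (exploiting orthogonality and bilinear Strichartz at the balanced frequency scale) will be required later in the paper.
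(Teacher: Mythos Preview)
Your proposal is correct and follows essentially the same approach as the paper: set $M\sim N_{\max}^2$ (the paper takes $M=C^{-1}N_{\max}^2$ with $C=32/C_0$), apply Lemma~\ref{nonreso_case_tri_est} to the seven pieces with at least one high modulation, and use Lemma~\ref{modul_est_T} together with $|\tau_0|\le\pi$ to kill the all-low piece. Your added explanation of why both parts of Lemma~\ref{modul_est_T} are invoked (part~(i) for unbalanced configurations via $\kks\neq 0$, part~(ii) for the balanced case via $\mus>0$) makes explicit what the paper compresses into a reference to Remark~\ref{resonance_cond_remark41}.
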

\begin{proof} 
For sufficiently large constant $C$ 
(for example, $C= \frac{32}{C_0}$, 
where $C_0$ is given in Lemma~\ref{modul_est_T}), 
we put 
$M:=C^{-1}N_{\max}^{2}$ and 
%
%
divide the integral (\ref{integral_tri_need_T})
into $8$ pieces of the form such as (\ref{integral_divide_8piece}). 
By Plancherel's theorem, Lemma~\ref{modul_est_T} (and Remark~\ref{resonance_cond_remark41}), and $N_{\max}\gg 1$, we have
\[
\int_{\R}\int_{\T^d}\eta (t)\left(\prod_{j=1}^{3}Q_{<M}^{\sigma_j}P_{N_{j}}u_{j,T}\right) dxdt=0
\]
because $\tau_0\in {\rm supp}\hspace{0.5ex}\widehat{\eta}$ satisfies $|\tau_0|\le \pi\ (\ll N_{\max}^2)$. 
Therefore, we can assume 
at least one of $Q_j^{\sigma_j}$ is equal to $Q_{\ge M}^{\sigma_j}$ 
and obtain (\ref{hl_T}) by Lemma~\ref{nonreso_case_tri_est}.
\end{proof}
We also obtain the following local estimate 
by using Lemma~\ref{nonreso_case_tri_est_loc}. 
\begin{prop}\label{HL_est_loc_T}
Let $d\geq 1$ and $\sigma_{1}$, $\sigma_{2}$, $\sigma_{3}\in \R \backslash \{ 0\}$ satisfy 
$\mus>0$. 
Assume $s>\max\{s_{c},0\}$. 
There exist $\delta>0$ and  $\eps >0$ such that for any $0<T\le 1$,
dyadic numbers $N_{1}$, $N_{2}$, $N_{3}\geq 1$ with $N_{\max}\gg 1$, 
and $P_{N_{j}}u_{j}\in V^{2}_{\sigma_{j}}L^{2}$\ $(j=1,2,3)$, we have
\begin{equation}\label{hl_loc_T}
\begin{split}
&\left| N_{\max}\int_{\R}\int_{\T^{d}}\eta(t)\left(\prod_{j=1}^{3}P_{N_{j}}u_{j,T}\right)dxdt\right|
\\
&\lesssim 
T^{\eps}N_{\min}^{s}\left(\frac{N_{\min}}{N_{\max}}+\frac{1}{N_{\min}}\right)^{\delta}\prod_{j=1}^{3}\|P_{N_{j}}u_{j}\|_{Y^{0}_{\sigma_{j}}}.
\end{split}
\end{equation}
\end{prop}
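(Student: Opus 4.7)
The strategy mirrors the proof of Proposition~\ref{HL_est_T} closely, with Lemma~\ref{nonreso_case_tri_est_loc} substituted for Lemma~\ref{nonreso_case_tri_est} to extract the $T^{\eps}$ gain and accommodate regularities $s > \max\{s_c, 0\}$ rather than $s_c$ itself. The key observation is that the nonresonance condition $\mus > 0$ is exactly what was used to annihilate the low-modulation piece in the critical case, and nothing about that argument depends on $s$.

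The plan is the following. First, I would fix a sufficiently large constant $C$ (e.g., $C = 32/C_0$ with $C_0$ from Lemma~\ref{modul_est_T}), set $M := C^{-1} N_{\max}^2$, and decompose each factor as
\[
P_{N_j} u_{j,T} = Q_{<M}^{\sigma_j} P_{N_j} u_{j,T} + Q_{\ge M}^{\sigma_j} P_{N_j} u_{j,T},
\]
which splits the trilinear integral into $8$ pieces of the form \eqref{integral_divide_8piece}. Second, for the all-low-modulation piece (all $Q^{\sigma_j} = Q_{<M}^{\sigma_j}$), I would observe that $\widehat{\eta}$ is supported in $[-\pi,\pi]$ by \eqref{etaft1}, so any $\tau_0 \in \supp \widehat{\eta}$ satisfies $|\tau_0| \le \pi \ll N_{\max}^2$. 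Combined with Remark~\ref{resonance_cond_remark41}(i), which says $\mus>0$ forces \eqref{modulation_est} to hold, the constraints $\tau_0 + \sum_j \tau_j = 0$ and $\sum_j \xi_j = 0$ together with all three modulations being less than $M$ are incompatible on the Fourier side. Hence Plancherel's theorem yields that this piece vanishes.

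Third, for each of the remaining seven pieces, at least one factor carries a $Q_{\ge M}^{\sigma_j}$ projection with $M \sim N_{\max}^2 \gg 1$, so Lemma~\ref{nonreso_case_tri_est_loc} applies directly and yields the bound
\[
T^{\eps} N_{\min}^{s} \Big( \frac{N_{\min}}{N_{\max}} + \frac{1}{N_{\min}} \Big)^{\delta} \prod_{j=1}^{3} \|P_{N_j} u_j\|_{Y^0_{\sigma_j}}
\]
for some $\delta, \eps > 0$, uniformly over the seven cases. Summing over the (finitely many) pieces gives \eqref{hl_loc_T}.

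There is no genuine obstacle beyond bookkeeping: the heart of the argument — the nonresonance identity plus the $V^2$-based modulation gain — is entirely packaged inside Lemmas~\ref{modul_est_T} and \ref{nonreso_case_tri_est_loc}. The only point requiring mild care is to check that the cutoffs $\psi_T \ee_{[0,T)}$ are compatible with the $Y^0_{\sigma_j}$ norm, but this is exactly what \eqref{Y-norm_T_est} provides, and it was already invoked inside Lemma~\ref{nonreso_case_tri_est_loc}.
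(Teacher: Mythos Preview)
Your proposal is correct and follows essentially the same approach as the paper, which merely states that Proposition~\ref{HL_est_loc_T} follows from the proof of Proposition~\ref{HL_est_T} by replacing Lemma~\ref{nonreso_case_tri_est} with Lemma~\ref{nonreso_case_tri_est_loc}. Your write-up in fact spells out more detail than the paper does.
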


\subsection{Resonance case I}
In this subsection, we give the trilinear estimates under the condition
$\mus= 0$.
Note that this condition implies 
$\kks \neq 0$.
\begin{prop}\label{HL_est_T_mass_reso}
Let $d\geq 4$ and $\sigma_{1}$, $\sigma_{2}$, $\sigma_{3}\in \R \backslash \{ 0\}$ satisfy 
$\mus = 0$. 
There exists $\delta >0$ such that for any $0<T\le 1$, 
dyadic numbers $N_{1}$, $N_{2}$, $N_{3}\geq 1$ with $N_{\max}\gg 1$, and 
$P_{N_{j}}u_{j}\in V^{2}_{\sigma_{j}}L^{2}$\ $(j=1,2,3)$, we have {\rm (\ref{hl_T})}. 
\end{prop}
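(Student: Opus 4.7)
The plan is to follow the scheme of Proposition \ref{HL_est_T}, with the one new subcase coming from the breakdown of Lemma \ref{modul_est_T}(ii) when $\mus=0$. By Remark \ref{lowfreq_trilin_est} I may assume $N_{\max}\gg 1$. I set $M:=C^{-1}N_{\max}^2$ with $C>0$ sufficiently large and decompose the integral \eqref{integral_tri_need_T} into the eight summands of \eqref{integral_divide_8piece} according to $Q_j^{\sigma_j}\in\{Q_{<M}^{\sigma_j},Q_{\ge M}^{\sigma_j}\}$. Since $\mus=0$ still forces $\kks\ne 0$ by Remark \ref{resonance_cond_remark41}(i), Lemma \ref{nonreso_case_tri_est} delivers the target bound \eqref{hl_T} for each of the seven summands containing at least one $Q_{\ge M}^{\sigma_j}$.

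It remains to estimate the all-low-modulation piece. I split it by the relative sizes of $\{|\xi_j|\}$. If $|\xi_i|\sim|\xi_j|\gg|\xi_k|$ for some permutation $(i,j,k)$ of $(1,2,3)$, then Lemma \ref{modul_est_T}(i) (applicable since $\kks\ne 0$) forces the modulation to exceed $C_0 N_{\max}^2>M$ for $C$ large, contradicting the support condition of $\prod_j Q_{<M}^{\sigma_j}$; this sub-piece therefore vanishes. The genuinely new case is the high-high-high configuration $|\xi_1|\sim|\xi_2|\sim|\xi_3|\sim N_{\max}=:N$, where $\mus=0$ violates the hypothesis of Lemma \ref{modul_est_T}(ii) and no modulation lower bound is available.

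For this HHH piece, I use $\eta \psi_T^3\equiv 1$ on $[0,T)$ (from \eqref{defpsi}) to rewrite the contribution as $\int_0^T\int_{\T^d}\prod_j P_{N_j}u_j\,dx\,dt$ and apply H\"older with three factors in $L^3_{t,x}$. Since $3\ge 2(d+2)/d$ for $d\ge 4$, Proposition \ref{Stri_est_T} together with Corollary \ref{UV_Stri_T} and the embedding $V^2\hookrightarrow U^3$ yield $\|P_N u\|_{L^3([0,T)\times\T^d)}\lesssim N^{(d-4)/6+\varepsilon'}\|P_N u\|_{Y^0_\sigma}$, where $\varepsilon'=0$ for $d\ge 5$ and $\varepsilon'>0$ can be taken arbitrarily small for $d=4$. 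Multiplying by $N_{\max}\sim N$ produces the bound $N^{s_c+3\varepsilon'}\prod_j\|P_{N_j}u_j\|_{Y^0_{\sigma_j}}$, which for $d\ge 5$ matches \eqref{hl_T} in HHH because $N_{\min}/N_{\max}+1/N_{\min}\asymp 1$ there. The main obstacle I foresee is the endpoint $d=4$: $p=3$ is precisely $2(d+2)/d$, so the $L^3$-Strichartz necessarily loses $N^{\varepsilon'}$ and the crude H\"older argument does not close at $s_c=1$. To remove this I expect to need a finer analysis exploiting the extra structure of $\mus=0$: the resonance function factors as $(\sigma_1+\sigma_3)|\xi_1+\lambda_0\xi_2|^2$ with $\lambda_0=\sigma_3/(\sigma_1+\sigma_3)$, so a dyadic modulation decomposition at level $M_0\le M$ confines $\xi_1+\lambda_0\xi_2$ to a ball of radius $O(M_0^{1/2})$; combining this transversality with the refined bilinear estimate \eqref{Ybe_HHL_T} and slab/cube Fourier decompositions in the spirit of Wang \cite{Wa} should absorb the $\varepsilon'$-loss and recover the critical bound.
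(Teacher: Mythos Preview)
Your reduction to the all-low-modulation HHH piece is exactly the paper's, and your $d=4$ sketch---dyadic modulation decomposition plus the perfect-square factorization of the resonance function under $\mus=0$, confining a linear combination of frequencies to a ball of radius $O(M_0^{1/2})$---is precisely what the paper executes. The difference is that the paper uses this refined argument \emph{uniformly} for all $d\ge 4$, not merely as a $d=4$ patch. Concretely: decompose $\prod_j Q_{<M}^{\sigma_j}=\sum_{M_1,M_2,M_3<M}\prod_j Q_{M_j}^{\sigma_j}$; by symmetry take $M_3=\max_j M_j$; the identity $|\sigma_1|\xi_1|^2+\sigma_2|\xi_2|^2+\sigma_3|\xi_3|^2|=|\sigma_3/(\sigma_1\sigma_2)|\,|\sigma_1\xi_1-\sigma_2\xi_2|^2$ forces $|\sigma_1\xi_1-\sigma_2\xi_2|\lesssim M_3^{1/2}$, so the generalized bilinear Strichartz of Remark~\ref{bilin_Stri_gene_bilin_op} gives $\|\eta(Q_{\le M_3}^{\sigma_1}P_{N_1}u_{1,T})(Q_{\le M_3}^{\sigma_2}P_{N_2}u_{2,T})\|_{L^2}\lesssim M_3^{s_c/2}(M_3^{1/2}/N_{\max}+M_3^{-1/2})^\delta\prod_{j=1}^2\|P_{N_j}u_j\|_{Y^0_{\sigma_j}}$; combine with $\|Q_{M_3}^{\sigma_3}P_{N_3}u_{3,T}\|_{L^2}\lesssim M_3^{-1/2}\|P_{N_3}u_3\|_{Y^0_{\sigma_3}}$ from \eqref{highMproj_Tequal} and sum $N_{\max}\sum_{M_3<M}M_3^{(s_c-1)/2}(\cdots)^\delta\lesssim N_{\max}^{s_c}$, which closes exactly when $s_c\ge 1$, i.e., $d\ge 4$.

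Your direct three-factor $L^3$ H\"older shortcut for $d\ge 5$ is valid and simpler---it is in fact the argument the paper uses for Proposition~\ref{HL_est_T_2} under $\mus<0$---but it makes no use of the $\mus=0$ structure and therefore cannot be pushed to $d=4$. The paper's choice to run the modulation-based argument for all $d\ge 4$ trades a small amount of extra work in high dimensions for a single unified proof; your split is equally correct but requires carrying two arguments.
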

\begin{proof}
For sufficiently large constant $C$, 
we put 
$M:=C^{-1}N_{\max}^{2}$ and 
divide the integral (\ref{integral_tri_need_T})
into $8$ pieces of the form such as (\ref{integral_divide_8piece}). 
Thanks to Lemma~\ref{nonreso_case_tri_est}, 
it suffices to consider the case $Q_j^{\sigma_j}=Q_{<M}^{\sigma_j}$\ $(j=1,2,3)$. 
By Plancherel's theorem and Lemma~\ref{modul_est_T} {\rm (i)} 
(see also Remark~\ref{resonance_cond_remark41}), we have
\[
\int_{\R}\int_{\T^d}\eta (t)\left(\prod_{j=1}^{3}Q_{<M}^{\sigma_j}P_{N_{j}}u_{j,T}\right) dxdt=0
\]
unless $N_1\sim N_2\sim N_3$. 
Therefore, we only have to consider the case $N_1\sim N_2\sim N_3$. 

We decompose
\[
\prod_{j=1}^{3}Q_{<M}^{\sigma_j}P_{N_{j}}u_{j,T}
=\sum_{1\le M_1,M_2,M_3<M}\prod_{j=1}^{3}Q_{M_j}^{\sigma_j}P_{N_{j}}u_{j,T}. 
\]
By the symmetry, 
we can assume $\max_{1\le j\le 3}M_j=M_3$. 
Then, it suffices to prove the estimate for the integral
\[
\sum_{M_3<M}
\int_{\R}\int_{\T^d}\eta (t)(Q_{\le M_3}^{\sigma_1}P_{N_{1}}u_{1,T})(Q_{\le M_3}^{\sigma_2}P_{N_{2}}u_{2,T})(Q_{M_3}^{\sigma_3}P_{N_{3}}u_{3,T})dxdt. 
\]
By \eqref{mus},
the conditions $\mus = 0$ and $\xi_1+\xi_2+\xi_3=0$ imply that
\[
|\sigma_1 |\xi_1|^2+\sigma_2|\xi_2|^2+\sigma_3|\xi_3|^2|
= \Big| \frac{\sigma_3}{\sigma_1\sigma_2} \Big| |\sigma_1\xi_1-\sigma_2\xi_2|^2.
\]
On the other hand, $\tau_0\in {\rm supp}$\hspace{0.5ex}$\widehat{\eta}$, 
$(\tau_j,\xi_j)\in {\rm supp}$\hspace{0.5ex}$\F[Q_{\le M_3}^{\sigma_j}P_{N_{j}}u_{j,T}]$\ $(j=1,2)$, 
and $(\tau_3,\xi_3)\in {\rm supp}$\hspace{0.5ex}$\F[Q_{M_3}^{\sigma_3}P_{N_{3}}u_{3,T}]$
with $\tau_0+\tau_1+\tau_2+\tau_3=0$, $\xi_1+\xi_2+\xi_3=0$ satisfy
\[
|\sigma_1 |\xi_1|^2+\sigma_2|\xi_2|^2+\sigma_3|\xi_3|^2|
\le |\tau_0|+\sum_{j=1}^3|\tau_j+\sigma_j|\xi_j|^2|\lesssim M_3. 
\]
Therefore, we have
\[
|\sigma_1\xi_1-\sigma_2\xi_2|\lesssim  M_3^{\frac{1}{2}}.
\]
By the same argument in the proof of Proposition~\ref{beStest_T} 
(see, also Remark~\ref{bilin_Stri_gene_bilin_op}), 
we obtain 
\[
\begin{split}
&\|\eta (t)(Q_{\le M_3}^{\sigma_1}P_{N_{1}}u_{1,T})(Q_{\le M_3}^{\sigma_2}P_{N_{2}}u_{2,T})\|_{L^2(\R\times \T^d)}\\
&\lesssim 
M_{3}^{\frac{s_c}{2}}
\left(\frac{M_3^{\frac{1}{2}}}{N_{\max}}+\frac{1}{M_{3}^{\frac{1}{2}}}
\right)^{\delta}\prod_{j=1}^{2}\|P_{N_{j}}u_{j,T}\|_{Y^{0}_{\sigma_{j}}}.
\end{split}
\]
Furthermore, by (\ref{highMproj_Tequal}) and the embedding $Y^{0}_{\sigma_{1}}\hookrightarrow V^{2}_{\sigma_{1}}L^{2}$,  we have
\[
\|Q_{M_3}^{\sigma_{3}}P_{N_{3}}u_{3,T}\|_{L^{2}(\R\times \T^{d})}
\lesssim M_{3}^{-\frac{1}{2}}\|P_{N_{3}}u_{3,T}\|_{V^{2}_{\sigma_{3}}L^{2}}\lesssim 
M_{3}^{-\frac{1}{2}}\|P_{N_{3}}u_{3,T}\|_{Y^{0}_{\sigma_{1}}}.
\]
By these estimates with the H\"older inequality and (\ref{Y-norm_T_est}), we obtain
\begin{equation}
\notag
\begin{split}
&\left|N_{\max}\sum_{M_3<M}
\int_{\R}\int_{\T^d}\eta (t)(Q_{\le M_3}^{\sigma_1}P_{N_{1}}u_{1,T})(Q_{\le M_3}^{\sigma_2}P_{N_{2}}u_{2,T})(Q_{M_3}^{\sigma_3}P_{N_{3}}u_{3,T})dxdt\right|\\
&\lesssim N_{\max}\sum_{M_3<M}
M_{3}^{\frac{s_c-1}{2}}
\left(\frac{M_3^{\frac{1}{2}}}{N_{\max}}+\frac{1}{M_{3}^{\frac{1}{2}}}
\right)^{\delta}\prod_{j=1}^{3}\|P_{N_{j}}u_{j,T}\|_{Y^{0}_{\sigma_{j}}}. 
\end{split}
\end{equation}
This estimate and $M\sim N_{\max}^2$ imply {\rm (\ref{hl_T})}
because $s_c\ge 1$ for $d\ge 4$, and it holds
\[
\sum_{M_3<M}M_3^{\frac{s_c-1}{2}}
\left(\frac{M_3^{\frac{1}{2}}}{N_{\max}}+\frac{1}{M_{3}^{\frac{1}{2}}}
\right)^{\delta}
\lesssim M^{\frac{s_c-1}{2}}
\left\{
\left(\frac{M^{\frac{1}{2}}}{N_{\max}}\right)^{\delta}+1
\right\}
\lesssim N_{\max}^{s_c-1}.
\]
\end{proof}
Note that
\[
\|Q_{M_3}^{\sigma_{3}}P_{N_{3}}u_{3,T}\|_{L^{2}(\R\times \T^{d})}
\lesssim T^{\frac 12} \|P_{N_{3}}u_{3,T}\|_{Y^{0}_{\sigma_{1}}}.
\]
By interpolating this estimate and \eqref{highMproj_T},
it holds that
\[
\|Q_{M_3}^{\sigma_{3}}P_{N_{3}}u_{3,T}\|_{L^{2}(\R\times \T^{d})}
\lesssim T^{\eps}M_{3}^{-\frac{1}{2}+\eps}\|P_{N_{3}}u_{3,T}\|_{Y^{0}_{\sigma_{1}}}
\]
for any $0<\eps<\frac 12$. 
By using this estimate in the proof of Proposition~\ref{HL_est_T_mass_reso}, 
we have
\[
\begin{split}
&\left|N_{\max}\sum_{M_3<M}
\int_{\R}\int_{\T^d}\eta (t)(Q_{\le M_3}^{\sigma_1}P_{N_{1}}u_{1,T})(Q_{\le M_3}^{\sigma_2}P_{N_{2}}u_{2,T})(Q_{M_3}^{\sigma_3}P_{N_{3}}u_{3,T})dxdt\right|\\
&\lesssim T^{\eps}N_{\max}\sum_{M_3<M}
M_{3}^{\frac{s_c-1}{2}+\eps}
\left(\frac{M_3^{\frac{1}{2}}}{N_{\max}}+\frac{1}{M_{3}^{\frac{1}{2}}}
\right)^{\delta}\prod_{j=1}^{3}\|P_{N_{j}}u_{j,T}\|_{Y^{0}_{\sigma_{j}}} 
\end{split}
\]
for $d\ge 1$. We note that $\frac{s_c-1}2 \le -\frac 14 <0$ if $1\le d\le 3$. 
Therefore, by choosing $\eps >0$
such that
$\eps = \min \{ \frac{s-s_c}2, \frac 18 \}$, 
we obtain the following. 
\begin{prop}\label{HL_est_loc_T_22}
Let $d\geq 1$ and $\sigma_{1}$, $\sigma_{2}$, $\sigma_{3}\in \R \backslash \{ 0\}$ satisfy 
$\mus = 0$. 
Assume $s>s_c$ and $s\ge 1$.  
There exist $\delta>0$ and $\eps>0$ such that
for any $0<T\le 1$, 
dyadic numbers $N_{1}$, $N_{2}$, $N_{3}\geq 1$ with $N_{\max}\gg 1$, 
and $P_{N_{j}}u_{j}\in V^{2}_{\sigma_{j}}L^{2}$\ $(j=1,2,3)$, we have {\rm (\ref{hl_loc_T})}. 
\end{prop}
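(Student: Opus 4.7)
The plan is to mimic the proof of Proposition~\ref{HL_est_T_mass_reso}, but to replace the critical dyadic summation with a subcritical one that can accommodate a small time factor $T^{\eps}$. As in the argument preceding the statement, I would set $M := C^{-1} N_{\max}^{2}$ for a sufficiently large constant $C$ and split the integral in \eqref{integral_tri_need_T} into eight pieces of the form \eqref{integral_divide_8piece} according to whether each $Q_{j}^{\sigma_{j}}$ equals $Q_{<M}^{\sigma_{j}}$ or $Q_{\ge M}^{\sigma_{j}}$. For any piece with at least one high-modulation factor, I would invoke Lemma~\ref{nonreso_case_tri_est_loc} directly, which already supplies both the $T^{\eps}$ gain and the desired $N_{\min}^{s}$ factor, so those contributions are immediately acceptable.

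The remaining, all-low-modulation piece requires the resonance analysis. Lemma~\ref{modul_est_T}(i), combined with Remark~\ref{resonance_cond_remark41}, forces $N_{1} \sim N_{2} \sim N_{3}$, since otherwise the Fourier support of the integrand is empty. Under $\mus=0$ and $\xi_{1}+\xi_{2}+\xi_{3}=0$ the identity
\[
|\sigma_{1}|\xi_{1}|^{2}+\sigma_{2}|\xi_{2}|^{2}+\sigma_{3}|\xi_{3}|^{2}|
=\Bigl|\frac{\sigma_{3}}{\sigma_{1}\sigma_{2}}\Bigr|\,|\sigma_{1}\xi_{1}-\sigma_{2}\xi_{2}|^{2}
\]
still holds, so after dyadically decomposing the low-modulation cutoffs and (by symmetry) assuming the maximum is $M_{3}$, I get the support restriction $|\sigma_{1}\xi_{1}-\sigma_{2}\xi_{2}| \lesssim M_{3}^{1/2}$. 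The generalized bilinear Strichartz estimate of Remark~\ref{bilin_Stri_gene_bilin_op} then yields
\[
\|\eta(t)(Q_{\le M_{3}}^{\sigma_{1}}P_{N_{1}}u_{1,T})(Q_{\le M_{3}}^{\sigma_{2}}P_{N_{2}}u_{2,T})\|_{L^{2}}
\lesssim M_{3}^{s_{c}/2}\Bigl(\frac{M_{3}^{1/2}}{N_{\max}}+M_{3}^{-1/2}\Bigr)^{\delta}
\prod_{j=1,2}\|P_{N_{j}}u_{j}\|_{Y^{0}_{\sigma_{j}}},
\]
exactly as in the proof of Proposition~\ref{HL_est_T_mass_reso}. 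The crucial modification is that the third factor is now controlled by the interpolated bound $\|Q_{M_{3}}^{\sigma_{3}}P_{N_{3}}u_{3,T}\|_{L^{2}} \lesssim T^{\eps}M_{3}^{-1/2+\eps}\|P_{N_{3}}u_{3}\|_{Y^{0}_{\sigma_{3}}}$ recorded above the statement, which trades a small power of $M_{3}$ for a gain of $T^{\eps}$.

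The main obstacle is the $M_{3}$-summation against the resulting factor $M_{3}^{(s_{c}-1)/2+\eps}$ after the prefactor $N_{\max}$ is included. When $1 \le d \le 3$, one has $(s_{c}-1)/2 \le -1/4$, so taking $\eps<1/8$ makes the exponent strictly negative, the sum converges to a constant, and the leftover $N_{\max}$ is absorbed into $N_{\min}^{s}$ via $N_{\min}\sim N_{\max}$ together with $s \ge 1$. When $d \ge 4$, the sum is dominated by the top term $M^{(s_{c}-1)/2+\eps}\sim N_{\max}^{s_{c}-1+2\eps}$, producing $N_{\max}^{s_{c}+2\eps}$, and the subcritical condition $s>s_{c}$ lets me absorb this into $N_{\min}^{s}$ provided $\eps<(s-s_{c})/2$. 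The choice $\eps=\min\{(s-s_{c})/2,\,1/8\}$ handles both regimes uniformly, and combining with a small residual gain from $(N_{\min}/N_{\max}+1/N_{\min})^{\delta}$ closes the estimate.
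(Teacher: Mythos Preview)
Your proposal is correct and follows essentially the same approach as the paper: the paper likewise reduces to the all-low-modulation piece (handled via Lemma~\ref{nonreso_case_tri_est_loc} otherwise), uses the same bilinear Strichartz estimate from Remark~\ref{bilin_Stri_gene_bilin_op} together with the interpolated bound $\|Q_{M_{3}}^{\sigma_{3}}P_{N_{3}}u_{3,T}\|_{L^{2}} \lesssim T^{\eps}M_{3}^{-1/2+\eps}\|P_{N_{3}}u_{3}\|_{Y^{0}_{\sigma_{3}}}$, and closes the $M_{3}$-summation with the identical choice $\eps=\min\{(s-s_{c})/2,\,1/8\}$. The only cosmetic caveat is that for $d=1,2$ the bilinear estimate technically gives $M_{3}^{a/2}$ for any small $a>0$ rather than $M_{3}^{s_{c}/2}$ (since $s_{c}\le 0$ there), but this does not affect the argument.
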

\subsection{Resonance case II}\label{reso_1_subsec}
We give the trilinear estimates under the condition
\[
\mus < 0, \quad (\sigma_2+\sigma_3)(\sigma_3+\sigma_1)\ne 0,
\]
where $\mus$ is defined in \eqref{mus}.
In this subsection, we do not consider the case 
$d=1,2$ and $s=1$, and these cases will be treated in the next subsection.

First, we show the trilinear estimate under a stronger condition $\kks \neq 0$,
where $\kks$ is defined in \eqref{kks}.

\begin{prop}\label{HL_est_T_2}
Let $d\geq 5$ and $\sigma_{1}$, $\sigma_{2}$, $\sigma_{3}\in \R \backslash \{ 0\}$ satisfy 
$\mus< 0$ and 
$\kks \neq 0$. 
There exists $\delta >0$ such that for any $0<T\le 1$, 
dyadic numbers $N_{1}$, $N_{2}$, $N_{3}\geq 1$ with $N_{\max}\gg 1$, and 
$P_{N_{j}}u_{j}\in V^{2}_{\sigma_{j}}L^{2}$\ $(j=1,2,3)$, we have {\rm (\ref{hl_T})}. 
\end{prop}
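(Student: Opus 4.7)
The plan is to adapt the argument of Propositions~\ref{HL_est_T} and~\ref{HL_est_T_mass_reso}. The new ingredient is a purely dispersive estimate to handle the High-High-High interaction, for which no nonresonance is available when $\mus<0$.

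First, I consider the regime $N_{\min}\ll N_{\max}$. Set $M:=C^{-1}N_{\max}^{2}$ with a sufficiently large constant $C$ and decompose \eqref{integral_tri_need_T} into $8$ pieces of the form \eqref{integral_divide_8piece}. For the seven pieces with at least one high-modulation factor, Lemma~\ref{nonreso_case_tri_est} (which requires only $\kks\neq 0$) immediately gives the desired bound. For the remaining all-low-modulation piece, I invoke Plancherel: the support conditions $|\tau_0|\lec 1$ (coming from $\widehat\eta$) and $|\tau_j+\sigma_j|\xi_j|^2|<M$ together would imply $|\tau_0|+\max_j\big|\tau_j+\sigma_j|\xi_j|^2\big|\lec M\ll N_{\max}^2$, contradicting the lower bound $\ge C_0 N_{\max}^2$ supplied by Lemma~\ref{modul_est_T}(i) (which applies because $\kks\neq 0$ forces $\sigma_i+\sigma_j\neq 0$ for the pair carrying the two highest frequencies). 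Hence this piece vanishes identically.

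Second, in the High-High-High regime $N_1\sim N_2\sim N_3\sim N$, I bypass the modulation decomposition and apply H\"older's inequality in $L^3(\R\times\T^d)$ together with the $\eta^{1/3}$-weighted $L^3$-Strichartz estimate \eqref{eta_lp_up_est}. For $d\geq 5$ one has $3>\frac{2(d+2)}{d}$, so \eqref{eta_lp_up_est} applies at $p=3$ with $s=(d-4)/6$ and \emph{without} any $\eps$-loss. Combining this with the embeddings $Y^0_{\sigma_j}\hookrightarrow V^2_{\sigma_j}L^2\hookrightarrow U^3_{\sigma_j}L^2$ (Propositions~\ref{upvpprop_T} and~\ref{UVembedpr}) and the cutoff stability \eqref{Y-norm_T_est}, I obtain
\begin{align*}
N\,\Big|\int_{\R}\int_{\T^d}\eta(t)\prod_{j=1}^3 P_{N_j}u_{j,T}\,dx\,dt\Big|
&\lec N\prod_{j=1}^3\big\|\eta^{1/3}P_{N_j}u_{j,T}\big\|_{L^3(\R\times\T^d)}
\\
&\lec N^{1+3(d-4)/6}\prod_{j=1}^3\|P_{N_j}u_j\|_{Y^0_{\sigma_j}}
=N^{s_c}\prod_{j=1}^3\|P_{N_j}u_j\|_{Y^0_{\sigma_j}}.
\end{align*}
Since $N_{\min}\sim N_{\max}$ in this regime, the factor $(N_{\min}/N_{\max}+1/N_{\min})^\delta$ is $\sim 1$, so the bound matches the target.

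The principal obstacle is the High-High-High case itself: the failure of the resonance mechanism when $\mus<0$ rules out the modulation argument used in Propositions~\ref{HL_est_T} and~\ref{HL_est_T_mass_reso}, and the purely dispersive substitute requires the $L^3$-Strichartz estimate without derivative loss. This is precisely what forces the dimensional restriction $d\geq 5$.
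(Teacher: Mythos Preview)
Your proof is correct and follows essentially the same approach as the paper: reduce to the all-low-modulation piece via Lemma~\ref{nonreso_case_tri_est} and Lemma~\ref{modul_est_T}(i) (using $\kks\neq 0$), which forces $N_1\sim N_2\sim N_3$, and then handle that case by the $L^3$ H\"older/Strichartz argument enabled by $3>\frac{2(d+2)}{d}$ for $d\ge 5$. The only cosmetic difference is that the paper keeps the $Q_{<M}^{\sigma_j}$ projectors through the $L^3$ step (removing them via \eqref{Vproj_T}), whereas you apply H\"older directly to $P_{N_j}u_{j,T}$ in the High-High-High regime; both are valid.
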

\begin{proof}
We set $M = C^{-1} N_{\max}^2$ for some $C \gg 1$.
Because of $\kks \neq 0$, 
by a similar reason in the proof of Proposition~\ref{HL_est_T_mass_reso}, 
it suffices to show the estimate for the integral
\[
\int_{\R}\int_{\T^d}\eta (t)\left(\prod_{j=1}^{3}Q_{<M}^{\sigma_j}P_{N_{j}}u_{j,T}\right) dxdt
\]
with $N_1 \sim N_2\sim N_3$. 
By the H\"older inequality, we have
\begin{align*}
&\left| N_{\max}\int_{\R}\int_{\T^{d}}\eta (t)\left(\prod_{j=1}^{3}Q_{<M}^{\sigma_j}P_{N_{j}}u_{j,T}\right)dxdt\right|
\\
&\leq N_{\max}\prod_{j=1}^{3}\|\eta (t)^{\frac{1}{3}}Q_{<M}^{\sigma_j}P_{N_j}u_{j,T}\|_{L^3 (\R\times \T^{d})}. 
\end{align*}
Furthermore, by (\ref{eta_lp_up_est}) with $p=3$, 
the embeddings $Y^{0}_{\sigma_{j}}\hookrightarrow V^{2}_{\sigma_{j}}L^{2}\hookrightarrow U^{3}_{\sigma_{j}}L^{2}$,  (\ref{Vproj_T}), and (\ref{Y-norm_T_est}), 
we have
\[
\big\| \eta (t)^{\frac{1}{3}}Q_{<M}^{\sigma_j}P_{N_j}u_{j,T} \big\|_{L^3 (\R\times \T^{d})}\lesssim N_j^{\frac d6- \frac 23}\|P_{N_j}u_{j}\|_{Y^{0}_{\sigma_j}}
\]
since $3>\frac{2(d+2)}d$ holds for $d\ge 5$. Therefore, we obtain 
\[
\begin{split}
\left| N_{\max}\int_{\R}\int_{\T^{d}}\eta (t)\left(\prod_{j=1}^{3}Q_{<M}^{\sigma_j}P_{N_{j}}u_{j,T}\right)dxdt\right|
&\lesssim 
N_{\min}^{s_c}\prod_{j=1}^{3}\|P_{N_{j}}u_{j}\|_{Y^{0}_{\sigma_{j}}}
\end{split}
\]
because 
\[
N_{\max}\prod_{j=1}^3N_j^{\frac d6- \frac 23}\sim N_{\max}^{s_c}\sim N_{\min}^{s_c}.
\]
\end{proof}
\begin{prop}\label{HL_est_loc_T_reso}
Let $d\ge 1$, and $\sigma_{1}$, $\sigma_{2}$, $\sigma_{3}\in \R \backslash \{ 0\}$ satisfy 
$\mus<0$ 
and $(\sigma_2+\sigma_3)(\sigma_3+\sigma_1)\ne 0$. 
Assume $s>\max\{s_c,1\}$. 
There exist $\delta >0$ and $\eps>0$ such that
for any $0<T\le 1$,  
dyadic numbers $N_{1}$, $N_{2}$, $N_{3}\geq 1$ with $N_{\max}\gg 1$, 
and $P_{N_{j}}u_{j}\in V^{2}_{\sigma_{j}}L^{2}$\ $(j=1,2,3)$, we have
\begin{equation}\label{hl_loc_T_reso}
\begin{split}
&\left| N_{3}\int_{\R}\int_{\T^d}\eta (t)\left(\prod_{j=1}^{3}P_{N_{j}}u_{j,T}\right)dxdt\right|
\\
&\lesssim 
T^{\eps}N_{\min}^{s}\left(\frac{N_{\min}}{N_{\max}}+\frac{1}{N_{\min}}\right)^{\delta}\prod_{j=1}^{3}\|P_{N_{j}}u_{j}\|_{Y^{0}_{\sigma_{j}}}.
\end{split}
\end{equation} 
\end{prop}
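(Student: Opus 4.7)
The plan is to mirror the proofs of Propositions~\ref{HL_est_T}, \ref{HL_est_T_mass_reso}, and \ref{HL_est_T_2}. I set $M = C^{-1} N_{\max}^2$ for a sufficiently large constant $C$ (as in Lemma~\ref{modul_est_T}) and split the integral $\int_{\R}\int_{\T^d} \eta(t) \prod_{j=1}^3 P_{N_j} u_{j,T}\,dx\,dt$ into eight pieces by inserting either $Q_{<M}^{\sigma_j}$ or $Q_{\ge M}^{\sigma_j}$ in front of each factor. The seven pieces containing at least one high-modulation factor are directly controlled by Lemma~\ref{nonreso_case_tri_est_loc}; its bound carries $N_{\max}$ in front, which dominates the desired $N_3$ since $N_3 \le N_{\max}$.

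For the remaining all-low-modulation piece, Lemma~\ref{modul_est_T}(i) applied with the hypothesis $(\sigma_2+\sigma_3)(\sigma_3+\sigma_1) \neq 0$, together with $|\tau_0| \le \pi \ll M$ on $\supp \widehat{\eta}$, forces the integrand to vanish unless either (b) $N_1 \sim N_2 \sim N_3 =: N$, or (a) $N_1 \sim N_2 =: N \gg N_3 =: K$ with $\sigma_1 + \sigma_2 = 0$ (the other two high-high-to-low configurations are excluded by $\sigma_2+\sigma_3 \neq 0$ and $\sigma_3+\sigma_1 \neq 0$, respectively). Case (b) I would handle as in Proposition~\ref{HL_est_T_2}: a trilinear H\"older estimate together with the $L^3$-Strichartz inequality \eqref{eta_lp_up_est} at $p=3$ (allowing a small derivative loss $\eps'$ when $d \le 4$) gives a bound of the form $N^{1+3\eps'}\prod_j \|P_{N_j} u_j\|_{Y^0_{\sigma_j}}$. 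Since $s > 1$ and $(N_{\min}/N_{\max}+1/N_{\min})^\delta$ is $O(1)$, the loss is absorbable, and interpolating with the trivial $T^{1/2}$-bound on $L^\infty_t L^2_x$ produces the required $T^\eps$-factor.

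The main obstacle will be case (a). Here the identity
\[
\sigma_1|\xi_1|^2 + \sigma_2|\xi_2|^2 + \sigma_3|\xi_3|^2 = -\sigma_1 (\xi_1-\xi_2)\cdot \xi_3 + \sigma_3|\xi_3|^2
\]
shows that the modulation phase is only of size $O(NK)$, so the crude threshold $M \sim N^2$ confers no restriction. My plan is to perform a second modulation decomposition at the finer threshold $M' \sim NK$: the high-$M'$ contribution will be estimated via Cauchy--Schwarz by combining $\|Q_{\ge M'}^{\sigma_j} P_{N_j} u\|_{L^2} \lesssim (NK)^{-1/2}\|u\|_{Y^0_{\sigma_j}}$ with a bilinear Strichartz estimate on a nonresonant pair $(\sigma_i,\sigma_3)$, $i\in\{1,2\}$, furnished by Proposition~\ref{Ybe_2_T}(i). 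For the low-$M'$ part, the phase above is linear in $\xi_1$ for fixed $\xi_3$, so $\xi_1$ is constrained to strips of thickness $\lesssim M'/|\xi_3| = N$ orthogonal to $\xi_3/|\xi_3|$; the strip decomposition of Proposition~\ref{beStest_T}, reworked with this geometry, should restore almost-orthogonality even without $\sigma_1 + \sigma_2 \neq 0$. Interpolating these estimates with the trivial $T^{1/2}$-bound will yield the $T^\eps$-factor, and the strict hypothesis $s > 1$ is essential to absorb the residual $K^{1+s_c-s}$-type power coming from the derivative factor $K$ combined with the bilinear Strichartz loss; precisely this interpolation fails in the marginal regime $d=1,2$, $s=1$, which is therefore deferred to the next subsection.
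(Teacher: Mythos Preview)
Your high-modulation pieces and your case (b) ($N_1\sim N_2\sim N_3$) match the paper in spirit. The divergence is in case (a), $N_1\sim N_2\gg N_3$, where your route is over-engineered and, as written, does not close.

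The paper does not split (a) from (b); it handles all of $N_1\sim N_2\gtrsim N_3$ uniformly with no second modulation threshold and no strip geometry. The observation you are missing is that the prefactor in \eqref{hl_loc_T_reso} is $N_3$, not $N_{\max}$. One decomposes $P_{N_1}u_1$ (and correspondingly $P_{N_2}u_2$) into cubes $C_1\in\CC_{N_3}$ of side $N_3$ and applies cube-localized Strichartz: for $1\le d\le 4$ the H\"older split $L^3\times L^3\to L^{3/2}$ on cubes gives $N_3^{2a}$, which interpolated with the split $L^2\times L^6\to L^{3/2}$ (the $L^2$ factor carrying the trivial $T^{1/2}$) produces $T^\eps N_3^{2a+O(\eps)}$; then a further $L^3$ on $u_3$ and the prefactor $N_3$ give $T^\eps N_3^{1+3a+O(\eps)}$, absorbed by $s>1$. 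For $d\ge 5$ the same scheme with $q=\frac{d+2}{d}$ and $p=\frac{d+2}{2}$ gives $T^\eps N_3^{s_c+O(\eps)}$. No nonresonance in the pair $(\sigma_1,\sigma_2)$ is needed anywhere.

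Your proposed argument for case (a) has a genuine gap. At your threshold $M'\sim N_1 N_3$ the strip thickness for $\xi_1$ is $M'/|\xi_3|\sim N_1$, which is no constraint at all, so the ``strip decomposition'' is vacuous. More structurally, the almost-orthogonality in Proposition~\ref{beStest_T} (specifically \eqref{alorF}) rests on the invertibility of $(k,l)\mapsto(\sigma_1 k-\sigma_2 l,\,k+l)$, which fails precisely when $\sigma_1+\sigma_2=0$; ``reworked with this geometry'' hides a real obstacle rather than a routine modification. Your high-$M'$ piece via bilinear Strichartz on a pair $(\sigma_i,\sigma_3)$ is plausible, but the low-$M'$ piece as described yields no gain.
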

\begin{proof}For sufficiently large constant $C$, 
we put 
$M:=C^{-1}N_{\max}^{2}$ and 
divide the integral (\ref{integral_tri_need_T})
into $8$ pieces of the form such as (\ref{integral_divide_8piece}). 
Thanks to Lemma~\ref{nonreso_case_tri_est_loc}, 
it suffices to consider the case $Q_j^{\sigma_j}=Q_{<M}^{\sigma_j}$\ $(j=1,2,3)$. 
Because $(\sigma_2+\sigma_3)(\sigma_3+\sigma_1)\ne 0$, 
by Plancherel's theorem and Lemma~\ref{modul_est_T} {\rm (i)}, we have
\[
\int_{\R}\int_{\T^d}\eta (t)\left(\prod_{j=1}^{3}Q_{<M}^{\sigma_j}P_{N_{j}}u_{j,T}\right) dxdt=0
\]
if $N_2\sim N_3\gg N_1$ or $N_3\sim N_1\gg N_2$ holds. 
Therefore, we only have to consider the case $N_1\sim N_2\gtrsim N_3$. 
In the same way as in the proof of Proposition~\ref{beStest_T}, 
we decompose $P_{N_1}u_{1}=\sum_{C_{1}\in \CC_{N_3}}P_{C_{1}}P_{N_1}u_{1}$. 
For fixed $C_{1}\in \CC_{N_3}$, let $\xi_{0}=\xi_0(C_1)$ be the center of $C_{1}$. 
Since $\xi_{1}\in C_{1}$ and $|\xi_{1}+\xi_{2}|\leq 2N_3$ imply $|\xi_{2}+\xi_{0}|\leq 3N_3$, we obtain
\begin{equation}
\notag
\begin{split}
&\big\|\eta (t)^{\frac{1}{q}}P_{N_3}(Q_{<M}^{\sigma_1}P_{C_{1}}P_{N_1}u_{1,T}\cdot Q_{<M}^{\sigma_2}P_{N_2}u_{2,T}) \big\|_{L^q(\R\times \T^d)}\\
&= \big\|\eta (t)^{\frac{1}{q}} P_{N_3}(Q_{<M}^{\sigma_1}P_{C_{1}}P_{N_1}u_{1,T}\cdot Q_{<M}^{\sigma_2}P_{C_{2}(C_{1})}P_{N_2}u_{2,T}) \big\|_{L^q(\R\times \T^d)}
\end{split}
\end{equation}
for $q\ge 1$, 
where $C_{2}(C_{1})$ is a cube contained in $\{\xi_{2}\in \Z^{d}|\ |\xi_{2}+\xi_{0}|\leq 3N_3\}$. 

We first assume $1\le d\le 4$ and $s>1$. 
In this case, we choose $q= \frac 32$. 
By the H\"older inequality, 
(\ref{eta_lp_up_est_C}) with $p=3$,  the embeddings
$Y^0_{\sigma}\hookrightarrow V^{2}_{\sigma}L^{2}\hookrightarrow U^{3}_{\sigma}L^{2}$, 
(\ref{Vproj_T}), and (\ref{Y-norm_T_est}), we have
\begin{equation}\label{23bilinest_int_1}
\begin{split}
&\big\|\eta (t)^{\frac{2}{3}} P_{N_3}(Q_{<M}^{\sigma_1}P_{C_{1}}P_{N_1}u_{1,T}\cdot Q_{<M}^{\sigma_2}P_{C_{2}(C_{1})}P_{N_2}u_{2,T}) \big\|_{L^{\frac{3}{2}}(\R\times \T^d)}\\
&\lesssim \big\|\eta (t)^{\frac{1}{3}}Q_{<M}^{\sigma_1}P_{C_{1}}P_{N_1}u_{1,T} \big\|_{L^{3}(\R\times \T^d)}
\big\|\eta (t)^{\frac{1}{3}}Q_{<M}^{\sigma_2}P_{C_{2}(C_1)}P_{N_2}u_{2,T} \big\|_{L^{3}(\R\times \T^d)}\\
&\lesssim N_3^{2a}\|P_{C_{1}}P_{N_1}u_{1}\|_{Y_{\sigma_1}^0}\|P_{C_{2}(C_1)}P_{N_2}u_{2}\|_{Y_{\sigma_2}^0}
\end{split}
\end{equation}
for any $a>0$ because $3\le \frac{2(d+2)}d$ holds for $1\le d\le 4$. 
On the other hand, by the boundedness of $\eta (t)$ and the embeddings
$Y^0_{\sigma}\hookrightarrow V^{2}_{\sigma}L^{2}\hookrightarrow L^{\infty}(\R;L^{2}(\T^d))$ ,  
we obtain
\[
\big\|\eta (t)^{\frac{1}{2}}Q_{<M}^{\sigma_1}P_{C_{1}}P_{N_1}u_{1,T}\big\|_{L^{2}(\R\times \T^d)}
 \lesssim \|P_{C_{1}}P_{N_1}u_{1,T}\|_{L^{2}(\R\times \T^d)}
 \lesssim T^{\frac{1}{2}}\|P_{C_{1}}P_{N_1}u_{1,T}\|_{Y_{\sigma_1}^0}. 
\]
Therefore, 
by the H\"older inequality, 
(\ref{eta_lp_up_est_C}) with $p=6$,  the embeddings
$Y^0_{\sigma}\hookrightarrow V^{2}_{\sigma}L^{2}\hookrightarrow U^{6}_{\sigma}L^{2}$, 
(\ref{Vproj_T}), and (\ref{Y-norm_T_est}), we have
\begin{equation}\label{23bilinest_int_2}
\begin{split}
&\big\|\eta (t)^{\frac{2}{3}} P_{N_3}(Q_{<M}^{\sigma_1}P_{C_{1}}P_{N_1}u_{1,T}\cdot Q_{<M}^{\sigma_2}P_{C_{2}(C_{1})}P_{N_2}u_{2,T}) \big\|_{L^{\frac{3}{2}}(\R\times \T^d)}\\
&\lesssim \big\|\eta (t)^{\frac{1}{2}}Q_{<M}^{\sigma_1}P_{C_{1}}P_{N_1}u_{1,T} \big\|_{L^{2}(\R\times \T^d)}
\big\|\eta (t)^{\frac{1}{6}}Q_{<M}^{\sigma_2}P_{C_{2}(C_1)}P_{N_2}u_{2,T}\big\|_{L^{6}(\R\times \T^d)}\\
&\lesssim T^{\frac{1}{2}}N_3^{\frac{d-1}{3}}\|P_{C_{1}}P_{N_1}u_{1}\|_{Y_{\sigma_1}^0}\|P_{C_{2}(C_1)}P_{N_2}u_{2}\|_{Y_{\sigma_2}^0}
\end{split}
\end{equation}
for $2\le d\le 4$
because $6>\frac{2(d+2)}d$ and $\frac d2- \frac{d+2}6= \frac{d-1}3$ hold. 
By the interpolation between (\ref{23bilinest_int_1}) and (\ref{23bilinest_int_2}), 
it holds that
\[
\begin{split}
&\big\|\eta (t)^{\frac{2}{3}} P_{N_3}(Q_{<M}^{\sigma_1}P_{C_{1}}P_{N_1}u_{1,T}\cdot Q_{<M}^{\sigma_2}P_{C_{2}(C_{1})}P_{N_2}u_{2,T}) \big\|_{L^{\frac{3}{2}}(\R\times \T^d)}\\
&\lesssim T^{\eps}N_3^{2a+\left(\frac{d-1}{3}-2a\right)\eps}\|P_{C_{1}}P_{N_1}u_{1}\|_{Y_{\sigma_1}^0}\|P_{C_{2}(C_1)}P_{N_2}u_{2}\|_{Y_{\sigma_2}^0}
\end{split}
\]
for any $a>0$ and $0<\eps <\frac 12$. 
By using this estimate, (\ref{eta_lp_up_est}) with $p=3$, the embeddings
$Y^0_{\sigma}\hookrightarrow V^{2}_{\sigma}L^{2}\hookrightarrow U^{3}_{\sigma}L^{2}$, 
(\ref{Vproj_T}), and (\ref{Y-norm_T_est}), we obtain
\[
\begin{split}
&\left| N_{3}\int_{\R}\int_{\T^d}\eta (t)\left(\prod_{j=1}^{3}Q_{<M}^{\sigma_j}P_{N_{j}}u_{j,T}\right)dxdt\right|\\
&\le N_3\sum_{C_1\in \CC_{N_3}}
\big\|\eta (t)^{\frac{2}{3}} P_{N_3}(Q_{<M}^{\sigma_1}P_{C_{1}}P_{N_1}u_{1,T}\cdot Q_{<M}^{\sigma_2}P_{C_{2}(C_{1})}P_{N_2}u_{2,T}) \big\|_{L^{\frac{3}{2}}(\R\times \T^d)}\\
&\hspace{34ex}\times \big\|\eta (t)^{\frac{1}{3}}Q_{<M}^{\sigma_3}P_{N_3}u_{3,T} \big\|_{L^3(\R\times \T^d)}\\
&\lesssim T^{\eps}N_3^{1+3a+\left(\frac{d-1}{3}-2a\right)\eps}\prod_{j=1}^3\|P_{N_{j}}u_{j}\|_{Y^0_{\sigma_j}}
\end{split}
\]
for any $a>0$ and $0<\eps <\frac 12$. 
For $2\le d\le 4$ and $s> \max \{ s_c,1 \}$, by choosing $a>0$ and $0<\eps<\frac 12$ such that
$0<a< \min\{ \frac{d-1}6, \frac{s-1}3 \}$ and $3a+(\frac{d-1}{3}-2a)\eps<s-1$,
we get (\ref{hl_loc_T_reso}) with $\delta =s-1-3a-(\frac{d-1}{3}-2a)\eps$. 
Note that for $d=1$, we have
\begin{equation}
\notag
\begin{split}
&\big\|\eta (t)^{\frac{2}{3}} P_{N_3}(Q_{<M}^{\sigma_1}P_{C_{1}}P_{N_1}u_{1,T}\cdot Q_{<M}^{\sigma_2}P_{C_{2}(C_{1})}P_{N_2}u_{2,T}) \big\|_{L^{\frac{3}{2}}(\R\times \T^d)}\\
&\lesssim T^{\frac{1}{2}}N_3^{a}\|P_{C_{1}}P_{N_1}u_{1}\|_{Y_{\sigma_1}^0}\|P_{C_{2}(C_1)}P_{N_2}u_{2}\|_{Y_{\sigma_2}^0}
\end{split}
\end{equation}
for any $a>0$
by the same calculation in (\ref{23bilinest_int_2}) because $6=\frac{2(d+2)}d$ holds for $d=1$. 
By using this estimate, (\ref{eta_lp_up_est}) with $p=3$, the embeddings
$Y^0_{\sigma}\hookrightarrow V^{2}_{\sigma}L^{2}\hookrightarrow U^{3}_{\sigma}L^{2}$, 
(\ref{Vproj_T}), and (\ref{Y-norm_T_est}), we obtain
\[
\begin{split}
&\left| N_{3}\int_{\R}\int_{\T}\eta (t)\left(\prod_{j=1}^{3}Q_{<M}^{\sigma_j}P_{N_{j}}u_{j,T}\right)dxdt\right|\\
&\le N_3\sum_{C_1\in \CC_{N_3}}
\big\|\eta (t)^{\frac{2}{3}} P_{N_3}(Q_{<M}^{\sigma_1}P_{C_{1}}P_{N_1}u_{1,T}\cdot Q_{<M}^{\sigma_2}P_{C_{2}(C_{1})}P_{N_2}u_{2,T}) \big\|_{L^{\frac{3}{2}}(\R\times \T)}\\
&\hspace{34ex}\times \big\|\eta (t)^{\frac{1}{3}}Q_{<M}^{\sigma_3}P_{N_3}u_{3,T} \big\|_{L^3(\R\times \T)}\\
&\lesssim T^{\frac{1}{2}}N_3^{1+2a}\prod_{j=1}^3\|P_{N_{j}}u_{j}\|_{Y^0_{\sigma_j}}
\end{split}
\]
for any $a>0$. 
By choosing $a>0$ as $2a<s-1$, 
we get (\ref{hl_loc_T_reso}) with $\delta =s-1-2a$. 

Next, we assume $d\ge 5$ and $s>s_c$. 
In this case, we choose $q= \frac{d+2}d$. 
By the same argument as above, we have
\begin{equation}\label{d2dbilinest_int_1}
\begin{split}
&\big\|\eta (t)^{\frac{d}{d+2}} P_{N_3}(Q_{<M}^{\sigma_1}P_{C_{1}}P_{N_1}u_{1,T}\cdot Q_{<M}^{\sigma_2}P_{C_{2}(C_{1})}P_{N_2}u_{2,T}) \big\|_{L^{\frac{d+2}{d}}(\R\times \T^d)}\\
&\lesssim \big\|\eta (t)^{\frac{d}{2(d+2)}}Q_{<M}^{\sigma_1}P_{C_{1}}P_{N_1}u_{1,T} \big\|_{L^{\frac{2(d+2)}{d}}(\R\times \T^d)}
\\
&\qquad \times
\big\|\eta (t)^{\frac{d}{2(d+2)}} Q_{<M}^{\sigma_2}P_{C_{2}(C_1)}P_{N_2}u_{2,T} \big\|_{L^{\frac{2(d+2)}{d}}(\R\times \T^d)}\\
&\lesssim N_3^{2a}\|P_{C_{1}}P_{N_1}u_{1}\|_{Y_{\sigma_1}^0}\|P_{C_{2}(C_1)}P_{N_2}u_{2}\|_{Y_{\sigma_2}^0}
\end{split}
\end{equation}
for any $a>0$ and
\begin{equation}\label{d2dbilinest_int_2}
\begin{split}
&\big\|\eta (t)^{\frac{d}{d+2}} P_{N_3}(Q_{<M}^{\sigma_1}P_{C_{1}}P_{N_1}u_{1,T}\cdot Q_{<M}^{\sigma_2}P_{C_{2}(C_{1})}P_{N_2}u_{2,T}) \big\|_{L^{\frac{d+2}{d}}(\R\times \T^d)}\\
&\lesssim \big\|\eta (t)^{\frac{1}{2}}Q_{<M}^{\sigma_1}P_{C_{1}}P_{N_1}u_{1,T} \big\|_{L^{2}(\R\times \T^d)}
\\
&\qquad \times
\big\|\eta (t)^{\frac{d-2}{2(d+2)}}Q_{<M}^{\sigma_2}P_{C_{2}(C_1)}P_{N_2}u_{2,T} \big\|_{L^{\frac{2(d+2)}{d-2}}(\R\times \T^d)}\\
&\lesssim T^{\frac{1}{2}}N_3\|P_{C_{1}}P_{N_1}u_{1}\|_{Y_{\sigma_1}^0}\|P_{C_{2}(C_1)}P_{N_2}u_{2}\|_{Y_{\sigma_2}^0}. 
\end{split}
\end{equation}
By the interpolation between (\ref{d2dbilinest_int_1}) and (\ref{d2dbilinest_int_2}), 
it holds that
\[
\begin{split}
&\big\|\eta (t)^{\frac{d}{d+2}} P_{N_3}(Q_{<M}^{\sigma_1}P_{C_{1}}P_{N_1}u_{1,T}\cdot Q_{<M}^{\sigma_2}P_{C_{2}(C_{1})}P_{N_2}u_{2,T}) \big\|_{L^{\frac{d+2}{d}}(\R\times \T^d)}\\
&\lesssim T^{\eps}N_3^{2a+2\left(1-a\right)\eps}\|P_{C_{1}}P_{N_1}u_{1}\|_{Y_{\sigma_1}^0}\|P_{C_{2}(C_1)}P_{N_2}u_{2}\|_{Y_{\sigma_2}^0}
\end{split}
\]
for any $a>0$ and $0<\eps <\frac 12$. 
By using this estimate, (\ref{eta_lp_up_est}) with $p=\frac{d+2}2$, the embeddings
$Y^0_{\sigma}\hookrightarrow V^{2}_{\sigma}L^{2}\hookrightarrow U^{3}_{\sigma}L^{2}$, 
(\ref{Vproj_T}), and (\ref{Y-norm_T_est}), we obtain
\[
\begin{split}
&\left| N_{3}\int_{\R}\int_{\T^d}\eta (t)\left(\prod_{j=1}^{3}Q_{<M}^{\sigma_j}P_{N_{j}}u_{j,T}\right)dxdt\right|\\
&\le N_3\sum_{C_1\in \CC_{N_3}}
\big\|\eta (t)^{\frac{d}{d+2}} P_{N_3}(Q_{<M}^{\sigma_1}P_{C_{1}}P_{N_1}u_{1,T}\cdot Q_{<M}^{\sigma_2}P_{C_{2}(C_{1})}P_{N_2}u_{2,T}) \big\|_{L^{\frac{d+2}{d}}(\R\times \T^d)}\\
&\hspace{34ex}\times \big\|\eta (t)^{\frac{2}{d+2}}Q_{<M}^{\sigma_3}P_{N_3}u_{3,T} \big\|_{L^{\frac{d+2}{2}}(\R\times \T^d)}\\
&\lesssim T^{\eps}N_3^{s_c+2a+2\left(1-a\right)\eps}\prod_{j=1}^3\|P_{N_{j}}u_{j}\|_{Y^0_{\sigma_j}}
\end{split}
\]
for any $a>0$ and $0<\eps <\frac 12$.
Now we have used the fact that 
$p>\frac{2(d+2)}d$ 
 and $s_c-1= \frac d2-\frac{d+2}p$ 
hold for $p=\frac{d+2}2$ and $d\ge 5$.
By choosing $a>0$ and $0<\eps <\frac 12$ such that 
$0<a< \min \{ 1 , \frac{s-s_c}2 \}$ and $2a+2(1-a)\eps<s-s_c$,
we get (\ref{hl_loc_T_reso}) with $\delta =s-s_c-2a-2(1-a)\eps$. 
\end{proof}

\subsection{Resonance case III}
\label{subsec:ResIII}
We give the trilinear estimate 
for $d=1,2$ under the condition $\mus< 0$.
We first consider the two dimensional case.
The following trilinear estimate plays a crucial role to handle resonant interactions.
Analogous trilinear estimates have been studied in \cite{Kishi13}, \cite{LO24}. 
\begin{thm}\label{thm:Triest_4.3}
Let $d=2$, and
$\sigma_1$, $\sigma_2$, $\sigma_3 \in \R\setminus \{0\}$ satisfy $\mus < 0$.
For any dyadic numbers $N$, $M_{1}$, $M_{2}$, $M_{3}$ with $M_{\max} \ll N$.
Then, we have
\begin{equation}\label{est:TriEst2d01}
\begin{split}
& \bigg| \int_\R \int_{\T^2} \eta(t) \bigl(Q^{\sigma_1}_{<M_1} P_{N} u_1\bigr) \bigl( Q^{\sigma_2}_{<M_2} P_{< N} u_2 \bigr) \bigl( Q^{\sigma_3}_{<M_3} P_{< N} u_3 \bigr)dxdt \bigg|\\
& \lesssim M_{\min}^{\frac12} M_{\max}^{\frac14} \| Q^{\sigma_1}_{<M_1} P_{N} u_1\|_{L_{t,x}^2} 
\| Q^{\sigma_2}_{<M_2} P_{< N} u_2\|_{L_{t,x}^2}  \| Q^{\sigma_3}_{<M_3} P_{< N} u_3\|_{L_{t,x}^2}.
\end{split}
\end{equation}
\end{thm}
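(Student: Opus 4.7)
The plan is to pass to the Fourier side via Parseval's identity and exploit the indefinite structure of the resonance function
\[
\Phi(\xi_1,\xi_2) := \sigma_1|\xi_1|^2+\sigma_2|\xi_2|^2+\sigma_3|\xi_1+\xi_2|^2
\]
made available by the hypothesis $\mus<0$. Writing $F_j := Q^{\sigma_j}_{<M_j}P_{N_j}u_j$ (with $N_1=N$, $N_2,N_3<N$) and substituting $\lambda_j := \tau_j+\sigma_j|\xi_j|^2$ (so that $G_j(\lambda,\xi) := \ti{F_j}(\lambda-\sigma_j|\xi|^2,\xi)$ is supported in $|\lambda|<M_j$), the compact support $\supp \ha\eta \subset [-\pi, \pi]$ arising from \eqref{etaft1} forces $|\Phi(\xi_1,\xi_2)|\lec M_{\max}$ in the sum.

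The factor $M_{\min}^{1/2}$ will come from the inner $\lambda$-integration: for fixed $(\xi_1,\xi_2)$ the triple integral equals $(\ha\eta \ast G_1 \ast G_2 \ast G_3)(\Phi)$, and Young's convolution inequality with the pattern $(L^1,L^1,L^2)$ placed on the two smallest-modulation factors (using $\|G_j\|_{L^1_\lambda}\le M_j^{1/2}\|G_j\|_{L^2_\lambda}$ by Cauchy--Schwarz) yields the pointwise bound $\lec M_{\min}^{1/2}\prod_j \|G_j(\cdot,\xi_j)\|_{L^2_\lambda}$. The factor $M_{\max}^{1/4}$ would come from a mixed-norm bilinear estimate on two of the three $F_j$'s. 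Completing the square in $\Phi$ (assuming $\sigma_j+\sigma_k \ne 0$ for the chosen pair $(j,k)$, the remaining linear case being simpler) gives
\[
\Big|\xi_j + \tfrac{\sigma_k}{\sigma_j+\sigma_k}\xi_i\Big|^2 = \tfrac{-\mus}{(\sigma_j+\sigma_k)^2}|\xi_i|^2 + O(M_{\max}),
\]
and since $\mus<0$ the right-hand side is nonnegative, so the resonance set is a genuine annulus of area $\sim M_{\max}$. The resulting lattice-point count yields the bilinear $L^2_{t,x}$-estimate $\|F_iF_j\|_{L^2_{t,x}} \lec (M_iM_j)^{1/2}\|F_i\|_{L^2}\|F_j\|_{L^2}$, and combining this with the trivial $\|F_iF_j\|_{L^2_t L^1_x} \lec \min(M_i,M_j)^{1/2}\|F_i\|_{L^2}\|F_j\|_{L^2}$ (from Cauchy--Schwarz in $x$ together with $\|F\|_{L^\infty_t L^2_x}\le M^{1/2}\|F\|_{L^2_{t,x}}$) and interpolating in $L^2_t L^q_x$ at $q=4/3$ produces
\[
\|F_iF_j\|_{L^2_t L^{4/3}_x} \lec \min(M_i,M_j)^{1/2}\max(M_i,M_j)^{1/4}\|F_i\|_{L^2}\|F_j\|_{L^2}.
\]
Choosing the pair $(i,j)$ so that $\{\min(M_i,M_j),\max(M_i,M_j)\} = \{M_{\min},M_{\max}\}$ and pairing with the remaining factor $F_k$ in the dual norm $L^2_t L^4_x$ via H\"older's inequality then delivers the claimed bound.

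The main obstacle is bounding the remaining factor $F_k$ in $L^2_t L^4_x$ without a spurious $N^{1/2}$ loss from the two-dimensional Bernstein inequality $\|F_k\|_{L^4_x} \lec N^{1/2}\|F_k\|_{L^2_x}$. To absorb this loss I would perform a Whitney-type angular decomposition of the high-frequency variable $\xi_1$ aligned with the annular resonance set obtained above, exploiting the orthogonality of the angular pieces together with the arc-length $\sim N$ and thickness $\sim M_{\max}/N$ scales of the annulus (which force $M_{\max}\ll N$ to play an essential role) to recover a bound uniform in $N$. This is precisely where the hypothesis $\mus<0$ is indispensable: in the degenerate case $\mus=0$ the annulus collapses and the $M_{\max}^{1/4}$ improvement would correspondingly fail.
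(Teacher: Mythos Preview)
Your proposal has a genuine gap, and it is precisely the obstacle you flag at the end: the $L^2_tL^4_x$ norm on the remaining factor $F_k$ costs $N^{1/2}$ by Bernstein, and the vague ``Whitney-type angular decomposition'' you invoke does not recover it. The issue is structural: once you commit to a bilinear$\times$linear H\"older split, the linear factor carries no resonance information at all, so there is nothing to decompose angularly \emph{on that factor} --- the gain has to come from somewhere else, and your bilinear estimate has already spent the resonance constraint. (Incidentally, your bilinear claim $\|F_iF_j\|_{L^2_{t,x}}\lec (M_iM_j)^{1/2}\|F_i\|\|F_j\|$ is also optimistic: the annulus in $\Z^2$ has area $\sim M_{\max}$, so Schur-type counting only gives $\min(M_i,M_j)^{1/2}M_{\max}^{1/2}$ in general.)

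The paper's approach avoids the H\"older split entirely. After extracting $M_{\min}^{1/2}$ from the $\tau$-integration (as you do), it reduces matters to the counting estimate
\[
\sup_{\xi_2,\xi_3}\ \#\bigl\{\xi_1:\ |\xi_1|\sim N,\ |\Phi(\xi_1,\xi_2)|\lec M_{\max},\ |\Phi(\xi_1,-\xi_1-\xi_3)|\lec M_{\max}\bigr\}\ \lec\ M_{\max}^{1/2}.
\]
The point is that for \emph{fixed} $\xi_2$ and $\xi_3$ there are \emph{two} constraints on $\xi_1$: one confines $\xi_1$ to an annulus of radius $\sim N$ and thickness $\sim M_{\max}/N$, the other (after subtracting) to a strip of width $\sim M_{\max}/N$. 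One then checks, case by case depending on which of $\sigma_i+\sigma_j$ vanish, that the annulus and the strip never become tangent to infinite order; the worst case is tangential intersection, where the arc of the annulus inside the strip has length $\sim M_{\max}^{1/2}$, giving the claimed lattice count. This simultaneous use of both resonance relations is the missing idea in your argument --- it is genuinely trilinear, not bilinear$\times$linear.
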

\begin{rem}\label{rem:ConvEst}
Theorem \ref{thm:Triest_4.3} can be viewed as a refined nonlinear Loomis--Whitney inequality on $\R \times$(lattices) obtained in \cite{KS21}. See Proposition 4.8 in \cite{KS21}. The nonlinear Loomis--Whitney inequality can be applied to the study of general dispersive equations. However, the transversality condition, which is not assumed here, is a crucial for the nonlinear Loomis--Whitney inequality. 
Hence a simple application of Proposition 4.8 in \cite{KS21} would not yield Theorem \ref{thm:Triest_4.3}. 
We will adopt a similar but more direct approach to show Theorem \ref{thm:Triest_4.3} compared with the proof of Proposition 4.8 in \cite{KS21}.
\end{rem}

\begin{proof}[Proof of Theorem \ref{thm:Triest_4.3}]
By Plancherel' theorem, \eqref{est:TriEst2d01} is equivalent to 
\begin{equation}
\label{goal:ConvEst1}
| (\widehat \eta \ast_\tau f_1 \ast f_2 \ast f_3) (0)| \lesssim M_{\min}^{\frac12} M_{\max}^{\frac14} 
\prod_{j=1}^3 \|f_j\|_{L^2},
\end{equation}
where
\begin{align*}
& \supp f_1 \subset \{ (\tau,\xi) \in \R \times \Z^2 \, | \, 
|\tau + \sigma_1 |\xi|^2| \leq M_1, \ |\xi| \sim N\},\\
& \supp f_2 \subset \{ (\tau,\xi) \in \R \times \Z^2 \, | \, 
|\tau+\sigma_2 |\xi|^2| \leq M_2, \ |\xi| \lesssim N\},\\
& \supp f_3 \subset \{ (\tau,\xi) \in \R \times \Z^2 \, | \, 
|\tau+ \sigma_3 |\xi|^2| \leq M_3, \ |\xi| \lesssim N\}.
\end{align*}
By the harmless decomposition, we may assume that there exist $\widetilde{\xi_1}$, $\widetilde{\xi_2} \in \R^2$ such that 
\begin{align*}
& \supp f_1 \subset \{ (\tau,\xi) \in \R \times \Z^2 \, | \, 
|\tau+ \sigma_1 |\xi|^2| \leq M_1, \ |\xi-\widetilde{\xi_1}| \ll N \},\\
& \supp f_2 \subset \{ (\tau,\xi) \in \R \times \Z^2 \, | \, 
|\tau+ \sigma_2 |\xi|^2| \leq M_2,  \ |\xi-\widetilde{\xi_2}| \ll N\},\\
& \supp f_3 \subset \{ (\tau,\xi) \in \R \times \Z^2 \, | \, 
|\tau+ \sigma_3 |\xi|^2| \leq M_3,  \ |\xi+\widetilde{\xi_1} + \widetilde{\xi_2} | \ll N\}.
\end{align*}
It follows from \eqref{etaft1} that $\supp \widehat \eta \subset [-\pi,\pi]$.
Define $\supp_{\xi} f_j = \{\xi_j \in \Z^2 \, | \, \text{there exists} \ \tau_j \in \R \ \mathrm{such} \ \mathrm{that} \ (\tau_j,\xi_j) \in \supp f_j \}$ and 
\begin{align*}
&\Psi_2(\tau_0,\tau_1,\xi_1,\tau_2,\xi_2)
= |\tau_0| + |\tau_1+ \sigma_1|\xi_1|^2| + |\tau_2 + \sigma_2|\xi_2|^2|
\\
&\hspace*{130pt} + |\tau_0+\tau_1+\tau_2 - \sigma_3 |\xi_1+\xi_2|^2|,\\ 
& \Psi_3(\tau_0,\tau_1,\xi_1,\tau_3,\xi_3)
= |\tau_0| + |\tau_1+ \sigma_1|\xi_1|^2| + |\tau_0+\tau_1+\tau_3 - \sigma_2|\xi_1+\xi_3|^2|
\\
&\hspace*{130pt} + |\tau_3 + \sigma_3 |\xi_3|^2|,\\ 
& S_{\xi_1,M_{\max}}^2  = \left\{ \xi_2 \in \supp_{\xi} f_2 \, \middle| \, 
\begin{aligned} 
&-\xi_1-\xi_2 \in \supp_{\xi} f_3,
\\
&\text{there exist} \ |\tau_0| \le \pi \text{ and } \tau_1, \tau_2 \in \R \ \mathrm{ such} \ \mathrm{that}
\\
& \Psi_2(\tau_0, \tau_1-\tau_0,\xi_1,\tau_2,\xi_2) \leq 3 M_{\max}
\end{aligned} \right\},\\
& S_{\xi_1,M_{\max}}^3  = \left\{ \xi_3 \in \supp_{\xi} f_3 \, \middle| \, 
\begin{aligned} 
&-\xi_1-\xi_3 \in \supp_{\xi} f_2,
\\
& \text{there exist} \ |\tau_0| \le \pi \text{ and } \tau_1, \tau_3 \in \R \ \mathrm{such} \ \mathrm{that} \\
& \Psi_3(\tau_0, \tau_1-\tau_0,\xi_1,\tau_3,\xi_3) \leq 3 M_{\max}
\end{aligned} \right\}.
\end{align*}
To see \eqref{goal:ConvEst1}, it suffices to show
\begin{equation}
\label{est:ConvEst2}
\sup_{\substack{\xi_2 \in \supp_{\xi} f_2\\\xi_3 \in \supp_{\xi} f_3}} \sum_{\xi_1 \in \supp_{\xi} f_1} \1_{S_{\xi_1,M_{\max}}^2}(\xi_2) \times \1_{S_{\xi_1,M_{\max}}^3}(\xi_3) \lesssim M_{\max}^{\frac12}.
\end{equation}
Indeed, if \eqref{est:ConvEst2} holds, by the Cauchy-Schwarz inequality, we have
\begin{align*}
&| (\widehat \eta \ast_\tau f_1 \ast f_2 \ast f_3) (0)|
\\
&=
\bigg| \int_{\R} \widehat \eta (\tau_0) (f_1 \ast f_2 \ast f_3)(-\tau_0,0) d \tau_0 \bigg|\\
&
\leq \sum_{\xi_1 \in \supp_{\xi} f_1}
\bigg| \int_{\R} \widehat \eta(\tau_0)
\int_\R f_1 (\tau_1-\tau_0,\xi_1) ( f_2 \ast f_3)(-\tau_1,-\xi_1) d \tau_1 d \tau_0 \bigg|\\
& \lesssim M_{\min}^{\frac12} \sum_{\xi_1 \in \supp_{\xi} f_1}
\| \widehat \eta \|_{L^1}
\|f_1(\xi_1)\|_{L_{\tau}^2} \|f_2|_{S_{\xi_1,M_{\max}}^2}\|_{L^2}\|f_3|_{S_{\xi_1,M_{\max}}^3}\|_{L^2}\\
& \leq M_{\min}^{\frac12} 
\| \widehat \eta \|_{L^1}
\|f_1\|_{L^2}
\Bigl( \sum_{\xi_1 \in \supp_{\xi} f_1} \|f_2|_{S_{\xi_1,M_{\max}}^2}\|_{L^2}^2 \|f_3|_{S_{\xi_1,M_{\max}}^3}\|_{L^2}^2 \Bigr)^{\frac12} \\
& \lesssim  M_{\min}^{\frac12} M_{\max}^{\frac14} \prod_{j=1}^3 \|f_j\|_{L^2}.
\end{align*}
To show \eqref{est:ConvEst2}, let us observe the condition of $\xi_1$ such that $(\xi_2,\xi_3) \in S_{\xi_1,M_{\max}}^2 \times S_{\xi_1,M_{\max}}^3$. If $\xi_2 \in S_{\xi_1,M_{\max}}^2$, since $\Psi_2(\tau_0,\tau_1-\tau_0,\xi_1,\tau_2,\xi_2) \lesssim M_{\max}$ for some $|\tau_0| \le \pi$ and $\tau_1, \tau_2 \in \R$, we have
\begin{equation}\label{est:ConvEst2A}
\bigl|\sigma_1 |\xi_1|^2 +\sigma_2|\xi_2|^2 + \sigma_3|\xi_1+\xi_2|^2 \bigr| \lesssim M_{\max}.
\end{equation}
Similarly, if $\xi_3 \in S_{\xi_1,M_{\max}}^3$, there exist $|\tau_0| \le \pi$ and $\tau_1, \tau_3 \in \R$ such that $\Psi_3(\tau_0, \tau_1 - \tau_0,\xi_1,\tau_3,\xi_3) \lesssim M_{\max}$. Hence, we have
\begin{equation}\label{est:ConvEst2B}
\bigl|\sigma_1 |\xi_1|^2 +\sigma_2|\xi_1+\xi_3|^2 + \sigma_3|\xi_3|^2 \bigr| \lesssim M_{\max}.
\end{equation}
We divide the proof of \eqref{est:ConvEst2} into the following two cases:
\begin{enumerate}
\item[{\rm (i)}] \label{ConvEst2_i}$(\sigma_1+\sigma_2)(\sigma_3+\sigma_1)\not= 0$,
\item[{\rm (ii)}] \label{ConvEst2_ii}$(\sigma_1+\sigma_2)(\sigma_3+\sigma_1) = 0$.
\end{enumerate}

\noindent \underline{The case {\rm (i)}}:
By $(\sigma_1+\sigma_2)(\sigma_3+\sigma_1)\not= 0$ and \eqref{mus}, we may write
\begin{align}
& \eqref{est:ConvEst2A} 
\iff  \bigg| \bigg| \xi_1 + \frac{\sigma_3}{\sigma_1+\sigma_3} \xi_2\bigg|^2 + \frac{\mus}{(\sigma_1+ \sigma_3)^2} |\xi_2|^2 \bigg| \lesssim M_{\max},\label{est:ConvEst3} \\
& \eqref{est:ConvEst2B} 
\iff  \bigg| \bigg| \xi_1 + \frac{\sigma_2}{\sigma_1+\sigma_2} \xi_3\bigg|^2 + \frac{\mus}{(\sigma_1+ \sigma_2)^2} |\xi_3|^2 \bigg| \lesssim M_{\max}.\label{est:ConvEst4}
\end{align}
It is clear that \eqref{est:ConvEst3} and \eqref{est:ConvEst4} imply $|\xi_2| \sim |\xi_3| \sim N$.
With $M_{\max} \ll N$,
it follows from \eqref{est:ConvEst3} and \eqref{est:ConvEst4} that
\begin{equation}
\begin{aligned}
&\bigg| \Bigl(\frac{2 \sigma_3}{\sigma_1+\sigma_3} \xi_2 - \frac{2\sigma_2}{\sigma_1+\sigma_2} \xi_3 \Bigr) \cdot \xi_1 
\\
&\quad
+ \frac{\sigma_3^2+\mus}{\sigma_3^2} \Bigl(\frac{\sigma_3^2}{ (\sigma_1+\sigma_3)^2} |\xi_2|^2 - \frac{\sigma_2^2}{(\sigma_1+\sigma_2)^2}|\xi_3|^2 \Bigr) 
+ \frac{\mus (\sigma_2^2 -\sigma_3^2)}{\sigma_3^2(\sigma_1+\sigma_2)^2} |\xi_3|^2 \bigg| \\
& \lesssim M_{\max}.
\end{aligned}
\label{est:ConvEst5}
\end{equation}
Now let us see
\begin{equation}
\label{est:ConvEst6}
\bigg| \frac{\sigma_3}{\sigma_1+\sigma_3} \xi_2 - \frac{\sigma_2}{\sigma_1+\sigma_2} \xi_3 \bigg| \sim N.
\end{equation} 
First, suppose that $\sigma_2 \not = \sigma_3$. 
Then, \eqref{est:ConvEst5} and $|\frac{\sigma_3}{\sigma_1+\sigma_3} \xi_2 - \frac{\sigma_2}{\sigma_1+\sigma_2} \xi_3| \ll N$ imply
\[
\bigg| \frac{\mus (\sigma_2^2 -\sigma_3^2)}{\sigma_3^2(\sigma_1+\sigma_2)^2} \bigg| |\xi_3|^2  \ll N^2,
\]
which contradicts the assumptions on exponents. 
While, in the case $\sigma_2 = \sigma_3$, if $|\frac{\sigma_3}{\sigma_1+\sigma_3} \xi_2 - \frac{\sigma_2}{\sigma_1+\sigma_2} \xi_3| \ll N$, we have $|\xi_2 - \xi_3| \ll N$. 
Consequently, the support conditions
\[
|\xi_1 - \widetilde{\xi_1}| + |\xi_2 - \widetilde{\xi_2}| + |\xi_3 + \widetilde{\xi_1} + \widetilde{\xi_2}| \ll N
\]
imply $|\xi_1 + 2 \xi_2| \ll N$.
This condition yields that $\widetilde{\xi_1} = -2 \widetilde{\xi_2}$.
Then, \eqref{est:ConvEst3} with $M_{\max} \ll N$ implies that
$\big| -2 + \frac{\sigma_3}{\sigma_1+\sigma_2} \big|^2 + \frac{\mus}{(\sigma_1+\sigma_2)^2} =0$,
which contradicts $\mus<0$.
We complete the proof of \eqref{est:ConvEst6}.

As a consequence, the conditions \eqref{est:ConvEst3}, \eqref{est:ConvEst5}, and \eqref{est:ConvEst6} imply that $\xi_1$ is contained in the intersection of the annulus of radius $\sim N$, width $\sim \frac{M_{\max}}N$ and the strip of width $\sim \frac{M_{\max}}N$. It is easy to see that the number of such $\xi_1 \in \Z^2$ is at most $\sim M_{\max}^{\frac12}$. See Figures \ref{fig:AnnuluStrip1} and \ref{fig:AnnuluStrip2}. This completes the proof of \eqref{est:ConvEst2} in the case {\rm (i)}. \\

\begin{figure}[t]
\begin{tabular}{cc}\hspace{-0.5cm}
\begin{minipage}[t]{7.7cm}
\begin{center}
\begin{tikzpicture}[]
\useasboundingbox (0,0) rectangle (4.5,5);
\path[draw,thick,fill=gray]
plot[domain=0.935:1.05,variable=\t, samples=50] ({2*cos(\t r) + 2},{2*sin(\t r)+2.5})--plot[domain=1.1:0.995,variable=\t, samples=50] ({2.2*cos(\t r) + 2},{2.2*sin(\t r)+2.5});
\path[draw,thick,fill=gray]
plot[domain=-0.935:-1.05,variable=\t, samples=50] ({2*cos(\t r) + 2},{2*sin(\t r)+2.5})--plot[domain=-1.1:-0.995,variable=\t, samples=50] ({2.2*cos(\t r) + 2},{2.2*sin(\t r)+2.5});
\draw (2,2.5) circle [radius=2];
\draw (2,2.5) circle [radius=2.2];
\draw (3,0.2)--(3,4.8);
\draw (3.2,0.2)--(3.2,4.8);
\end{tikzpicture}
\caption{An annulus and a strip intersect transversely.}
\label{fig:AnnuluStrip1}
\end{center}
\end{minipage} \hspace{-0.5cm}
\begin{minipage}[t]{7.7cm}
\centering
\begin{tikzpicture}[]
\useasboundingbox (0,0) rectangle (5,5);
\path[draw,thick,fill=gray]
plot[domain=-0.42:0.42,variable=\t, samples=50] ({2.2*cos(\t r) + 2},{2.2*sin(\t r)+2.5});
\draw (2,2.5) circle [radius=2];
\draw (2,2.5) circle [radius=2.2];
\draw (4,0.2)--(4,4.8);
\draw (4.2,0.2)--(4.2,4.8);
\draw[densely dotted] ({2.2*cos(0.42 r) + 2},{2.2*sin(0.42 r)+2.5})--(4.7,{2.2*sin(0.42 r)+2.5});
\draw[densely dotted] ({2.2*cos(-0.42 r) + 2},{2.2*sin(-0.42 r)+2.5})--(4.7,{2.2*sin(-0.42 r)+2.5});
\draw[<->] (4.5,3.35)--(4.5,1.65);
\node[anchor=west] at(4.5,2.5){$\sim M_{\max}^{\frac12}$};
\draw[<->] ({2.2*cos(0.42 r)+1.99},{2.2*sin(0.42 r)+4})--({2.2*cos(0.42 r)+2.2},{2.2*sin(0.42 r)+4});
\node[anchor=west] at(4.1,5){$\sim \frac{M_{\max}}N$};
\end{tikzpicture}
\caption{An annulus and a strip intersect tangentially.}
\label{fig:AnnuluStrip2}
\end{minipage}
\end{tabular}
\end{figure}

\noindent \underline{The case {\rm (ii)}}:
By symmetry, it is enough to consider the case $\sigma_1+ \sigma_2=0$. We consider the two cases: (iiA) $\sigma_1 + \sigma_3 \not=0$ and (iiB) $\sigma_1 + \sigma_3=0$. In the first case, similarly to \eqref{est:ConvEst3} and \eqref{est:ConvEst4}, we have
\begin{align}
& \eqref{est:ConvEst2A} 
\iff  \bigg| \bigg| \xi_1 + \frac{\sigma_3}{\sigma_1+\sigma_3} \xi_2\bigg|^2 - \frac{\sigma_1^2}{(\sigma_1+ \sigma_3)^2} |\xi_2|^2 \bigg| \lesssim M_{\max},\label{est:ConvEst7} \\
& \eqref{est:ConvEst2B} 
\iff  \bigg|\xi_3 \cdot \Bigl(\xi_1+ \frac{\sigma_1-\sigma_3}{2 \sigma_1} \xi_3\Bigr) \bigg| \lesssim M_{\max}.
\notag
\end{align}
It is clear that \eqref{est:ConvEst7} implies $|\xi_2| \sim N$.
Moreover,
these conditions imply that $\xi_1$ is confined in the intersection of the strip of width 
$\frac{M_{\max}}{|\xi_3|}$ and the annulus of width $\frac{M_{\max}}N$. 
The case $|\xi_3| \sim N$ can be dealt with in the same way as in the case {\rm (i)}. 
Hence, we suppose that $|\xi_3| \ll N$. Let us consider $|\xi_3| \lesssim M_{\max}^{\frac12}$ first. 
In this case, since we may assume $|\xi_1+ \xi_2| \lesssim M_{\max}^{\frac12}$,
\eqref{est:ConvEst7} implies the claim \eqref{est:ConvEst2}.
While, if $|\xi_3| \gtrsim M_{\max}^{\frac12}$, the width of the strip is $\frac{M_{\max}}{|\xi_3|} \lesssim M_{\max}^{\frac12}$. Since $|\xi_1| \sim N$ and $|\xi_3| \ll N$, the strip and the annulus intersect transversely. Therefore, we get the bound \eqref{est:ConvEst2} for the case $\sigma_1 + \sigma_2=0$ and (iiA). 

Next, we consider the case (iiB). In this case, the two conditions are
\begin{align}
& \eqref{est:ConvEst2A} 
\iff  |\xi_2 \cdot (\xi_1 + \xi_2) | \lesssim M_{\max},\label{est:ConvEst9} \\
& \eqref{est:ConvEst2B} 
\iff  |\xi_3 \cdot (\xi_1+\xi_3) | \lesssim M_{\max}.\label{est:ConvEst10}
\end{align}
Notice that these conditions imply that $\xi_1$ is contained in the intersection of two strips of widths $\sim \frac{M_{\max}}{|\xi_2|}$ and $\frac{M_{\max}}{|\xi_3|}$, respectively. 
Without loss of generality, we may assume $|\xi_2| \sim N$. 
If $|\xi_3| \sim N$, it follows from \eqref{est:ConvEst9}, \eqref{est:ConvEst10}, and the support condition $|\xi_1 +\xi_2 + \xi_3| \ll N$ that $\xi_2$ and $\xi_3$ are, as vectors, almost perpendicular. 
Hence, the two strips intersect transversely and $\xi_1$ is contained in a square cube of side length $\sim \frac{M_{\max}}N$. Next, we assume that $|\xi_3| \ll N$. In this case, however, we may show \eqref{est:ConvEst2} in the same way as in the proof of the case (iiA) under $|\xi_3| \ll N$. Thus we omit the proof.
\end{proof}

\begin{rem}\label{rem:ConvEstTori}
By replacing $\Z^2$ in the proof of Theorem \ref{thm:Triest_4.3} with $\theta_1 \Z \times \theta_2 \Z$ where $0 < \theta_1,\theta_2< \infty$, it is easy to check that we may replace $\T^2$ of \eqref{est:TriEst2d01} with $(\R / 2\pi \theta _1 \Z) \times (\R / 2\pi \theta _2 \Z)$. 
\end{rem}

A similar, but simpler, calculation yields the trilinear estimate in the one dimensional case.

\begin{cor}\label{cor:Triest_4.4}
Let $d=1$, and
$\sigma_1$, $\sigma_2$, $\sigma_3 \in \R\setminus \{0\}$ satisfy $\mus < 0$.
For any dyadic numbers $N$, $M_{1}$, $M_{2}$, $M_{3}$ with $M_{\max} \ll N$.
Then, we have
\[
\begin{split}
& \bigg| \int_\R \int_{\T} \eta (t) \bigl(Q^{\sigma_1}_{<M_1} P_{N} u_1\bigr) \bigl( Q^{\sigma_2}_{<M_2} P_{< N} u_2 \bigr) \bigl( Q^{\sigma_3}_{<M_3} P_{< N} u_3 \bigr)dxdt \bigg|\\
& \lesssim M_{\min}^{\frac12} \| Q^{\sigma_1}_{<M_1} P_{N} u_1\|_{L_{t,x}^2} 
\| Q^{\sigma_2}_{<M_2} P_{< N} u_2\|_{L_{t,x}^2}  \| Q^{\sigma_3}_{<M_3} P_{< N} u_3\|_{L_{t,x}^2}.
\end{split}
\]
\end{cor}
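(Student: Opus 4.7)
The proof mirrors that of Theorem~\ref{thm:Triest_4.3}, with the one-dimensional geometry replacing the annulus/strip counting by an $O(1)$ count. By Plancherel's theorem, the estimate is equivalent to
\[
|(\widehat\eta \ast_\tau f_1 \ast f_2 \ast f_3)(0)| \lesssim M_{\min}^{1/2} \prod_{j=1}^{3} \|f_j\|_{L^2},
\]
where the $f_j$ carry the analogous support conditions in $\R \times \Z$. Applying the harmless decomposition as in the two-dimensional proof, I may assume that $\supp_\xi f_j$ is contained in an interval of length $\ll N$ centered at $\widetilde{\xi_1} \sim N$, $\widetilde{\xi_2}$, and $-\widetilde{\xi_1}-\widetilde{\xi_2}$ respectively; in particular every admissible triple $(\xi_1,\xi_2,\xi_3)$ satisfies $|\xi_1 + \xi_2 + \xi_3| \ll N$. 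The Cauchy--Schwarz reduction employed in the proof of Theorem~\ref{thm:Triest_4.3} then shows it suffices to establish the counting bound
\[
\sup_{\xi_2 \in \supp_\xi f_2,\, \xi_3 \in \supp_\xi f_3} \#\bigl\{ \xi_1 \in \supp_\xi f_1 : \xi_2 \in S^2_{\xi_1, M_{\max}}, \ \xi_3 \in S^3_{\xi_1, M_{\max}} \bigr\} \lesssim 1,
\]
which accounts for the absence of the $M_{\max}^{1/4}$ factor present in the two-dimensional result.

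For the counting, the condition $\xi_2 \in S^2_{\xi_1, M_{\max}}$ is equivalent to
\[
\text{(A):}\quad \bigl| (\sigma_1+\sigma_3)\xi_1^2 + 2\sigma_3 \xi_1 \xi_2 + (\sigma_2+\sigma_3)\xi_2^2 \bigr| \lesssim M_{\max},
\]
and $\xi_3 \in S^3_{\xi_1, M_{\max}}$ to an analogous constraint (B) in the variable $\xi_3$. I split into subcases paralleling those in the proof of Theorem~\ref{thm:Triest_4.3}. If $(\sigma_1+\sigma_2)(\sigma_1+\sigma_3) \neq 0$, or if exactly one of these two quantities vanishes, then at least one of (A), (B) is a nondegenerate quadratic in $\xi_1$ whose discriminant equals $-4\mus\,\xi_2^2 > 0$ or $-4\mus\,\xi_3^2 > 0$ (using $\mus<0$). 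Its real roots take the form $C^{\pm}\xi_2$ (resp.\ $C^{\pm}\xi_3$) with $C^{\pm} = (-\sigma_3 \pm \sqrt{-\mus})/(\sigma_1+\sigma_3)$, and a direct computation gives derivative of size $2\sqrt{-\mus}\,|\xi_2|$ at each root. Hence $|\xi_1| \sim N$ forces $|\xi_2| \sim N$ (resp.\ $|\xi_3| \sim N$), and the level set defined by (A) has length $\lesssim M_{\max}/N \ll 1$, containing at most $O(1)$ integer points.

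The remaining case, and the main obstacle, is $\sigma_1 + \sigma_2 = \sigma_1 + \sigma_3 = 0$, i.e.\ $\sigma_2 = \sigma_3 = -\sigma_1$. Here both (A) and (B) degenerate to
\[
|\xi_2|\,|\xi_1+\xi_2| \lesssim M_{\max}, \qquad |\xi_3|\,|\xi_1+\xi_3| \lesssim M_{\max},
\]
so each inequality becomes trivial whenever the corresponding external frequency vanishes. This is precisely where the harmless decomposition is essential: because $\widetilde{\xi_1} \sim N$ and $\widetilde{\xi_1} + \widetilde{\xi_2} + (-\widetilde{\xi_1}-\widetilde{\xi_2}) = 0$, one cannot have both $|\widetilde{\xi_2}| \ll N$ and $|\widetilde{\xi_1}+\widetilde{\xi_2}| \ll N$ simultaneously. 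Consequently, for every admissible triple in the decomposition piece at least one of $|\xi_2| \sim N$ or $|\xi_3| \sim N$ holds, and the corresponding degenerate constraint confines $\xi_1$ to an interval of length $\lesssim M_{\max}/N \ll 1$, again yielding $O(1)$ integer solutions. Summing over the finitely many decomposition pieces completes the proof of the counting bound, and hence of the corollary.
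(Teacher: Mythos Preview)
Your proof is correct and follows essentially the same approach as the paper: reduce via Plancherel and Cauchy--Schwarz to the counting bound, then use the constraints \eqref{est:ConvEst2A} and \eqref{est:ConvEst2B} to confine $\xi_1$ to a set of length $\lesssim M_{\max}/N \ll 1$, hence $O(1)$ integers. The paper states this conclusion in one line without spelling out the case split on $(\sigma_1+\sigma_2)(\sigma_1+\sigma_3)$, whereas you make the quadratic/degenerate analysis explicit; both arguments are the one-dimensional specialization of the proof of Theorem~\ref{thm:Triest_4.3}.
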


\begin{proof}
We use the same notation as in the proof of Theorem \ref{thm:Triest_4.3}.
When $d=1$,
\eqref{est:ConvEst2A} and \eqref{est:ConvEst2B} yield that $\xi_1$ is contained in an interval of side length $\lec \frac{M_{\max}}N$.
Since $\xi_1 \in \Z$,
we obtain
\[
\sup_{\substack{\xi_2 \in \supp_{\xi} f_2\\\xi_3 \in \supp_{\xi} f_3}} \sum_{\xi_1 \in \supp_{\xi} f_1} \1_{S_{\xi_1,M_{\max}}^2}(\xi_2) \times \1_{S_{\xi_1,M_{\max}}^3}(\xi_3) \lesssim 1
\]
instead of \eqref{est:ConvEst2},
which shows the desired bound.
\end{proof}

\subsection{Time local estimates for critical case}

To prove the local well-posedness for large initial data in $\mathcal{H}^{s_c}(\T^d )$,
we use the following proposition.

\begin{prop}\label{nonreso_case_tri_est_loc_critical} 
Let $d\ge 3$ and $\sigma_{1}$, $\sigma_{2}$, $\sigma_3\in \R \backslash \{0\}$ satisfy 
$\kks \ne 0$. 
Then,
there exist $\eps, \delta, \theta>0$ such that
for any $0<T\le 1$, dyadic numbers $N_{1}$, $N_{2}$, $N_{3}$, $K \geq 1$,
and $P_{N_{j}}u_{j}\in V^{2}_{\sigma_{j}}L^{2}$\ $(j=1,2,3)$, we have 
\begin{align*}
&\left| N_{\max}\int_{\R}\int_{\T^{d}}\eta(t)
(P_{N_{1}} P_{<K} u_{1,T})
\left(\prod_{j=2}^{3} P_{N_{j}}u_{j,T}\right)
dxdt\right|
\\
&\lesssim  T^{\eps}K^{\theta}
N_{\min}^{s_c}
\left(\frac{N_{\min}}{N_{\max}}+\frac{1}{N_{\min}}\right)^{\delta}
\prod_{j=1}^{3}\|P_{N_{j}}u_{j}\|_{Y^{0}_{\sigma_{j}}}.
\end{align*}
\end{prop}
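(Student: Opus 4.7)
The plan is to run a case analysis that mirrors the proof of Proposition~\ref{HL_est_T} but exploits the projection $P_{<K}$ on $u_1$, which forces $N_1 \lesssim K$ and hence $N_{\min} \le N_1 \lesssim K$. The two sub-cases will be covered respectively by Lemma~\ref{nonreso_case_tri_est_loc} (handling at least one high-modulation factor) and a bilinear Strichartz plus trivial $L^2$ argument (handling the fully low-modulation piece).

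First I will dispose of $N_{\max} \lesssim 1$ directly via H\"older and Bernstein as in Remark~\ref{lowfreq_trilin_est}, since $N_{\min}^{s_c} \sim 1$. For $N_{\max} \gg 1$, I set $M := C^{-1} N_{\max}^2$ for $C$ sufficiently large and split the integral into the eight pieces indexed by $Q_j^{\sigma_j} \in \{Q_{<M}^{\sigma_j}, Q_{\ge M}^{\sigma_j}\}$. To the seven pieces carrying at least one $Q_{\ge M}^{\sigma_j}$ I apply Lemma~\ref{nonreso_case_tri_est_loc} with regularity $s = s_c + \epsilon$ for a small $\epsilon > 0$ (admissible since $d \ge 3$ gives $s_c > 0$); using $N_{\min} \lesssim K$, the resulting factor $N_{\min}^{s_c+\epsilon}$ is bounded by $K^\epsilon N_{\min}^{s_c}$, which yields the desired right-hand side.

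The remaining all-low-modulation piece is the delicate one. Since $\kks \ne 0$, every pair $\sigma_i + \sigma_j$ is nonzero, so Lemma~\ref{modul_est_T}(i) together with Plancherel and $\supp \widehat{\eta} \subset [-\pi,\pi]$ forces the piece to vanish unless $N_1 \sim N_2 \sim N_3$. In that regime $N_1 \lesssim K$ gives $N_{\max} \sim N_{\min} \lesssim K$, and I then apply the bilinear Strichartz estimate of Proposition~\ref{Ybe_2_T}(i) to the pair $(u_2, u_3)$ (legitimate since $\sigma_2 + \sigma_3 \ne 0$) coupled via Cauchy--Schwarz with the trivial bound $\|u_{1,T}\|_{L^2_{t,x}} \lesssim T^{1/2} \|P_{N_1} u_1\|_{Y^0_{\sigma_1}}$. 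This produces a bound of the form $T^{1/2} N_{\max}^{1+s_c} \prod_{j=1}^{3} \|P_{N_j} u_j\|_{Y^0_{\sigma_j}}$, which, using $N_{\max} \lesssim K$ and $T \le 1$ (so that $T^{1/2} \le T^\eps$ for any $\eps \le 1/2$), is dominated by $T^\eps K \, N_{\min}^{s_c} \prod_{j=1}^{3} \|P_{N_j} u_j\|_{Y^0_{\sigma_j}}$; the savings factor $(N_{\min}/N_{\max} + 1/N_{\min})^\delta$ is $\sim 1$ in this regime.

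The main obstacle is precisely this fully low-modulation tri-resonant interaction: under $\kks \ne 0$ alone (as opposed to $\mus > 0$), Lemma~\ref{modul_est_T}(ii) is unavailable and no modulation-based improvement is possible at the critical level, so the estimate can only close by exploiting the frequency restriction $N_{\max} \lesssim K$ afforded by $P_{<K}$ and absorbing the loss into $K^\theta$. Choosing for instance $\theta = 1$ and $\eps, \delta, \epsilon > 0$ sufficiently small then gives the conclusion.
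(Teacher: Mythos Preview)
Your proof is correct and follows the same overall strategy as the paper: split by modulation, handle the high-modulation pieces via a $T$-dependent bound, and for the fully low-modulation piece use $\kks \ne 0$ to force $N_1 \sim N_2 \sim N_3$ and then exploit $N_1 \lesssim K$. The paper's execution differs only in the tools used at each step: for the high-modulation pieces it interpolates the crude bound $T^{1/2} K^{d/2}$ (obtained from H\"older, \eqref{high_mod_deriv_gain}, and Bernstein) directly against the critical estimate \eqref{nonreso_est_trilin} of Lemma~\ref{nonreso_case_tri_est}, rather than invoking Lemma~\ref{nonreso_case_tri_est_loc} at $s=s_c+\epsilon$ and then absorbing $N_{\min}^\epsilon\lesssim K^\epsilon$; for the low-modulation piece it uses only H\"older and Bernstein (with $N_{\max}\lesssim K$) to obtain $T K^{d/2+1}$, rather than your bilinear Strichartz plus trivial $L^2$ argument, arriving at a larger but equally admissible value of $\theta$.
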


\begin{proof}
We set $M = C^{-1} N_{\max}^2$
for some $C \gg 1$.
We divide the integral into $8$ pieces of the form such as \eqref{integral_divide_8piece}.

If $Q_j^{\sigma_j}=Q_{\ge M}^{\sigma_j}$ for some $j\in \{1,2,3\}$,
the H\"older inequality, (\ref{high_mod_deriv_gain}), and
the Bernstein inequality yield that
\begin{align*}
&\left| N_{\max}\int_{\R}\int_{\T^{d}}\eta(t)
(Q_{1}^{\sigma_1} P_{N_{1}} P_{<K} u_{1,T})
\left(\prod_{j=2}^{3}Q_{j}^{\sigma_j} P_{N_{j}}u_{j,T}\right)
dxdt\right|
\\
&\lesssim 
T^{\frac{1}{2}}K^{\frac{d}{2}}\prod_{j=1}^{3}\|P_{N_{j}}u_{j}\|_{Y^{0}_{\sigma_{j}}}.
\end{align*}
By interpolating this estimate and \eqref{nonreso_est_trilin},
we obtain the desired bound.

If $Q_j^{\sigma_j}=Q_{< M}^{\sigma_j}$ for any $j\in \{1,2,3\}$,
Lemma \ref{modul_est_T} with $\kks \ne 0$
yields that $N_1\sim N_2\sim N_3$. 
Then,
it follows from
the H\"older inequality and the Bernstein inequality with $N_{\max}\lesssim K$
that
\begin{align*}
&\left| N_{\max}\int_{\R}\int_{\T^{d}}\eta(t)
(Q_{1}^{\sigma_1} P_{N_{1}} P_{<K} u_{1,T})
\left(\prod_{j=2}^{3}Q_{j}^{\sigma_j} P_{N_{j}}u_{j,T}\right)
dxdt\right|
\\
&\lesssim 
TK^{\frac{d}{2}+1}\prod_{j=1}^{3}\|P_{N_{j}}u_{j}\|_{Y^{0}_{\sigma_{j}}},
\end{align*}
Since $d \ge 3$ implies $s_c>0$,
this shows the desired bound.
\end{proof}

\section{Proof of the well-posedness\label{WP2}}
In this section, we prove the well-posedness of (\ref{NLS_sys_torus}).
We define the map 
\[
\Phi(u,v,w)=(\Phi_{\alpha, u_{0}}^{(1)}(w, v), \Phi_{\beta, v_{0}}^{(1)}(\overline{w}, v), \Phi_{\gamma, w_{0}}^{(2)}(u, \overline{v}))
\]
as
\[
\begin{split}
\Phi_{\sigma, \varphi}^{(1)}(f,g)(t)&:=e^{it\sigma \Delta}\varphi +i I^{(1)}_{\sigma}(f,g)(t),\\
\Phi_{\sigma, \varphi}^{(2)}(f,g)(t)&:=e^{it\sigma \Delta}\varphi -i I^{(2)}_{\sigma}(f,g)(t),
\end{split}
\] 
where
\[
\begin{split}
I^{(1)}_{\sigma}(f,g)(t)&:=\int_{0}^{t}\ee_{[0,\infty )}(t')e^{i(t-t')\sigma \Delta}(\nabla \cdot f(t'))g(t')dt',\\
I^{(2)}_{\sigma}(f,g)(t)&:=\int_{0}^{t}\ee_{[0,\infty )}(t')e^{i(t-t')\sigma \Delta}\nabla (f(t')\cdot g(t'))dt'.
\end{split}
\]
\subsection{Except the case $\mu<0$ and $s=1$}
\label{subsec:WP11}

In this subsection, 
we prove Theorems~\ref{wellposed_T} and ~\ref{wellposed_sub_T} 
except the case $\mu<0$ and $s=1$. 
Key estimates are the followings.
\begin{prop}\label{Duam_est_T}
Assume that
$\alpha$, $\beta$, $\gamma \in \R\backslash \{0\}$ satisfy
\[
\begin{cases}
{\rm (a)}\ \mu>0&{\rm if}\  d=3,\\ 
{\rm (b)}\ \mu \ge0&{\rm if}\  d=4,\\
{\rm (c)}\ \kappa \neq 0&{\rm if}\ d\ge 5,
\end{cases}
\] 
where $\mu$ and $\kappa$ are defined in \eqref{coeff_condition}.
Then, for $0<T\le 1$, we have
\begin{align}
&\|I_{\alpha}^{(1)}(w,v)\|_{Z^{s_c}_{\alpha}([0,T))}\lesssim \|w\|_{Y^{s_c}_{\gamma}([0,T))}\|v\|_{Y^{s_c}_{\beta}([0,T))},\label{Duam_al_T}\\
&\|I_{\beta}^{(1)}(\overline{w},u)\|_{Z^{s_c}_{\beta}([0,T))}\lesssim \|w\|_{Y^{s_c}_{\gamma}([0,T))}\|u\|_{Y^{s_c}_{\alpha}([0,T))},\label{Duam_be_T}\\
&\|I_{\gamma}^{(2)}(u,\overline{v})\|_{Z^{s_c}_{\gamma}([0,T))}\lesssim \|u\|_{Y^{s_c}_{\alpha}([0,T))}\|v\|_{Y^{s_c}_{\beta}([0,T))}\label{Duam_ga_T}.
\end{align} 
\end{prop}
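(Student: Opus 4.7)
The plan is to reduce each of the three Duhamel estimates to the trilinear bounds of Section~\ref{tri_est_T} via duality, and to close by dyadic summation. By the structural symmetry of the system, I focus on \eqref{Duam_al_T}; the proofs of \eqref{Duam_be_T} and \eqref{Duam_ga_T} follow from analogous calculations (for the third estimate after an integration by parts to move the gradient onto the test function). By Proposition~\ref{duality_T},
\begin{equation*}
\|I^{(1)}_\alpha (w,v)\|_{Z^{s_c}_\alpha([0,T))}
\le \sup_{\|\varphi\|_{Y^{-s_c}_\alpha}=1}
\bigg|\int_0^T\!\!\int_{\T^d}(\nabla\cdot w)\, v \, \overline{\varphi}\,dx\,dt\bigg|,
\end{equation*}
and the integrand has frequency parameters $\sigma_1=\gamma$, $\sigma_2=\beta$, $\sigma_3=-\alpha$. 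A direct computation from \eqref{mus} and \eqref{kks} gives $\mu(\sigma_1,\sigma_2,\sigma_3)=\mu$ and $|\kappa(\sigma_1,\sigma_2,\sigma_3)|=|\kappa|$, so the hypotheses (a)--(c) transfer correctly to the trilinear estimates.

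Using the identity $\eta(t)\psi_T(t)^3=1$ on $[0,T)$ from \eqref{defpsi}, I extend the $t$-integration to $\R$ with $\eta(t)$ inserted, at which point the trilinear estimates of Section~\ref{tri_est_T} apply. After Littlewood--Paley decomposition of $w,v,\varphi$, the contribution from $N_{\max}\lesssim 1$ is controlled directly (cf.\ Remark~\ref{lowfreq_trilin_est}). For $N_{\max}\gg 1$, I bound $|\nabla\cdot P_{N_1}w_T|\lesssim N_1|P_{N_1}w_T|\le N_{\max}|P_{N_1}w_T|$ to align with the derivative factor in the trilinear bounds, and apply Proposition~\ref{HL_est_T} when $\mu>0$, Proposition~\ref{HL_est_T_mass_reso} when $\mu=0$ (relevant under (b)), and Proposition~\ref{HL_est_T_2} when $\mu<0$ and $\kappa\ne 0$ (relevant under (c)). Together with \eqref{Y-norm_T_est} to absorb the temporal cutoffs $\psi_T\mathbf{1}_{[0,T)}$, each dyadic triple $(N_1,N_2,N_3)$ yields
\begin{equation*}
\begin{split}
&\bigg|\int_\R\!\!\int_{\T^d}\eta(t)(\nabla\cdot P_{N_1}w_T)(P_{N_2}v_T)\overline{P_{N_3}\varphi_T}\,dx\,dt\bigg|
\\
&\qquad \lesssim
N_{\min}^{s_c}\Bigl(\tfrac{N_{\min}}{N_{\max}}+\tfrac{1}{N_{\min}}\Bigr)^{\delta}
\|P_{N_1}w\|_{Y^0_\gamma}\|P_{N_2}v\|_{Y^0_\beta}\|P_{N_3}\varphi\|_{Y^0_\alpha}.
\end{split}
\end{equation*}

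Rewriting $\|P_{N_j}u_j\|_{Y^0}$ as $N_j^{-s}\|P_{N_j}u_j\|_{Y^s}$ with $s=s_c$ for $w,v$ and $s=-s_c$ for $\varphi$ introduces the factor $N_1^{-s_c}N_2^{-s_c}N_3^{s_c}$, and the remaining task is to show that
\begin{equation*}
\Lambda(N_1,N_2,N_3):=N_{\min}^{s_c}\Bigl(\tfrac{N_{\min}}{N_{\max}}+\tfrac{1}{N_{\min}}\Bigr)^{\delta} N_1^{-s_c}N_2^{-s_c}N_3^{s_c}
\end{equation*}
defines a bounded trilinear form on $\ell^2\times\ell^2\times\ell^2$ of dyadic sequences. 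In the imbalanced regimes ($N_1\sim N_2\gg N_3$; $N_2\sim N_3\gg N_1$; $N_1\sim N_3\gg N_2$), a positive power of $N_{\min}/N_{\max}$ provides geometric decay in the minority index that is absorbed by Cauchy--Schwarz. The delicate case is the fully resonant regime $N_1\sim N_2\sim N_3$, where $\Lambda\sim 1$ uniformly in the common dyadic size and no geometric gain is available; summability there rests on the trivial H\"older inequality $\sum_N a_1(N)a_2(N)a_3(N)\le (\sup_N a_1)\|a_2\|_{\ell^2}\|a_3\|_{\ell^2}$ together with $\ell^2\hookrightarrow\ell^\infty$. This resonant regime is the main technical pressure point: at the scaling critical regularity $s_c$ there is no margin in the exponents, which is precisely why the argument is tight at $s=s_c$ and does not yield any subcritical gain.
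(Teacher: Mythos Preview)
Your proof is correct and follows essentially the same route as the paper's own argument: duality via Proposition~\ref{duality_T}, reduction to the trilinear estimates of Section~\ref{tri_est_T} (Propositions~\ref{HL_est_T}, \ref{HL_est_T_mass_reso}, \ref{HL_est_T_2} according to the sign of $\mu$), and closing by a dyadic Cauchy--Schwarz summation. The only cosmetic differences are that the paper exemplifies with \eqref{Duam_ga_T} rather than \eqref{Duam_al_T}, and organizes the dyadic sum by the three sets $S_j=\{N_{\min}=N_j\}$ instead of phrasing it as boundedness of the trilinear form $\Lambda$; your verification that $\mu(\sigma_1,\sigma_2,\sigma_3)=\mu$ and $|\kappa(\sigma_1,\sigma_2,\sigma_3)|=|\kappa|$ for the relevant triple is the same bookkeeping the paper relies on implicitly.
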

\begin{proof}
We prove only (\ref{Duam_ga_T}) for the case {\rm (a)}  
since the other cases and the estimates (\ref{Duam_al_T}), (\ref{Duam_be_T}) can be proved in the same way (we use Proposition~\ref{HL_est_T_mass_reso} for the case {\rm (b)} 
and Proposition~\ref{HL_est_T_2} for the case {\rm (c)} instead of Proposition~\ref{HL_est_T}). 
Let
\[
(u_{1},u_{2}):=(u,\overline{v}), \quad
(\sigma_{1},\sigma_{2},\sigma_{3}):=(\alpha ,-\beta ,-\gamma ).
\]
We define
\[
S_{j}:=\{ (N_{1}, N_{2}, N_{3})| \ N_{\max}\sim N_{{\rm med}}\gtrsim N_{\min}\geq 1,\ N_{\min}=N_{j}\}\ (j=1,2,3)
\]
and $S:=\bigcup_{j=1}^{3}S_{j}$, where $(N_{\max},N_{\rm med},N_{\min})$ is one of the permutation of $(N_{1},N_{2},N_{3})$ such that $N_{\max}\geq N_{\rm med}\geq N_{\min}$. 
Then we have
\[
\begin{split}
&\left\| I_{-\sigma_{3}}^{(2)}(u_{1}, u_{2})\right\|_{Z^{s_c}_{-\sigma_{3}}([0,T))}
\\
&\lesssim \sup_{\|u_{3}\|_{Y^{-s_c}_{\sigma_{3}}=1}}\left|\int_{0}^{T}\int_{\T^{d}}u_{1}u_{2}(\nabla \cdot u_{3})dxdt\right|\\
&\leq \sup_{\|u_{3}\|_{Y^{-s_c}_{\sigma_{3}}=1}}\sum_{(N_1,N_2,N_3)\in S}\left|\int_{0}^{T}\int_{\T^{d}}P_{N_{1}}u_{1}P_{N{2}}u_{2}P_{N_{3}} (\nabla \cdot u_{3}) dxdt\right|\\
&\leq \sup_{\|u_{3}\|_{Y^{-s_c}_{\sigma_{3}}=1}}\sum_{(N_1,N_2,N_3)\in S}N_{\min}^{s_c}\left(\frac{N_{\min}}{N_{\max}}+\frac{1}{N_{\min}}\right)^{\delta}\prod_{j=1}^{3}\|P_{N_{j}}u_{j}\|_{Y^{0}_{\sigma_{j}}}
\end{split}
\]
by Proposition~\ref{duality_T} and Proposition~\ref{HL_est_T} (see, also Remark~\ref{lowfreq_trilin_est}).  
Furthermore, we have
\[
\begin{split}
&\sum_{(N_1,N_2,N_3)\in S_{1}}N_{\min}^{s_c}\left(\frac{N_{\min}}{N_{\max}}+\frac{1}{N_{\min}}\right)^{\delta}\prod_{j=1}^{3}\|P_{N_{j}}u_{j}\|_{Y^{0}_{\sigma_{j}}}\\
&\sim\sum_{N_{2}}\sum_{N_{3}\sim N_{2}}\sum_{N_{1}\lesssim N_{2}}N_{3}^{s_c}N_{1}^{s_c}\left(\frac{N_{1}}{N_{2}}+\frac{1}{N_{1}}\right)^{\delta}\|P_{N_{1}}u_{1}\|_{Y^{0}_{\sigma_{1}}}\|P_{N_{2}}u_{2}\|_{Y^{0}_{\sigma_{2}}}\|P_{N_{3}}u_{3}\|_{Y^{-s_c}_{\sigma_{3}}}\\
&\leq \|u_{1}\|_{Y^{s_c}_{\sigma_{1}}}\|u_{2}\|_{Y^{s_c}_{\sigma_{2}}}\|u_{3}\|_{Y^{-s_c}_{\sigma_{3}}}
\end{split}
\]
and
\[
\begin{split}
&\sum_{(N_1,N_2,N_3)\in S_{3}}N_{\min}^{s_c}\left(\frac{N_{\min}}{N_{\max}}+\frac{1}{N_{\min}}\right)^{\delta}\prod_{j=1}^{3}\|P_{N_{j}}u_{j}\|_{Y^{0}_{\sigma_{j}}}\\
&\sim \sum_{N_{1}}\sum_{N_{2}\sim N_{1}}\sum_{N_{3}\lesssim N_{2}}N_{3}^{2s_c}\left(\frac{N_{3}}{N_{2}}+\frac{1}{N_{3}}\right)^{\delta}\|P_{N_{1}}u_{1}\|_{Y^{0}_{\sigma_{1}}}\|P_{N_{2}}u_{2}\|_{Y^{0}_{\sigma_{2}}}\|P_{N_{3}}u_{3}\|_{Y^{-s_c}_{\sigma_{3}}}\\
&\leq \|u_{1}\|_{Y^{s_c}_{\sigma_{1}}}\|u_{2}\|_{Y^{s_c}_{\sigma_{2}}}\|u_{3}\|_{Y^{-s_c}_{\sigma_{3}}}
\end{split}
\]
by the Cauchy-Schwarz inequality for the dyadic sum. In the same way as the estimate for the summation of $S_{1}$, we have 
\[
\sum_{(N_1,N_2,N_3)\in S_{2}}N_{\min}^{s}\left(\frac{N_{\min}}{N_{\max}}+\frac{1}{N_{\min}}\right)^{\delta}\prod_{j=1}^{3}\|P_{N_{j}}u_{j}\|_{Y^{0}_{\sigma_{j}}}
\lesssim \|u_{1}\|_{Y^{s}_{\sigma_{1}}}\|u_{2}\|_{Y^{s}_{\sigma_{2}}}\|u_{3}\|_{Y^{-s}_{\sigma_{3}}}. 
\]
Therefore, we obtain (\ref{Duam_ga_T}) since $\|u_{1}\|_{Y^{s}_{\sigma_{1}}}=\|u\|_{Y^{s}_{\alpha}}$ and $\|u_{2}\|_{Y^{s}_{\sigma_{2}}}=\|v\|_{Y^{s}_{\beta}}$. 
\end{proof}

The same argument with Proposition \ref{nonreso_case_tri_est_loc_critical} yields
the following time local estimate.

\begin{prop}\label{Duam_est_T_loc_critical}
Let $d \ge 3$ and 
$\alpha$, $\beta$, $\gamma \in \R\backslash \{0\}$ satisfy $\kappa \neq 0$.
Then, there exists $\eps, \theta >0$ such that 
for any $0<T\le 1$ and dyadic number $K\ge 1$,  we have
\begin{align*}
&\|I_{\alpha}^{(1)}(w,v)-I_{\alpha}^{(1)}(P_{\ge K}w,P_{\ge K}v)\|_{Z^{s_c}_{\alpha}([0,T))}\lesssim 
T^{\eps}K^{\theta}\|w\|_{Y^{s_c}_{\gamma}([0,T))}\|v\|_{Y^{s_c}_{\beta}([0,T))},\\
&\|I_{\beta}^{(1)}(\overline{w},u)-I_{\beta}^{(1)}(\overline{P_{\ge K}w},P_{\ge K}u)\|_{Z^{s_c}_{\beta}([0,T))}\lesssim T^{\eps}K^{\theta}\|w\|_{Y^{s_c}_{\gamma}([0,T))}\|u\|_{Y^{s_c}_{\alpha}([0,T))},\\
&\|I_{\gamma}^{(2)}(u,\overline{v})-I_{\gamma}^{(2)}(P_{\ge K}u,\overline{P_{\ge K}v})\|_{Z^{s_c}_{\gamma}([0,T))}\lesssim T^{\eps}K^{\theta}\|u\|_{Y^{s_c}_{\alpha}([0,T))}\|v\|_{Y^{s_c}_{\beta}([0,T))}.
\end{align*} 
\end{prop}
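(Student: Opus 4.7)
The plan is to reduce each of the three inequalities to Proposition~\ref{Duam_est_T} combined with the time-local trilinear bound in Proposition~\ref{nonreso_case_tri_est_loc_critical}. First I would exploit bilinearity to split
\[
I_\alpha^{(1)}(w,v)-I_\alpha^{(1)}(P_{\ge K}w,P_{\ge K}v)
=I_\alpha^{(1)}(P_{<K}w,v)+I_\alpha^{(1)}(P_{\ge K}w,P_{<K}v),
\]
and similarly for the $\beta$ and $\gamma$ equations. The point is that in every resulting term at least one of the two inputs carries a $P_{<K}$ projection; by the self-adjointness of the Littlewood--Paley projectors, after invoking Proposition~\ref{duality_T} the dual trilinear integral always has at least one factor of the form $P_{N_j}P_{<K}u_j$.

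Next, for each of the resulting terms I would repeat the argument of Proposition~\ref{Duam_est_T} verbatim: set $(u_1,u_2)=(u,\bar v)$ (respectively for the other two estimates), $(\sigma_1,\sigma_2,\sigma_3)$ as in that proof, partition the frequencies into the sets $S_1$, $S_2$, $S_3$ determined by which $N_j$ is smallest, and apply Proposition~\ref{nonreso_case_tri_est_loc_critical} in place of Proposition~\ref{HL_est_T} to the trilinear integral. This replaces the bound
\[
N_{\min}^{s_c}\bigl(N_{\min}/N_{\max}+1/N_{\min}\bigr)^{\delta}\prod_{j=1}^{3}\|P_{N_j}u_j\|_{Y^{0}_{\sigma_j}}
\]
by the same expression multiplied by $T^{\eps}K^{\theta}$, with the $P_{<K}$ factor absorbed into the trilinear estimate (regardless of whether it is the $j=1$, $j=2$, or $j=3$ factor, since the roles in Proposition~\ref{nonreso_case_tri_est_loc_critical} can be permuted by applying the duality to a different variable).

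The dyadic summation is then identical to that in Proposition~\ref{Duam_est_T}: on each of $S_1$, $S_2$, $S_3$ one uses the Cauchy--Schwarz inequality in the dyadic variables and the fact that $s_c\ge 0$ together with the factor $(N_{\min}/N_{\max}+1/N_{\min})^{\delta}$ to make the sum converge; the $T^{\eps}K^{\theta}$ factor passes through the summation untouched. Putting the two splittings together yields the claimed bound.

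The only point requiring a little care is that Proposition~\ref{nonreso_case_tri_est_loc_critical} is stated with the $P_{<K}$ on the first slot. To put it on a different slot one reassigns which of the three functions plays the role of the ``tested'' function in the duality step; since the trilinear form is symmetric in a suitable cyclic sense (as used already in the proof of Proposition~\ref{Duam_est_T}), this is purely bookkeeping and not a genuine obstacle. Thus the hard work has already been done in Proposition~\ref{nonreso_case_tri_est_loc_critical} and the present proposition follows by the same summation scheme as Proposition~\ref{Duam_est_T}.
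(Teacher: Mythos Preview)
Your proposal is correct and follows exactly the approach the paper indicates (the paper states only ``The same argument with Proposition~\ref{nonreso_case_tri_est_loc_critical} yields the following time local estimate''). Your bilinear splitting, the use of Proposition~\ref{duality_T} to pass to a trilinear integral, the replacement of Proposition~\ref{HL_est_T} by Proposition~\ref{nonreso_case_tri_est_loc_critical}, and the dyadic summation over $S_1,S_2,S_3$ are precisely what is intended. The one remark is that your justification for moving $P_{<K}$ to a different slot is slightly off: it is not really a matter of ``applying the duality to a different variable'' (the dual variable is fixed once you choose which $I^{(j)}_\sigma$ you estimate), but simply that the proof of Proposition~\ref{nonreso_case_tri_est_loc_critical} is symmetric in the three factors---both the Bernstein step and the conclusion $N_{\max}\lesssim K$ from $N_1\sim N_2\sim N_3$ work with $P_{<K}$ on any of $u_1,u_2,u_3$.
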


Combining the estimates above,
we obtain Theorem~\ref{wellposed_T}.
While the argument is the same as that in \cite{HTT11},
we give the proof for completeness.

\begin{proof}[\rm{\bf{Proof of Theorem~\ref{wellposed_T}.}}]
For an interval $I\subset \R$, we define
\begin{align}\label{Xs_norm_T}
X^{s_c}(I)
&:=Z^{s_c}_{\alpha}(I)\times Z^{s_c}_{\beta}(I)\times Z^{s_c}_{\gamma}(I), \\
\| (u,v,w) \|_{X^{s_c}(I)}
&:= \max \big\{ \|u\|_{Z_{\alpha}^{s_c}(I)},\ \|v\|_{Z_{\beta}^{s_c}(I)},\ \|w\|_{Z_{\gamma}^{s_c}(I)} \big\}.
\notag
\end{align}
Moreover,
we set
\[
X^{s_c}_{r}(I)
:=\left\{(u,v,w)\in X^{s_c}(I)\left|\
\| (u,v,w) \|_{X^{s_c}(I)}
\le r \right.\right\}
\]
for $r>0$.
Note that $X^{s_c}_{r}(I)$ is a closed subset of the Banach space $X^{s_c}(I)$. 
Let
$C$ be the maximum of the implicit constants in the estimates in Propositions \ref{Duam_est_T} and \ref{Duam_est_T_loc_critical}.

\noindent \underline{\rm Case (a)} (Small initial data):
Let $r>0$ satisfy
\[
r< \frac 1{8C}.
\]
Let $(u_{0}$, $v_{0}$, $w_{0})\in \H^{s_c}(\T^d )$ satisfy
\[
\max\{ \| u_0 \|_{H^{s_c}}, \| v_0 \|_{H^{s_c}}, \| w_0 \|_{H^{s_c}} \} \le r.
\]
Note that 
\[
\|e^{i\sigma t\Delta}\varphi \|_{Z^{s_c}_{\sigma}([0,1))}
\leq \| e^{i\sigma t\Delta}\varphi \|_{Z^{s_c}_{\sigma}}
\leq \|\varphi \|_{H^{s_c}}.
\]
For $(u,v,w)\in X^{s_c}_{2r}([0,1))$, 
Proposition~\ref{Duam_est_T} yields that
\[
\begin{split}
\|\Phi^{(1)}_{\alpha ,u_{0}}(w, v)\|_{Z_{\alpha}^{s_c}([0,1))}&\leq \|u_{0}\|_{H^{s_c}} +C\|w\|_{Z_{\gamma}^{s_c}([0,1))}\|v\|_{Z_{\beta}^{s_c}([0,1))}\leq r(1+4Cr) < 2r,\\
\|\Phi^{(1)}_{\beta ,v_{0}}(\overline{w}, u)\|_{Z_{\beta}^{s_c}([0,1))}&\leq \|v_{0}\|_{H^{s_c}} +C\|w\|_{Z_{\gamma}^{s_c}([0,1))}\|u\|_{Z_{\alpha}^{s_c}([0,1))}\leq r(1+4Cr)<2r,\\
\|\Phi^{(2)}_{\gamma ,w_{0}}(u, \overline{v})\|_{Z_{\gamma}^{s_c}([0,1))}&\leq \|w_{0}\|_{H^{s_c}} +C\|u\|_{Z_{\alpha}^{s_c}([0,1))}\|v\|_{Z_{\beta}^{s_c}([0,1))}\leq r(1+4Cr)<2r.
\end{split}
\]
Similarly,
for $(u_1,v_1,w_1), (u_2,v_2,w_2)\in X^{s_c}_{2r}([0,1))$, 
we have
\[
\begin{split}
&\|\Phi^{(1)}_{\alpha ,u_{0}}(w_{1}, v_{1})-\Phi^{(1)}_{\alpha ,u_{0}}(w_{2}, v_{2})\|_{Z_{\alpha}^{s_c}([0,1))}
\\
&\leq 4Cr
\left( \|w_{1}-w_{2}\|_{Z_{\gamma}^{s_c}([0,1))}+\|v_{1}-v_{2}\|_{Z_{\beta}^{s_c}([0,1))}\right)
,\\
&\|\Phi^{(1)}_{\beta ,v_{0}}(\overline{w_{1}}, u_{1})-\Phi^{(1)}_{\beta ,v_{0}}(\overline{w_{2}}, u_{2})\|_{Z_{\beta}^{s_c}([0,1))}
\\
&\leq 4Cr\left( \|w_{1}-w_{2}\|_{Z_{\gamma}^{s_c}([0,1))}+\|u_{1}-u_{2}\|_{Z_{\alpha}^{s_c}([0,1))}\right),\\
&\|\Phi^{(2)}_{\gamma ,w_{0}}(u_{1}, \overline{v_{1}})-\Phi^{(2)}_{\gamma ,w_{0}}(u_{2}, \overline{v_{2}})\|_{Z_{\gamma}^{s_c}([0,1))}
\\
&\leq 4Cr\left( \|u_{1}-u_{2}\|_{Z_{\alpha}^{s_c}([0,1))}+\|v_{1}-v_{2}\|_{Z_{\beta}^{s_c}([0,1))}\right).
\end{split}
\]
Therefore,
$\Phi$ is a contraction map on $X^{s_c}_{2r}([0,1))$. 
This implies the existence of the solution to the system (\ref{NLS_sys_torus}) and the uniqueness in the ball $X^{s_c}_{2r}([0,1))$. 
The uniqueness in $X^{s_c}([0,1))$ and
the Lipschitz continuity of the flow map can be obtained by the standard argument. \\

\noindent \underline{\rm Case (b)} (Large initial data):
Let $R>0$ be given and assume $(u_0,v_0,w_0)\in \mathcal{H}^{s_c}(\T^d)$ satisfy
\[
\max\{ \| u_0 \|_{H^{s_c}}, \| v_0 \|_{H^{s_c}}, \| w_0 \|_{H^{s_c}} \} \le R.
\]
Let $r\in (0,R)$ be a small constant to be chosen later.
Then,
there exists a dyadic number $K_0=K_0(u_0,v_0,w_0,r)$ such that
\[
\max\{ \| P_{\ge K_0}u_0 \|_{H^{s_c}}, \| P_{\ge K_0}v_0 \|_{H^{s_c}}, \| P_{\ge K_0}w_0 \|_{H^{s_c}} \} \le r. 
\]
We define 
\begin{equation}
\notag
\widetilde{X}^{s_c}_{2R,2r}([0,T))
:=\left\{(u,v,w)\in X^{s_c}_{2R} ([0,T))
\left|\ 
(P_{\ge K_0}u,P_{\ge K_0}v,P_{\ge K_0}w)\in X^{s_c}_{2r} ([0,T))
 \right.\right\}.
\end{equation}
For $(u,v,w)\in \widetilde{X}^{s_c}_{2R,2r}([0,T))$, 
Propositions~\ref{Duam_est_T} and ~\ref{Duam_est_T_loc_critical} yield that
\[
\begin{split}
&\|\Phi^{(1)}_{\alpha ,u_{0}}(w, v)\|_{Z_{\alpha}^{s_c}([0,T))}\\
&\leq \|e^{i\alpha t\Delta}u_0\|_{Z^{s_c}_{\alpha}([0,T))}+\|I_{\alpha}^{(1)}(P_{\ge {K_0}}w,P_{\ge {K_0}}v)\|_{Z^{s}_{\alpha}([0,T))}\\
&\hspace{20ex}+\|I_{\alpha}^{(1)}(w,v)-I_{\alpha}^{(1)}(P_{\ge K_0}w,P_{\ge K_0}v)\|_{Z^{s_c}_{\alpha}([0,T))}\\
&\le \|u_0\|_{H^{s_c}}+C\|P_{\ge K_0}w\|_{Z^{s_c}_{\gamma}([0,T))}\|P_{\ge K_0}v\|_{Z^{s_c}_{\beta}([0,T))}
\\
&\quad +CT^{\eps}K_0^{\theta}\|w\|_{Z^{s_c}_{\gamma}([0,T))}\|v\|_{Z^{s_c}_{\beta}([0,T))}\\
&\le R+ 4Cr^2+ 4CT^{\eps}K_0^{\theta}R^2.
\end{split}
\]
Moreover,
we have
\[
\begin{split}
&\|P_{\ge K_0}\Phi^{(1)}_{\alpha ,u_{0}}(w, v)\|_{Z_{\alpha}^{s_c}([0,T))}\\
&\leq \|e^{i\alpha t\Delta}P_{\ge K_0}u_0\|_{Z^{s_c}_{\alpha}([0,T))}+\|I_{\alpha}^{(1)}(P_{\ge {K_0}}w,P_{\ge {K_0}}v)\|_{Z^{s}_{\alpha}([0,T))}\\
&\hspace{20ex}+\|I_{\alpha}^{(1)}(w,v)-I_{\alpha}^{(1)}(P_{\ge K_0}w,P_{\ge K_0}v)\|_{Z^{s_c}_{\alpha}([0,T))}\\
&\le \|P_{\ge K_0}u_0\|_{H^{s_c}}+C\|P_{\ge K_0}w\|_{Z^{s_c}_{\gamma}([0,T))}\|P_{\ge K_0}v\|_{Z^{s_c}_{\beta}([0,T))}
\\
&\quad +CT^{\eps}K_0^{\theta}\|w\|_{Z^{s_c}_{\gamma}([0,T))}\|v\|_{Z^{s_c}_{\beta}([0,T))}\\
&\le r+4Cr^2+4CT^{\eps}K_0^{\theta}R^2. 
\end{split}
\]
Here,
we choose $r\in (0,R)$ and $T\in (0,1]$ satisfying
\[
r\le \frac{1}{32C}, \quad
T^{\eps}\le \frac{r}{32CK_0^{\theta}R^2}
.
\]
Then, we obtain $\|\Phi^{(1)}_{\alpha ,u_{0}}(w, v)\|_{Z_{\alpha}^{s_c}([0,T))}\le 2R$, 
$\|P_{\ge K_0}\Phi^{(1)}_{\alpha ,u_{0}}(w, v)\|_{Z_{\alpha}^{s_c}([0,T))}\le 2r$, and
\[
\begin{split}
&\|\Phi^{(1)}_{\alpha ,u_{0}}(w_{1}, v_{1})-\Phi^{(1)}_{\alpha ,u_{0}}(w_{2}, v_{2})\|_{Z_{\alpha}^{s_c}([0,T))}
\\
&\leq (4Cr+6CT^{\eps}K_0^{\theta}R)\left( \|w_{1}-w_{2}\|_{Z_{\gamma}^{s_c}([0,T))}+\|v_{1}-v_{2}\|_{Z_{\beta}^{s_c}([0,T))}\right)\\
&\le \frac 5{16} \left( \|w_{1}-w_{2}\|_{Z_{\gamma}^{s_c}([0,T))}+\|v_{1}-v_{2}\|_{Z_{\beta}^{s_c}([0,T))}\right). 
\end{split}
\]
The similar estimates for $\|\Phi^{(1)}_{\beta ,v_{0}}(\overline{w}, u)\|_{Z_{\beta}^{s_c}([0,T))}$ 
and $\|\Phi^{(2)}_{\gamma ,w_{0}}(u, \overline{v})\|_{Z_{\gamma}^{s_c}([0,T))}$ 
can be obtained. 
Therefore, $\Phi$ is a contraction map on $\widetilde{X}^{s_c}_{2R,2r}([0,T))$. 
\end{proof}

By using Proposition~\ref{HL_est_loc_T}, ~\ref{HL_est_loc_T_22}, or ~\ref{HL_est_loc_T_reso},
instead of Proposition~\ref{HL_est_T} in the proof of Proposition~\ref{Duam_est_T},
we get the following.

\begin{prop}\label{Duam_est_T_loc}
Let $d\ge 1$ and $0<T\le 1$.
If one of 
\begin{enumerate}
\item[{\rm (i)}] $\mu>0$ and $s>\max \{ s_c, 0\};$
\item[{\rm (ii)}] $\mu = 0$, $s>s_c$, and $s\ge 1;$
\item[{\rm (iii)}] $\mu < 0$, $\widetilde{\kappa} \neq 0$, $s> \max \{ s_c, 1 \}$
\end{enumerate}
is satisfied, 
then there exists $\eps >0$, such that we have
\begin{align*}
&\|I_{\alpha}^{(1)}(w,v)\|_{Z^{s}_{\alpha}([0,T))}\lesssim T^{\eps}\|w\|_{Y^{s}_{\gamma}([0,T))}\|v\|_{Y^{s}_{\beta}([0,T))},\\
&\|I_{\beta}^{(1)}(\overline{w},u)\|_{Z^{s}_{\beta}([0,T))}\lesssim T^{\eps}\|w\|_{Y^{s}_{\gamma}([0,T))}\|u\|_{Y^{s}_{\alpha}([0,T))},\\
&\|I_{\gamma}^{(2)}(u,\overline{v})\|_{Z^{s}_{\gamma}([0,T))}\lesssim T^{\eps}\|u\|_{Y^{s}_{\alpha}([0,T))}\|v\|_{Y^{s}_{\beta}([0,T))}.
\end{align*} 
\end{prop}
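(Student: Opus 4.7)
The plan is to follow the proof strategy of Proposition~\ref{Duam_est_T} essentially verbatim, replacing the critical bilinear/trilinear estimates by their subcritical local analogues, which supply the crucial $T^{\eps}$ factor. I would prove only one of the three Duhamel estimates, say \eqref{Duam_ga_T} with $s$ in place of $s_c$, since the other two follow by the same argument (symmetry in the roles of $u,v,w$). Setting $(u_1,u_2)=(u,\overline{v})$ and $(\sigma_1,\sigma_2,\sigma_3)=(\alpha,-\beta,-\gamma)$, I would invoke Proposition~\ref{duality_T} to bound $\|I_{\gamma}^{(2)}(u,\overline{v})\|_{Z_{\gamma}^{s}([0,T))}$ by the supremum over $\|u_3\|_{Y_{-\gamma}^{-s}([0,T))}=1$ of a trilinear form, and then decompose dyadically according to the frequency configurations $(N_1,N_2,N_3)$. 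Following the notation of the original proof, I would split the sum over the three regions $S_1, S_2, S_3$ (distinguished by which of $N_1,N_2,N_3$ is smallest).

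For each dyadic piece, I would apply the relevant local trilinear estimate: Proposition~\ref{HL_est_loc_T} in case (i) $\mu>0$, Proposition~\ref{HL_est_loc_T_22} in case (ii) $\mu=0$, and Proposition~\ref{HL_est_loc_T_reso} in case (iii) $\mu<0$, $\widetilde{\kappa}\neq 0$. Each of these furnishes a bound of the form
\begin{equation*}
T^{\eps}N_{\min}^{s}\Big(\frac{N_{\min}}{N_{\max}}+\frac{1}{N_{\min}}\Big)^{\delta}\prod_{j=1}^{3}\|P_{N_j}u_j\|_{Y_{\sigma_j}^{0}},
\end{equation*}
which is precisely the critical bound from Proposition~\ref{HL_est_T} multiplied by $T^{\eps}$ and with $s$ in place of $s_c$. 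The extremely low-frequency contribution $N_{\max}\lesssim 1$ is even easier by Remark~\ref{lowfreq_trilin_est}, which already carries a $T^{1/2}$ factor.

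Once the trilinear bound is in place, the dyadic summation is carried out exactly as in the proof of Proposition~\ref{Duam_est_T}: in the region $S_1$ (where $N_1\ll N_2\sim N_3$) one distributes $N_3^{s}\cdot N_1^{s}$ to the $Y_{\sigma_j}^{s}$ and $Y_{-\sigma_3}^{-s}$ norms, and uses $(N_1/N_2+1/N_1)^{\delta}$ together with the Cauchy–Schwarz inequality in the dyadic indices; the region $S_3$ (where $N_3\ll N_1\sim N_2$) is handled symmetrically with $N_3^{2s}$, and $S_2$ is analogous to $S_1$. This produces the final bound $T^{\eps}\|u\|_{Y_{\alpha}^{s}([0,T))}\|v\|_{Y_{\beta}^{s}([0,T))}$, completing the proof.

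The main obstacle is purely bookkeeping: one must verify that each of the three propositions (\ref{HL_est_loc_T}, \ref{HL_est_loc_T_22}, \ref{HL_est_loc_T_reso}) actually applies under the stated hypotheses on $(d,s,\mu,\widetilde{\kappa})$. In particular, Proposition~\ref{HL_est_loc_T_22} requires both $s>s_c$ and $s\geq 1$, which matches case (ii); Proposition~\ref{HL_est_loc_T_reso} requires $s>\max\{s_c,1\}$ together with $(\sigma_2+\sigma_3)(\sigma_3+\sigma_1)\neq 0$, which for our coefficient tuple $(\alpha,-\beta,-\gamma)$ is $(\alpha-\gamma)(\beta+\gamma)=\widetilde{\kappa}\neq 0$, again matching case (iii). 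Once these consistency checks are done, the rest of the argument is entirely mechanical.
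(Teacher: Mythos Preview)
Your proposal is correct and matches the paper's approach exactly: the paper's entire proof is the single sentence ``By using Proposition~\ref{HL_est_loc_T}, \ref{HL_est_loc_T_22}, or \ref{HL_est_loc_T_reso}, instead of Proposition~\ref{HL_est_T} in the proof of Proposition~\ref{Duam_est_T}, we get the following,'' and your write-up spells out precisely this substitution together with the consistency checks on the hypotheses. One small point worth making explicit in case (iii): Proposition~\ref{HL_est_loc_T_reso} carries the factor $N_3$ (not $N_{\max}$) on the left-hand side, so when treating $I_\alpha^{(1)}$ and $I_\beta^{(1)}$ you should label the factor carrying the derivative as $u_3$; with that relabeling the condition $(\sigma_2+\sigma_3)(\sigma_3+\sigma_1)\neq 0$ again reduces to $\widetilde\kappa\neq 0$ in all three cases.
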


Theorem \ref{wellposed_sub_T} except for $\mu<0$ and $s=1$ follows from Proposition \ref{Duam_est_T_loc}.
Since this is a standard contraction argument,
we omit the details here.

\subsection{The case $\mu<0$, $\widetilde{\kappa}\ne 0$, $d=1,2$, and $s=1$}
\label{subsec:ds21}
In this subsection, 
we prove Theorem~\ref{wellposed_sub_T} 
for the case $\mu<0$, $\widetilde{\kappa}\ne 0$, $d=1,2$, and $s=1$. 
We first give the definition of the solution space. 
\begin{defn}
We define $X$ as the space of all vector valued functions $F:\R \times \T^d \to \C^d$ such that $F(\cdot, x) \in \mathcal{S}(\R)$ for all $x \in \T^d$ and the map $x \mapsto F(\cdot, x)$ is $C^{\infty}$.

Let $s, b\in \R$, $\sigma \in \R\backslash \{0\}$. \\
(i)\ For $1 \leq p < \infty$, we define the function space $X^{s,b,p}_{\sigma}$ as the completion of $X$ with the norm
\[
\|u\|_{X^{s,b,p}_{\sigma}}=\biggl\{\sum_{N\geq 1}N^{2s}
\Bigl(\sum_{M\geq 1}M^{pb}\|Q_{M}^{\sigma}P_{N}u\|_{L^2}^p\Bigr)^\frac{2}{p}\biggr\}^{\frac{1}{2}}.
\]
Similarly, we define the function space $X^{s,b,\infty}_{\sigma}$ as the completion of $X$ with the norm
\[
\|u\|_{X^{s,b,\infty}_{\sigma}}=\biggl\{\sum_{N\geq 1}N^{2s}
\Bigl(\sup_{M\geq 1}M^{b}\|Q_{M}^{\sigma}P_{N}u\|_{L^2}\Bigr)^2\biggr\}^{\frac{1}{2}}. 
\]
(ii)\ For $T>0$, we define the time localized space $X^{1,\frac12,1}_{\sigma, T}$ (see Remark \ref{rem:ristI}) as
\[
X^{1,\frac12,1}_{\sigma, T} = X^{1,\frac12,1}_{\sigma}([0,T)).
\]
\end{defn}

Recall that $\chi \in C_0^{\infty}((-2,2))$ is non-negative with $\chi(t) = 1$ for $|t| \leq 1$. We define $\chi_T(t) = \chi( \frac tT)$.
The following linear estimates hold.
See Propositions 5.2 and 5.3 in \cite{BHHT09} for the proof.

\begin{prop}\label{Xsb_linear_est}
Let $\sigma\in \R\backslash \{0\}$, $b \in (0,\frac{1}{2})$, and 
$0<T\le 1$. 
\begin{enumerate}
\item[(1)] For any $\varphi \in H^1 (\T^d)$, we have
\[
\|e^{it\sigma \Delta}\varphi\|_{X^{1,\frac{1}{2},1}_{\sigma,T}}
\lesssim \|\varphi\|_{H^1}.
\]
\item[(2)] For any $F \in X^{1,-\frac12,1}_{\sigma}$, we have
\[
\biggl\|\chi (t) \int_{0}^{t}e^{i(t-t')\sigma \Delta}F(t')dt'\biggr\|_{X^{1,\frac{1}{2},1}_{\sigma}}
\lesssim \|F\|_{X^{1,-\frac12,1}_{\sigma}}.
\]
\item[(3)] For $u \in X^{1,\frac{1}{2},1}_{\sigma}$, we have
\[
\|\chi_T(t) u\|_{X^{1,b,1}_{\sigma}}\lesssim T^{\frac{1}{2}-b}\|u\|_{X^{1,\frac{1}{2},1}_{\sigma}}. 
\]
\end{enumerate}
\end{prop}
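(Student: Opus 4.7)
The plan is to follow the standard treatment of Besov-type Bourgain spaces (cf.\ \cite{BHHT09} Propositions 5.2 and 5.3), adapted to $X^{1,b,1}_\sigma$ as defined here.

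For (1), I exhibit the concrete extension $u(t,x) := \chi(t)\,e^{it\sigma\Delta}\varphi(x)$; since $T\le 1$ and $\chi\equiv 1$ on $[-1,1]$, this agrees with $e^{it\sigma\Delta}\varphi$ on $[0,T)$. A direct calculation gives $\widetilde{u}(\tau,\xi)=\widehat{\chi}(\tau+\sigma|\xi|^2)\widehat{\varphi}(\xi)$, so
\[
\|Q_M^\sigma P_N u\|_{L^2_{t,x}} = \|\psi_M\widehat{\chi}\|_{L^2_\tau}\,\|P_N\varphi\|_{L^2_x}.
\]
Rapid decay of $\widehat{\chi}$ gives $\sum_M M^{1/2}\|\psi_M\widehat{\chi}\|_{L^2_\tau}\lec 1$, whence $\|P_N u\|_{X^{0,1/2,1}_\sigma}\lec \|P_N\varphi\|_{L^2}$, and $\ell^2_N$-summation with weight $N$ yields (1).

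For (2), I decompose $F=\sum_M F_M$ with $F_M:=Q_M^\sigma F$, and for each dyadic $M$ produce the splitting
\[
\chi(t)\int_0^t e^{i(t-t')\sigma\Delta}F_M(t')\,dt' = U_M(t,x) + \chi(t)\,e^{it\sigma\Delta}\psi_M(x),
\]
where $U_M$ has space-time Fourier support in $\{|\tau+\sigma|\xi|^2|\lec M\}$ with $\|U_M\|_{L^2_{t,x}}\lec M^{-1}\|F_M\|_{L^2_{t,x}}$, and $\|\psi_M\|_{L^2_x}\lec M^{-1/2}\|F_M\|_{L^2_{t,x}}$. This decomposition originates from the classical time-Fourier representation of the Duhamel operator via the multiplier $(i(\tau+\sigma|\xi|^2))^{-1}$ minus its evaluation at $\tau=-\sigma|\xi|^2$, which produces the free-solution remainder $\psi_M$. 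Part (1) then controls the $\chi(t)e^{it\sigma\Delta}\psi_M$ piece while $U_M$ is bounded directly; each dyadic piece contributes $\lec M^{-1/2}\|F_M\|_{L^2_{t,x}}$ to the $X^{0,1/2,1}_\sigma$-norm, and the $\ell^1$-sum in $M$ matches the $X^{0,-1/2,1}_\sigma$-norm of $F$ (with $\ell^2_N$-summation at weight $N$ handling the spatial derivative).

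For (3), decompose $u=\sum_M u_M$ with $u_M:=Q_M^\sigma u$ and estimate $\|Q_{M'}^\sigma(\chi_T u_M)\|_{L^2_{t,x}}$ for each pair $(M,M')$. Two pointwise bounds are available: $\|Q_{M'}(\chi_T u_M)\|_{L^2_{t,x}}\le \|\chi_T\|_{L^\infty_t}\|u_M\|_{L^2_{t,x}}\lec \|u_M\|_{L^2_{t,x}}$, and $\|Q_{M'}(\chi_T u_M)\|_{L^2_{t,x}}\le \|\chi_T\|_{L^2_t}\|u_M\|_{L^\infty_t L^2_x}\lec T^{1/2}M^{1/2}\|u_M\|_{L^2_{t,x}}$, where the last step uses a Bernstein-type estimate in time on the modulation-$M$ piece. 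Since $\widehat{\chi_T}(\tau)=T\widehat{\chi}(T\tau)$ is rapidly decreasing, the $M'$-sum is effectively truncated to $M'\lec \max(M,T^{-1})$. Combining the minimum of the two bounds over the two regimes $M\lec T^{-1}$ and $M\gg T^{-1}$ yields
\[
\sum_{M'}(M')^b\|Q_{M'}(\chi_T u_M)\|_{L^2_{t,x}}\lec T^{1/2-b}M^{1/2}\|u_M\|_{L^2_{t,x}}
\]
in both regimes, and summing in $M$ in $\ell^1$ gives (3).

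The main obstacle will be maintaining the $\ell^1$-Besov structure in modulation throughout (2) and (3), which does not tolerate the logarithmic losses allowed in the standard $X^{s,b}$ (i.e., $\ell^\infty$) setting. In (2), the Fourier-side splitting must be arranged so that both $U_M$ and $\psi_M$ obey sharp bounds with no log loss; in (3), the truncation of the $M'$-sum via the rapid decay of $\widehat{\chi_T}$ must be executed with the correct balance between the two pointwise bounds in each regime, the borderline case $M\sim T^{-1}$ requiring particular care.
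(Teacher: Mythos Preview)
The paper does not give its own proof of this proposition; it simply refers the reader to Propositions 5.2 and 5.3 in \cite{BHHT09}, which is exactly the reference and approach your proposal follows. Your sketch is correct and matches the cited argument; the only imprecision is in (2), where you assert that $U_M$ has strict modulation support $\{|\tau+\sigma|\xi|^2|\lec M\}$ --- after multiplication by $\chi(t)$ this is only true up to rapidly decaying tails (handled exactly as you do in (3)), but this does not affect the outcome.
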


The following nonlinear estimates play a crucial role in the proof of the well-posedness. 
\begin{prop}\label{prop:Xsb_trilinear_est1}
Let $d \in \{ 1,2 \}$ and $\alpha$, $\beta$, $\gamma \in \R\setminus \{0\}$ satisfy $\mu < 0$ and $\widetilde \kappa \not=0$. Then, there exists $\varepsilon >0 $ such that
\begin{align*}
&\|I_{\alpha}^{(1)}(w,v)\|_{X^{1,\frac{1}{2},1}_{\alpha, T}}  \lesssim T^{\varepsilon} \|w\|_{X^{1,\frac{1}{2},1}_{\gamma, T}}\|v\|_{X^{1,\frac{1}{2},1}_{\beta, T}},\\
&\|I_{\beta}^{(1)}(\overline{w},u)\|_{X^{1,\frac{1}{2},1}_{\beta, T}} \lesssim T^{\varepsilon}\|w\|_{X^{1,\frac{1}{2},1}_{\gamma, T}}\|u\|_{X^{1,\frac{1}{2},1}_{\alpha, T}},\\
&\|I_{\gamma}^{(2)}(u,\overline{v})\|_{X^{1,\frac{1}{2},1}_{\gamma, T}} \lesssim T^{\varepsilon}\|u\|_{X^{1,\frac{1}{2},1}_{\alpha, T}}\|v\|_{X^{1,\frac{1}{2},1}_{\beta, T}}
\end{align*}
for $0 < T \leq 1$.
\end{prop}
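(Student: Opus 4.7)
The plan is to adapt the standard $X^{s,b}$-contraction argument to the Besov modulation norm $X^{1,1/2,1}$, applying the new trilinear estimates Theorem~\ref{thm:Triest_4.3} (for $d=2$) and Corollary~\ref{cor:Triest_4.4} (for $d=1$) to handle the genuinely resonant interactions $|\xi_1|\sim|\xi_2|\sim|\xi_3|$ that arise precisely because $\mu<0$ (and which blocked the $U^2/V^2$ machinery used in Proposition~\ref{Duam_est_T_loc}). I will focus on the estimate for $\|I_\alpha^{(1)}(w,v)\|_{X^{1,1/2,1}_{\alpha,T}}$; the other two are analogous after using $\|\overline{f}\|_{X^{s,b,1}_\sigma}=\|f\|_{X^{s,b,1}_{-\sigma}}$. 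By Proposition~\ref{Xsb_linear_est}(2)--(3), it suffices to show
\[
\|\chi_T(t)(\nabla\cdot w)v\|_{X^{1,-\frac12,1}_\alpha}\lesssim T^\varepsilon\|w\|_{X^{1,\frac12,1}_{\gamma,T}}\|v\|_{X^{1,\frac12,1}_{\beta,T}},
\]
where the $T^\varepsilon$ gain is extracted by passing temporarily to an intermediate exponent $b<\frac12$. Then by duality $(X^{1,-\frac12,1}_\alpha)^*=X^{-1,\frac12,\infty}_\alpha$ and Littlewood--Paley decomposition in both frequency ($N_1,N_2,N_3$) and modulation ($M_1,M_2,M_3$), the estimate reduces to summed trilinear integrals in which the derivative contributes a factor $N_3$.

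I then split into two frequency regimes. In the \emph{nonresonant regime} where $N_1,N_2,N_3$ are not all comparable, the hypothesis $\widetilde{\kappa}\neq 0$ together with Lemma~\ref{modul_est_T}(i) forces $M_{\max}\gtrsim N_{\max}^2$, and combining the modulation gain $M_{\max}^{-1/2}$ with the bilinear Strichartz estimate Proposition~\ref{Ybe_2_T} (via $X^{0,\frac12,1}_\sigma\hookrightarrow V^2_\sigma L^2$) yields summable contributions, essentially as in Subsection~\ref{reso_1_subsec}. In the \emph{resonant regime} $N_1\sim N_2\sim N_3=N$, I further split on the size of $M_{\max}$: when $M_{\max}\gtrsim N$ the usual $L^4$-Strichartz plus modulation gain handles it; when $M_{\max}\ll N$, I apply Theorem~\ref{thm:Triest_4.3} or Corollary~\ref{cor:Triest_4.4}. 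Writing $\|Q^{\sigma_j}_{M_j}P_Nu_j\|_{L^2}=M_j^{-1/2}\alpha^{(j)}_{N,M_j}$, so that $\sum_{M}\alpha^{(j)}_{N,M}=\|P_Nu_j\|_{X^{0,\frac12,1}_{\sigma_j}}$, Theorem~\ref{thm:Triest_4.3} produces the weight $M_{\min}^{1/2}M_{\max}^{1/4}\prod_jM_j^{-1/2}$ in $d=2$. Using the elementary inequality $M_{\min}^{1/2}M_{\max}^{1/4}\le (M_1M_2M_3)^{1/4}$ (which follows from $M_{\min}\le M_{\mathrm{mid}}$), this collapses to $\prod_jM_j^{-1/4}$, and the dyadic sums $\sum_{M\ge 1}M^{-1/4}$ are geometric and bounded by an absolute constant, so the $\ell^1$-Besov summation closes regardless of whether the factor is an $\ell^1$ input or the $\ell^\infty$ test function. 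Tracking the $N$-exponents, the derivative $N$, the two factors $N^{-1}$ hidden in the inputs $\|P_Nu_j\|_{X^{1,\frac12,1}}=N\|P_Nu_j\|_{X^{0,\frac12,1}}$, and the dual weight $N^{-1}$ from $X^{-1,\frac12,\infty}$ all balance, leaving a net coefficient $c_N$ with $\{c_N\}\in\ell^2$, which is summed by Cauchy--Schwarz.

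The main obstacle is the resonant sub-case $M_{\max}\ll N$ in dimension two: the $M_{\max}^{1/4}$ loss in Theorem~\ref{thm:Triest_4.3} must be absorbed together with the three Besov $\ell^1$-normalisation factors $M_j^{-1/2}$, and the key technical point is that the AM/GM-type bound $M_{\min}^{1/2}M_{\max}^{1/4}\le(M_1M_2M_3)^{1/4}$ lets this absorption proceed symmetrically across the three modulation variables without any logarithmic loss, so that the test function (which is only in $\ell^\infty$) can sit at any of the three indices. The one-dimensional case is strictly easier since Corollary~\ref{cor:Triest_4.4} has no $M_{\max}$-factor. Once these resonant contributions are controlled, the $T^\varepsilon$ smallness follows from Proposition~\ref{Xsb_linear_est}(3) applied to the inputs with a modulation exponent slightly below $\frac12$, and the contraction argument then yields the well-posedness as in Subsection~\ref{subsec:WP11}.
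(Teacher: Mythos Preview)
Your overall strategy matches the paper's, and your handling of the genuinely resonant case $N_1\sim N_2\sim N_3$ via Theorem~\ref{thm:Triest_4.3}/Corollary~\ref{cor:Triest_4.4} together with the clean inequality $M_{\min}^{1/2}M_{\max}^{1/4}\le(M_1M_2M_3)^{1/4}$ is correct. There is, however, a genuine gap in what you call the ``nonresonant regime''.

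You claim that $\widetilde\kappa\ne 0$ together with Lemma~\ref{modul_est_T}(i) forces $M_{\max}\gtrsim N_{\max}^2$ whenever $N_1,N_2,N_3$ are not all comparable. This is false. With $(\sigma_1,\sigma_2,\sigma_3)=(\alpha,-\beta,-\gamma)$, the hypothesis $\widetilde\kappa=(\alpha-\gamma)(\beta+\gamma)\ne 0$ only yields $(\sigma_2+\sigma_3)(\sigma_3+\sigma_1)\ne 0$, so Lemma~\ref{modul_est_T}(i) covers the configurations $N_2\sim N_3\gg N_1$ and $N_1\sim N_3\gg N_2$, but \emph{not} $N_1\sim N_2\gg N_3$ when $\alpha=\beta$ (which is permitted under $\widetilde\kappa\ne 0$). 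In that situation $\sigma_1+\sigma_2=0$, there is no modulation gain, and your proposed tool---the bilinear Strichartz estimate Proposition~\ref{Ybe_2_T} applied to the pair $(u_1,u_2)$---also fails, since it requires $\sigma_1+\sigma_2\ne 0$.

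The paper resolves this by splitting explicitly into the cases $\alpha\ne\beta$ and $\alpha=\beta$. In the latter, the key observation is that the derivative sits on $u_3=\overline w$, which carries the \emph{low} frequency $N_3$, so there is no derivative loss to recover; one then applies Theorem~\ref{thm:Triest_4.3} (respectively Corollary~\ref{cor:Triest_4.4}) directly to the configuration $N_3\ll N_1\sim N_2$ in the sub-case $M_{\max}\ll N_1$, and uses the $L^4$-Strichartz argument when $M_{\max}\gtrsim N_1$. Note that Theorem~\ref{thm:Triest_4.3} is stated with $P_N u_1$ and $P_{<N}u_2$, $P_{<N}u_3$ and does not require all three frequencies to be comparable, so it applies here as well. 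Once you insert this case distinction, your argument is complete and coincides with the paper's.
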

It follows from Proposition~\ref{prop:Xsb_trilinear_est1} and Proposition~\ref{Xsb_linear_est} (1) that the standard contraction mapping argument implies the well-posedness. 
Thus, we  focus on Proposition~\ref{prop:Xsb_trilinear_est1}. 

To prove Proposition~\ref{prop:Xsb_trilinear_est1}, it is enough to show the following proposition.
\begin{prop}\label{prop:Xsb_trilinear_est2}
Let $d \in \{ 1,2 \}$ and $\alpha$, $\beta$, $\gamma \in \R\setminus \{0\}$ satisfy $\mu < 0$ and $\widetilde \kappa \not=0$. Then, there exists $\varepsilon >0 $ such that
\begin{align*}
&\|\chi_T(t) (\nabla \cdot w) v \|_{X^{1,-\frac{1}{2},1}_{\alpha}}  \lesssim T^{\varepsilon} \|w\|_{X^{1,\frac{1}{2},1}_{\gamma}}\|v\|_{X^{1,\frac{1}{2},1}_{\beta}},\\
&\|\chi_T(t) (\nabla \cdot \overline{w}) u\|_{X^{1,-\frac{1}{2},1}_{\beta}} \lesssim T^{\varepsilon}\|w\|_{X^{1,\frac{1}{2},1}_{\gamma}}\|u\|_{X^{1,\frac{1}{2},1}_{\alpha}},\\
&\|\chi_T(t) \nabla (u \cdot \overline{v})\|_{X^{1,-\frac{1}{2},1}_{\gamma}} \lesssim T^{\varepsilon}\|u\|_{X^{1,\frac{1}{2},1}_{\alpha}}\|v\|_{X^{1,\frac{1}{2},1}_{\beta}}
\end{align*}
for $0 <T \le 1$.
\end{prop}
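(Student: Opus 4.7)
The plan is to dyadically decompose each factor in frequency and modulation, pass to a trilinear sum, and handle the resulting cases by combining Lemma~\ref{modul_est_T} with the bilinear Strichartz estimates of Section~\ref{b_Stri_T} in the non-resonant configurations and the sharp trilinear estimates of Subsection~\ref{subsec:ResIII} in the resonant configuration. By the obvious symmetry of the three inequalities, it suffices to treat the third. I will extract the $T^\eps$ gain at the outset via Proposition~\ref{Xsb_linear_est}(3): since $\chi_T(t) = \chi_T(t) \chi_{2T}(t)^2$, the outer $\chi_{2T}$'s may be absorbed into the inputs $u$ and $\bar v$, and for $0 < \eps < 1/2$ one has
\[
\|\chi_{2T} u\|_{X^{1, 1/2 - \eps, 1}_\sigma} \lec T^\eps \|u\|_{X^{1, 1/2, 1}_\sigma}.
\]
Thus it suffices to prove the corresponding trilinear estimate with inputs in $X^{1, 1/2 - \eps, 1}$ and with no explicit $T$-factor.

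After this reduction, set $\sigma_1 = \al$, $\sigma_2 = -\be$, $\sigma_3 = \ga$ and decompose each factor into blocks $Q_{M_j}^{\sigma_j} P_{N_j}$. The frequency configurations split into the non-resonant cases (some $N_j$ strictly larger than the other two) and the resonant case $N_1 \sim N_2 \sim N_3 \sim N$. In the non-resonant cases, $\widetilde\kappa \neq 0$ together with Lemma~\ref{modul_est_T}(i) forces $M_{\max} \gec N_{\max}^2$, so at least one factor has high modulation. Placing that factor in $L^2$ (where the weight $M_{\max}^{-(1/2 - \eps)} \lec N_{\max}^{-1 + 2\eps}$ absorbs the derivative $N_{\max}$) and bounding the remaining bilinear product by Proposition~\ref{Ybe_2_T} (through the embedding $X^{1, 1/2 - \eps, 1}_\sigma \hookrightarrow V^2_\sigma H^1$) produces a geometric gain $(N_{\min}/N_{\max} + N_{\min}^{-1})^\de$ that closes the dyadic summation in both $N$ and $M$.

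The main obstacle is the resonant case $N_1 \sim N_2 \sim N_3 \sim N$; this is precisely the reason a Besov-type resolution space $X^{s,b,1}$ is required here in place of the $U^p$, $V^p$ spaces used in the critical setting, since the sharp high-high-high trilinear estimate carries a modulation-dependent loss that must be absorbed by the $\ell^1_M$ summation. One further splits according to whether $M_{\max} \ll N$ or $M_{\max} \gec N$. In the first range, Theorem~\ref{thm:Triest_4.3} for $d=2$ (respectively Corollary~\ref{cor:Triest_4.4} for $d=1$)---whose hypothesis $\mus < 0$ matches our assumption $\mu < 0$---yields the bound $M_{\min}^{1/2} M_{\max}^{1/4}$ (respectively $M_{\min}^{1/2}$) times the product of $L^2$ norms. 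Inserted against the weight $(M_1 M_2 M_3)^{-(1/2 - \eps)}$ from the modulation norms, the trilinear $M$-sum reduces in $d=2$ to
\[
\sum_{M_{\min} \le M_{\mathrm{med}} \le M_{\max}} M_{\min}^{\eps} M_{\mathrm{med}}^{-1/2 + \eps} M_{\max}^{-1/4 + \eps},
\]
which converges for $0 < \eps < 1/12$, and to an even simpler sum for $d=1$. In the remaining range $M_{\max} \gec N$, the weight $M_{\max}^{-(1/2 - \eps)}$ already provides $N^{-1 + 2\eps}$, which more than compensates the single derivative, and the sum closes by H\"older together with the $L^4$-Strichartz estimate (Corollaries~\ref{U4_Stri_T_1d} and~\ref{UV_Stri_T}). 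Combining both resonant subcases with the non-resonant bound completes the proof.
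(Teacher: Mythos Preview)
Your non-resonant argument has a genuine gap. You assert that in every high-high-low configuration, $\widetilde\kappa\neq 0$ together with Lemma~\ref{modul_est_T}(i) forces $M_{\max}\gec N_{\max}^2$. But $\widetilde\kappa=(\alpha-\gamma)(\beta+\gamma)\neq 0$ only guarantees $\sigma_1+\sigma_3\neq 0$ and $\sigma_2+\sigma_3\neq 0$; it says nothing about $\sigma_1+\sigma_2=\alpha-\beta$. The case $\alpha=\beta$ is certainly allowed by the hypotheses (indeed $\alpha=\beta$ forces $\mu=-\alpha^2<0$, and $\widetilde\kappa\neq 0$ merely requires $\alpha\neq\pm\gamma$). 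When $\alpha=\beta$ and $N_3\ll N_1\sim N_2$, Lemma~\ref{modul_est_T}(i) is unavailable and $M_{\max}$ can be arbitrarily small, so your modulation gain $M_{\max}^{-(1/2-\eps)}\lec N_{\max}^{-1+2\eps}$ is simply absent. The paper treats $\alpha=\beta$ as a separate case: it observes that in this bad configuration the derivative always lands on the low-frequency factor, splits according to $M_{\max}\gec N_1$ versus $M_{\max}\ll N_1$, and in the latter range invokes Theorem~\ref{thm:Triest_4.3} (which is stated for $P_N u_1$ against $P_{<N}u_2$, $P_{<N}u_3$, hence applies to high-high-low, not only to $N_1\sim N_2\sim N_3$). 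Your proposal uses Theorem~\ref{thm:Triest_4.3} only for comparable frequencies, so this configuration is not covered.

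There is also a technical problem in your non-resonant bound: the embedding $X^{1,1/2-\eps,1}_\sigma\hookrightarrow V^2_\sigma H^1$ you invoke to access Proposition~\ref{Ybe_2_T} fails for $\eps>0$ (one has $X^{0,1/2,1}_\sigma\hookrightarrow U^2_\sigma L^2\hookrightarrow V^2_\sigma L^2$, but dropping to $b=1/2-\eps$ loses the half-derivative in time that $V^2$ requires). The paper avoids this by working directly with $L^p$ Strichartz estimates on modulation-localized pieces $Q_{M_j}^{\sigma_j}P_{N_j}u_j$, picking up the factor $M_j^{1/2}$ from the modulation width rather than passing through $V^2$; this keeps everything at the level of $L^2_{t,x}$ norms and meshes with the $X^{s,b,p}$ structure.
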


Let us see that Proposition~\ref{prop:Xsb_trilinear_est2} implies Proposition~\ref{prop:Xsb_trilinear_est1}.
\begin{proof}[Proof of Proposition~\ref{prop:Xsb_trilinear_est1}]
We only consider the first estimate:
\[
\|I_{\alpha}^{(1)}(w,v)\|_{X^{1,\frac{1}{2},1}_{\alpha, T}}  \lesssim T^{\varepsilon} \|w\|_{X^{1,\frac{1}{2},1}_{\gamma, T}}\|v\|_{X^{1,\frac{1}{2},1}_{\beta, T}}.
\]
To see this, we take the functions $W \in X^{1,\frac12,1}_{\gamma}$ and $V \in X^{1,\frac12,1}_{\beta}$ so that 
\begin{align*}
 W(t) & = w(t) \quad \mathrm{if} \ t \in [0,T), \qquad \|W \|_{X^{1,\frac12,1}_{\gamma}} \leq 2 \|w\|_{X^{1,\frac{1}{2},1}_{\gamma, T}},\\
V(t) & = v(t) \quad \, \mathrm{if} \ t \in [0,T), \qquad \  \|V \|_{X^{1,\frac12,1}_{\beta}} \leq 2 \|v\|_{X^{1,\frac{1}{2},1}_{\beta, T}}.
\end{align*}
It is known that if $(\nabla \cdot W) V \in X^{1,-\frac{1}{2},1}_{\alpha}$, then we have $I_{\alpha}^{(1)}(W,V) \in C(\R;H^1(\T^d))$. See Lemma~2.2 in \cite{GTV97}. Since $I_{\alpha}^{(1)}(W,V)(t) = I_{\alpha}^{(1)}(w,v)(t)$ if $t \in [0,T)$, we have
\[
\|I_{\alpha}^{(1)}(w,v)\|_{X^{1,\frac{1}{2},1}_{\alpha, T}} \leq \|\chi_T(t) I_{\alpha}^{(1)}(W,V)\|_{X^{1,\frac{1}{2},1}_{\alpha}}.
\]
Therefore, we deduce from Propositions~\ref{Xsb_linear_est} and \ref{prop:Xsb_trilinear_est2} that
\begin{align*}
\|I_{\alpha}^{(1)}(w,v)\|_{X^{1,\frac{1}{2},1}_{\alpha, T}} & \leq \|\chi_T(t) I_{\alpha}^{(1)}(W,V)\|_{X^{1,\frac{1}{2},1}_{\alpha}}\\
& \lesssim \|\chi_T(t) (\nabla \cdot W) V \|_{X^{1,-\frac{1}{2},1}_{\alpha}}\\
&  \lesssim T^{\varepsilon} \|W\|_{X^{1,\frac{1}{2},1}_{\gamma}}\|V\|_{X^{1,\frac{1}{2},1}_{\beta}}\\
& \lesssim T^{\varepsilon} \|w\|_{X^{1,\frac{1}{2},1}_{\gamma,T}}\|v\|_{X^{1,\frac{1}{2},1}_{\beta,T}}.
\end{align*}
This completes the proof.
\end{proof}
\begin{proof}[Proof of Proposition~\ref{prop:Xsb_trilinear_est2}]
We focus on the case $d=2$,
since the case $d=1$ is easily treated. 
We only consider the first estimate:
\[
\|\chi_T(t) (\nabla \cdot w) v \|_{X^{1,-\frac{1}{2},1}_{\alpha}}  \lesssim T^{\varepsilon} \|w\|_{X^{1,\frac{1}{2},1}_{\gamma}}\|v\|_{X^{1,\frac{1}{2},1}_{\beta}}.
\]
Set
\[
(u_1, u_2,u_3)=(u, \overline v,\overline  w),
\quad
(\sigma_1,\sigma_2,\sigma_3) = (\alpha,-\beta,-\gamma),
\]
for simplicity.

\noindent \underline{Case $\alpha \not= \beta$}:
Let us consider the case $\alpha \not= \beta$. 
By duality, we have
\[
\| \chi_T(t) (\nabla \cdot u_3) u_2 \|_{X^{1,-\frac{1}{2},1}_{-\sigma_1}}
= \sup_{\|u_1\|_{X^{-1,\frac12, \infty}_{\sigma_1}}=1} \bigg| \int_\R \int_{\T^2} \chi_T(t) u_1 u_2 (\nabla \cdot u_3) dxdt \bigg|.
\]
From the same argument as in \eqref{defpsi},
there exists $\psi_T\in C_0^{\infty}(\R)$ such that 
\[
\eta (t)\psi_T(t)^2=1
\]
on $[-2T,2T]$ for $0<T\le 1$.
We can write as follows:
\[
\int_\R \int_{\T^2} \chi_T(t) u_1 u_2 (\nabla \cdot u_3) dxdt
=
\int_\R \int_{\T^2} \eta(t) u_1 u_{2,T} (\nabla \cdot u_{3,T}) dxdt,
\]
where
$u_{2,T} := \psi_T \chi_T u_2$ and
$u_{3,T} := \psi_T u_3$.
Hence, by the dyadic decompositions and Propositions~\ref{Xsb_linear_est} (3),
it suffices to show that
there exits $\eps>0$ such that,
for $N_{\min} \ll N_{\max}$,
\begin{equation}\label{est:01_4.3}
\begin{aligned}
&\sum_{M_1,M_2,M_3}
\bigg| \int_\R \int_{\T^2}\eta(t) \Bigl( \prod_{j=1}^3  P_{N_j} Q_{M_j}^{\sigma_j}u_j \Bigr) dxdt\bigg|
\\
&\lesssim
N_{\min}^{\frac14} N_{\max}^{-1}
\|u_1 \|_{X^{0,\frac12,\infty}_{\sigma_1}}
\Big(
\|u_2\|_{X^{0,\frac12,\infty}_{\sigma_2}}
\|u_3\|_{X^{0,\frac12-\eps,\infty}_{\sigma_3}}
+
\|u_2\|_{X^{0,\frac12-\eps,\infty}_{\sigma_2}}
\|u_3\|_{X^{0,\frac12,\infty}_{\sigma_3}}
\Big)
,
\end{aligned}
\end{equation}
and that for $N_1 \sim N_2 \sim N_3$,
\begin{equation}
\label{est:02_4.3}
\begin{aligned}
&\sum_{M_1,M_2,M_3}\bigg| \int_\R \int_{\T^2} \eta(t)  \Bigl( \prod_{j=1}^3 P_{N_j} Q_{M_j}^{\sigma_j} u_j \Bigr) dx dt\bigg|
\\
&\lesssim
\|u_1 \|_{X^{0,\frac12,\infty}_{\sigma_1}}
\Big(
\|u_2\|_{X^{0,\frac12,\infty}_{\sigma_2}}
\|u_3\|_{X^{0,\frac12-\eps,\infty}_{\sigma_3}}
+
\|u_2\|_{X^{0,\frac12-\eps,\infty}_{\sigma_2}}
\|u_3\|_{X^{0,\frac12,\infty}_{\sigma_3}}
\Big)
.
\end{aligned}
\end{equation}
We consider \eqref{est:01_4.3}. Since $N_{\min} \ll N_{\max}$ and $(\sigma_1+\sigma_2)(\sigma_2 + \sigma_3)(\sigma_3 + \sigma_1) \not= 0$, as in the proof of Proposition~\ref{HL_est_T}, we may assume that $M_{\max} \gtrsim N_{\max}^2$. In addition, for each $j=1,2,3$, we may assume that the spatial frequency of $P_{N_j} Q_{M_j}^{\sigma_j}u_j$ is contained in a ball of radius $\sim N_{\min}$. 
Then, for $j=1,2,3$, by
\eqref{eta_lp_up_est_C},
we have
\[
\big\| \eta(t)^{\frac 14}  P_{N_j} Q_{M_j}^{\sigma_j}u_j \big\|_{L_{t,x}^{\frac{14}{3}}} \lesssim M_j^{\frac12}N_{\min}^{\frac17} \|P_{N_j} Q_{M_j}^{\sigma_j}u_j\|_{L_{t,x}^2}.
\]
Moreover, a trivial bound holds:
\[
\big\| \eta(t)^{\frac 14}  P_{N_j} Q_{M_j}^{\sigma_j}u_j \big\|_{L_{t,x}^2}
\lesssim \|P_{N_j} Q_{M_j}^{\sigma_j}u_j\|_{L_{t,x}^2}.
\]
By interpolating two estimates above,
we have
\[
\big\| \eta(t)^{\frac 14} P_{N_j} Q_{M_j}^{\sigma_j} u_j \big\|_{L_{t,x}^4}
\lesssim M_j^{\frac{7}{16}}N_{\min}^{\frac18} \|P_{N_j} Q_{M_j}^{\sigma_j}u_j\|_{L_{t,x}^2}.
\]

Suppose that $M_1=M_{\max}$.
We have
\begin{align*}
& \sum_{M_1 \gtrsim N_{\max}^2} \sum_{M_2,M_3} \bigg| \int_\R \int_{\T^2} \eta (t) \Bigl( \prod_{j=1}^3  P_{N_j} Q_{M_j}^{\sigma_j}u_j \Bigr) dxdt\bigg|\\
\lesssim & \sum_{M_1 \gtrsim N_{\max}^2} \sum_{M_2,M_3}
\big\| \eta(t)^{\frac 12} P_{N_1} Q_{M_1}^{\sigma_1}u_1 \big\|_{L_{t,x}^2}
\\
&\qquad \times
\big\| \eta(t)^{\frac 14} P_{N_2} Q_{M_2}^{\sigma_2}u_2 \big\|_{L_{t,x}^4}
\big\| \eta(t)^{\frac 14} P_{N_3} Q_{M_3}^{\sigma_3}u_3 \big\|_{L_{t,x}^4}\\
\lesssim & \sum_{M_1 \gtrsim N_1^2}
\Big( \frac{N_{\max}^2}{M_1} \Big)^{\frac12}N_{\max}^{-1} \Bigl( \sup_{M_1} M_1^{\frac12}
\|P_{N_1} Q_{M_1}^{\sigma_1}u_1\|_{L_{t,x}^2} \Bigr)
\\
&\qquad \times
N_{\min}^{\frac14} \| u_2\|_{X^{0,\frac{7}{16}+ \varepsilon,\infty}_{\sigma_2}} \| u_3\|_{X^{0,\frac{7}{16}+\eps,\infty}_{\sigma_3}}
\\
\lesssim
&
N_{\min}^{\frac14} N_{\max}^{-1}
 \|u_1\|_{X^{0,\frac12,\infty}_{\sigma_1}}
\prod_{j=2}^3 \|u_j\|_{X^{0,\frac 7{16}+\eps,\infty}_{\sigma_j}}
\end{align*}
for any $\eps>0$.
The other cases can be handled in a similar way. 

We turn to the proof of \eqref{est:02_4.3}. In the case $M_{\max} \gec N_{1}$, similarly to the proof of \eqref{est:01_4.3}, the Strichartz estimates imply \eqref{est:02_4.3}. 
For the case $M_{\max} \ll N_{1}$, Theorem~\ref{thm:Triest_4.3} readily yields \eqref{est:02_4.3}. 

\noindent \underline{Case $\alpha = \beta$}:
Let $N_j$ be the size of spatial frequency of $u_j$.
By \eqref{modul_est_T},
for the cases $N_1 \ll N_2 \sim N_3$, $N_2 \ll N_1 \sim N_3$, and $N_1 \sim N_2 \sim N_3$,
we can prove the desired bound in the same manner as in the case $\alpha \not= \beta$.
Thus, we only need to consider the case $N_3 \ll N_1 \sim N_2$. In this case, the difference from the case $\alpha \not= \beta$ is that $M_{\max} \gtrsim N_{1}^2$ does not necessarily hold true. While, the derivative hits $u_3$ whose frequency is smaller than that of $u_1$, $u_2$. 

By duality, it is enough to show
\begin{equation}
\label{est:03_4.3}
\begin{split}
\sum_{M_1,M_2,M_3} \bigg| & \int_\R \int_{\T^2} \eta(t) \bigl(Q^{\sigma_1}_{M_1} P_{N_1} u_1\bigr) \bigl( Q^{\sigma_2}_{M_2} P_{N_2} u_2 \bigr) \bigl( Q^{\sigma_3}_{M_3} P_{< N_1} \langle \nabla \rangle u_3 \bigr)dxdt\bigg|\\
& \lesssim
\|u_1\|_{X^{0,\frac12,\infty}_{\sigma_1}}
\|u_2\|_{X^{0,\frac12-\eps,\infty}_{\sigma_2}}
\|u_3\|_{X^{1,\frac12-\eps,\infty}_{\sigma_3}}
\end{split}
\end{equation}
for some $\eps>0$.
If $M_{\max} \gec N_1$, in the same way as in the case $\alpha \not= \beta$, the Strichartz estimate yields \eqref{est:03_4.3}. In the case $M_{\max} \ll N_1$, by using Theorem~\ref{thm:Triest_4.3} and
the Littlewood-Paley theorem,
we have
\[
\begin{split}
\sum_{M_1,M_2,M_3} \bigg| & \int_\R \int_{\T^2} \eta(t) \bigl(Q^{\sigma_1}_{M_1} P_{N_1} u_1\bigr) \bigl( Q^{\sigma_2}_{M_2} P_{N_2} u_2 \bigr) \bigl( Q^{\sigma_3}_{M_3} P_{< N_1} \langle \nabla \rangle u_3 \bigr)dxdt\bigg|\\
& \lesssim  \| u_1\|_{X^{0,\frac12- \varepsilon,\infty}_{\sigma_1}} \| u_2\|_{X^{0,\frac12- \varepsilon,\infty}_{\sigma_2}} \|\langle \nabla \rangle u_3\|_{X^{0,\frac12- \varepsilon,\infty}_{\sigma_3}}\\
& \lec \|u_1\|_{X^{0,\frac12-\eps,\infty}_{\sigma_1}} \|u_2\|_{X^{0,\frac12-\eps,\infty}_{\sigma_2}} \|u_3\|_{X^{1,\frac12- \varepsilon,\infty}_{\sigma_3}},
\end{split}
\]
as desired.
\end{proof}

\section{Proof of ill-posedness}
\label{sec:IP}

In this section, we prove Theorems~\ref{thm:IP} and ~\ref{thm:IP2}. 
Let $k$ be a rational number and $N \gg 1$ such that
$k N$ is an integer.
We consider a solution of the form
\begin{equation}
\begin{aligned}
u(t,x) &= (f(t) e^{-i t \al k^2 N^2} e^{ikNx_1}, 0, \dots, 0) \\
v(t,x) &= (g(t) e^{-i t \be (k-1)^2 N^2} e^{i (k-1) N x_1}, 0, \dots, 0) \\
w(t,x) &= (h(t) e^{-i t \ga N^2} e^{iNx_1}, 0, \dots, 0)
\end{aligned}
\label{uvwaa}
\end{equation}
for $t \ge 0$ and $x=(x_1,\cdots,x_d)\in \R^d$.
From this choice,
Theorems~\ref{thm:IP} and ~\ref{thm:IP2} for the multidimensional cases follow that for the one dimensional case.
In what follows,
we only consider the case $d=1$.

By \eqref{NLS_sys_torus},
$f,g,h$ satisfy the following system of ordinary differential equations:
\begin{equation}
\left\{
\begin{aligned}
&f'(t) = - N g(t) h(t) e^{it (\al k^2 - \be (k-1)^2-\ga) N^2} , && t>0, \\
&g'(t) = N f(t) \cj{h(t)} e^{-it (\al k^2 - \be (k-1)^2-\ga) N^2}, && t>0, \\
&h'(t) = N f(t) \cj{g(t)} e^{-it (\al k^2 - \be (k-1)^2-\ga) N^2} , && t>0. \\
\end{aligned}
\right.
\label{ODE1e}
\end{equation}
When $k$ is a solution to
\begin{equation}
\label{keq1}
\al k^2 - \be (k-1)^2-\ga=0,
\end{equation}
the oscillation part in \eqref{ODE1e} vanishes.
Note that
\eqref{keq1} is equivalent to 
\[
(\al-\be) k^2 + 2 \be k - (\be+\ga)=0.
\]
Namely,
\begin{equation}
k
= \begin{cases}
\frac{-\be \pm \sqrt{\be^2+ (\al-\be)(\be+\ga)}}{\al-\be}
= \frac{-\be \pm \sqrt{\al \be - \be \ga + \al \ga}}{\al-\be}
& \text{if } \al-\be \neq 0,
\\
\frac{\be+\ga}{2\be}
& \text{if } \al-\be = 0.
\end{cases}
\label{conk}
\end{equation}
A direct calculation shows that
\begin{equation}
\frac{d}{dt}
\big( |f(t)|^2 + |g(t)|^2 \big)
=
\frac{d}{dt}
\big( |f(t)|^2 + |h(t)|^2 \big)
=0.
\label{cons1}
\end{equation}
This is a reflection of the $L^2$-conservation law of \eqref{NLS_sys_torus}.

If $k$ satisfies \eqref{keq1} and
the initial data $f(0)$, $g(0)$, and $h(0)$ are real,
then $f,g,h$ are real-valued.
In particular, they satisfy
\begin{equation}
\left\{
\begin{aligned}
&f'(t) = - N g(t) h(t), && t>0, \\
&g'(t) = N f(t) h(t), && t>0, \\
&h'(t) = N f(t) g(t), && t>0. \\
\end{aligned}
\right.
\label{ODE1f}
\end{equation}


\subsection{The case $\be+\ga=0$ and $s>0$}
\label{Subsec:a21}

For $N \gg 1$ and $0<\de \ll 1$,
we set
\begin{equation}
u_0(x)= \de, \quad
v_0(x)= 0, \quad
w_0(x)= \de N^{-s} e^{iNx}.
\label{iniaa}
\end{equation}
Then,
we have
\begin{equation}
\| u_0 \|_{H^s}
+
\| v_0 \|_{H^s}
+
\| w_0 \|_{H^s}
\le 2 \de.
\label{iniss}
\end{equation}

It follows from
$f(0)=\de$, $g(0)=0$, $h(0) = \de N^{-s}$,
and \eqref{cons1}
that
\[
f(t)^2 + 2 g(t)^2 - h(t)^2
= \de^2 (1-N^{-2s}).
\]
With
\eqref{ODE1f},
we have the following Cauchy problem:
\[
\left\{
\begin{aligned}
&g''(t) = -N^2 g(t) \big( 2 g(t)^2 - \de^2 (1-N^{-2s}) \big), && t>0, \\
&(g(0),g'(0)) = (0, \de^2 N^{1-s}).
\end{aligned}
\right.
\]
It follows from $s>0$ that $N^{-2s} \ll 1$.
We set 
\begin{equation}
\ka (t) = \frac{\sqrt 2}{\de \sqrt{1-N^{-2s}}} g \Big( \frac t{\de N \sqrt{1-N^{-2s}}} \Big).
\label{alf0}
\end{equation}
Then, $\ka$ satisfies
\[
\left\{
\begin{aligned}
&\ka''(t) = - \ka (t) \big( \ka (t)^2 - 1 \big), && t>0, \\
&(\ka(0),\ka'(0)) = \Big( 0, \frac{\sqrt 2 N^{-s}}{1-N^{-2s}} \Big).
\end{aligned}
\right.
\]

For simplicity, we set $x(t)=\ka(t)$ and $y(t)=\ka'(t)$.
Then, $x(t)$ and $y(t)$ satisfy
\[
\left\{
\begin{aligned}
&x'(t)=y(t), & t>0, \\
&y'(t)=-x(t)^3+x(t), & t>0, \\
&(x(0),y(0)) = \Big( 0, \frac{\sqrt 2 N^{-s}}{1-N^{-2s}} \Big).
\end{aligned}
\right.
\]
The solution
$(x(t), y(t))$
is on the curve
\[
\frac{y^2}2 + \frac{x^4}4 - \frac{x^2}2
= \frac{N^{-2s}}{(1-N^{-2s})^2} =: E_0.
\]

\begin{figure}[H]
\begin{tikzpicture}[scale=1.7, samples=100]
\draw[thick, ->] (-1.8,0) -- (1.8,0) node [right] {$x$};
\draw[thick, ->] (0,-1) -- (0,1) node [above] {$y$};
\coordinate (O) at (0,0) node at (O) [below left] {$O$};
\draw[thick, domain={0}:{1.4562}] plot (\x,{sqrt(1/8+\x*\x-\x*\x*\x*\x/2)});
\draw[thick, domain={0}:{0.7}] plot ({sqrt(1+sqrt(1+(1/4)-2*\x*\x))},\x);
\draw[thick, domain={0}:{1.4562}] plot (-\x,{sqrt(1/8+\x*\x-\x*\x*\x*\x/2)});
\draw[thick, domain={0}:{0.7}] plot ({-sqrt(1+sqrt(1+(1/4)-2*\x*\x))},\x);
\draw[thick, domain={0}:{1.4562}] plot (\x,{-sqrt(1/8+\x*\x-\x*\x*\x*\x/2)});
\draw[thick, domain={0}:{0.7}] plot ({sqrt(1+sqrt(1+(1/4)-2*\x*\x))},-\x);
\draw[thick, domain={0}:{1.4562}] plot (-\x,{-sqrt(1/8+\x*\x-\x*\x*\x*\x/2)});
\draw[thick, domain={0}:{0.7}] plot ({-sqrt(1+sqrt(1+(1/4)-2*\x*\x))},-\x);
\draw[dashed] plot (-1,{-sqrt(5/8)}) -- (-1,{sqrt(5/8)});
\draw[dashed] plot (1,{-sqrt(5/8)}) -- (1,{sqrt(5/8)});
\fill (1,0) circle(0.04);
\fill (-1,0) circle(0.04);
\node at (-1.2,-0.18) {$-1$};
\node at (1.1,-0.18) {$1$};
\end{tikzpicture}
\caption{$\frac{y^2}2+\frac{x^4}4-\frac{x^2}2=E_0 \, (>0)$}
\label{fig:3}
\end{figure}

Set
\begin{equation}
t_\ast = \inf \Big\{ t >0 \mid x(t)= 1 \Big\}.
\label{time1}
\end{equation}

\begin{lemm}
\label{lem:tast0}
For $s>0$ and $N \gg 1$,
we have
$t_\ast \lec \log N$.
\end{lemm}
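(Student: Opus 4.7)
My plan is to exploit the energy conservation law that defines the level curve on which $(x(t), y(t))$ evolves in order to express $y$ as a function of $x$, and then reduce the bound on $t_\ast$ to the evaluation of an explicit one-dimensional integral.

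First I would verify that $y(t) > 0$ and $x(t)$ is strictly increasing on $[0, t_\ast]$. Since $y(0) = \frac{\sqrt 2 N^{-s}}{1-N^{-2s}} > 0$ and, along the portion of the level set $\{y^2/2 + x^4/4 - x^2/2 = E_0\}$ lying in the strip $0 \leq x \leq 1$, we have $y'(t) = x(t)(1-x(t)^2) \geq 0$, the function $y$ is nondecreasing as long as $x \in [0, 1]$. In particular $y$ stays positive on $[0, t_\ast]$, so we can invert $t \mapsto x(t)$ and write
\begin{equation*}
t_\ast = \int_0^1 \frac{dx}{y(x)}, \qquad y(x) = \sqrt{2E_0 + x^2 - \tfrac{x^4}{2}}.
\end{equation*}

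Next I would bound the integrand from above using the elementary inequality $x^2 - \tfrac{x^4}{2} = x^2(1-\tfrac{x^2}{2}) \geq \tfrac{x^2}{2}$ valid for $0 \le x \le 1$. This yields
\begin{equation*}
t_\ast \leq \int_0^1 \frac{dx}{\sqrt{2E_0 + x^2/2}} = \sqrt 2 \int_0^1 \frac{dx}{\sqrt{x^2 + 4E_0}} = \sqrt 2 \, \sinh^{-1}\!\Big( \frac{1}{2\sqrt{E_0}} \Big).
\end{equation*}
Finally, using $\sinh^{-1}(z) = \log(z + \sqrt{z^2+1}) \leq \log(2z+1)$ together with $E_0 = N^{-2s}/(1-N^{-2s})^2$ and the assumption $s>0$, $N \gg 1$, we obtain $E_0^{-1/2} \sim N^s$, hence
\begin{equation*}
t_\ast \leq \sqrt 2 \log\!\Big( N^{-s}+\sqrt{N^{-2s}+1}\Big)^{-1} + O(1) \lesssim s \log N \lesssim \log N,
\end{equation*}
which is the desired estimate.

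The proof is essentially a one-line reduction to an elementary integral, so there is no real obstacle; the only point to be careful about is that the $x$-component really reaches $1$, which is guaranteed because $y$ is bounded below by $\sqrt{2E_0}$ on $[0,1]$ so the trajectory cannot stop before $x=1$, making the infimum in \eqref{time1} finite and the integral representation valid.
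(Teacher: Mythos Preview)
Your argument is correct and mirrors the paper's: both convert $t_\ast$ to $\int_0^1 dx/\sqrt{2E_0+x^2-x^4/2}$, bound the denominator from below (you use the simpler inequality $x^2-x^4/2\ge x^2/2$ on $[0,1]$ in place of the paper's exact factorization of the quartic), and evaluate the resulting integral to obtain $t_\ast\lesssim \log E_0^{-1/2}\sim s\log N$. One small slip: the displayed expression $\log\big(N^{-s}+\sqrt{N^{-2s}+1}\big)^{-1}$ in your last line tends to $0$ rather than $s\log N$, but this is inessential since from $t_\ast\le\sqrt 2\,\sinh^{-1}\!\big(1/(2\sqrt{E_0})\big)$ and $1/(2\sqrt{E_0})\sim N^s/2$ the desired bound $t_\ast\lesssim\log N$ follows immediately.
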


\begin{proof}
Note that $x(t)$ and $y(t)$ are increasing for $0<t<t_\ast$,
since $y(0)>0$.
By
$\frac{dt}{dx} = \frac{1}{\sqrt{2E_0 - \frac{x^4}2+x^2}}$,
we have
\begin{align*}
t_\ast
&= \int_0^1 \frac{d x}{\sqrt{2E_0 - \frac{x^4}2 + x^2}}
= \sqrt 2 \int_0^1 \frac{d x}{\sqrt{-(x^2-1-\sqrt{1+4E_0})(x^2-1+\sqrt{1+4E_0})}} \\
&\lec
\int_0^1
\frac{dx}{\sqrt{x^2-1+\sqrt{1+4E_0}}}
= \bigg[ \log \bigg| x+\sqrt{x^2-1+\sqrt{1+4E_0}} \bigg| \bigg]_0^1
\\
&=
\log \Big( 1 + \sqrt[4]{1+4E_0} \Big)
-
\log
\underbrace{\sqrt{-1+\sqrt{1+4E_0}}}
_{= \sqrt{\frac{4E_0}{1+\sqrt{1+4E_0}}}}
\\
&\lec
\log E_0^{-\frac 12}
\sim \log N.
\qedhere
\end{align*}
\end{proof}

Set
\begin{equation}
T = \frac{t_\ast}{\de N \sqrt{1-N^{-2s}}},
\quad \de = (\log N)^{-1}.
\label{Ttime1}
\end{equation}
It follows from \eqref{uvwaa} with $d=1$ and $k=0$, \eqref{alf0}, and \eqref{time1} that
\[
\| v(T) \|_{H^s}
= N^s |g(T)|
= N^s \frac{\de \sqrt{1-N^{-2s}}}{\sqrt 2} |\ka ( t_\ast)|
\sim
N^s (\log N)^{-1} \gg 1,
\]
provided that $s>0$ and $N \gg 1$.
From Lemma \ref{lem:tast0} and \eqref{Ttime1},
we also have
\[
T \lec N^{-1} (\log N)^2 \ll 1.
\]
With \eqref{iniss} and $\de = (\log N)^{-1}$,
we obtain the norm inflation in $H^s(\T)$ for $\be+\ga=0$ and $s>0$.

\subsection{The case $\be+\ga=0$ and $s<0$}

Next,
we consider the case
\[
\be+\ga=0, \quad s<0.
\]
For $N \gg 1$ and $0<\de \ll 1$,
we set
\begin{equation}
u_0(x)= 0, \quad
v_0(x)= \de N^{-s} e^{-iNx}, \quad
w_0(x)= \de N^{-s} e^{iNx}.
\notag
\end{equation}
Then,
we have
\begin{equation}
\| u_0 \|_{H^s}
+
\| v_0 \|_{H^s}
+
\| w_0 \|_{H^s}
\le 2 \de.
\label{inissa}
\end{equation}

It follows from
$f(0)=0$, $g(0)=\de N^{-s}$, $h(0) = \de N^{-s}$,
and \eqref{cons1}
that
\[
2f(t)^2 + g(t)^2 + h(t)^2
= 2\de^2 N^{-2s}.
\]
With
\eqref{ODE1f},
we have the following Cauchy problem:
\[
\left\{
\begin{aligned}
&f''(t) = 2N^2 f(t) \big( f(t)^2 - \de^2 N^{-2s} \big), && t>0, \\
&(f(0),f'(0)) = (0, -\de^2 N^{1-2s}).
\end{aligned}
\right.
\]
We set 
\begin{equation}
\ka (t) = \frac{1}{\de N^{-s}} f \Big( \frac t{\sqrt 2 \de N^{1-s}} \Big).
\label{alf0a}
\end{equation}
Then, $\ka$ satisfies
\[
\left\{
\begin{aligned}
&\ka''(t) = \ka (t) \big( \ka (t)^2 - 1 \big), && t>0, \\
&(\ka(0),\ka'(0)) = \Big( 0, -\frac{1}{\sqrt 2} \Big).
\end{aligned}
\right.
\]

For simplicity, we set $x(t)=\ka(t)$ and $y(t)=\ka'(t)$.
Then, $x(t)$ and $y(t)$ satisfy
\[
\left\{
\begin{aligned}
&x'(t)=y(t), & t>0, \\
&y'(t)=x(t)^3-x(t),  & t>0, \\
&(x(0),y(0)) = \Big( 0, -\frac{1}{\sqrt 2} \Big).
\end{aligned}
\right.
\]
Note that this Cauchy problem is independent of $N$ and $\de$.
The solution
$(x(t), y(t))$
is on the curve
\[
\frac{y^2}2 - \frac{x^4}4 + \frac{x^2}2
= \frac 14.
\]

\begin{figure}[H]
\begin{tikzpicture}[scale=1.7, samples=100]
\draw[thick, ->] (-1.8,0) -- (1.8,0) node [right] {$x$};
\draw[thick, ->] (0,-1) -- (0,1) node [above] {$y$};
\coordinate (O) at (0,0) node at (O) [below left] {$O$};
\draw[thick, domain={-1.5}:{1.5}] plot (\x,{(\x*\x-1)/sqrt(2)});
\draw[thick, domain={-1.5}:{1.5}] plot (\x,{-(\x*\x-1)/sqrt(2)});
\fill (1,0) circle(0.04);
\fill (-1,0) circle(0.04);
\node at (-1,-0.3) {$-1$};
\node at (1,-0.3) {$1$};
\end{tikzpicture}
\caption{$\frac{y^2}2-\frac{x^4}4+\frac{x^2}2=\frac 14$}
\end{figure}
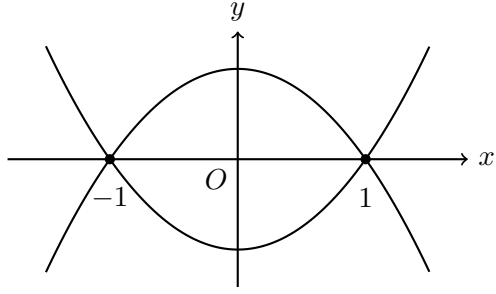

By setting
\begin{equation}
t_\ast = \inf \Big\{ t >0 \mid x(t)= - \frac 12 \Big\},
\label{time1a}
\end{equation}
we have $t_\ast \lec 1$.
Set
\begin{equation}
T = \frac{t_\ast}{\sqrt 2 \de N^{1-s}},
\quad
\de = (\log N)^{-1}.
\label{Ttime1a}
\end{equation}
It follows from \eqref{uvwaa} with $d=1$ and $k=0$, \eqref{alf0a}, and \eqref{time1a} that
\[
\| u(T) \|_{H^s}
= |f(T)|
= \de N^{-s} |\ka ( t_\ast)|
\sim
N^{-s} (\log N)^{-1}
\gg 1,
\]
provided that $s<0$ and $N \gg 1$.
From \eqref{Ttime1a},
we also have
\[
T \lec N^{s-1} \log N \ll 1.
\]
With \eqref{inissa} and $\de = (\log N)^{-1}$,
we obtain the norm inflation in $H^s(\T)$ for $\be+\ga=0$ and $s<0$.

\subsection{The case $\be+\ga=0$ and $s=0$}

We consider the case
\begin{equation}
\notag
\be + \ga =0,
\quad s=0.
\end{equation}
We take $k=0$  in \eqref{uvwaa}.

For $N \gg 1$ and $0<\de \ll 1$,
we set
\begin{equation}
u_0(x)= 1+\de, \quad
v_0(x)= 0, \quad
w_0(x)= \de e^{iNx}.
\label{iniaa1}
\end{equation}
It follows from
$f(0)=1+\de$, $g(0)=0$, $h(0) = \de$,
and \eqref{cons1}
that
\[
f(t)^2 + 2 g(t)^2 - h(t)^2
= 1+2\de.
\]
With
\eqref{ODE1f},
we have the following Cauchy problem:
\[
\left\{
\begin{aligned}
&g''(t) = -N^2 g(t) \big( 2 g(t)^2 - (1+2\de) \big), && t>0, \\
&(g(0),g'(0)) = (0, \de (1+\de)N).
\end{aligned}
\right.
\]
We set 
\begin{equation}
\ka (t) = \sqrt{\frac 2{1+2\de}} g \Big( \frac t{N \sqrt{1+2\de}} \Big).
\label{alf01}
\end{equation}
Then, $\ka$ satisfies
\[
\left\{
\begin{aligned}
&\ka''(t) = - \ka (t) \big( \ka (t)^2 - 1 \big), && t>0, \\
&(\ka(0),\ka'(0)) = \Big( 0, \frac{\sqrt 2 \de (1+\de)}{1+2\de} \Big).
\end{aligned}
\right.
\]

For simplicity, we set $x(t)=\ka(t)$ and $y(t)=\ka'(t)$.
Then, $x(t)$ and $y(t)$ satisfy
\[
\left\{
\begin{aligned}
&x'(t)=y(t), & t>0, \\
&y'(t)=-x(t)^3+x(t), & t>0, \\
&(x(0),y(0)) = \Big( 0, \frac{\sqrt 2 \de (1+\de)}{1+2\de} \Big).
\end{aligned}
\right.
\]
The solution
$(x(t), y(t))$
is on the curve
\[
\frac{y^2}2 + \frac{x^4}4 - \frac{x^2}2
= \Big( \frac{\de (1+\de)}{1+2\de} \Big)^2 =: E_0.
\]
See figure \ref{fig:3}.

Set
\begin{equation}
t_\ast = \inf \Big\{ t >0 \mid x(t)= 1 \Big\}.
\label{time11}
\end{equation}
The same argument as in Lemma \ref{lem:tast0} yields the following:

\begin{lemm}
\label{lem:tast01}
For $s=0$ and $0 <\de \ll 1$,
we have
$
t_\ast
\lec |\log \de|.
$
\end{lemm}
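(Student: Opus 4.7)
The plan is to mimic the argument used for Lemma~\ref{lem:tast0}, noting that the key quantity in that proof was the smallness of $E_0$, and that here $E_0 = \big( \frac{\de(1+\de)}{1+2\de} \big)^2 \sim \de^2$ plays the same role as $N^{-2s}$ did before. Since $y(0)>0$ and the phase portrait (Figure~\ref{fig:3}) shows that on the energy curve through $(0,y(0))$ with $E_0>0$ both $x(t)$ and $y(t)$ are strictly increasing on $(0,t_\ast)$, I can separate variables via $\frac{dt}{dx}=\tfrac{1}{\sqrt{2E_0-\frac{x^4}{2}+x^2}}$ and write
\[
t_\ast=\int_0^1 \frac{dx}{\sqrt{2E_0-\frac{x^4}{2}+x^2}}.
\]

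Next I would factor the quartic under the square root exactly as in the proof of Lemma~\ref{lem:tast0}:
\[
2E_0-\tfrac{x^4}{2}+x^2=-\tfrac{1}{2}\bigl(x^2-1-\sqrt{1+4E_0}\bigr)\bigl(x^2-1+\sqrt{1+4E_0}\bigr).
\]
For $x\in[0,1]$ and $0<E_0\ll 1$, the factor $-(x^2-1-\sqrt{1+4E_0})=1+\sqrt{1+4E_0}-x^2$ is bounded below by a positive absolute constant, so
\[
t_\ast\lec \int_0^1 \frac{dx}{\sqrt{x^2-1+\sqrt{1+4E_0}}}.
\]
Evaluating this integral via the antiderivative $\log\bigl|x+\sqrt{x^2+c^2}\bigr|$ with $c^2:=\sqrt{1+4E_0}-1=\tfrac{4E_0}{1+\sqrt{1+4E_0}}$ yields
\[
t_\ast\lec \log\bigl(1+\sqrt[4]{1+4E_0}\bigr)-\tfrac{1}{2}\log\!\Big(\tfrac{4E_0}{1+\sqrt{1+4E_0}}\Big)\lec |\log E_0|.
\]

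Finally, since $E_0=\bigl(\tfrac{\de(1+\de)}{1+2\de}\bigr)^2\sim \de^2$ as $\de\to 0^+$, we have $|\log E_0|\sim|\log\de|$, giving $t_\ast\lec|\log\de|$. There are no substantial obstacles here; the only thing to watch is the lower bound on $1+\sqrt{1+4E_0}-x^2$ on $[0,1]$, which is immediate from $E_0>0$. The proof is essentially identical to that of Lemma~\ref{lem:tast0}, with the parameter $N^{-2s}$ replaced by $\de^2$.
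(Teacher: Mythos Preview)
Your proof is correct and follows exactly the approach indicated in the paper, which simply states that the same argument as in Lemma~\ref{lem:tast0} applies. You have carried out that argument in detail, replacing the small parameter $N^{-2s}$ there by $\de^2$ here via $E_0\sim\de^2$.
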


Set
\begin{equation}
T = \frac{t_\ast}{N \sqrt{1+2\de}},
\quad \de = \frac 1N.
\label{Ttime11}
\end{equation}
It follows from \eqref{uvwaa} with $d=1$ and $k=0$, \eqref{alf01}, and \eqref{time11} that
\[
\| v(T) \|_{L^2}
= |g(T)|
= \sqrt{\frac{1+2\de}2} |\ka ( t_\ast)|
\sim
1,
\]
provided that $N \gg 1$.
From Lemma \ref{lem:tast01} and \eqref{Ttime11},
we also have
\[
T \lec N^{-1} \log N \ll 1
\]
for $N \gg 1$.
Note that
\[
\ti u(t,x) =1,
\quad
\ti v(t,x) = \ti w(t,x)=0
\]
is a solution to \eqref{NLS_sys_torus}.
Here,
\eqref{iniaa1} yields that
\[
\| u(0) - \ti u(0) \|_{L^2}
+ \| v(0) - \ti v(0) \|_{L^2}
+ \| w(0) - \ti w(0) \|_{L^2}
= \frac 2N \ll 1
\]
for $N \gg 1$ and $\de = \frac 1N$.
Moreover,
we obtain that
\[
\| v(T) - \ti v(T) \|_{L^2}
= \| v(T) \|_{L^2}
\sim 1,
\]
which shows the discontinuity of the flow map for $\be+\ga=0$ and $s=0$.

\subsection{The case $\al-\ga=0$ and $s<0$}

We consider the case
\[
\al - \ga =0,
\quad
s<0.
\]
In this case,
we take $k=1$ in \eqref{uvwaa}.

Let $N \gg 1$ and $0<\de \ll 1$.
Set
\begin{equation}
u_0(x) = \de N^{-s} e^{iNx}, \quad
v_0(x) = \de, \quad
w_0(x) = 0.
\notag
\end{equation}
Then,
we have
\begin{equation}
\| u_0 \|_{H^s}
+
\| v_0 \|_{H^s}
+
\| w_0 \|_{H^s}
\le 2 \de.
\label{inissab}
\end{equation}

It follows from
$f(0)=\de N^{-s}$, $g(0)=\de$, $h(0) = 0$,
and \eqref{cons1}
that
\[
f(t)^2 + 2g(t)^2 - h(t)^2
= \de^2(2+N^{-2s}).
\]
With
\eqref{ODE1f},
we have the following Cauchy problem:
\[
\left\{
\begin{aligned}
&g''(t) = -N^2 g(t) \big( 2g(t)^2 - \de^2(2+N^{-2s}) \big), && t>0, \\
&(g(0),g'(0)) = (\de, 0).
\end{aligned}
\right.
\]
We set 
\begin{equation}
\ka (t) = \frac{\sqrt 2}{\de \sqrt{2+N^{-2s}}} g \Big( \frac t{\de N \sqrt{2+N^{-2s}} } \Big).
\label{alf0ab}
\end{equation}
Then, $\ka$ satisfies
\[
\left\{
\begin{aligned}
&\ka''(t) = -\ka (t) \big( \ka (t)^2 - 1 \big), && t>0, \\
&(\ka(0),\ka'(0)) = \Big( \sqrt{\frac 2{2+N^{-2s}}}, 0 \Big).
\end{aligned}
\right.
\]

For simplicity, we set $x(t)=\ka(t)$ and $y(t)=\ka'(t)$.
Then, $x(t)$ and $y(t)$ satisfy
\[
\left\{
\begin{aligned}
&x'(t)=y(t), & t>0, \\
&y'(t)=-x(t)^3+x(t),  & t>0, \\
&(x(0),y(0)) = \Big( \sqrt{\frac 2{2+N^{-2s}}}, 0 \Big).
\end{aligned}
\right.
\]
The solution
$(x(t), y(t))$
is on the curve
\[
\frac{y^2}2 + \frac{x^4}4 - \frac{x^2}2
= -\frac{1+N^{-2s}}{(2+N^{-2s})^2}
=: E_0.
\]
Note that $E_0>-\frac 14$.

\begin{figure}[H]
\begin{tikzpicture}[scale=1.7, samples=100]
\draw[thick, ->] (-1.8,0) -- (1.8,0) node [right] {$x$};
\draw[thick, ->] (0,-1) -- (0,1) node [above] {$y$};
\coordinate (O) at (0,0) node at (O) [below left] {$O$};
\draw[thick, domain={0.4}:{1.3}] plot (\x,{sqrt(-1/8+\x*\x-\x*\x*\x*\x/2)});
\draw[thick, domain={0.4}:{1.3}] plot (\x,{-sqrt(-1/8+\x*\x-\x*\x*\x*\x/2)});
\draw[thick, domain={-0.4}:{0.4}] plot ({sqrt(1+sqrt(1-(1/4)-2*\x*\x))},\x);
\draw[thick, domain={-0.4}:{0.4}] plot ({sqrt(1-sqrt(1-(1/4)-2*\x*\x))},\x);
\draw[thick, domain={0.4}:{1.3}] plot (-\x,{sqrt(-1/8+\x*\x-\x*\x*\x*\x/2)});
\draw[thick, domain={0.4}:{1.3}] plot (-\x,{-sqrt(-1/8+\x*\x-\x*\x*\x*\x/2)});
\draw[thick, domain={-0.4}:{0.4}] plot ({-sqrt(1+sqrt(1-(1/4)-2*\x*\x))},\x);
\draw[thick, domain={-0.4}:{0.4}] plot ({-sqrt(1-sqrt(1-(1/4)-2*\x*\x))},\x);
\draw[dashed] plot (1,{-sqrt(3/8)}) -- (1,{sqrt(3/8)});
\draw[dashed] plot (-1,{-sqrt(3/8)}) -- (-1,{sqrt(3/8)});
\fill (1,0) circle(0.04);
\fill (-1,0) circle(0.04);
\node at (-1.2,-0.18) {$-1$};
\node at (1.1,-0.18) {$1$};
\end{tikzpicture}
\caption{$\frac{y^2}2+\frac{x^4}4-\frac{x^2}2=E_0 \in (-\frac 14,0)$}
\end{figure}
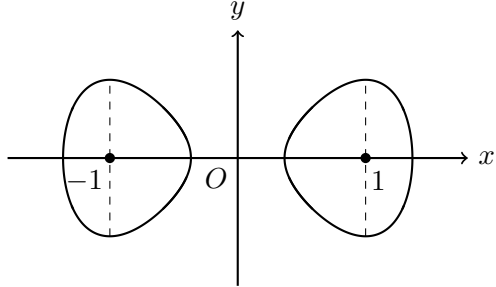

Set
\begin{equation}
t_\ast = \inf \Big\{ t >0 \mid x(t)= 1 \Big\}.
\label{time1ab}
\end{equation}

\begin{lemm}
\label{lem:tast0ab}
For $s<0$ and $N \gg 1$,
we have
$t_\ast \lec \log N$.
\end{lemm}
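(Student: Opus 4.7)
The plan is to imitate the argument of Lemma~\ref{lem:tast0}: reduce to a one-dimensional quadrature via energy conservation, and bound the resulting elliptic-type integral by a logarithm.  Since $y'(0)=x(0)-x(0)^3>0$ (as $0<x(0)<1$), $y$ becomes strictly positive immediately after $t=0$, hence $x$ is strictly increasing on $(0,t_\ast)$.  On this interval the energy identity gives $y=\sqrt{2E_0+x^2-\tfrac{x^4}{2}}$, so separating variables yields
\[
t_\ast \;=\; \int_{x(0)}^{1}\frac{dx}{\sqrt{2E_0+x^2-\tfrac{x^4}{2}}}.
\]

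Next I would factor the quartic inside the square root.  Writing $2E_0+x^2-\tfrac{x^4}{2}=\tfrac{1}{2}(x^2-a^2)(b^2-x^2)$, where $a^2=1-\sqrt{1+4E_0}$ and $b^2=1+\sqrt{1+4E_0}$, a direct computation with $E_0=-\frac{1+N^{-2s}}{(2+N^{-2s})^2}$ gives
\[
\sqrt{1+4E_0}\;=\;\frac{N^{-2s}}{2+N^{-2s}},
\qquad
a^2=\frac{2}{2+N^{-2s}}=x(0)^2,
\qquad
b^2-1=\sqrt{1+4E_0}.
\]
Thus $x(0)$ is exactly the smaller turning point $a$ (confirming that the integrand is locally $L^1$ at the lower endpoint), and for $s<0$ and $N\gg 1$ one has $a\sim N^s\to 0$ while $b^2-1\to 1$, so in particular $\sqrt{b^2-1}$ is bounded below by a positive constant for $N$ large.

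For $x\in[a,1]$ we have $b^2-x^2\ge b^2-1$, so
\[
t_\ast
\;\le\;\frac{\sqrt{2}}{\sqrt{b^2-1}}\int_{a}^{1}\frac{dx}{\sqrt{x^2-a^2}}
\;=\;\frac{\sqrt{2}}{\sqrt{b^2-1}}\,\log\!\left(\frac{1+\sqrt{1-a^2}}{a}\right).
\]
Since $\sqrt{b^2-1}\gtrsim 1$ and $a\sim N^s$ with $s<0$ fixed, the right-hand side is $\lesssim\log(1/a)\sim |s|\log N\lesssim\log N$, which is the claimed bound.  There is no real obstacle here: the only minor point worth being careful about is that $x(0)=a$ coincides with a simple zero of the polynomial under the square root, which makes the integral improper at the lower endpoint, but the singularity is of square-root type and absorbed into the explicit antiderivative above.
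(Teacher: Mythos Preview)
Your proof is correct and follows essentially the same route as the paper's: both express $t_\ast$ as the quadrature $\int_{x(0)}^1 dx/\sqrt{2E_0+x^2-x^4/2}$, factor the quartic via $a^2=1-\sqrt{1+4E_0}$ and $b^2=1+\sqrt{1+4E_0}$, bound the factor $\sqrt{b^2-x^2}$ from below by $\sqrt{b^2-1}\sim 1$, and evaluate the remaining integral $\int_a^1 dx/\sqrt{x^2-a^2}=\log\!\big((1+\sqrt{1-a^2})/a\big)\sim|s|\log N$. Your explicit identification $a^2=x(0)^2$ and the remark about the integrable square-root singularity at the lower endpoint make the argument slightly more transparent, but the substance is identical.
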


While the proof follows from the same as in Lemma \ref{lem:tast0},
we give a proof of Lemma \ref{lem:tast0ab} here for completeness.

\begin{proof}
Note that $x(t)$ and $y(t)$ are increasing for $0<t<t_\ast$,
since $0<x(0)<1$ and $y(0)=0$.

By
$\frac{dt}{dx} = \frac{1}{\sqrt{2E_0 -\frac{x^4}2+x^2}}$,
we have
\begin{align*}
t_\ast
&= \int_{\sqrt{\frac 2{2+N^{-2s}}}}^1 \frac{d x}{\sqrt{2E_0 - \frac{x^4}2 + x^2}}
\\
&= \sqrt 2 \int_{\sqrt{\frac 2{2+N^{-2s}}}}^1 \frac{d x}{\sqrt{-(x^2-1-\sqrt{1+4E_0})(x^2-1+\sqrt{1+4E_0})}} \\
&\lec
\int_{\sqrt{\frac 2{2+N^{-2s}}}}^1
\frac{dx}{\sqrt{x^2-1+\sqrt{1+4E_0}}}
= \bigg[ \log \Big| x+\sqrt{x^2-1+\sqrt{1+4E_0}} \Big| \bigg]_{\sqrt{\frac 2{2+N^{-2s}}}}^1
\\
&=
\log \Big( 1 + \sqrt[4]{1+4E_0} \Big)
\underbrace{
-
\log
\bigg(
\sqrt{\frac 2{2+N^{-2s}}}
+ \sqrt{\frac 2{2+N^{-2s}} -1 + \sqrt{1+4E_0}}
\bigg)}
_{= - \log \sqrt{\frac 2{2+N^{-2s}}}}
\\
&\lec
\log N.
\end{align*}
\end{proof}

Set
\begin{equation}
T = \frac{t_\ast}{\de N \sqrt{2+N^{-2s}}},
\quad
\de = (\log N)^{-1}.
\label{Ttime1ab}
\end{equation}
It follows from \eqref{uvwaa} with $d=1$ and $k=1$, \eqref{alf0ab}, and \eqref{time1ab} that
\[
\| v(T) \|_{H^s}
= |g(T)|
= \frac{\de \sqrt{2+N^{-2s}}}{\sqrt 2} |\ka ( t_\ast)|
\sim
N^{-s} ( \log N)^{-1}
\gg 1,
\]
provided that $s<0$ and $N \gg 1$.
From Lemma \ref{lem:tast0ab} and \eqref{Ttime1ab},
we also have
\[
T \lec N^{-1} (\log N)^2 \ll 1.
\]
With \eqref{inissab} and $\de = (\log N)^{-1}$,
we obtain the norm inflation in $H^s(\T)$ for $\al-\ga=0$ and $s<0$.

\begin{rem}
\label{rem:notip}
When $\al - \ga =0$,
even if we take
\[
u_0(x)= 0, \quad
v_0(x)= \de, \quad
w_0(x)= \de N^{-s} e^{-iNx}
\]
as in \eqref{iniaa},
the ill-posedness in $H^s(\T)$ for $s>0$ does not follow.
Indeed,
\eqref{uvwaa} with $d=1$ and $k=1$ and \eqref{cons1} yields that
\begin{align*}
&\| u(t) \|_{H^s}
+ \| w(t) \|_{H^s}
\sim
\sqrt{
\| u(t) \|_{H^s}^2
+ \| w(t) \|_{H^s}^2}
=
\sqrt{f(t)^2+ N^{2s} h(t)^2}
= \de,
\\
&\| v(t) \|_{H^s}
= |g(t)|
\le
\sqrt{f(t)^2+g(t)^2}
= \de.
\end{align*}
\end{rem}

\section{Not locally uniformly continuous}
\label{Sec:7}

In this section, we prove Theorem~\ref{thm:uni1}. 
By the same reason in Section~\ref{sec:IP}, 
we only consider $d=1$ in this section. 

\subsection{The case $\al-\ga=0$ and $s>0$}
\label{subsec:al-ga}

We consider the case
\[
\al - \ga =0.
\]
Let $0<\de \ll 1$ and $N \gg 1$.%
\footnote{We take small $\de$ and then large $N$ in this section.
Note that the order was reversed in Section \ref{sec:IP}.}
Set
\begin{equation}
u_{\pm,0}(x) = 0, \quad
v_{\pm,0}(x) = \pm \de, \quad
w_{\pm,0}(x) = N^{-s} e^{iNx}.
\label{ini00}
\end{equation}
Then,
we have
\begin{equation}
\begin{aligned}
&
\| u_{\pm,0} \|_{H^s}=0,
\quad
\| v_{\pm,0} \|_{H^s}
= \de \ll 1, \quad
\| w_{\pm,0} \|_{H^s}
= 1,
\\
&
\| u_{+,0} - u_{-,0} \|_{H^s}
+
\| v_{+,0} - v_{-,0} \|_{H^s}
+
\| w_{+,0} - w_{-,0} \|_{H^s}
= 2\de \ll 1.
\end{aligned}
\label{inis0}
\end{equation}
Let $(u_\pm, v_\pm, w_\pm)$ be the solution to \eqref{NLS_sys_torus} of the form \eqref{uvwaa} with $d=1$ and $k=1$ and the initial data \eqref{ini00}.
Moreover, $f_\pm, g_\pm, h_\pm$ are defined as in \eqref{uvwaa} with $d=1$.

It follows from
$f_\pm(0)=0$, $g_\pm(0)=\pm \de$, $h_\pm(0) = N^{-s}$,
and \eqref{cons1}
that
\[
2f_\pm(t)^2 + g_\pm(t)^2 + h_\pm(t)^2
= \de^2 +N^{-2s}.
\]
With
\eqref{ODE1f},
we have the following Cauchy problem:
\[
\left\{
\begin{aligned}
&f_\pm''(t) = N^2 f_\pm(t) \big( 2 f_\pm(t)^2 - (\de^2+N^{-2s}) \big), && t>0, \\
&(f_\pm(0),f_\pm'(0)) = (0, \mp \de N^{1-s}).
\end{aligned}
\right.
\]
We set
\begin{equation}
\ka_\pm (t) = \sqrt{\frac 2{\de^2 +N^{-2s}}} f_\pm \Big( \frac t{N \sqrt{\de^2+N^{-2s}}} \Big).
\label{alf}
\end{equation}
Then, $\ka_\pm$ satisfies
\[
\left\{
\begin{aligned}
&\ka_\pm''(t) = \ka_\pm (t) \big( \ka_\pm (t)^2 - 1 \big), && t>0, \\
&(\ka_\pm(0),\ka_\pm'(0)) = \Big( 0, \mp \frac{\sqrt 2 \de N^{-s}}{\de^2 +N^{-2s}} \Big).
\end{aligned}
\right.
\]

For simplicity, we set
$x_\pm (t) = \ka_\pm (t)$ and $y_\pm (t) = \ka_\pm'(t)$.
Then,
$x_\pm (t)$ and $y_\pm (t)$ satisfy
\[
\left\{
\begin{aligned}
&x_\pm'(t)=y_\pm(t), & t>0, \\
&y_\pm'(t)=x_\pm(t)^3-x_\pm(t),  & t>0, \\
&(x_\pm(0),y_\pm(0)) = \Big( 0, \mp \frac{\sqrt 2 \de N^{-s}}{\de^2 +N^{-2s}} \Big).
\end{aligned}
\right.
\]
The solution
$(x_\pm(t), y_\pm(t))$
is on the curve
\[
\frac{y^2}2 - \frac{x^4}4 + \frac{x^2}2 = E_{0} := \frac{\de^2 N^{-2s}}{(\de^2+N^{-2s})^2}.
\]
Note that $\de \neq N^{-s}$ implies that $E_0<\frac 14$.

\begin{figure}[H]
\begin{tikzpicture}[scale=1.7, samples=100]
\draw[thick, ->] (-1.8,0) -- (1.8,0) node [right] {$x$};
\draw[thick, ->] (0,-1) -- (0,1) node [above] {$y$};
\coordinate (O) at (0,0) node at (O) [below left] {$O$};
\draw[thick, domain={-0.5}:{0.5}] plot (\x,{sqrt(1/4-\x*\x+\x*\x*\x*\x/2)});
\draw[thick, domain={-0.5}:{0.5}] plot ({sqrt(1-sqrt(1-(1/2)+2*\x*\x))},\x);
\draw[thick, domain={-0.5}:{0.5}] plot (\x,{-sqrt(1/4-\x*\x+\x*\x*\x*\x/2)});
\draw[thick, domain={-0.5}:{0.5}] plot ({-sqrt(1-sqrt(1-(1/2)+2*\x*\x))},\x);
\draw[thick, domain={-1}:{1}] plot ({sqrt(1+sqrt(1-(1/4)+2*\x*\x))},\x);
\draw[thick, domain={-1}:{1}] plot ({-sqrt(1+sqrt(1-(1/4)+2*\x*\x))},\x);
\fill (1,0) circle(0.04)node[below]{$1$};
\fill (-1,0) circle(0.04)node[below]{$-1$};
\end{tikzpicture}
\caption{$\frac{y^2}2-\frac{x^4}4+\frac{x^2}2=E_0 \in (0, \frac 14)$}
\end{figure}

Set
\begin{equation}
t_{\ast, \pm}
= \inf \left\{ t >0 \, \middle| \ x_\pm(t)= \mp  \sqrt{1- \sqrt{1-4E_{0}}} \right\}.
\label{timed}
\end{equation}

\begin{lemm}
\label{lem:tast2}
For $s>0$, $0<\de \ll 1$, and $N> \big( \frac \de 2 \big)^{-\frac 1s}$,
we have
$t_{\ast,+} = t_{\ast,-}$ and
$t_{\ast, \pm} \lec 1$.
\end{lemm}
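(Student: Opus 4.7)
The plan is to exploit two features of the planar Hamiltonian system: the symmetry $(x,y) \mapsto (-x,-y)$ and the fact that the hypothesis $N > (\de/2)^{-1/s}$ forces the energy $E_0$ to be bounded strictly below the separatrix value $1/4$, so that the trajectory is a closed curve confined to $|x|<1$.

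First, the equality $t_{\ast,+}=t_{\ast,-}$ follows from the observation that if $(x_+(t),y_+(t))$ solves the system with initial datum $(0, -c)$, where $c := \frac{\sqrt 2 \de N^{-s}}{\de^2+N^{-2s}}$, then $(-x_+(t),-y_+(t))$ solves the same system with initial datum $(0,+c) = (x_-(0),y_-(0))$. By uniqueness of ODE solutions, $x_-(t) \equiv -x_+(t)$, so the first time at which $x_-(t)$ reaches $+\sqrt{1-\sqrt{1-4E_0}}$ coincides with the first time at which $x_+(t)$ reaches $-\sqrt{1-\sqrt{1-4E_0}}$.

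For the bound $t_{\ast,+} \lesssim 1$, I would introduce $r := N^{-2s}/\de^2$, so the hypothesis gives $r < 1/4$ and
\[
E_0 = \frac{r}{(1+r)^2} \le \frac{4}{25}, \qquad \sqrt{1-4E_0} \ge \tfrac{1}{2}.
\]
Setting $a^2 := 1-\sqrt{1-4E_0}$ and $b^2 := 1+\sqrt{1-4E_0}$, one has the factorization $2E_0 - x^2 + \tfrac{x^4}{2} = \tfrac{1}{2}(a^2-x^2)(b^2-x^2)$ with $b^2 - a^2 = 2\sqrt{1-4E_0} \ge 1$ and $0 < a < 1 \le b$. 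Since $y_+(0) < 0$ and $y_+'(0) = x_+(0)(x_+(0)^2-1)=0$, the trajectory lies on the level set of energy $E_0$ and $x_+$ decreases monotonically from $0$ until it reaches its first turning point at $x = -a$, where $y_+$ vanishes. Inverting the conservation law on this monotone arc and substituting $x = a\sin\theta$ yields
\[
t_{\ast,+}
= \int_0^a \frac{\sqrt 2 \, dx}{\sqrt{(a^2-x^2)(b^2-x^2)}}
= \int_0^{\pi/2} \frac{\sqrt 2 \, d\theta}{\sqrt{b^2 - a^2 \sin^2\theta}}
\le \frac{\pi}{\sqrt 2},
\]
where the last inequality uses $b^2 - a^2\sin^2\theta \ge b^2 - a^2 \ge 1$.

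The argument is essentially a one-variable integral estimate expressed in action-angle type coordinates; there is no serious obstacle beyond checking that the hypothesis on $N$ quantitatively separates $E_0$ from the separatrix value $1/4$ and thereby keeps the elliptic-type integrand uniformly bounded.
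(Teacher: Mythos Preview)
Your proof is correct and follows essentially the same approach as the paper. Both arguments verify that the hypothesis $N > (\de/2)^{-1/s}$ forces $E_0 < 4/25$, reduce $t_{\ast,\pm}$ to the elliptic-type integral $\int_0^a \frac{\sqrt 2\,dx}{\sqrt{(a^2-x^2)(b^2-x^2)}}$, and bound it using $b^2-a^2 = 2\sqrt{1-4E_0} \gtrsim 1$; the only cosmetic difference is that the paper rescales $x \mapsto x/a$ to normalize the integration to $[-1,0]$, whereas you use the trigonometric substitution $x = a\sin\theta$, and the paper phrases the symmetry $t_{\ast,+}=t_{\ast,-}$ via evenness of the integrand rather than via the ODE symmetry $(x,y)\mapsto(-x,-y)$.
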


\begin{proof}
We first consider the case $\pm =+$.
Then,
$x_+(t)$ is decreasing and $y_-(t)$ is increasing for $0<t<t_{\ast,+}$,
since $y(0)<0$.
The condition
$N> \big( \frac \de 2 \big)^{-\frac 1s}$ yields that
$0<E_0<\frac 4{25}$.
It follows from
$\frac{dt}{dx} = - \frac{1}{\sqrt{2E_{0} + \frac{x^4}2-x^2}}$
that
\begin{align*}
t_{\ast,+}
&= \int_{-\sqrt{1- \sqrt{1-4E_{0}}}}^0 \frac{d x}{\sqrt{2E_{0} + \frac{x^4}2 - x^2}}
\\
&= \sqrt 2 \int_{-\sqrt{1- \sqrt{1-4E_{0}}}}^{0}
\frac{d x}{\sqrt{(x^2-1-\sqrt{1-4E_{0}})(x^2-1+\sqrt{1-4E_{0}})}} \\
&= \sqrt 2
\int_{-1}^{0}
\frac{d x}{\sqrt{(1+\sqrt{1-4E_{0}}-(1-\sqrt{1-4E_{0}})x^2)(1-x^2)}}
%
\\
&\le
\frac 1{\sqrt[4]{1-4E_{0}}}
\int_{-1}^0 \frac{dx}{\sqrt{1-x^2}}
\lec 1.
\end{align*}

When $\pm=-$,
$x_-(t)$ is increasing and $y_-(t)$ is decreasing for $0<t<t_{\ast,-}$,
since $y(0)>0$.
It follows from
$\frac{dt}{dx} = \frac{1}{\sqrt{2E_{0} + \frac{x^4}2-x^2}}$
that
\[
t_{\ast,-}
= \int_0^{\sqrt{1- \sqrt{1-4E_{0}}}} \frac{d x}{\sqrt{2E_{0} + \frac{x^4}2 - x^2}}.
\]
Since the integrand is an even function,
we have $t_{\ast, +} = t_{\ast,-}$.
\end{proof}

Let $T$ satisfy
\begin{equation}
T = \frac{t_{\ast,\pm}}{N \sqrt{\de^2 + N^{-2s}}}.
\label{time2}
\end{equation}
By
\eqref{uvwaa} with $d=1$ and $k=1$,
\eqref{alf},
and $t_{\ast, +} = t_{\ast, -}$,
we have
\[
\| u_+ (T) - u_-(T) \|_{H^s}
= N^s | f_+ (T) - f_- (T) |
= N^s \sqrt{\frac{\de^2+N^{-2s}}2} |\ka_+ (t_{\ast, +}) - \ka_- (t_{\ast,-})|.
\]
Here,
\eqref{timed} yields that
\[
\ka_\pm (t_{\ast, \pm})
= \mp \sqrt{1- \sqrt{1-4E_0}}
= \mp \sqrt{\frac{4E_0}{\sqrt{1+ \sqrt{1-4E_0}}}}.
\]
It follows from $s>0$ that
\[
\lim_{N \to \infty}
N^s \sqrt{E_0}
= \frac{1}{\de}.
\]
We thus obtain
\begin{equation}
\lim_{N \to \infty}
\| u_+ (T) - u_-(T) \|_{H^s}
=
\frac \de{\sqrt 2}
\Big| \frac{\sqrt 2}{\de}
+ \frac{\sqrt 2}{\de} \Big|
= 2.
\label{low2}
\end{equation}
It follows from Lemma \ref{lem:tast2} and \eqref{time2} that
$\lim_{N \to \infty} T =0$.
Hence,
\eqref{inis0} and \eqref{low2} yield
that
the flow map for \eqref{NLS_sys_torus} fails to be 
locally uniformly continuous in $H^s(\T)$ for $\al-\ga=0$ and $s>0$.

\subsection{The case $\mu \le 0$ and $s<1$}
\label{Subsec:s<1}

Assume that $\mu \le 0$,
where $\mu$ is defined in \eqref{coeff_condition}.
Let $k$ be a real number given in \eqref{conk}.
We do not assume that $k$ is rational here,
namely, $k$ may be irrational.

Let $\{ p_n \}$ and $\{ q_n \}$ be sequences of integers
satisfying
\begin{equation}
\Big| k - \frac{p_n}{q_n} \Big| < \frac 1{q_n^2}
\label{Diop1}
\end{equation}
for any $n \in \N$ and
\[
\lim_{n \to \infty} q_n =\infty.
\]
If $k$ is rational,
there exist integers $p,q$ such that $k= \frac pq$.
Then, we can take $p_n = np$ and $q_n = nq$.
If $k$ is irrational,
from Dirichlet's theorem on Diophantine approximation,
we can choose such sequences.

Note that
\[
\lim_{n \to \infty}
\frac{p_n}{q_n} = k.
\]
When $k \neq 0,1$,
we have
\begin{equation}
|p_n| \sim |q_n| \sim |p_n-q_n|
\label{pqd}
\end{equation}
for $n \gg 1$.
In what follows,
we assume $k \neq 0,1$.
See Remark \ref{rem:nuni} below for the case $k=0,1$.

For $0<\delta \ll 1$,
we set
\begin{equation}
u_{\pm,0}(x) = |p_n|^{-s} e^{ip_n x}, \quad
v_{\pm,0}(x) = \pm \delta |p_n-q_n|^{-s} e^{i(p_n-q_n) x}, \quad
w_{\pm,0}(x) = \pm \delta |q_n|^{-s} e^{iq_n x}.
\label{ini3ab}
\end{equation}
A direct calculation shows that
\begin{equation}
\begin{aligned}
&\| u_{\pm,0} \|_{H^s} = 1,
\quad
\| v_{\pm,0} \|_{H^s}
= \de \ll 1, \quad
\| w_{\pm,0} \|_{H^s}
= \de \ll 1,
\\
&
\begin{aligned}
\| u_{+,0} - u_{-,0} \|_{H^s}
+
\| v_{+,0} - v_{-,0} \|_{H^s}
&+
\| w_{+,0} - w_{-,0} \|_{H^s}
= 2 \de \ll 1.
\end{aligned}
\end{aligned}
\label{inidd4b}
\end{equation}

We use the same notation as in Subsection \ref{subsec:al-ga}.
Namely,
$(u_\pm, v_\pm, w_\pm)$ denotes the solution to \eqref{NLS_sys_torus} of the form \eqref{uvwaa} and the initial data \eqref{ini3ab}.
Moreover, $f_\pm, g_\pm, h_\pm$ are defined as in \eqref{uvwaa} with $d=1$.

It follows from
$f_\pm (0)= |p_n|^{-s}$, $g_\pm (0)= \pm \de |p_n-q_n|^{-s}$, $h_\pm(0) = \pm \de |q_n|^{-s}$
and \eqref{cons1}
that
\begin{equation}
\label{omn}
|f_\pm(t)|^2 - |g_\pm(t)|^2 + 2|h_\pm(t)|^2
= |p_n|^{-2s} + \de^2 |p_n-q_n|^{-2s} + \de^2 |q_n|^{-2s}
=: \om_n
.
\end{equation}
With
\eqref{ODE1e} and \eqref{omn},
we have the following Cauchy problem:
\begin{equation}
\left\{
\begin{aligned}
&h_\pm''(t) = -q_n^2 h_\pm(t) \big( 2|h_\pm(t)|^2 - \om_n \big) - i \big( \alpha p_n^2 - \beta (p_n-q_n)^2 - \gamma q_n^2 \big) h_\pm'(t), && t>0, \\
&(h_\pm(0),h_\pm'(0)) = (\pm \de |q_n|^{-s}, \pm \de q_n |p_n|^{-s} |p_n-q_n|^{-s}).
\end{aligned}
\right.
\label{hpm3}
\end{equation}
Moreover,
by \eqref{cons1} and \eqref{ODE1e},
we have
\begin{equation}
\begin{aligned}
|h_\pm(t)|
&\le
\sqrt{|f_\pm(t)|^2 + | h_\pm (t)|^2}
\sim |q_n|^{-s},
\\
|h_\pm'(t)|
&\le
|q_n|
(|f_\pm(t)|^2 + |g_\pm (t)|^2)
\sim |q_n|^{1-2s}.
\end{aligned}
\label{bdqq}
\end{equation}

Let $\ti h_\pm$ be the solution to the Cauchy problem:
\begin{equation}
\left\{
\begin{aligned}
&\ti h_\pm''(t) = -q_n^2 \ti h_\pm(t) \big( 2|\ti h_\pm(t)|^2 - \om_n \big)
, && t>0, \\
&(\ti h_\pm(0),\ti h_\pm'(0)) = (\pm \de |q_n|^{-s}, \pm \de q_n |p_n|^{-s} |p_n-q_n|^{-s}).
\end{aligned}
\right.
\label{hpm3a}
\end{equation}
Note that $\ti h_\pm$ is real-valued, since the initial data are real numbers.
We set 
\begin{equation}
\ka_\pm (t) = \sqrt{\frac 2{\om_n}} \ti h_\pm \Big( \frac t{ \sqrt{\om_n} q_n} \Big).
\label{al33b}
\end{equation}
Then, $\ka_\pm$ satisfies
\[
\left\{
\begin{aligned}
&\ka_\pm''(t) = -\ka_\pm (t)(\ka_\pm(t)^2-1), && t>0, \\
&(\ka_\pm(0),\ka_\pm'(0)) = \Big( \pm \sqrt{\frac 2{\om_n}} |q_n|^{-s} \de,
\pm \frac{\sqrt 2}{\om_n} |p_n|^{-s} |p_n-q_n|^{-s}
\de
\Big).
\end{aligned}
\right.
\]

For simplicity, we set $x_\pm(t)=\ka_\pm(t)$ and $y_\pm(t)=\ka_\pm'(t)$.
Then, $x_\pm(t)$ and $y_\pm(t)$ satisfy
\[
\left\{
\begin{aligned}
&x_\pm'(t)=y_\pm(t), & t>0, \\
&y_\pm'(t)=-x_\pm(t)^3+x_\pm(t),  & t>0, \\
&(x_\pm(0),y_\pm(0)) =
\Big( \pm \sqrt{\frac 2{\om_n}} |q_n|^{-s} \de,
\pm \frac{\sqrt 2}{\om_n} |p_n|^{-s} |p_n-q_n|^{-s}
\de
\Big).
\end{aligned}
\right.
\]
The solution
$(x_\pm(t), y_\pm(t))$
is on the curve
\[
\frac{y^2}2 + \frac{x^4}4 - \frac{x^2}2
= 0.
\]
\begin{figure}[H]
\begin{tikzpicture}[scale=1.7, samples=100]
\draw[thick, ->] (-1.8,0) -- (1.8,0) node [right] {$x$};
\draw[thick, ->] (0,-1) -- (0,1) node [above] {$y$};
\coordinate (O) at (0,0) node at (-0.13,-0.28) {$O$};
\draw[thick, domain={0}:{1.4}] plot (\x,{sqrt(0\x*\x-\x*\x*\x*\x/2)});
\draw[thick, domain={-0.7}:{0.7}] plot ({sqrt(1+sqrt(1-2*\x*\x))},\x);
\draw[thick, domain={0}:{1.4}] plot (\x,{-sqrt(0\x*\x-\x*\x*\x*\x/2)});
\draw[thick, domain={-0.7}:{0.7}] plot ({-sqrt(1+sqrt(1-2*\x*\x))},\x);
\draw[thick, domain={0}:{1.4}] plot (-\x,{sqrt(0\x*\x-\x*\x*\x*\x/2)});
\draw[thick, domain={-0.7}:{0.7}] plot ({-sqrt(1+sqrt(1-2*\x*\x))},\x);
\draw[thick, domain={0}:{1.4}] plot (-\x,{-sqrt(0\x*\x-\x*\x*\x*\x/2)});
\draw[thick, domain={-0.7}:{0.7}] plot ({-sqrt(1+sqrt(1-2*\x*\x))},-\x);
\fill (1,0) circle(0.04)node[below]{$1$};
\fill (-1,0) circle(0.04)node[below]{$-1$};
\fill ({sqrt 2},0) circle(0.04)node[below right]{$\sqrt 2$};
\fill ({-sqrt 2},0) circle(0.04)node[below left]{$-\sqrt 2$};
\end{tikzpicture}
\caption{$\frac{y^2}2+\frac{x^4}4-\frac{x^2}2=0$}
\end{figure}
In particular,
it follows from \eqref{omn} and \eqref{pqd} that
\begin{equation}
|\ti h_\pm (t)|
= \sqrt{\frac{\om_n}2} | \ka_\pm (\sqrt{\om_n} q_n t)|
\le
\sqrt{\om_n}
\sim |q_n|^{-s}.
\label{bdqq2}
\end{equation}

\begin{lemm}
\label{lem:diff5a}
Assume that $\mu \le 0$.
Let $h_\pm$ and $\ti h_\pm$ be solutions to \eqref{hpm3} and \eqref{hpm3a}, respectively.
Then, we have
\[
| h_\pm(t) - \ti h_\pm(t)|
\lec t^2 |q_n|^{1-2s} \exp \big( t^2 |q_n|^{2(1-s)} \big)
\]
for $t>0$ and $n \gg 1$.
\end{lemm}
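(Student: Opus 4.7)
Set $e(t) := h_\pm(t) - \ti h_\pm(t)$. First, I would note that the initial data in \eqref{hpm3} and \eqref{hpm3a} coincide, so $e(0)=e'(0)=0$. Subtracting the two equations gives
\[
e''(t) = -2q_n^2\bigl( h_\pm(t)|h_\pm(t)|^2 - \ti h_\pm(t)|\ti h_\pm(t)|^2 \bigr) + q_n^2 \omega_n e(t) - i A_n\, h_\pm'(t),
\]
where $A_n := \alpha p_n^2 - \beta(p_n-q_n)^2 - \gamma q_n^2$. The first key estimate is a bound on $A_n$: writing $A_n = q_n^2\bigl[\alpha(p_n/q_n)^2 - \beta(p_n/q_n-1)^2 - \gamma\bigr]$ and recalling that the bracketed polynomial vanishes at $p_n/q_n=k$ (this is exactly \eqref{keq1}, whose real solution $k$ exists thanks to $\mu\le 0$), one factors out $p_n/q_n - k$ and applies the Diophantine bound $|p_n/q_n - k| \le q_n^{-2}$ from \eqref{Diop1} to conclude $|A_n|\lec 1$.

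Next, I would assemble the pointwise ingredients. The conservation identity \eqref{cons1} together with \eqref{pqd} gives $\omega_n \sim q_n^{-2s}$, and the bounds \eqref{bdqq} and \eqref{bdqq2} yield $|h_\pm(t)|, |\ti h_\pm(t)| \lec q_n^{-s}$ and $|h_\pm'(t)| \lec q_n^{1-2s}$. Using the elementary Lipschitz estimate $\bigl|z|z|^2-w|w|^2\bigr| \lec (|z|^2+|w|^2)|z-w|$ with $z=h_\pm$, $w=\ti h_\pm$, the equation for $e$ gives the pointwise bound
\[
|e''(t)| \lec q_n^{2(1-s)}|e(t)| + q_n^{1-2s}.
\]
Integrating twice from $0$ with $e(0)=e'(0)=0$ produces the integral inequality
\[
|e(t)| \le C\, t^2 q_n^{1-2s} + C\, q_n^{2(1-s)} \int_0^t (t-s)|e(s)|\,ds.
\]

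The last step is a Gronwall iteration. Setting $K := C q_n^{2(1-s)}$ and $M := C q_n^{1-2s}$, one iterates the inequality, observing that the integral operator $\mathcal K[f](t) := \int_0^t(t-s)f(s)\,ds$ acts on monomials by $\mathcal K^j[t^2](t) = t^{2j+2}/(2j+2)!$. Summing the resulting geometric-type series yields
\[
|e(t)| \le M\sum_{j\ge 0}\frac{(K t^2)^j\, t^2}{(2j+2)!} \lec M t^2 \cosh\bigl(\sqrt K\, t\bigr),
\]
and the elementary comparison $\sqrt K\, t \le 1 + K t^2$ converts $\cosh(\sqrt K\, t)$ into the stated exponential $\exp(K t^2)$, completing the proof.

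The main obstacle, and the place where some care is needed, is this Gronwall step: because the kernel $(t-s)$ depends on $t$, the usual first-order Gronwall lemma does not apply directly. One must either iterate termwise as above (tracking the combinatorial factors $(2j+2)!$) or instead compare $|e(t)|$ with the unique solution of the associated second-order linear scalar equation $y''=Ky+M$, $y(0)=y'(0)=0$, which is $y(t) = M(\cosh(\sqrt K\, t)-1)/K$. Either route produces the claimed bound.
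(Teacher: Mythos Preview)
Your proof is correct and follows essentially the same route as the paper's: subtract the two equations, use the Diophantine bound \eqref{Diop1} together with the fact that $k$ is a root of \eqref{keq1} to obtain $|A_n|\lesssim 1$, control the cubic difference via the a priori bounds \eqref{bdqq}--\eqref{bdqq2}, and close with a Gronwall-type argument. The only cosmetic difference is that the paper passes through the running supremum $D_\pm(t)=\sup_{0<t'<t}|d_\pm(t')|$ before applying the standard Gronwall lemma, whereas you iterate the convolution kernel directly; both yield the same bound (and as a minor point, you should avoid reusing $s$ as the integration variable since it already denotes the Sobolev exponent).
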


\begin{proof}
Set
\[
d_\pm := h_\pm - \ti h_\pm.
\]
By \eqref{hpm3} and \eqref{hpm3a},
$d_\pm$ satisfies
\[
\left\{
\begin{aligned}
&d_\pm''(t)
=
-q_n^2 d_\pm(t) \big( 2|h_\pm(t)|^2 - \om_n \big)
-q_n^2 \ti h_\pm(t) \big( 2 d_\pm(t) \cj{h_\pm(t)} - \om_n \big)
\\
&\hspace*{50pt}
-q_n^2 \ti h_\pm(t) \big( 2 \ti h_\pm(t) \cj{d_\pm(t)} - \om_n \big)
- i \big( \alpha p_n^2 - \beta (p_n-q_n)^2 - \gamma q_n^2 \big) h_\pm'(t), && t>0, \\
&(d_\pm(0),d_\pm'(0)) = (0,0).
\end{aligned}
\right.
\]
Recall that $k$ is defined in \eqref{conk}.
When $\alpha - \beta \neq 0$,
it follows from $\mu \le 0$, \eqref{Diop1}, and \eqref{pqd} that
\begin{align*}
| \alpha p_n^2 - \beta (p_n-q_n)^2 - \gamma q_n^2 |
&=
| (\alpha -\beta) p_n^2 + 2 \beta p_n q_n - (\beta+\gamma) q_n^2 |
\\
&=
\Big| (\alpha-\beta)
\Big( p_n + \frac{\beta+\sqrt{|\mu|}}{\alpha-\beta} q_n \Big)
\Big( p_n + \frac{\beta-\sqrt{|\mu|}}{\alpha-\beta} q_n \Big)
\Big|
\\
&\lec 1.
\end{align*}
When $\alpha - \beta = 0$,
a similar calculation yields that
\begin{align*}
| \alpha p_n^2 - \beta (p_n-q_n)^2 - \gamma q_n^2 |
&=
| 2 \beta p_n q_n - (\beta+\gamma) q_n^2 |
\\
&=
\Big| 2 \beta \Big( \frac{p_n}{q_n}  - \frac{\beta+\gamma}{2\beta} \Big) q_n^2 \Big|
\lec 1.
\end{align*}

Set
\[
D_\pm (t) := \sup_{0<t'<t} | d_\pm(t')|.
\]
From the corresponding integral equation with \eqref{bdqq} and \eqref{bdqq2},
we obtain
\[
D_\pm(t)
\lec
|q_n|^{2-2s}
\int_0^t t' D_\pm(t') dt'
+ t^2 |q_n|^{1-2s}.
\]
Gronwall's inequality yields that
\[
D_\pm(t)
\lec
t^2 |q_n|^{1-2s}
\exp \big( c t^2 |q_n|^{2-2s} \big),
\]
which shows the desired bound.
\end{proof}

Set
\begin{equation}
t_{\ast, \pm} = \inf \Big\{ t >0 \mid x_\pm (t)= \pm 1 \Big\}.
\notag
\end{equation}
By symmetry,
we have $t_{\ast,+} = t_{\ast,-}$.
The same argument as in Lemma \ref{lem:tast2} yields that
\[
t_{\ast, \pm} \lec |\log \de|.
\]
Set
\begin{equation}
T = \frac{t_{\ast, \pm}}{\sqrt{\om_n} q_n}.
\label{timT3b}
\end{equation}
Then, Lemma \ref{lem:diff5a} with \eqref{omn} and \eqref{pqd}
imply that
\begin{equation}
|h_\pm(T) - \ti h_\pm(T)|
\lec |q_n|^{-1}
\exp \big( \theta (\log \de)^2 \big)
\label{diff6a}
\end{equation}
for some constant $\theta >0$.
By
\eqref{uvwaa}, \eqref{al33b}, $t_{\ast,+}= t_{\ast,-}$, and \eqref{diff6a},
we have
\[
\begin{aligned}
&\| w_+(T) - w_-(T) \|_{H^s}
\\
&= |q_n|^s |h_+(T) - h_-(T)|
\\
&\ge
|q_n|^s |\ti h_+(T) - \ti h_-(T)|
- |q_n|^s
\big( |h_+(T) - \ti h_+(T)| + |h_-(T) - \ti h_-(T)| \big)
\\
&\ge
|q_n|^s \sqrt{\frac{\om_n}2}|\ka_+ (t_{\ast,+}) - \ka_- (t_{\ast,-})|
- C |q_n|^{s-1} \exp \big( \theta (\log \de)^2 \big)
\\
&
=
\sqrt 2 |q_n|^s \sqrt{\om_n}
- C |q_n|^{s-1} \exp \big( \theta (\log \de)^2 \big).
\end{aligned}
\]
From \eqref{omn}, \eqref{pqd}, and $s<1$,
we obtain that
\begin{equation}
\| w_+(T) - w_-(T) \|_{H^s}
\sim 1
\label{funif3b}
\end{equation}
for $0<\de \ll 1$ and $n \gg 1$.

It follow from \eqref{timT3b} and \eqref{pqd} that 
$\lim_{n \to \infty} T=0$ for $s<1$.
With \eqref{inidd4b} and \eqref{funif3b},
the flow map for \eqref{NLS_sys_torus} fails to be 
locally uniformly continuous in $H^s(\T)$ for $s<1$.

\begin{rem}
\label{rem:nuni}

By \eqref{conk},
the conditions $k=0$ and $k=1$ correspond to $\be+\ga=0$ and $\al-\ga=0$, respectively.
The argument above also works for $k=1$ and $0 \le s <1$.
Indeed,
when $k=1$,
we replace \eqref{ini3ab} by
\[
u_{\pm,0}(x) = N^{-s} e^{i N x}, \quad
v_{\pm,0}(x) = \pm \delta N^{-s} , \quad
w_{\pm,0}(x) = \pm \delta N^{-s} e^{i N x}.
\]
Then, we have
\[
\begin{aligned}
&\| u_{\pm,0} \|_{H^s} =1,
\quad
\| v_{\pm,0} \|_{H^s}
= \de N^{-s} \ll 1, \quad
\| w_{\pm,0} \|_{H^s}
= \de \ll 1,
\\
&
\begin{aligned}
\| u_{+,0} - u_{-,0} \|_{H^s}
+
\| v_{+,0} - v_{-,0} \|_{H^s}
&+
\| w_{+,0} - w_{-,0} \|_{H^s}
\\
&\quad
= 2 ( N^{-s} + 1) \de \ll 1
\end{aligned}
\end{aligned}
\]
for $s \ge 0$.
Moreover,
$h_\pm$ satisfies
\[
\left\{
\begin{aligned}
&h_\pm''(t) = -N^2 h_\pm(t) \big( 2h_\pm(t)^2 - (1+\de^2) N^{-2s} \big), && t>0, \\
&(h_\pm(0),h_\pm'(0)) = (\pm \de N^{-s}, \pm \de N^{1-2s}).
\end{aligned}
\right.
\]
Thus,
the same argument above implies \eqref{funif3b} for $s<1$.
Namely, the flow map fails to be locally uniformly continuous for $\al-\ga=0$ and $0 \le s<1$.
With Theorem \ref{thm:IP2} and the result in Subsection \ref{subsec:al-ga},
we obtain Theorem \ref{thm:uni1} (ii) for $(\be+\ga)(\al-\ga)=0$.
\end{rem}

\mbox{}

\noindent
{\bf 
Acknowledgements.}
H.H.~was supported by JSPS KAKENHI  Grant number JP21K13825.
S.K.~was supported by JSPS KAKENHI  Grant number JP24K16945.
M.O.~was supported by JSPS KAKENHI  Grant number JP23K03182.
The authors would like to thank the anonymous referees for helpful comments


\begin{thebibliography}{99}



\bibitem{Be08}I. Bejenaru, {\itshape Quadratic nonlinear derivative Schr\"odinger equations. Part II},
Trans. Amer. Math. Soc. {\bfseries 360} (2008), no. 11, 5925--5957.

\bibitem{BHHT09}
I. Bejenaru, S. Herr, J. Holmer, and D. Tataru,
{\itshape On the 2D Zakharov system with $L^2$ Schr\"odinger data},
Nonlinearity {\bfseries 22} (2009), no. 5, 1063--1089.


\bibitem{BL01}H. Biagioni and F. Linares, {\itshape Ill-posedness for the derivative Schr\"odinger and generalized Benjamin-Ono equations},
Trans. Amer. Math. Soc. {\bfseries 353} (2001), no. 9, 3649--3659.

\bibitem{Bo93}J. Bourgain, {\itshape Fourier transform restriction phenomena for certain lattice
  subsets and applications to nonlinear evolution equations. I. Schr\"odinger equations},
  Geom. Funct. Anal. {\bfseries 3} (1993), no. 2, 107--156. 

\bibitem{Bo13}J. Bourgain, {\itshape Moment inequalities for trigonometric polynomials with spectrum
in curved hypersurfaces},
  Israel J. Math. {\bfseries 193} (2013), no. 1, 441--458. 

\bibitem{BD15}J. Bourgain and C. Demeter, {\itshape The proof of the $l^2$ decoupling conjecture},
Ann. of Math. (2) {\bfseries 182} (2015), no. 1, 351--389.

\bibitem{BGT02}
N. Burq, P. G\'erard, and N. Tzvetkov,
{\itshape An instability property of the nonlinear Schr\"odinger equation on $\mathbb S^d$},
Math. Res. Lett. {\bfseries 9} (2002), no. 2-3, 323--335.

 

\bibitem{Chi99}H. Chihara, {\itshape Gain of regularity for semilinear Schr\"odinger equations},
Math. Ann. {\bfseries 315} (1999), no. 4, 529--567. 

\bibitem{Chi02}
H. Chihara,
\textit{The initial value problem for Schr\"odinger equations on the torus},
Int. Math. Res. Not. 2002, no. 15, 789--820.

\bibitem{Ch}M. Christ, {\itshape Illposedness of a Schr\"odinger equation with derivative nonlinearity}, preprint
(\url{https://math.berkeley.edu/~mchrist/preprints.html}).

\bibitem{CGKO17}
J. Chung, Z. Guo, S. Kwon, and T. Oh,
{\itshape Normal form approach to global well-posedness of the quadratic derivative nonlinear Schr\"odinger equation on the circle},
Ann. Inst. H. Poincar\'e C Anal. Non Lin\'eaire 34 (2017), no.5, 1273--1297.


\bibitem{CCT03}
M. Christ, J. Colliander, and T. Tao,
{\itshape Asymptotics, frequency modulation, and low regularity ill-posedness for canonical defocusing equations},
Amer. J. Math. {\bfseries 125} (2003), no. 6, 1235--1293.

\bibitem{CC04}M. Colin and T. Colin, {\itshape On a quasilinear Zakharov system describing laser-plasma interactions},
  Differential Integral Equations, {\bfseries 17} (2004), no. 3-4, 297--330. 


\bibitem{CKSTT02}J. Colliander, M. Keel, G. Staffilani, H. Takaoka, and T. Tao,
{\itshape A refined global well-posedness result
for Schr\"odigner equations with derivative},
SIAM J. Math. Anal. {\bfseries 34} (2002), no. 1, 64--86.


\bibitem{GTV97}
J. Ginibre, Y. Tsutsumi, and G. Velo,
{\itshape On the Cauchy problem for the Zakharov system},
J. Funct. Anal. {\bfseries 151} (1997), no. 2, 384--436.

\bibitem{Gru00}
A. Gr\"unrock,
\textit{On the Cauchy- and periodic boundary value problem for a certain class of derivative nonlinear Schr\"odinger equations},
arXiv:math/0006195.

\bibitem{HHK09}M. Hadac, S. Herr, and H. Koch, {\itshape Well-posedness and scattering for the KP-II equation in a critical space},
  Ann. Inst. H. Poincar\'e CAnal. Non lin\'eaire. {\bfseries 26} (2009), no. 3, 917--941.
  
\bibitem{HHK10}M. Hadac, S. Herr, and H. Koch, {\itshape Errantum to ``Well-posedness and scattering for the KP-II equation in a critical space'' [Ann. I. H. Poincar\'e--AN26 (3) (2009) 917--941]}, Ann. Inst. H. Poincar\'e CAnal. Non lin\'eaire. {\bfseries 27} (2010), no. 3, 971--972. 

\bibitem{HKNV}
B. Harrop-Griffiths, R.Killip, M. Ntekoume, and M. Visan,
\textit{Global well-posedness for the derivative nonlinear Schr\"odinger equation in $L^2(\R)$},
arXiv:2204.12548.


\bibitem{He06}S. Herr, {\itshape On the Cauchy problem for the derivative nonlinear Schr\"odinger 
equation with periodic boundary condition},
 Int. Math. Res. Not. {\bfseries 2006} (2006), 33 pp.
  
  
\bibitem{HTT11}S. Herr, D. Tataru, and N. Tzvetkov, {\itshape Global well-posedness of the energy-critical nonlinear Schr\"odinger equation with small initial data in $H^{1}(\T^{3})$},
  Duke. Math. J. {\bfseries 159} (2011), no. 2, 329--349. 
  
\bibitem{Hi}H. Hirayama, {\itshape Well-posedness  and scattering for a system of quadratic derivative
nonlinear Schr\"odinger equations with low regularity initial data}, Commun. Pure Appl. Anal. {\bfseries 13} (2014), no. 4, 1563--1591. 

\bibitem{HK}H. Hirayama and S. Kinoshita, 
{\itshape Sharp bilinear estimates and its application to 
a system of quadratic derivative nonlinear Schr\"odinger equations}, 
Nonlinear Anal. {\bfseries 178} (2019), 205--226. 

\bibitem{HKO2020}H. Hirayama, S. Kinoshita, and M. Okamoto, 
{\itshape Well-posedness for a system of quadratic deivative 
nonlinear Schr\"odinger equations with radial initial data}, 
Ann. Henri Poincar\'e {\bfseries 21} (2020), 2611--2636.

\bibitem{HKO2021}
H.~Hirayama, S.~Kinoshita, and M.~Okamoto,
\textit{Well-posedness for a system of quadratic derivative nonlinear Schr\"odinger equations in almost critical spaces},
J. Math. Anal. Appl. {\bfseries 499} (2021), no. 2, Paper No. 125028, 29 pp.

\bibitem{HKO22}
H.~Hirayama, S.~Kinoshita, and M.~Okamoto,
\textit{A remark on the well-posedness for a system of quadratic derivative nonlinear Schr\"odinger equations},
Commun. Pure Appl. Anal. {\bfseries 21} (2022), no. 10, 3309--3334.

\bibitem{IKO16}
M. Ikeda, N. Kishimoto, and M. Okamoto,
{\itshape
Well-posedness for a quadratic derivative nonlinear Schr\"odinger system at the critical regularity},
J. Funct. Anal. {\bfseries 271} (2016), no. 4, 747--798.

\bibitem{KPV98}C. Kenig, G. Ponce, and L. Vega, {\itshape Smoothing effects and local existence theory for the generalized nonlinear Schr\"odinger equations},
  Invent. Math. {\bfseries 134} (1998), no. 3, 489--545.  
%

\bibitem{KNV23}
R. Killip, M. Ntekoume, M.Vi\c{s}an,
\textit{On the well-posedness problem for the derivative nonlinear Schr\"{o}dinger equation}
Anal. PDE \textbf{16} (2023), no. 5, 1245--1270.

\bibitem{KV16}
R. Killip and M. Vi\c{s}an,
\textit{Scale invariant Strichartz estimates on tori and applications},
Math. Res. Lett. \textbf{23} (2016), no. 2, 445--472.

\bibitem{KS21}
S. Kinoshita and R. Schippa,
\textit{Loomis-Whitney-type inequalities and low regularity well-posedness of the periodic Zakharov-Kuznetsov equation},
J. Funct. Anal. {\bfseries 280} (2021), no. 6, 108904.

\bibitem{Kishi13}
N. Kishimoto, \textit{Local well-posedness for the Zakharov system on the multidimensional torus}, J. Anal. Math. {\bfseries 119} (2013), 213--253.

\bibitem{KT18}H. Koch and D. Tataru, {\itshape Conserved energies for the cubic nonlinear 
Schr\"odinger equation in one dimension},
Duke Math. J. {\bfseries 167} (2018), no. 17, 3207--3313.  

\bibitem{KoOk1}
T. Kondo and M. Okamoto,
\textit{Norm inflation for a higher-order nonlinear Schr\"odinger equation with a derivative on the circle},
Partial Differ. Equ. Appl. {\bfseries 6} (2025), no. 2., Paper No. 11, 14pp. 

\bibitem{KoOk2}
T. Kondo and M. Okamoto,
\textit{Well- and ill-posedness of the Cauchy problem for semi-linear Schr\"odinger equations on the torus},
arXiv:2501.04205.

\bibitem{LO24}
R. Liu and T. Oh,
{\itshape Sharp local well-posedness of the two-dimensional periodic nonlinear Schr\"{o}dinger equation with a quadratic nonlinearity $|u|^2$}, Math. Res. Lett. {\bfseries 31} (2024), no.1, 255--277. 


%
\bibitem{MWX11}C. Miao, Y. Wu, and G. Xu, {\itshape Global well-posedness for Schr\"odinger equation with derivative in $H^{\frac 12}(\R )$}, J. Differential. Equations {\bfseries 251} (2011), no.8, 2164--2195.

\bibitem{Oh09}T. Oh, {\itshape Diophantine conditions in well-posedness theory of coupled KdV-Type systems: local Theory},
Int. Math. Res. Not. IMRN {\bfseries 2009}, no. 18, 3516--3556.




\bibitem{Tak01}H. Takaoka, {\itshape Global well-posedness for Schr\"odinger equations with derivative in a nonlinear term and data in low-order Sobolev spaces},
Electron. J. Differential Equations {\bfseries 2001}, No. 42, 23 pp.

\bibitem{Wa} Y. Wang, {\itshape Periodic nonlinear Schr\"odinger equation in critical $H^{s}(\T^{n})$ spaces},
SIAM J. Math. Anal. {\bfseries 45} (2013), no. 3, 1691--1703.

\bibitem{Wi10}Y. Win, {\itshape Global Well-Posedness of the Derivative Nonlinear Schr\"odinger Equations on $\T$},
Funkcial. Ekvac. {\bfseries 53} (2010), no. 1, 51--88.
\end{thebibliography}
\end{document}